\definecolor{webgreen}{rgb}{0,.5,0}
\definecolor{webbrown}{rgb}{.6,0,0}
\newtheorem{theorem}{Theorem}[section]
\newtheorem{lemma}[theorem]{Lemma}
\newtheorem{proposition}[theorem]{Proposition}
\newtheorem{corollary}[theorem]{Corollary}
\theoremstyle{definition}
\newtheorem{definition}[theorem]{Definition}
\newtheorem{example}[theorem]{Example}
\newtheorem{remark}[theorem]{Remark}
\newtheorem{assumption}[theorem]{Assumption}
\definecolor{webgreen}{rgb}{0,.5,0}
\newcommand{\N}{\ensuremath{\mathbb{N}}}
\newcommand{\Z}{\ensuremath{\mathbb{Z}}}
\newcommand{\R}{\ensuremath{\mathbb{R}}}
\newcommand{\Q}{\ensuremath{\mathbb{Q}}}
\newcommand{\Comp}{\ensuremath{\mathbb{C}}}
\newcommand{\U}{\ensuremath{\mathcal{U}}}
\newcommand{\Uz}{\ensuremath{\U_{o}}}
\newcommand{\en}[1]{\prescript{* \!}{}{#1}}
\newcommand{\tsse}{\text{ if and only if }}
\newcommand{\orig}{{\mathbf{0}}}
\renewcommand{\geq}{\geqslant}
\newcommand{\simo}{\mathord{\sim}}
\newcommand{\Va}{\ensuremath{\mathbf{V}}} 
\DeclareMathOperator{\V}{\mathsf{V}} 
\DeclareMathOperator{\C}{\mathsf{I}} 
\DeclareMathOperator{\VI}{\mathsf{V}_{\R}} 
\DeclareMathOperator{\CI}{\mathsf{I}_{\R}} 
\DeclareMathOperator{\VU}{\mathsf{V}} 
\DeclareMathOperator{\CU}{\mathsf{I}} 
\DeclareMathOperator{\CC}{\mathscr{I}} 
\DeclareMathOperator{\VV}{\mathscr{V}} 
\DeclareMathOperator{\F}{\mathscr{Free}} 
\DeclareMathOperator{\Max}{\mathtt{Max}} 
\DeclareMathOperator{\UU}{\mathscr{U}} 
\newcommand{\A}{A}
\newcommand{\seq}{\subseteq}
\newcommand{\imm}{\iota}
\newcommand{\df}{\coloneqq}
\newcommand{\eps}{\varepsilon}
\DeclareMathOperator{\spec}{\mathtt{Spec}}
\DeclareMathOperator{\st}{\mathtt{st}}
\DeclareMathOperator{\free}{\mathscr{F}}
\newcommand{\freen}{\mathscr{\free}^{\ell}_n}
\newcommand{\freersn}{\mathscr{\free}^{v}_n} 
\newcommand{\freek}{\mathscr{\free}^{\ell}_{\kappa}} 
\newcommand{\freersk}{\mathscr{\free}^{v}_{\kappa}} 
\newcommand{\genid}[1]{\langle#1\rangle}
\newcommand{\env}[1]{\langle #1 \rangle}
\newcommand{\ort}[1]{#1^{\perp}}
\newcommand{\ortp}[1]{\langle#1\rangle^{\perp}}
\newcommand{\lr}{L^{n}_\mathbb{R}}
\newcommand{\lz}{L^{n}_\mathbb{Z}}
\newcommand{\cone}{\mathsf{Cone}}
\DeclareMathOperator{\red}{red}
\newcommand{\apal}{\mathsf{G^\alpha}}
\newcommand{\vpal}{\mathsf{R^\alpha}}
\newcommand{\sdefzal}{\mathsf{S^{\alpha}_{def \, \mathbb{Z}}}}
\newcommand{\sdefral}{\mathsf{S^{\alpha}_{def \, \mathbb{R}}}}
\DeclareMathOperator{\VUst}{\VU^o}
\newcommand{\indu}{\mathsf{u}}
\newcommand{\indv}{\mathsf{v}}
\newcommand{\indw}{\mathsf{w}}
\newcommand{\pfin}{\mathcal{P}_{\text{fin}}}
\newcommand{\pfink}{\pfin(\kappa)}
\newcommand{\pfinom}{\pfin(\omega)}
\begin{document}

\title{Baker-Beynon duality beyond semisimplicity}

\author{Luca Carai}
\address{University of Milan, Department of Mathematics, via Cesare Saldini, 50 20133 Milan (MI), Italy.}
\email{luca.carai.uni@gmail.com}
\author{Serafina Lapenta}
\address{University of Salerno, Department of Mathematics, Piazza Renato Caccioppoli, 2 84084 Fisciano (SA), Italy.}
\email{slapenta@unisa.it}
\author{Luca Spada} 
\address{University of Salerno, Department of Mathematics, Piazza Renato Caccioppoli, 2 84084 Fisciano (SA), Italy.}
\email{lspada@unisa.it}

 \begin{abstract}
Combining tools from category theory, model theory, and non-standard analysis we extend Baker-Beynon dualities to the classes of \emph{all} Abelian $\ell$-groups and \emph{all}  Riesz spaces (also known as vector lattices).  The extended dualities have a strong geometrical flavor, as they involve a non-standard version of the category of polyhedral cones and piecewise (homogeneous) linear maps between them. We further show that our dualities are induced by the functor $\spec$, once it is understood how to endow it with ``coordinates'' in some ultrapower $\U$ of $\R$. This also allows us to characterize the topological spaces arising as spectra of Abelian $\ell$-groups and Riesz spaces as certain subspaces of $\U^{\kappa}$  endowed with the Zariski topology given by definable functions in their respective languages. Furthermore, we provide some applications of the extended duality by characterizing, in geometrical terms, semisimplicity, Archimedeanity, and the existence of weak and strong order-units.  Finally, we show that our dualities afford a neat and simpler proof of Panti's celebrated characterization of the prime ideals in free Abelian $\ell$-groups and Riesz spaces.
\end{abstract}
\keywords{
Abelian $\ell$-group, Riesz space, vector lattice, Baker-Beynon duality, prime spectrum, non-standard analysis, ultrapower, piecewise linear function, Archimedeanity}
\subjclass[2020]{06F20, 46A40, 26E35}
\maketitle

\section{Introduction}
\label{sec:intro}
The idea of representing an abstract structure using its \emph{spectrum} of maximal or prime ideals has a long and interesting history. Providing a complete account would take far more space than the one available for this introduction.  In the next few paragraphs we briefly recall some of its crucial advancements which are relevant to this paper, highlighting the interplay between functional analysis and algebraic geometry.
  
For an algebraically closed field $k$, Hilbert's \emph{Nullstellensatz} establishes a bijection between the points of $k^{n}$ and the \emph{maximal} spectrum of the polynomial ring $k[x_{1},\dots,x_{n}]$. As a consequence, any affine variety can be viewed as the set of maximal ideals of its coordinate ring and \emph{a posteriori} this gives a way to represent finitely generated commutative complex algebras without nilpotents as algebras of functions.

In his time-honored \cite{Gelfand:1941}, Gelfand took the above idea one step further providing a way to represent any \emph{abstract} (commutative, complex, and unital) Banach algebra as an algebra of continuous functions over its maximal spectrum, endowed with the weak topology. A key step in the proof is the observation that the quotient of any Banach algebra over a maximal ideal is isomorphic to $\Comp$.  
Similar ideas were developed independently by Stone \cite{Stone:1937} and Yosida \cite{yosida1941vector} in the case of divisible lattice ordered groups, and Kakutani \cite{Kakutani:1941} and the Krein brothers \cite{Krein:1940} for Riesz spaces. In all these cases a crucial property is that the structures are complete with respect to a norm, which is either derived from the algebraic structure or primitively assumed to exist.  In the 1970s, in a series of papers Baker and Beynon \cite{baker68,MR332617,beynon1974,beynon77} succeeded in dropping the assumption of norm-completeness by restricting to the class of finitely generated ---still Archimedean--- lattice ordered groups and Riesz spaces. Their representation is written in the language of algebraic geometry, making no reference to the maximal spectrum.  

The next breakthrough in this line of research was again made in algebraic geometry: starting from ideas developed in the 1950s by Chevalley, Nagata, and Serre\footnote{In the spirit of this introduction, it seems relevant to observe that, according to \cite[Footnote 29]{Cartier}, it was Martineau ---an expert in analysis!--- that suggested to Serre the possibility of using the spectrum of an arbitrary commutative ring as a foundation for algebraic geometry.}, Grothendieck shows that the spectrum of \emph{prime} ideals can be used to represent any commutative ring with unit as an algebra of functions. The main difference with the previous representation is that the quotients of a ring over its prime ideals need not have the same isomorphism type. This led Grothendieck to consider the notion of \emph{sheaf}. To each point $p$ of the spectrum is attached a local ring, called the \emph{stalk} at $p$ and the elements of the ring are represented as \emph{global sections} over the spectrum.

In this paper we are concerned with similar questions in the case of lattice ordered Abelian groups and Riesz spaces.  Recall that an \emph{Abelian}\footnote{All groups appearing in this paper are tacitly assumed to be Abelian.} lattice ordered group (\emph{$\ell$-group}, for short) is a  group $(G,+,0)$ endowed with a lattice order that is compatible with $+$, i.e., if $x\le y$ then $x+z\le y+z$. A \emph{Riesz space} (also called \emph{vector lattice}) $V$ is a vector space over $\R$ that is endowed with a partial order that makes it an $\ell$-group and that is compatible with the scalar multiplication, namely, if $0\le v\in V$ and $0\le a\in \R$, then $av\ge 0$.  An $\ell$-group (or a Riesz space) $G$ is called \emph{Archimedean} if for every $x,y\in G$, $x\le 0$ whenever $nx\le y$ for all $n\in\N $.  For any $x\in G$, we set $\lvert x\rvert=x\vee (-x)$.  An \emph{ideal} of $G$ is a subgroup $J$ of $G$ such that $x\in J$ and $\lvert y\rvert \le \lvert x\rvert$ imply $y\in J$. If $G$ is a Riesz space, it is easy to see that any ideal of $G$ is a vector subspace. 
The lattice of congruences of both $\ell$-groups and Riesz spaces is isomorphic to the lattice of ideals. Hence, we freely speak of quotients over an ideal.
A proper ideal $P$ of an $\ell$-group or Riesz space $G$ is called \emph{prime} if the quotient $G/P$ is linearly ordered. Both for $\ell$-groups and Riesz spaces, being a prime ideal is equivalent to the following property: $a\wedge b\in P$ implies $a\in P$ or $b\in P$ (see \cite[Proposition 2.4.3]{BKW} and \cite[Theorem 33.2]{RS1}). The set of prime ideals of $G$ is called \emph{spectrum} and denoted by $\spec(G)$. An ideal is called \emph{maximal} if it is proper and maximal with respect to the inclusion order, the set of maximal ideals of $G$ is denoted by $\Max(G)$.  It follows that $M$ is a maximal ideal if and only if the quotient over $M$ is simple (and non-trivial), that is, without non-trivial ideals.
An algebra is called \emph{semisimple} if it is a subdirect product of simple algebras.  In $\ell$-groups and Riesz spaces semisemplicity implies Archimedeanity and they are equivalent in the finitely generated setting.  However, in general Archimedeanity does not imply semisemplicity \cite[Example 27.8]{RS1}.

Baker-Beynon duality (see \cite[Theorem~1.1 and subsequent comments]{beynon77} and \cite[Corollary 1 to Proposition 2.1]{beynon1974}) provides a geometrical description of finitely generated Archimedean $\ell$-groups and Riesz spaces.  To precisely state the content of these results we need to recall some concepts from polyhedral geometry.

Let $\kappa$ be a cardinal. A continuous function $f\colon\mathbb{R}^\kappa \to \mathbb{R}$ is  \emph{piecewise linear} if there exists a finite set $\{l_1, \ldots, l_m\}$ of linear (homogeneous) polynomials with variables ranging in  $\{ x_\beta \mid \beta < \kappa\}$, called \emph{components}, such that for each $x \in \mathbb{R}^\kappa$ one has $f(x)=l_i(x)$ for some $i=1, \ldots, m$. The function $f$ is said to have \emph{integer coefficients} if the components can be chosen to have integer coefficients.  Notice that, although $\kappa$ might be infinite, piecewise linear functions only depend on finitely many variables.
By a \emph{cone} we mean a subset of $\R^\kappa$  closed under multiplication by non-negative scalars. A \emph{closed cone} is a cone which is also closed in the Euclidean topology of $\R^\kappa$.

Both $\ell$-groups and Riesz spaces are equationally definable classes of algebras, i.e., they form \emph{varieties}. Consequently, free objects in both categories exist; we write $\freek$ for the free $\kappa$-generated $\ell$-group and $\freersk$ for the free $\kappa$-generated Riesz space.  
In \cite[Theorem 2.4 and Section 7]{baker68}, Baker proved that $\freek$ is isomorphic to the $\ell$-group of continuous piecewise linear functions with integer coefficients on $\R^{\kappa}$, and $\freersk$ is isomorphic to the Riesz space of continuous piecewise linear functions with real coefficients on $\R^{\kappa}$.

In \cite{beynon1974}, Beynon defines a Galois connection between the powersets of $\freersk$  and of $\R^{\kappa}$, for any finite $\kappa$:
\begin{equation}\label{eq:BBoperators}
\begin{split}
 \text{for }T&\seq \freersk\qquad \V_{\R}(T)\df\{ x \in {\R}^{\kappa} \mid t(x)=0 \mbox{ for all } t \in T \},\\
\text{for }S&\seq \R^{\kappa}\hspace{0.6cm}\quad \C_{\R}(S)\df\{ t \in \freersk \mid t(x)=0 \mbox{ for all } x \in S \} .
\end{split}
\end{equation}
The fixed points of the above connection are the intersections of maximal ideals, on the one side, and the closed cones on the other. It follows that the pair of operators in \eqref{eq:BBoperators} lifts to a contravariant equivalence between the category of finitely generated Archimedean Riesz spaces and the category of \emph{closed cones} and piecewise linear functions. By \cite[Theorem~1.1 and subsequent comments]{beynon77}, analogous results hold for $\ell$-groups.\footnote{Observe that the results of Beynon straightforwardly generalize beyond the finitely generated case to any semisimple algebra.}

It is well known that any non-trivial $\ell$-group that embeds into $\mathbb{R}$ is simple. Moreover, by Hölder Theorem (see, e.g., \cite[Théorème 2.6.3]{BKW}), any simple $\ell$-group embeds into $\R$ and the embedding is unique up to multiplication by a positive real number.\footnote{For Riesz spaces a stronger version of this holds: any simple Riesz space is isomorphic to $\R$ \cite[Theorem 26.11]{RS1}.}
Now, let $I$ be an arbitrary ideal of $\freek$ such that $G=\freek/I$ is semisimple and let $(g_\beta)_{\beta<\kappa}$ the equivalence classes in $G$ of the free generators.  Since quotients over maximal ideals are simple, the composition of the projection $G\to G/M$ with an embedding of $G/M$ into $\mathbb{R}$ provides a map $\pi_{M}\colon G\to \R$,
 for each $M \in \Max(G)$. In turn these functions determine  a map $e \colon \Max(G) \to \mathbb{R}^\kappa$ defined by
\begin{equation*}\label{eq:embeddingMax}
e(M) = \big(\pi_{M}(g_{\beta})\big)_{\beta < \kappa}.
\end{equation*}
If $\Max(G)$ is endowed with the Zariski topology, then $e$ becomes a topological embedding of $\Max(G)$ into $\R^{\kappa}$ with the Euclidean topology. Thus, $\Max(G)$ is homeomorphic to its image through $e$ which lays inside $\V_{\R}(I)\subseteq\R^\kappa$.
This observation, combined with the duality induced by \eqref{eq:BBoperators}, should not lead the reader to believe that $\Max(G)$ contains enough information to reconstruct $G$, as the example of the $\ell$-groups $\R$ and $\Z$ easily shows: $\Max(\R)$ and $\Max(\Z)$ are both singletons.  Indeed, it is the accompanying embedding of $\Max(G)$ into $\R^{\kappa}$ (which we call the \emph{coordinatization} of $\Max(G)$) that completely characterizes $G$ up to isomorphism. Observe that in the finitely generated setting the embedding $e \colon \Max(G) \to \mathbb{R}^n$ can be defined in a canonical way: if $g_1, \dots
,g_n$ are the generators of $G$, the embedding of $G/M$ into $\R$ can be chosen such that $e(M)=\big(\pi_{M}(g_{1}),\dots,\pi_{M}(g_{n})\big)$ lies on the hypersphere $\mathbb{S}^{n-1}$ for each $M \in \Max(G)$. It follows that, when $G$ is finitely generated, $\Max(G)$ is homeomorphic to $\V_{\R}(I) \cap  \mathbb{S}^{n-1}$.

It has been clear that to go beyond Baker-Beynon duality maximal ideals should be replaced by prime ideals. However, the previous example of the $\ell$-groups $\R$ and $\Z$ again shows that $\spec$ alone does not suffice to characterize an $\ell$-group: $\spec(\R)$ and $\spec(\Z)$ are both singletons. Some sort of coordinatization is needed also in this case. In \cite{keimel1968representation}, Keimel showed that any  $\ell$-group can be represented as the algebra of global sections of a sheaf based on its spectrum of prime ideals.  Similar sheaf representations for Riesz spaces can be found in \cite{di2021sheaf}.
The study of the spectrum of $\ell$-groups attracted much attention (see \cite[Chapitre 10]{BKW}, \cite{MR1675803,MR1409232,MR1296582}).  A major advance was made by Panti in \cite{MR1707667}, where he completely characterized the prime ideals of finitely generated free $\ell$-groups and Riesz spaces as the ideals of functions vanishing along some ``direction''.
A major breakthrough in the study of the topology of spectra of $\ell$-groups was recently made by Wehrung.  In \cite[Corollary~9.2, Section~10]{MR3894048}  he showed that: $(i)$ second countable spectra of $\ell$-groups with the Zariski topology are exactly the generalized spectral spaces whose specialization order forms a root system and $(ii)$ the lattices of compact-open sets of the spectra of $\ell$-groups do not admit a first-order characterization.

In this paper we extend Baker-Beynon duality beyond the semisimple case (\Cref{thm:duality}). The main tools to obtain such an extension are:  the results on general affine adjunctions established in \cite{CMS} (which we briefly recall in \Cref{sec:prelim:adjunction}), and the existence, for any cardinal $\alpha$, of an ultrapower $\U$ of $\mathbb{R}$ containing a copy of each linearly ordered $\ell$-group and Riesz space with cardinality smaller than $\alpha$ (\Cref{thm:ultrap-embedding}). In \Cref{s:BB-comparison} we compare these dualities with Baker-Beynon duality. In \Cref{sec:first-properties} we show that these extensions induce generalized spectral topologies on $\U^\kappa$ minus the origin (\Cref{t:Uzkappa generalized spectral}) and that spectra of $\ell$-groups and Riesz spaces are retracts of their closed subsets (\Cref{c:spec-and-V}). These results provide the coordinatization of spectra mentioned above.  In \Cref{sec:archim} we dually characterize the existence of weak and strong order-units (\Cref{t:duals-of-units}). We also describe the subsets of $\U^\kappa$ corresponding to the intersection of all maximal ideals (\Cref{p:intersection of max ideals dual}) and the ideal of nilpotents (\Cref{t:dual-of-N}). This description yields a dual characterization of semisimplicity and Archimedeanity. We postpone to \cref{appendix} a dual description of  an $\omega$-generated $\ell$-group or Riesz space whose ideal of nilpotents does not coincide with the intersection of all maximal ideals.
In \Cref{sec:irreducible} we use tools from non-standard analysis to give a geometrical description of the irreducible closed subsets of $\U^{n}$ minus the origin, with $n \in \N$ (\Cref{lem:clx-coneix,lem:clx-coneix-lgroups}). We show that such subsets are in correspondence with some finite sequences of vectors in $\mathbb{R}^n$ that we call \emph{indexes}. This correspondence affords a canonical choice of the coordinatization of the spectra.
In \Cref{sec:Panti} we use the results of \Cref{sec:irreducible} to obtain an alternative proof of the characterization of prime ideals in finitely generated free $\ell$-groups and Riesz spaces given in \cite{MR1707667} (see \Cref{c:panti}).

We assume the reader is familiar with the background on $\ell$-groups and Riesz spaces  given in the Introduction.  The interested reader is kindly referred to \cite{anderson1988free,BKW,Darnel,fuchs2011partially,kopytov2013theory,RS1,RS2} for further details.

\section{Affine adjunctions}
\label{sec:prelim:adjunction}
In \cite{CMS}, the authors present a categorical abstraction of the classical affine adjunction of algebraic geometry. In particular, in \cite[Section 4]{CMS} they  show that, for any variety of algebras $\Va$, any choice of $A\in \Va$ induces a contravariant adjunction between $\Va$ and the category of subsets of $A^{\kappa}$ ---with $\kappa$ ranging over all cardinals--- and morphisms defined as follows.  A function $f\colon A^\kappa\to A$ is called  \emph{definable} if there exists a term $t$ in the language of $\Va$ such that $f(x)=t(x)$ for all $x\in A^\kappa$. The definition easily generalizes to functions from $S\seq A^{\mu}$ into $S'\seq A^{\nu}$, with $\mu$ and $\nu$ cardinals, indeed one says that $f\colon S\to S'$ is definable if there exists a family of terms $\{t_{\beta}\}_{\beta<\nu}$ such that 
\[f(x)=(t_{\beta}(x))_{\beta<\nu}\qquad \text{for any } x\in S.\]
In the context of $\ell$-groups (Riesz spaces, respectively) we call these maps $\Z$-definable ($\R$-definable, respectively).

In the rest of this section we recall some results of \cite{CMS} in the particular cases of $\ell$-groups or Riesz spaces. Thus, henceforth $\Va$ will be used to indicate either the variety of $\ell$-groups or Riesz spaces,  $\free_{\kappa}$ will denote the free $\kappa$-generated algebra in $\Va$,  $\A$ will be a fixed but arbitrary algebra in $\Va$, and $\kappa, \mu,\nu$ will always be cardinals. 

\begin{definition}\label{def:I and V for U}
 For any  $T \subseteq \free_{\kappa}$ and $S \subseteq \A^{\kappa}$, we define the following operators.
\begin{align*}
\V_{\A}(T)\df&\{ x \in {\A}^{\kappa} \mid t(x)=0 \mbox{ for all } t \in T \},\\
\C_{\A}(S)\df&\{ t \in \free_{\kappa} \mid t(x)=0 \mbox{ for all } x \in S \}.
\end{align*}
\end{definition} 
When $T=\{t\}$ and $S= \{ x \}$, we simply write $\V_{\A}(t)$ and $\C_{\A}(x)$. 
It is straightforward to see that $\C_{\A}(S)$ is an ideal of $\free_{\kappa}$ for every $S \subseteq \A^{\kappa}$.

\begin{lemma}[Basic Galois connection, \mbox{\cite[Lemma 2.3]{CMS}}]\label{l:galois} 
For every  $S\subseteq\A^{\kappa}$ and $T\subseteq\free_{\kappa}$, \[T\subseteq\C_{\A}{(S)}\tsse S\subseteq\V_{\A}{(T)}.\]
In words, the functions $\C_{\A}$ and $\V_{\A}$ form a Galois connection.
\end{lemma}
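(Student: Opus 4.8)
The plan is to prove both implications simultaneously by unwinding the two operators into a single symmetric condition, so that the biconditional reduces to the commutativity of two universal quantifiers. First I would observe that, directly from \Cref{def:I and V for U}, the inclusion $T\seq\C_{\A}(S)$ holds exactly when every $t\in T$ lies in $\C_{\A}(S)$, that is, when $t(x)=0$ for every $t\in T$ and every $x\in S$. Dually, $S\seq\V_{\A}(T)$ holds exactly when every $x\in S$ lies in $\V_{\A}(T)$, i.e.\ when $t(x)=0$ for every $x\in S$ and every $t\in T$. These two conditions differ only in the (irrelevant) order of the two universal quantifiers.

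Concretely, I would write the argument as the chain of equivalences
\begin{align*}
T\seq\C_{\A}(S) &\iff \big(\forall t\in T\ \forall x\in S\colon\ t(x)=0\big)\\
&\iff \big(\forall x\in S\ \forall t\in T\colon\ t(x)=0\big)\\
&\iff S\seq\V_{\A}(T),
\end{align*}
where the first and last equivalences are the definitions of $\C_{\A}$ and $\V_{\A}$, and the middle one holds because the two quantifiers range over fixed, independent sets.

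I do not expect any obstacle here: the statement is nothing more than the standard Galois connection attached to a binary relation, namely the evaluation relation $R=\{(t,x)\in\free_{\kappa}\times\A^{\kappa}\mid t(x)=0\}$, with $\C_{\A}$ the map $S\mapsto\{t\mid (t,x)\in R\text{ for all }x\in S\}$ and $\V_{\A}$ its transpose; one could therefore also simply invoke that abstract fact. The only point worth recording is that the operators are well-defined as maps between the two powersets, which is immediate from the definitions (and was already noted for $\C_{\A}$, whose values are in fact ideals).
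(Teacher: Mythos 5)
Your proof is correct: unwinding both inclusions to the symmetric condition ``$t(x)=0$ for all $t\in T$ and all $x\in S$'' and commuting the quantifiers is exactly the standard argument for the Galois connection induced by the vanishing relation, which is what the cited result in \cite{CMS} amounts to (the paper itself gives no proof, simply quoting \cite[Lemma~2.3]{CMS}). Nothing is missing.
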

An immediate consequence of \Cref{l:galois} is that the compositions $\V_{\A} \circ \C_{\A}$ and $\C_{\A} \circ \V_{\A}$ are closure operators. Notice also that:
\[
\V_{\A}(T)= \bigcap \{ \V_{\A}(t) \mid t \in T \} \quad \mbox{and} \quad \C_{\A}(S)= \bigcap \{ \C_{\A}(x) \mid x \in S \}.
\]
We will tacitly assume that to each algebra $G\in\Va$ is associated some cardinal $\kappa$ and an ideal $J$ of $\free_{\kappa}$ such that $G$ is isomorphic to $\free_{\kappa}/J$.
The above Galois pair always induces a pair of contravariant functors $\CC$ and $\VV$ between the category  $\Va$ and the category of subsets of ${\A}^{\kappa}$ with definable maps between them.  For the objects we define, for any  subset $S\subseteq {\A}^{\kappa}$,
\[
\CC{(S)}\df\free_{\kappa}/\C_{\A}{(S)}
 \]
 and, for any algebra $\free_{\kappa}/J$,
\[
\VV{(\free_{\kappa}/J})\df\V_{\A}{(J)}.
\]

 Given $S\subseteq {\A}^{\kappa}$ and $T\subseteq {\A}^{\nu}$, let $\lambda \colon S \to T$
be a definable map, and let $(t_{\beta})_{\beta< \nu}$ be a $\nu$-tuple of defining terms for $\lambda$. Then there is an induced function $\CC{(\lambda)}\colon \CC{(T)} \to \CC{(S)}$ which acts for each $s \in \F_{\nu}$ as follows:
\[
\frac{s}{\C_{\A}{(T)}} \in \CC{(T)} \hspace{.3cm}\overset{\CC{(\lambda)}}{\longmapsto}\hspace{.3cm} \frac{s[(t_{\beta})_{\beta<\nu}]}{\C_{\A}{(S)}} \in \CC{(S)}.\]
Let $h \colon \F_{\kappa}/J_{1}\to \F_{\nu}/J_{2}$ be a $\Va$-homomorphism. 
For each $\alpha<\mu$, let  $\pi_{\alpha}$ be the projection term on the $\alpha^{\text{th}}$ coordinate, and let $\pi_{\alpha}/J_{1}$ denote 
the equivalence class of $\pi_{\alpha}$ modulo $J_{1}$. Fix, for each $\alpha$, an arbitrary $f_\alpha \in \F_{\nu}$ such that $f_\alpha/J_2=h(\pi_\alpha/J_1)$. For any $(x_{\beta})_{\beta< \nu}\in\V_{\A}{(J_{2})}$, set
\[
\VV{(h)}((x_{\beta})_{\beta<\nu})\df\big(\, f_{\alpha}(\, (x_{\beta})_{\beta<\nu}\,)\,\big)_{\alpha<{\kappa}}.
\]
\begin{theorem}[{\cite[Corollary 4.8]{CMS}}] \label{t:adjunction} 
The functors $\CC$ and $\VV$ form a contravariant adjunction.
\end{theorem}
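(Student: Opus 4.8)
The plan is to exhibit, directly, the natural bijection underlying the adjunction: I would first realise both of the relevant hom-sets as one and the same set of ``solution tuples'', and then reduce naturality to the interaction between substitution of terms and evaluation. Throughout, fix $G\in\Va$ with its presentation $G=\free_{\kappa}/J$ and a subset $S\seq\A^{\mu}$ (the operators of \Cref{def:I and V for U} and the functors $\CC$ and $\VV$ extend verbatim to arbitrary cardinals), so that $\CC(S)=\free_{\mu}/\C_{\A}(S)$ and $\VV(G)=\V_{\A}(J)\seq\A^{\kappa}$.

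First I would describe $\mathrm{Hom}_{\Va}(G,\CC(S))$ concretely. By the universal property of the free algebra $\free_{\kappa}$ together with the quotient $\free_{\kappa}\twoheadrightarrow G$, a homomorphism $G\to\CC(S)$ is the same thing as a $\kappa$-tuple $(\bar f_{\beta})_{\beta<\kappa}$ of elements of $\free_{\mu}/\C_{\A}(S)$ subject to the constraint $t\big((\bar f_{\beta})_{\beta<\kappa}\big)=0$ in $\free_{\mu}/\C_{\A}(S)$ for every $t\in J$; write $H$ for the set of such tuples. Next I would describe $\mathrm{Hom}_{\mathcal D}(S,\VV(G))$, where $\mathcal D$ denotes the category of subsets of powers of $\A$ with definable maps. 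A morphism $S\to\V_{\A}(J)$ is the restriction to $S$ of a definable map, hence is given by a $\kappa$-tuple $(t_{\beta})_{\beta<\kappa}$ of $\mu$-ary terms; unwinding the definition of $\V_{\A}$, the requirement that this map carry $S$ into $\V_{\A}(J)$ says precisely that for every $s\in J$ the substituted term $s[(t_{\beta})_{\beta<\kappa}]$ vanishes on $S$, i.e.\ lies in $\C_{\A}(S)$. Moreover, two such tuples of terms represent the same morphism of $\mathcal D$ exactly when they agree as functions on $S$; since evaluation at a point of $S$ is a homomorphism $\free_{\mu}\to\A$, this happens iff $t_{\beta}-t'_{\beta}\in\C_{\A}(S)$ for every $\beta$, that is, iff the induced tuples in $\big(\free_{\mu}/\C_{\A}(S)\big)^{\kappa}$ coincide. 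Hence $(t_{\beta})_{\beta}\mapsto(t_{\beta}/\C_{\A}(S))_{\beta}$ is a well-defined bijection of $\mathrm{Hom}_{\mathcal D}(S,\VV(G))$ with the same set $H$, its inverse being obtained by choosing term representatives. Composing, I get a bijection $\Phi_{G,S}\colon\mathrm{Hom}_{\mathcal D}(S,\VV(G))\to\mathrm{Hom}_{\Va}(G,\CC(S))$.

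It would then remain to check that $\Phi$ is natural in $G$ and in $S$ --- together with the functoriality of $\CC$ and $\VV$ themselves, which is the routine verification that substitution of terms is associative and unital, and which also makes $\CC(\lambda)$ and $\VV(h)$ (as defined above) independent of the chosen defining terms. Since $\Phi_{G,S}$ is by construction ``the identity on representing term-tuples'', naturality reduces to one bookkeeping identity: substitution performed inside a term agrees with the same substitution performed on its values, i.e.\ $f[(t_{\beta})_{\beta}](x)=f\big((t_{\beta}(x))_{\beta}\big)$ for $x\in S$. Granting this, for $h\colon G\to G'$ in $\Va$ both composites around the naturality square send a representing tuple $(t_{\beta})_{\beta}$ of a morphism in $\mathrm{Hom}_{\mathcal D}(S,\VV(G'))$ to $\big(f_{\alpha}[(t_{\beta})_{\beta}]\big)_{\alpha}$, where $(f_{\alpha})_{\alpha}$ are terms representing $h$ on generators: on the algebraic side one substitutes $(t_{\beta}/\C_{\A}(S))_{\beta}$ into $f_{\alpha}$, and on the geometric side one postcomposes with $\VV(h)$ and invokes the displayed identity. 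Symmetrically, for $\lambda\colon S\to S'$ in $\mathcal D$ with defining terms $(r_{\gamma})_{\gamma}$, precomposition with $\lambda$ and postcomposition with $\CC(\lambda)$ both come down to substituting $(r_{\gamma})_{\gamma}$ into the representing terms. So both squares commute and $\CC$ and $\VV$ form a contravariant adjunction.

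The hard part, I expect, is nothing deep: it is exactly the bookkeeping of the previous paragraph --- keeping the variances straight, and checking with care that the set-theoretic equality of morphisms in $\mathcal D$ really matches the algebraic quotient by $\C_{\A}(S)$ and that substitution commutes with evaluation --- each being a natural place to slip on an off-by-one in the variable indexing or to conflate a term with the function it induces on $\A^{\mu}$. A slightly more conceptual alternative I might pursue instead is to write down the unit $\eta_{G}\colon G=\free_{\kappa}/J\to\free_{\kappa}/\C_{\A}\V_{\A}(J)=\CC\VV(G)$ (the canonical surjection, using $J\seq\C_{\A}\V_{\A}(J)$, a consequence of \Cref{l:galois}) and the counit $\varepsilon_{S}\colon S\hookrightarrow\V_{\A}\C_{\A}(S)=\VV\CC(S)$ (the inclusion, which is a definable map via the projection terms), and then deduce the two triangle identities from the standard Galois-connection equalities $\C_{\A}\V_{\A}\C_{\A}=\C_{\A}$ and $\V_{\A}\C_{\A}\V_{\A}=\V_{\A}$; the amount of checking is comparable.
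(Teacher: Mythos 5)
Your argument is correct, but note that the paper does not prove \Cref{t:adjunction} itself: it imports it as \cite[Corollary 4.8]{CMS}, where the adjunction arises by specializing a much more general categorical construction of affine adjunctions to the variety $\Va$ and the chosen algebra $\A$. Your proposal instead verifies the dual adjunction directly, by identifying both hom-sets $\mathrm{Hom}_{\Va}(\free_{\kappa}/J,\CC(S))$ and $\mathrm{Hom}(S,\VV(\free_{\kappa}/J))$ with the same set of $\kappa$-tuples in $(\free_{\mu}/\C_{\A}(S))^{\kappa}$ annihilating $J$; this computation is sound, the substitution--evaluation identity you isolate is exactly what makes the two naturality squares commute, and the routine checks you defer (functoriality of $\CC$ and $\VV$, and independence of $\CC(\lambda)$, $\VV(h)$ from the chosen defining terms and representatives $f_{\alpha}$) are genuinely needed but straightforward. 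Your unit/counit alternative, with the triangle identities falling out of $\C_{\A}\V_{\A}\C_{\A}=\C_{\A}$ and $\V_{\A}\C_{\A}\V_{\A}=\V_{\A}$, is equally viable. The trade-off is generality versus self-containedness: your step identifying two defining tuples exactly when they agree modulo $\C_{\A}(S)$ uses subtraction, i.e.\ the ideal--congruence correspondence special to $\ell$-groups and Riesz spaces (in \cite{CMS} the operator $\C$ produces a congruence and the argument works for arbitrary varieties and beyond), whereas your hands-on proof has the merit of making the theorem verifiable within the paper without invoking the external machinery.
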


 As a general fact, any adjunction restricts to an equivalence between the full subcategories of fixed points of the endofunctors obtained by composition.   This motivates the study of the fixed points of $\VV\circ\CC$ and $\CC\circ\VV$.  Notice that immediate calculations show that  $\VV\circ\CC(S)$ is just $\V_{\A}\C_{\A}(S)$ and $\CC\circ\VV(\F_{\mu}/J)$ is $\F_{\mu}/\C_{\A}\V_{\A}(J)$.
The fixed points of $\C_{\A}\circ\V_{\A}$ have an explicit characterization, provided by the following lemma.
\begin{lemma}[\mbox{\cite[Lemma 4.11 and Theorem 4.15]{CMS}}]\label{l:null-cong}
Let $J$ be an ideal of $\free_{\kappa}$.
\begin{enumerate}
\item\label{l:null-subdirect}  $\C_{\A} \V_{\A}(J)=J$ if and only if  $J=\C_{\A}(S)$
for some subset $S$ of $A^\kappa$.
\item\label{p:cong-embed} The algebra $\free_{\kappa}/J$ embeds into $\A$ if and only if there exists $x\in \A^{\kappa}$ such that $J=\C_{\A} (x)$.
\end{enumerate}
\end{lemma}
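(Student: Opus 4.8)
The statement to prove is \Cref{l:null-cong}, which has two parts concerning the fixed points of the closure operator $\C_{\A}\circ\V_{\A}$ and the embeddability of quotients into $\A$. Since the excerpt attributes the result to \cite[Lemma 4.11 and Theorem 4.15]{CMS}, the plan is to reconstruct the short algebraic arguments rather than invoke that reference verbatim.

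\medskip

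\noindent\textbf{Proof proposal.}

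For part \eqref{l:null-subdirect}, the direction from left to right is immediate: if $\C_{\A}\V_{\A}(J)=J$, then taking $S\df\V_{\A}(J)\subseteq A^{\kappa}$ witnesses $J=\C_{\A}(S)$. For the converse, suppose $J=\C_{\A}(S)$ for some $S\subseteq A^{\kappa}$. Applying \Cref{l:galois} and the fact that $\V_{\A}\circ\C_{\A}$ and $\C_{\A}\circ\V_{\A}$ are closure operators, one has $S\subseteq\V_{\A}\C_{\A}(S)$, hence $\C_{\A}\V_{\A}\C_{\A}(S)\subseteq\C_{\A}(S)$; the reverse inclusion holds because $\C_{\A}\circ\V_{\A}$ is a closure operator, so $\C_{\A}(S)\subseteq\C_{\A}\V_{\A}\C_{\A}(S)$. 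Thus $\C_{\A}\V_{\A}(J)=\C_{\A}\V_{\A}\C_{\A}(S)=\C_{\A}(S)=J$. This part is routine Galois-connection bookkeeping and should present no obstacle.

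\medskip

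For part \eqref{p:cong-embed}, first assume $\free_{\kappa}/J$ embeds into $\A$ via some homomorphism $h$. Let $x\in\A^{\kappa}$ be the tuple whose $\beta$-th coordinate is $h(\pi_{\beta}/J)$, where $\pi_{\beta}$ is the $\beta$-th projection term. For any term $t\in\free_{\kappa}$, one has $t(x)=h(t/J)$ because $h$ is a homomorphism and the $\pi_{\beta}$ generate $\free_{\kappa}$; since $h$ is injective, $t(x)=0$ in $\A$ if and only if $t/J=0$, i.e.\ $t\in J$. Therefore $\C_{\A}(x)=J$. Conversely, if $J=\C_{\A}(x)$ for some $x\in\A^{\kappa}$, the assignment $t/J\mapsto t(x)$ is well-defined (by the very definition of $\C_{\A}(x)$) and is clearly a homomorphism $\free_{\kappa}/J\to\A$; it is injective precisely because $t(x)=0$ forces $t\in\C_{\A}(x)=J$, i.e.\ $t/J=0$. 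So $\free_{\kappa}/J$ embeds into $\A$.

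\medskip

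The only point requiring a little care ---and the one I expect to be the main (modest) obstacle--- is the verification that the evaluation map $t/J\mapsto t(x)$ is well-defined and respects all operations, which amounts to checking that $\free_{\kappa}$ really has the universal property making terms-modulo-provable-equality coincide with the algebra structure; this is standard universal algebra (terms form the free algebra, so evaluation at any $\kappa$-tuple $x\in\A^{\kappa}$ is the unique homomorphism extending $\pi_{\beta}\mapsto x_{\beta}$), but it is worth stating explicitly since the coordinatization arguments later in the paper rely on it. With this in hand, both parts follow, and part \eqref{l:null-subdirect} can alternatively be deduced from part \eqref{p:cong-embed} together with the observation that $\C_{\A}(S)=\bigcap_{x\in S}\C_{\A}(x)$, exhibiting $\free_{\kappa}/\C_{\A}(S)$ as a subdirect product of the embeddable quotients $\free_{\kappa}/\C_{\A}(x)$.
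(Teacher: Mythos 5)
Your proposal is correct, and since the paper itself only cites \cite[Lemma 4.11 and Theorem 4.15]{CMS} for this lemma, your reconstruction is exactly the standard argument those references encode: part \eqref{l:null-subdirect} is the Galois-connection identity $\C_{\A}\V_{\A}\C_{\A}=\C_{\A}$ applied to $J=\C_{\A}(S)$, and part \eqref{p:cong-embed} is the observation that $\C_{\A}(x)$ is precisely the kernel-ideal of the evaluation homomorphism $t\mapsto t(x)$, combined with the universal property of $\free_{\kappa}$ to identify that evaluation with $h$ composed with the quotient map. No gaps; the well-definedness point you flag is indeed just the freeness/first-isomorphism-theorem routine already used in \Cref{l:PWL-I(C)}.
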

\begin{remark}\label{r:x-not-unique-for-prime-ideals}
It is worthwhile to notice that if $J$ is a prime ideal of $\free_{\kappa}$, then the element $x\in \A^{\kappa}$ such that $J=\C_{\A}(x)$ is not necessarily unique. This corresponds to the fact that $\free_{\kappa}/J$ might be embedded into $\A$ in different ways.
\end{remark}

So, the ideals fixed by the composition $\C_{\A} \circ \V_{\A}$ are exactly the ones that can be obtained as an intersection of ideals $J$ such that $\free_{\kappa}/J$ embeds into $\A$.

Baker-Beynon duality can be obtained as a particular case of the above setting by choosing $A=\R$; in \Cref{s:BB-comparison} we provide a more precise comparison. In fact, although Baker-Beynon duality is stated only for Archimedean finitely generated structures, the general framework outlined above already provides an extension to all semisimple algebras. 

Finally, we record here the following observation for further reference.
\begin{lemma}\label{l:PWL-I(C)}
For any subset $S\seq \A^{\kappa}$, $\free_{\kappa}/\C_{\A}(S)$ is isomorphic to the algebra of definable maps from $S$ to $\A$, with operations defined pointwise.
\end{lemma}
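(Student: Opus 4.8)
The plan is to exhibit an explicit isomorphism between $\free_{\kappa}/\C_{\A}(S)$ and the algebra $\mathrm{Def}(S,\A)$ of definable maps $S\to\A$ (with pointwise operations), and then check it is a well-defined $\Va$-homomorphism that is both injective and surjective. First I would define the candidate map. Every element of $\free_{\kappa}/\C_{\A}(S)$ has the form $t/\C_{\A}(S)$ for some term $t$ in the language of $\Va$ (recall $\free_{\kappa}$ is generated by the projections $\pi_{\beta}$, $\beta<\kappa$, so its elements are exactly terms in these generators). Send $t/\C_{\A}(S)$ to the function $\widehat{t}\colon S\to\A$ defined by $\widehat{t}(x)\df t(x)$ for $x\in S$. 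This is by definition a definable map, so the target is correct.

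Next I would verify well-definedness: if $t/\C_{\A}(S)=t'/\C_{\A}(S)$, then $t-t'\in\C_{\A}(S)$ (using that $\C_{\A}(S)$ is an ideal and ideals correspond to congruences, so $t$ and $t'$ are congruent iff their ``difference'' lies in the ideal), which by \Cref{def:I and V for U} means $(t-t')(x)=0$ for all $x\in S$, i.e.\ $t(x)=t'(x)$ for all $x\in S$, hence $\widehat{t}=\widehat{t'}$. Conversely this same computation shows injectivity: if $\widehat{t}=\widehat{t'}$ then $t-t'$ vanishes on all of $S$, so $t-t'\in\C_{\A}(S)$ and the two classes coincide. Homomorphicity is routine since operations on both sides are computed pointwise/termwise: for an $n$-ary operation symbol $\sigma$, $\widehat{\sigma(t_{1},\dots,t_{n})}(x)=\sigma(t_{1}(x),\dots,t_{n}(x))=\sigma(\widehat{t_{1}}(x),\dots,\widehat{t_{n}}(x))$. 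Surjectivity is immediate from the definition of definable map: any definable $f\colon S\to\A$ is by definition $f(x)=t(x)$ for some term $t$, hence $f=\widehat{t}$ is in the image.

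The only point requiring a little care —and the closest thing to an obstacle— is making sure the bookkeeping about terms is handled cleanly in the case $\kappa$ is infinite: a term still involves only finitely many variables, so ``$t(x)$'' for $x\in\A^{\kappa}$ makes sense by projecting to the relevant finite coordinates, and the restriction of $t$ to $S\seq\A^{\kappa}$ is well-behaved; this matches the generalized notion of definability for maps on subsets $S\seq\A^{\mu}$ introduced in \Cref{sec:prelim:adjunction}. One should also note that the set of definable maps $S\to\A$ is indeed closed under the operations of $\Va$ (so that it is a genuine subalgebra of $\A^{S}$), which again follows because composing finitely many terms yields a term. With these remarks in place the argument is a direct verification, and no deep input beyond \Cref{l:galois} and the term-generation of $\free_{\kappa}$ is needed.
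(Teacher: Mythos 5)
Your proposal is correct and follows essentially the same route as the paper: both send a term $t$ to the definable map it induces on $S$, identify the kernel of this assignment as exactly $\C_{\A}(S)$, and conclude; the paper packages the well-definedness and injectivity checks into a single appeal to the first isomorphism theorem, which is only a difference in presentation. Your extra remarks (finitely many variables per term, closure of definable maps under the operations) are sound but routine, so no further changes are needed.
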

\begin{proof}
Consider the map $f$ that sends $t\in \free_{\kappa}$ to the definable map associated to $t$ restricted to $S$; it is routine to check that $f$ factors through $\free_{\kappa}/\C_{\A}(S)$ and is onto.  Furthermore, notice that $f(t)=0$ if and only if $t(s)=0$ for any $s\in S$, if and only if $t\in \C_{\A} (S)$. The statement then follows from the first isomorphism theorem.
\end{proof}

\section{The extended dualities for \texorpdfstring{$\ell$}{l}-groups and Riesz spaces}\label{sec:duality}

\Cref{l:null-cong} readily implies that an algebra of $\Va$ is fixed ---up to an iso--- by $\CC\circ \VV$ if and only if it is a subdirect product of algebras that embed into the chosen algebra $\A$. Therefore, the composition $\CC\circ \VV$  fixes every algebra in $\Va$ if and only if all the subdirectly irreducible algebras of $\Va$ embed into $\A$.  Unfortunately, such an $A$ does not exist as it can be easily shown that there exist subdirectly irreducible $\ell$-groups and Riesz spaces of arbitrarily large cardinality.  Nevertheless, as we prove in the next theorem, for any cardinal $\alpha$ one can find an algebra that contains isomorphic copies of all linearly ordered algebras in $\Va$ of cardinality smaller than $\alpha$. Since all subdirectly irreducible $\ell$-groups and Riesz spaces are linearly ordered, with this choice of $\A$ the composition $\CC\circ \VV$  fixes every algebra in $\Va$ with cardinality smaller than $\alpha$.  Before stating the theorem we briefly recall some standard tools in model theory.

Recall that two first-order structures for the same language $\mathcal{L}$ are called  \emph{elementarily equivalent} if they satisfy the same first-order $\mathcal{L}$-sentences. 
Let $A$ be a first-order structure, $I$ a set, and $F$ an ultrafilter of the powerset $\mathscr{P}(I)$.  The relation on $A^I$ defined by
\[ f\equiv_F g \tsse \{i\in I \mid f(i)=g(i)\}\in F \]
is a congruence and the quotient $A^I/{\equiv_F}$ is said to be an \emph{ultrapower} of $A$. 

The following result should be compared with the celebrated Hahn's embedding theorem for ordered groups \cite{Hahn} (and Hausner and Wendel's similar result for Riesz spaces \cite{hausner1952ordered}, see also \cite{Clifford}).
\begin{theorem}\label{thm:ultrap-embedding}
Let $\alpha$ be a cardinal.
\begin{enumerate}
\item\label{thm:ultrap-embedding:item1} There exists an ultrapower of $\R$ such that every linearly ordered group of cardinality less than $\alpha$ embeds in it. 
\item\label{thm:ultrap-embedding:item2} 
There exists an ultrapower of $\R$  such that every linearly ordered Riesz space of cardinality less than $\alpha$ embeds in it.
\end{enumerate}
Furthermore, if $\alpha \ge 2^{\omega}$, the ultrapower of \cref{thm:ultrap-embedding:item2} has also the property expressed in  \cref{thm:ultrap-embedding:item1}.
\end{theorem}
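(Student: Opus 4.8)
The plan is to reduce both claims to two classical ingredients: the model-theoretic fact that a sufficiently saturated ultrapower of a structure is universal for all structures elementarily equivalent to it, and Keisler's theorem on the existence of good ultrafilters (available in ZFC by Kunen's construction). A preliminary observation frames the reduction: in a \emph{linearly} ordered $\ell$-group or Riesz space the lattice operations are term-definable from the order, since $x\vee y$ and $x\wedge y$ are just $\max$ and $\min$; hence any order-preserving group homomorphism (that moreover commutes with all scalar multiplications, in the Riesz case) is automatically an embedding of $\ell$-groups, resp.\ Riesz spaces. It therefore suffices to work in the language $\{+,-,0,\le\}$ for \cref{thm:ultrap-embedding:item1}, and in that language together with a unary scalar symbol for each real for \cref{thm:ultrap-embedding:item2}; and, since being linearly ordered is a first-order property preserved under ultrapowers, the relevant ultrapower of $\R$ is again linearly ordered, so an order-embedding into it is automatically a lattice embedding.

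First I would record the algebraic input. Every linearly ordered group $G$ is torsion-free and so order-embeds into its divisible hull $G\otimes_{\Z}\Q$, which is a nontrivial divisible ordered abelian group (or trivial, in which case it embeds anywhere) of cardinality $<\alpha$ whenever $|G|<\alpha$; and a linearly ordered Riesz space is exactly a totally ordered vector space over $\R$. The model-theoretic point is that the theory of nontrivial divisible ordered abelian groups and the theory of nontrivial totally ordered $\R$-vector spaces are \emph{complete}: each eliminates quantifiers in its natural language (the classical quantifier-elimination theorem for ordered vector spaces over an ordered field), and $\R$ is a model of both. Consequently, every linearly ordered group [resp.\ Riesz space] of cardinality $<\alpha$ embeds, in the relevant language, into a model of $\mathrm{Th}(\R)$ of cardinality $<\alpha$.

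Next I would invoke saturation. By Keisler's theorem, for every infinite cardinal $\lambda$ there is a countably incomplete, $\lambda^{+}$-good ultrafilter $F$ on a set $I$ with $|I|=\lambda$, such that $M^{I}/F$ is $\lambda^{+}$-saturated for every structure $M$ in a language of cardinality $\le\lambda$. An $\alpha^{+}$-saturated model of a complete theory elementarily embeds every model of that theory of cardinality $\le\alpha$ (build the embedding by transfinite recursion, realizing at each step one type over a set of size $\le\alpha$). So it is enough to exhibit an $\alpha^{+}$-saturated ultrapower of $\R$: for \cref{thm:ultrap-embedding:item1} the language $\{+,-,0,\le\}$ is finite, so take $\lambda=\alpha$ (the case of finite $\alpha$ being trivial, e.g.\ $\R$ itself); for \cref{thm:ultrap-embedding:item2} the Riesz language has cardinality $2^{\omega}$, so take $\lambda=\max\{\alpha,2^{\omega}\}$. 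In each case $\R^{I}/F$ is $\lambda^{+}$-saturated, hence by the previous two paragraphs contains an isomorphic copy of every linearly ordered group, resp.\ Riesz space, of cardinality $<\alpha$.

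Finally, for the ``furthermore'' clause, suppose $\alpha\ge 2^{\omega}$. Then we may already take $\lambda=\alpha$ in the construction for \cref{thm:ultrap-embedding:item2}, so that the index set has size $\alpha$, which is at least the cardinality of \emph{both} the group language and the Riesz language. Goodness of $F$ then makes $\R^{I}/F$ $\alpha^{+}$-saturated both as a Riesz space and in its reduct to $\{+,-,0,\le\}$ (equivalently: a reduct of an $\alpha^{+}$-saturated structure is $\alpha^{+}$-saturated), and that reduct is an $\alpha^{+}$-saturated model of the complete theory of ordered groups; hence the single ultrapower produced for \cref{thm:ultrap-embedding:item2} also witnesses \cref{thm:ultrap-embedding:item1}. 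The hypothesis $\alpha\ge 2^{\omega}$ enters exactly at the choice of a common index set of size $\alpha$. I expect the main obstacle to be the third paragraph: one must make sure the two first-order theories are genuinely complete (this rests on quantifier elimination for ordered vector spaces over an ordered field) and that arbitrarily saturated ultrapowers of $\R$ exist in ZFC (Keisler's theorem plus Kunen's good ultrafilters); granting these, the bookkeeping with $\lambda$ and the two language cardinalities is routine.
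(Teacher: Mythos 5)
Your proposal is correct and follows essentially the same route as the paper: reduce to the completeness of the theories of nontrivial divisible ordered abelian groups (via the divisible hull) and of nontrivial totally ordered $\R$-vector spaces (via quantifier elimination), then invoke the existence of sufficiently universal ultrapowers of $\R$, with the ``furthermore'' clause handled by a common index set and reducts. The only difference is cosmetic: you obtain universality from $\alpha^{+}$-saturation via Keisler--Kunen good ultrafilters, whereas the paper cites the $\alpha^{+}$-universality of ultrapowers directly (Chang--Keisler, Theorem 4.3.12), for which regular ultrafilters already suffice.
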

\begin{proof}
Without loss of generality we can assume $\alpha$ to be infinite.

\Cref{thm:ultrap-embedding:item1}.  It is known (\cite[Lemme 1.6.8 and Proposition 1.6.9]{BKW}) that any linearly ordered group $G$ can be embedded into a linearly ordered divisible group of cardinality $\max\{\omega,|G|\}$. By \cite[Theorem 4.3.2]{R77}, any non-trivial linearly ordered divisible group is elementarily equivalent to the additive group of the real numbers.  Hence, by \cite[Theorem 4.3.12]{CK}, there exists an ultrapower $\U$ of $\R$ such that all linearly ordered groups of cardinality less or equal to $\alpha$ embed into $\U$.

\Cref{thm:ultrap-embedding:item2}. The proof for Riesz spaces is analogous. We only prove that all non-trivial linearly ordered Riesz spaces are elementarily equivalent. By \cite[Chapter 1, Corollary 7.8]{dries} their theory has quantifier elimination, and hence it is model complete (see, e.g., \cite[Theorem 3.5.1(iv)]{CK}). Furthermore, $\R$ is an \emph{algebraically prime} model, i.e., $\R$ embeds into every non-trivial Riesz space. Hence, the theory of linearly ordered Riesz spaces is complete (see, e.g., \cite[Proposition 3.5.11]{CK}) and the claim follows as in the previous item.

Finally the last part of the statement follows from \cite[Exercise 4.3.25]{CK}.
\end{proof}

Let $\alpha$ be a cardinal. We recall that an algebraic structure $A$ is \emph{$\alpha$-generated} if there exists a family $\{ a_i \mid i\in I\}$ of elements of $A$ that generates $A$ and $\alpha$ is the cardinality of $I$.  

\begin{corollary}\label{cor:existence-U-generators}
Let $\alpha$ be a cardinal. 
\begin{enumerate}
\item\label{cor:existence-U-generators:item1} There exists an ultrapower of $\R$ such that any $\kappa$-generated linearly ordered  $\ell$-group with $\kappa < \alpha$ embeds in it.
\item\label{cor:existence-U-generators:item2} There exists an ultrapower of $\R$ such that any $\kappa$-generated linearly ordered Riesz space with $\kappa < \alpha$ embeds in it.
\item\label{cor:existence-U-generators:item3} Any ultrapower of $\R$ given by a non-principal ultrafilter on a countable set of indexes has the property that all finitely generated linearly ordered $\ell$-groups and Riesz spaces embed into it.
\end{enumerate}
\end{corollary}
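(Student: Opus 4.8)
The plan is to deduce all three items from \Cref{thm:ultrap-embedding} together with two auxiliary facts: that $\kappa$-generated algebras in $\Va$ have bounded cardinality, and that an ultrapower of $\R$ over a non-principal ultrafilter on a countable index set is $\omega_{1}$-saturated.

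For \cref{cor:existence-U-generators:item1,cor:existence-U-generators:item2} I would first recall, from Baker's description of the free algebras in \cite{baker68}, that $\freek$ consists of the piecewise linear functions on $\R^{\kappa}$ with integer coefficients (each depending on finitely many variables) and $\freersk$ of those with real coefficients; a routine count then gives $|\freek|\le\max\{\kappa,\omega\}$ and $|\freersk|\le\max\{\kappa,2^{\omega}\}$. Hence any $\kappa$-generated $\ell$-group has cardinality at most $\max\{\kappa,\omega\}$ and any $\kappa$-generated Riesz space at most $\max\{\kappa,2^{\omega}\}$. Given $\alpha$, I would apply the relevant item of \Cref{thm:ultrap-embedding} to the cardinal $(\max\{\alpha,\omega\})^{+}$ in the $\ell$-group case and to $(\max\{\alpha,2^{\omega}\})^{+}$ in the Riesz case, obtaining an ultrapower $\U$ of $\R$ into which every linearly ordered algebra of the relevant kind of cardinality below that bound embeds; a $\kappa$-generated such algebra with $\kappa<\alpha$ has strictly smaller cardinality, hence embeds into $\U$. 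The only point to watch is that \Cref{thm:ultrap-embedding} supplies an embedding of the underlying linearly ordered group, respectively ordered vector space; but in a linearly ordered structure $\vee$ and $\wedge$ are $\max$ and $\min$ and are therefore preserved by any order-embedding, so these are $\ell$-group, respectively Riesz space, embeddings.

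For \cref{cor:existence-U-generators:item3}, write $\U=\R^{I}/F$ with $I$ countably infinite and $F$ non-principal. I would begin by noting that $F$ is countably incomplete — enumerating $I=\{i_{n}\}$, the cofinite sets $I\setminus\{i_{0},\dots,i_{n-1}\}$ all lie in $F$ and have empty intersection — so by the classical saturation theorem for ultraproducts over countably incomplete ultrafilters \cite{CK}, $\U$ is $\omega_{1}$-saturated; moreover $\U\equiv\R$ both as an ordered group and as a Riesz space. In the $\ell$-group case, a finitely generated linearly ordered $\ell$-group $G$ is countable by the bound above, and if non-trivial it embeds, by \cite{BKW}, into a divisible linearly ordered group $D$ of cardinality $\omega$, which by \cite{R77} is elementarily equivalent to $\R$ in the language of ordered groups; since $\U$ is an $\omega_{1}$-saturated model of the same complete theory and $D$ is countable, $D$ embeds elementarily into $\U$, and composing gives the desired $\ell$-group embedding of $G$.

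The Riesz space part of \cref{cor:existence-U-generators:item3} is the step I expect to be the real obstacle: a non-trivial finitely generated linearly ordered Riesz space has cardinality $2^{\omega}$, so $\omega_{1}$-saturation no longer absorbs it directly. To get around this I would exploit the fact that such a Riesz space $V$ has finite rank: writing $V\cong\freersn/P$ with $P$ a prime ideal ($n\in\N$), the chain of (prime) ideals of $V$ is finite, of some length $k\le n$. This is classical — it is exactly the information carried by Panti's description of the prime spectrum of free Riesz spaces \cite{MR1707667}, and it can also be read off directly from the polyhedral structure of $\freersn$. Since every simple Riesz space is isomorphic to $\R$, the Hausner--Wendel embedding theorem \cite{hausner1952ordered}, in its elementary finite-rank case, then yields a Riesz space embedding of $V$ into the lexicographic power $(\R^{k},\mathrm{lex})$. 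Finally $(\R^{k},\mathrm{lex})$ embeds into $\U$: take a positive infinitesimal $\eps\in\U$, say the class of $(1/(n+1))_{n}$, and the elements $\eps^{(m)}$ represented by $(1/(n+1)^{m})_{n}$, so that $1=\eps^{(0)}\gg\eps^{(1)}\gg\dots\gg\eps^{(k-1)}>0$ in $\U$; the $\R$-linear map $(a_{1},\dots,a_{k})\mapsto\sum_{m=1}^{k}a_{m}\eps^{(m-1)}$ is injective and order-preserving because its value is dominated by the summand coming from the lexicographically first nonzero coordinate, hence it is a Riesz space embedding. Composing the two embeddings completes the proof; the only genuinely non-routine ingredient is the finite-rank fact, everything else being bookkeeping.
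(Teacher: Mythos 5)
Your proof is correct. For items \eqref{cor:existence-U-generators:item1} and \eqref{cor:existence-U-generators:item2} your argument is essentially the paper's: bound the cardinality of a $\kappa$-generated algebra (at most $\kappa+\omega$, resp.\ $\kappa+2^{\omega}$) and apply \Cref{thm:ultrap-embedding} to a suitable larger cardinal; whether one takes $\alpha+2^{\omega}$, as the paper does, or the successor cardinals you use is immaterial bookkeeping, and your remark that order-embeddings of linearly ordered groups automatically preserve $\vee$ and $\wedge$ is a correct (if tacit in the paper) observation.

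For item \eqref{cor:existence-U-generators:item3} the routes differ. The paper disposes of it by citation: $\omega_1$-saturation of ultrapowers over countably incomplete ultrafilters together with elementary embeddability of small models of a complete theory (the results it quotes from Chang--Keisler), plus results of Labuschagne--van Alten for the Riesz case. Your $\ell$-group argument is exactly the content of those Chang--Keisler citations, made explicit. Your Riesz-space argument is genuinely different and self-contained: you observe that a finitely generated linearly ordered Riesz space has finite rank, invoke Hausner--Wendel to identify it with a lexicographic power $\R^{k}$, and then embed $\R^{k}_{\mathrm{lex}}$ into $\U$ by an explicit $\R$-linear map built from powers of a positive infinitesimal; the domination estimate you give for order-preservation is correct, and such a map between linearly ordered Riesz spaces is automatically a Riesz embedding. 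What your route buys is an explicit embedding with no appeal to saturation (which, as you note, cannot handle cardinality $2^{\omega}$ directly); what the paper's route buys is brevity. One caveat: the finite-rank fact does not need Panti's theorem, and inside this paper it would be better not to use it, since \Cref{sec:Panti} later derives Panti's characterization from machinery resting on this very corollary, so your primary justification would introduce an apparent circularity. The fact is elementary: in a totally ordered Riesz space the Riesz subspace generated by finitely many elements coincides with their linear span (finite suprema and infima of elements of a chain are among those elements), so a finitely generated linearly ordered Riesz space is a finite-dimensional totally ordered vector space, to which Hausner--Wendel applies directly. With that substitution your argument is complete and non-circular.
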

\begin{proof}
To prove \cref{cor:existence-U-generators:item1,cor:existence-U-generators:item2} it is sufficient to observe that any $\kappa$-generated  $\ell$-group has cardinality at most $\alpha+\omega \le \alpha +2^{\omega}$ and that any $\kappa$-generated Riesz space has cardinality at most $\alpha+2^{\omega} $. So, it is enough to apply \Cref{thm:ultrap-embedding} to the cardinal $\alpha+2^{\omega}$. \Cref{cor:existence-U-generators:item3} follows from \cite[Theorems 5.1.12 and 6.1.1]{CK} and \cite[Theorem 2.3 and Lemma 2.4]{vanAlten}.
\end{proof}

\begin{assumption}\label{ass:U-embeds}
For the rest of the section, we fix a cardinal $\alpha$ and denote by $\U$ any linearly ordered group (Riesz space) such that all linearly ordered groups (Riesz spaces) generated by less than $\alpha$ elements embed into $\U$. The existence of a linearly ordered $\ell$-group (Riesz space) $\U$ with this property is guaranteed by \cref{cor:existence-U-generators}.
\end{assumption}

We instantiate the framework of \Cref{sec:prelim:adjunction} by setting $A=\U$. Since it will be clear from the context whether we are working in the setting of $\ell$-groups or Riesz spaces, we use the notation $\CU$ and $\VU$ for the operators in both settings. 
For the most part of this paper the proof strategy is the same for both $\ell$-groups and Riesz spaces. Consequently, we prove all results in general and specify when they only apply to one of the two cases or they need additional assumptions.

Let $\kappa < \alpha$ be a cardinal. We write $\orig$ for the origin of $\U^{\kappa}$, i.e., the $\kappa$-tuple whose entries are all equal to $0$.

\begin{lemma}\label{t:null-U}
Let $T\seq\free_{\kappa}$ and $x\in \U^{\kappa}$. 
\begin{enumerate}
\item\label{t:null-U:item0} $\free_{\kappa}=\CU(x)$ if and only if $x=\orig$.
\item\label{t:null-U:item1} $T$ is a prime ideal if and only if $T=\CU(x)$ for some $x \in \U^{\kappa}\setminus \{\mathbf{0}\}$.
\item\label{t:null-U:item2} $\CU\VU(T)$ is the ideal generated by $T$.
\item\label{t:null-U:item3} $T$ is an ideal if and only if $T=\CU\VU(T)$ if and only if $T=\CU(S)$ for some $S \subseteq \U^{\kappa}$.
\end{enumerate}
\end{lemma}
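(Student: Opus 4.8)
The plan is to prove the four items in order, each feeding into the next, and to lean on \Cref{l:null-cong} throughout. For \eqref{t:null-U:item0} I would argue directly: a routine induction on terms shows that every element of $\free_{\kappa}$, read as a term function, vanishes at $\orig$ --- each operation symbol of the language ($+$, $-$, $\vee$, $\wedge$, and scalar multiplication in the Riesz case) sends a tuple of zeros to zero --- so $\free_{\kappa}=\CU(\orig)$; conversely, if $x\neq\orig$ then some coordinate $x_{\beta}$ is nonzero, so the $\beta^{\text{th}}$ projection term does not lie in $\CU(x)$, and hence $\CU(x)$ is proper.

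For \eqref{t:null-U:item1} I would use \Cref{l:null-cong}\eqref{p:cong-embed} in both directions. If $T$ is prime, then $\free_{\kappa}/T$ is linearly ordered and generated by the images of the $\kappa$ free generators; since $\kappa<\alpha$, \Cref{ass:U-embeds} gives an embedding of $\free_{\kappa}/T$ into $\U$, and then \Cref{l:null-cong}\eqref{p:cong-embed} produces $x\in\U^{\kappa}$ with $T=\CU(x)$, where $x\neq\orig$ by \eqref{t:null-U:item0} because $T$ is proper. Conversely, if $T=\CU(x)$ with $x\neq\orig$, then $T$ is proper by \eqref{t:null-U:item0}, and $\free_{\kappa}/T$ embeds into $\U$ by \Cref{l:null-cong}\eqref{p:cong-embed} (equivalently, it is a subalgebra of $\U$ by \Cref{l:PWL-I(C)} applied to $S=\{x\}$), hence it is linearly ordered; and a proper ideal with linearly ordered quotient is prime.

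The core is \eqref{t:null-U:item2}, and this is the step I expect to need genuine input. One inclusion is formal: $\CU\VU(T)$ is an ideal (being of the form $\CU(S)$) and contains $T$ by \Cref{l:galois}, so it contains the ideal $\langle T\rangle$ generated by $T$. For the reverse inclusion I would first observe that, since every $\CU(x)$ is an ideal, $x\in\VU(T)$ iff $T\seq\CU(x)$ iff $\langle T\rangle\seq\CU(x)$; hence $\VU(T)=\VU(\langle T\rangle)$, so $\CU\VU(T)=\CU\VU(\langle T\rangle)$ and it suffices to prove $\CU\VU(J)=J$ for every ideal $J$ of $\free_{\kappa}$. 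By \Cref{l:null-cong}\eqref{l:null-subdirect} this holds as soon as $J$ can be written as $\CU(S)$ for some $S$, and by \Cref{l:null-cong}\eqref{p:cong-embed} it is enough to present $J$ as an intersection of ideals $J'$ with $\free_{\kappa}/J'$ embeddable into $\U$. The key external fact I would invoke is the classical representation of $\ell$-groups (resp.\ Riesz spaces) as subdirect products of linearly ordered algebras --- equivalently, that $J$ is the intersection of the prime ideals of $\free_{\kappa}$ containing it --- after which each such prime $P$ has $\free_{\kappa}/P$ linearly ordered and generated by at most $\kappa<\alpha$ elements, hence embeddable into $\U$ by \Cref{ass:U-embeds}. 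This is precisely where the cardinal bookkeeping (that the quotients stay generated by fewer than $\alpha$ elements) is used, and it is the only genuinely non-formal ingredient in the whole proof.

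Finally, \eqref{t:null-U:item3} I would read off at once: if $T=\CU\VU(T)$ then $T=\CU(\VU(T))$; if $T=\CU(S)$ then $T$ is an ideal; and if $T$ is an ideal, then $T=\langle T\rangle=\CU\VU(T)$ by \eqref{t:null-U:item2}. Thus the main obstacle is the reverse inclusion in \eqref{t:null-U:item2}; everything else reduces to manipulation of the Galois connection together with \Cref{l:null-cong}.
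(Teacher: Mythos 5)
Your proof is correct and follows essentially the same route as the paper: items \eqref{t:null-U:item0}, \eqref{t:null-U:item1}, and \eqref{t:null-U:item3} are argued identically, and for \eqref{t:null-U:item2} you rely on the same two ingredients the paper uses, namely that prime quotients of $\free_{\kappa}$ embed into $\U$ (via \Cref{ass:U-embeds} and \Cref{l:null-cong}\eqref{p:cong-embed}) and the classical fact that every ideal is the intersection of the prime ideals containing it. The only difference is organizational: you reduce to showing that ideals are $\CU\VU$-fixed points via \Cref{l:null-cong}\eqref{l:null-subdirect}, whereas the paper computes $\CU\VU(T)$ directly as the intersection of the primes containing $T$; the content is the same.
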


\begin{proof}
\Cref{t:null-U:item0}. Notice that $\CU(\mathbf{0})=\free_\kappa$ because each $t \in \free_\kappa$ induces a homogeneous function from $\U^\kappa$ to $\U$. Conversely, since every projection $\pi_\gamma$ is in $\free_\kappa=\CU(x)$, then $\pi_\gamma(x)=0$. Thus,  $x= \mathbf{0}$.

\Cref{t:null-U:item1}.
An ideal $T$ is prime if and only if  $\free_{\kappa}/T$ is non-trivial and linearly ordered.  In turn, by \Cref{ass:U-embeds}, the latter is equivalent to the fact that $\free_{\kappa}/T$ is non-trivial and embeds into $\U$. Hence, by \Cref{l:null-cong}\eqref{p:cong-embed} and \Cref{t:null-U:item0}, $T$ is prime if and only if $T=\CU(x)$ for some $x \in \U^{\kappa}\setminus\{\orig\}$.

\Cref{t:null-U:item2}. Since $\CU$ and $\VU$ form a Galois connection,
\[
\CU\VU(T)=\CU\left(\bigcup \{\{x\} \mid x \in \VU(T) \}\right)=\bigcap \{\CU(x) \mid x \in \VU(T) \}=\bigcap \{\CU(x) \mid T \subseteq \CU(x) \}.
\]
Thus, it follows from \Cref{t:null-U:item1} that $\CU\VU(T)$ is the intersection of all the prime ideals containing $T$. By \cite[4.2.4]{BKW} and \cite[Theorem 33.5]{RS1}, the latter coincides with the ideal generated by $T$.

\Cref{t:null-U:item3}. It follows from \Cref{t:null-U:item2} that  $T$ is an ideal if and only if $T=\CU\VU(T)$.
If $T=\CU\VU(T)$, then $T=\CU(S)$, where $S=\VU(T)$. For the other implication it is sufficient to recall that $T=\CU(S)$ is an ideal for all $S \subseteq \U^\kappa$.
\end{proof}

\begin{definition}
We say that a subset of $\U^\kappa$ is $\Z$-definable ($\R$-definable, respectively) if it is of the form $\VU(T)$ for some subset $T$ of the free $\ell$-group $\freek$ (free Riesz space $\freersk$, respectively). 
\end{definition}
We are now ready to state the main result of this section.  The names of categories introduced in the table below will only be used, for the sake of clarity, in the statement of next theorem.
\begin{center}
\begin{tabular}{|c| l| l|}
\hline
Name & Objects & Morphisms \\
\hline
$\apal$ & $\kappa$-generated $\ell$-groups, with $\kappa<\alpha$ & $\ell$-group homomorphisms\\
$\vpal$ & $\kappa$-generated Riesz spaces, with $\kappa<\alpha$ & Riesz space homomorphisms\\
$\sdefzal$ & $\Z$-definable subsets of $\U^\kappa$, with $\kappa < \alpha$ & $\Z$-definable maps\\
$\sdefral$ & $\R$-definable subsets of $\U^\kappa$, with $\kappa < \alpha$ & $\R$-definable maps\\
\hline
\end{tabular}
\end{center}

\begin{theorem} \label{thm:duality}
Under the \cref{ass:U-embeds}, the dual adjunction of \Cref{t:adjunction}, with $A\df\U$, restricts to dual equivalences between:
\begin{enumerate}
\item\label{t:duality:item1} $\apal$ and $\sdefzal$.
\item\label{t:duality:item2} $\vpal$ and $\sdefral$.
\end{enumerate}
\end{theorem}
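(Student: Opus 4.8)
The plan is to derive both statements from the general categorical principle, recalled right after \Cref{t:adjunction}, that any adjunction restricts to a dual equivalence between the full subcategories of fixed points of the two composite endofunctors; in our situation these are $\CC\circ\VV$ on the algebra side and $\VV\circ\CC$ on the subset side. Since both $\CC$ and $\VV$ preserve the index $\kappa$, the dual adjunction of \Cref{t:adjunction} instantiated at $A\df\U$ first restricts to a dual adjunction between the full subcategory of $\Va$ on the $\kappa$-generated algebras with $\kappa<\alpha$ and the full subcategory on the subsets of the spaces $\U^{\kappa}$ with $\kappa<\alpha$. It then only remains to identify the fixed points inside each of these two restricted categories and to check that the morphisms match.

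On the algebra side, $\CC\circ\VV(\free_{\kappa}/J)=\free_{\kappa}/\CU\VU(J)$ with comparison morphism the canonical surjection, so $\free_{\kappa}/J$ is a fixed point precisely when $\CU\VU(J)=J$. By \Cref{t:null-U}\eqref{t:null-U:item3} --- which rests on \Cref{ass:U-embeds} and sharpens \Cref{l:null-cong}\eqref{l:null-subdirect} in the present setting --- this holds exactly when $J$ is an ideal of $\free_{\kappa}$. Since, by the convention fixed in \Cref{sec:prelim:adjunction}, every $\kappa$-generated algebra of $\Va$ is of the form $\free_{\kappa}/J$ with $J$ an ideal, it follows that for $\kappa<\alpha$ every $\kappa$-generated $\ell$-group (resp.\ Riesz space) is a fixed point of $\CC\circ\VV$, while conversely any fixed point carrying an index $\kappa<\alpha$ is in particular $\kappa$-generated. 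The fixed-point subcategory inherits all $\Va$-homomorphisms, hence it is precisely $\apal$ in the $\ell$-group case and $\vpal$ in the Riesz space case.

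On the subset side, $\VV\circ\CC(S)=\VU\CU(S)$, so $S\seq\U^{\kappa}$ is a fixed point precisely when $\VU\CU(S)=S$. If $S=\VU(T)$ for some $T\seq\free_{\kappa}$, then the Galois identity $\VU\CU\VU=\VU$ gives $\VU\CU(S)=S$; conversely a fixed $S$ equals $\VU(\CU(S))$. Thus the fixed points are exactly the subsets of $\U^{\kappa}$ lying in the image of $\VU$, that is, the $\Z$-definable subsets when $\Va$ is the variety of $\ell$-groups (taking $T\seq\freek$) and the $\R$-definable subsets when $\Va$ is the variety of Riesz spaces (taking $T\seq\freersk$). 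Keeping $\kappa<\alpha$ and the definable maps as morphisms, this subcategory is $\sdefzal$, respectively $\sdefral$.

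Combining the two identifications with the general principle yields the dual equivalences $\apal\simeq\sdefzal$ of \eqref{t:duality:item1} and $\vpal\simeq\sdefral$ of \eqref{t:duality:item2}, implemented by the restrictions of $\CC$ and $\VV$. The only substantive ingredient is the equivalence, for $\kappa<\alpha$, between $\CU\VU(J)=J$ and $J$ being an ideal: this is what ensures that no $\kappa$-generated algebra is lost on passing to fixed points, and it is precisely here that the abundance of linearly ordered quotients inside the ultrapower $\U$ --- guaranteed by \Cref{ass:U-embeds} through \Cref{thm:ultrap-embedding} and \Cref{cor:existence-U-generators} --- is indispensable. Everything else is routine bookkeeping about the chosen presentations $\free_{\kappa}/J$ and about the matching of morphisms on the two sides.
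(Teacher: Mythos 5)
Your proposal is correct and follows essentially the same route as the paper: you identify the fixed points of $\CC\circ\VV$ via \Cref{t:null-U}\eqref{t:null-U:item3} (so every $\kappa$-generated algebra, $\kappa<\alpha$, is fixed) and the fixed points of $\VV\circ\CC$ as the definable subsets via the Galois identity $\VU\CU\VU=\VU$, then invoke the general fact that an adjunction restricts to an equivalence on fixed points. The paper's proof is just a more condensed version of the same argument.
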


\begin{proof} 
On the one hand, if $\free_\kappa/J$ is a $\kappa$-generated  $\ell$-group, then $\VV(\free_\kappa/J) = \VU(J)$ is a $\Z$-definable subset of $\U^{\kappa}$. On the other hand, if $C\seq\U^{\kappa}$ is $\Z$-definable, then $\CC(C) = \free_\kappa/\CU(C)$ is $\kappa$-generated. Moreover, if $\free_\kappa/J$ if a $\kappa$-generated $\ell$-group, then $J=\CU \VU(J)$ by \Cref{t:null-U}\eqref{t:null-U:item3}, and if $C\seq\U^{\kappa}$ is $\Z$-definable, then $C=\VU \CU(C)$ by definition.
This proves \Cref{t:duality:item1}. An analogous argument yields \Cref{t:duality:item2}.
\end{proof}

\begin{remark}\label{r:duality finitely generated}
Let $\U$ be an ultrapower of $\R$ given by a non-principal ultrafilter on a countable set of indexes $I$. By \Cref{cor:existence-U-generators}(\ref{cor:existence-U-generators:item3}), $\U$ satisfies  \cref{ass:U-embeds} with $\alpha = \omega$. Thus, \cref{thm:duality} yields a duality between the categories of finitely generated $\ell$-groups (Riesz Spaces, respectively) and of $\Z$-definable ($\R$-definable, respectively) subsets of $\U^n$ with $n \in \N$.
\end{remark}

\subsection{Comparison with the Baker-Beynon duality}\label{s:BB-comparison}
In this section we compare Baker-Beynon duality with the one of \Cref{thm:duality}. To this end, we need to add to \Cref{ass:U-embeds} the additional requirement that $\R$ be a subalgebra of $\U$. 
Note that this is not a strong requirement. Indeed, in any non-trivial Riesz space the set of scalar multiples of a non-zero element forms a subalgebra that is isomorphic to $\mathbb{R}$. In addition, if $\U$ is a linearly ordered $\ell$-group satisfying \Cref{ass:U-embeds} with $\alpha \geq 2^{\omega}$, then $\mathbb{R}$ embeds into $\U$.

So, in the remainder of this section, $\R^\kappa$ is a subalgebra of $\U^\kappa$ and thus it makes sense to  apply the operator $\CU$ to subsets of $\mathbb{R}^\kappa$ and compare the pair $\CI$ and $\VI$ with $\C$ and $\V$.

\begin{lemma}\label{l:facts-C-V}
Let $k<\alpha$,  $S \subseteq \R^{\kappa}$ and $T \subseteq \free_\kappa$.
\begin{enumerate}
\item\label{l:facts-C-V:item1} $\CU(S)=\CI(S)$.
\item\label{l:facts-C-V:item5} $\VI(T)=\VU(T) \cap \R^{\kappa}$.
\item\label{l:facts-C-V:item4} $\VI \CI(S)=\VU \CU(S)\cap \R^\kappa$.
\end{enumerate}
\end{lemma}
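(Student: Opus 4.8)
The three statements are all elementary once one remembers that terms in the language of $\ell$-groups and Riesz spaces are interpreted by the \emph{same} formula in every algebra of the variety, and that $\R^\kappa$ carries the subalgebra structure inherited from $\U^\kappa$.

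\medskip

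\noindent\textbf{Plan.} I would prove the three items in the order given, since each feeds into the next. For \eqref{l:facts-C-V:item1}: both $\CU(S)$ and $\CI(S)$ are, by \Cref{def:I and V for U}, the set of $t\in\free_\kappa$ such that the term function of $t$ vanishes on all of $S$. The key observation is that for $x\in\R^\kappa\seq\U^\kappa$ the value $t(x)$ computed in $\U$ coincides with the value computed in $\R$, because $\R$ is a subalgebra of $\U$ and term functions are preserved by the inclusion homomorphism $\R\hookrightarrow\U$. Hence the two defining conditions are literally the same condition, and the two sets are equal. For \eqref{l:facts-C-V:item5}: by definition $\VU(T)=\{x\in\U^\kappa\mid t(x)=0\text{ for all }t\in T\}$ and $\VI(T)=\{x\in\R^\kappa\mid t(x)=0\text{ for all }t\in T\}$; intersecting the first with $\R^\kappa$ restricts to tuples $x\in\R^\kappa$, and again by preservation of term values along $\R\hookrightarrow\U$ the condition ``$t(x)=0$ in $\U$'' is equivalent to ``$t(x)=0$ in $\R$'' for such $x$, giving exactly $\VI(T)$.

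\medskip

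\noindent\textbf{The third item.} Item \eqref{l:facts-C-V:item4} follows by composing the first two. Write $T=\CI(S)=\CU(S)$ using \eqref{l:facts-C-V:item1}. Then by \eqref{l:facts-C-V:item5} applied to this $T$,
\[
\VI\CI(S)=\VI(T)=\VU(T)\cap\R^\kappa=\VU\CU(S)\cap\R^\kappa,
\]
which is the claim. No further work is needed.

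\medskip

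\noindent\textbf{Main obstacle.} Honestly there is no serious obstacle: the entire content is the remark that the inclusion $\R\hookrightarrow\U$ is a $\Va$-homomorphism, so $t^{\U}(x)=t^{\R}(x)$ whenever $x\in\R^\kappa$. The only point requiring a moment's care is to state this preservation cleanly and to be sure that the hypothesis ``$\R$ is a subalgebra of $\U$'' (added at the start of \Cref{s:BB-comparison}) is exactly what licenses it; one should also note that it implies $\R^\kappa$ is a subalgebra of $\U^\kappa$ coordinatewise, so that the operators $\C,\V$ of \eqref{eq:BBoperators} agree with $\CI,\VI$. After that the three equalities are immediate set-theoretic rewritings, and I would present the proof in just a few lines.
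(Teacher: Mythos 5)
Your proposal is correct and follows essentially the same route as the paper: item (1) from the preservation of term values along the inclusion $\R\hookrightarrow\U$ (the paper phrases it via $\CU(x)=\CI(x)$ pointwise and intersects), item (2) directly from the definitions, and item (3) by combining the first two exactly as you do. Nothing is missing.
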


\begin{proof}
\Cref{l:facts-C-V:item1}. Since $\R$ is a subalgebra of $\U$, we have that $\CU(x)=\CI(x)$ for each $x \in \R^{\kappa}$. It follows that $\CU(S)=\bigcap \{ \CU(x) \mid x \in S \}=\bigcap \{ \CI(x) \mid x \in S \}=\CI(S)$.

\Cref{l:facts-C-V:item5} is an immediate consequence of the definitions of $\VI$ and $\VU$. 

\Cref{l:facts-C-V:item4}. It follows from \Cref{l:facts-C-V:item1} and \Cref{l:facts-C-V:item5} that  $\VI \CI(S) = \VI \CU(S) = \VU \CU(S)\cap \R^\kappa$.
\end{proof}

\begin{proposition}\label{p:null-U-max}
Let $k<\alpha$ and $T \subseteq \free_{\kappa}$.
\begin{enumerate}
\item\label{p:null-U-max:item1} $T$ is a maximal ideal if and only if $T=\CU(x)$ for some $x \in \R^{\kappa}\setminus \{\mathbf{0}\}$.
\item\label{p:null-U-max:item2} $\CU\VI(T)$ is the intersection of all maximal ideals containing $T$.
\item\label{p:null-U-max:item3} The following are equivalent:
\begin{enumerate}[label=(\roman*) , ref = \roman*]
\item\label{p:null-U-max:itemi} $T$ is an intersection of maximal ideals.
\item\label{p:null-U-max:itemii} $T=\CU\VI(T)$.
\item\label{p:null-U-max:itemiii} $T=\CU(S)$ for some $S \subseteq \R^{\kappa}$.
\item\label{p:null-U-max:itemiv} $T$ is an ideal and $\VU(T)= \VU \CU \VI(T)$.
\end{enumerate}
\end{enumerate}  
\end{proposition}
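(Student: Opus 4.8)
The plan is to mirror the structure of \Cref{t:null-U}, substituting maximal ideals for prime ideals and $\R$ for $\U$ at the appropriate places, and then feed the results into the abstract machinery of \Cref{sec:prelim:adjunction}. For \cref{p:null-U-max:item1}, I would argue that $T$ is maximal if and only if $\free_\kappa/T$ is simple and non-trivial; by Hölder's theorem a simple $\ell$-group embeds into $\R$ (and for Riesz spaces it is isomorphic to $\R$), and conversely any non-trivial $\ell$-group embedding into $\R$ is simple, so maximality of $T$ is equivalent to $\free_\kappa/T$ being non-trivial and embeddable into $\R$. Since $\R$ is a subalgebra of $\U$ (our standing additional assumption in \Cref{s:BB-comparison}), \Cref{l:null-cong}\eqref{p:cong-embed} applied with $A=\R$, together with \Cref{t:null-U}\eqref{t:null-U:item0} (which also holds with $\R$ in place of $\U$, via \Cref{l:facts-C-V}\eqref{l:facts-C-V:item1} or a direct homogeneity argument), gives that $T$ is maximal iff $T=\CI(x)=\CU(x)$ for some $x\in\R^\kappa\setminus\{\orig\}$.

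For \cref{p:null-U-max:item2}, I would run the same Galois-connection computation as in \Cref{t:null-U}\eqref{t:null-U:item2}, but applied to the pair $\CU,\VI$ restricted to $\R^\kappa$:
\[
\CU\VI(T)=\CU\Big(\bigcup\{\{x\}\mid x\in\VI(T)\}\Big)=\bigcap\{\CU(x)\mid x\in\VI(T)\}=\bigcap\{\CU(x)\mid x\in\R^\kappa,\ T\subseteq\CU(x)\},
\]
and then invoke \cref{p:null-U-max:item1} to recognize the right-hand side as the intersection of all maximal ideals containing $T$ (the case $x=\orig$ contributes $\free_\kappa$, which is absorbed by the intersection whenever $T$ is proper, and if $T=\free_\kappa$ the empty intersection is again $\free_\kappa$). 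Here I should be slightly careful: \Cref{l:facts-C-V}\eqref{l:facts-C-V:item1} guarantees $\CU(S)=\CI(S)$ for $S\subseteq\R^\kappa$, so there is no ambiguity in which operator is being applied.

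For \cref{p:null-U-max:item3}, the implications (\ref{p:null-U-max:itemii})$\Rightarrow$(\ref{p:null-U-max:itemi}) and (\ref{p:null-U-max:itemi})$\Rightarrow$(\ref{p:null-U-max:itemii}) are immediate from \cref{p:null-U-max:item2}, and (\ref{p:null-U-max:itemii})$\Rightarrow$(\ref{p:null-U-max:itemiii}) is trivial (take $S=\VI(T)$), while (\ref{p:null-U-max:itemiii})$\Rightarrow$(\ref{p:null-U-max:itemii}) follows from \Cref{l:facts-C-V}\eqref{l:facts-C-V:item1} and the fact that $\CU\VI\CU(S)=\CU(S)$ for $S\subseteq\R^\kappa$ (a consequence of the Galois closure identities together with \Cref{l:facts-C-V:item5}, since $\VI\CU(S)=\VU\CU(S)\cap\R^\kappa\supseteq S$). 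The only genuinely new equivalence is (\ref{p:null-U-max:itemiv}); the plan is to show (\ref{p:null-U-max:itemii})$\Leftrightarrow$(\ref{p:null-U-max:itemiv}). If $T=\CU\VI(T)$, then $T$ is an ideal by \Cref{t:null-U}\eqref{t:null-U:item3}, and applying $\VU$ and using $\VU\CU\VU=\VU$ together with \Cref{l:facts-C-V}\eqref{l:facts-C-V:item5} gives $\VU(T)=\VU\CU\VI(T)$. Conversely, if $T$ is an ideal with $\VU(T)=\VU\CU\VI(T)$, apply $\CU$ to both sides to get $\CU\VU(T)=\CU\VU\CU\VI(T)=\CU\VI(T)$, and since $T$ is an ideal the left-hand side is $T$ by \Cref{t:null-U}\eqref{t:null-U:item3}.

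The main obstacle I anticipate is not any single deep step but rather bookkeeping: keeping straight which operator ($\CU$ versus $\CI$, $\VU$ versus $\VI$) is acting on which kind of set, and making sure every appeal to \Cref{l:facts-C-V} is legitimate (in particular that the relevant subsets genuinely lie in $\R^\kappa$). The one place requiring a real idea beyond formal manipulation is the Hölder-theorem input in \cref{p:null-U-max:item1}: one must remember that simplicity of $\free_\kappa/T$ is \emph{equivalent} to embeddability into $\R$ (not merely implied by it), which uses the fact that every non-trivial $\ell$-subgroup of $\R$ is simple.
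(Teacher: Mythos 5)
Your proposal is correct and follows essentially the same route as the paper: maximality via embeddability into $\R$ (Hölder) combined with \Cref{l:null-cong}\eqref{p:cong-embed}, \Cref{t:null-U}\eqref{t:null-U:item0} and \Cref{l:facts-C-V}\eqref{l:facts-C-V:item1} for item (1), the same Galois-connection computation for item (2), and the same closure identities for item (3). The only differences are cosmetic: the paper closes item (3) by the cycle (\ref{p:null-U-max:itemii})$\Rightarrow$(\ref{p:null-U-max:itemiii})$\Rightarrow$(\ref{p:null-U-max:itemiv})$\Rightarrow$(\ref{p:null-U-max:itemii}), whereas you prove (\ref{p:null-U-max:itemii})$\Leftrightarrow$(\ref{p:null-U-max:itemiii}) and (\ref{p:null-U-max:itemii})$\Leftrightarrow$(\ref{p:null-U-max:itemiv}) directly, which is equally valid.
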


\begin{proof}
\Cref{p:null-U-max:item1}. As mentioned in \Cref{sec:intro}, $T$ is a maximal ideal if and only if $\freek/T$ is non-trivial and embeds into $\R$. Hence, by \Cref{l:null-cong}\eqref{p:cong-embed} and \Cref{t:null-U}(\ref{t:null-U:item0}), $T$ is maximal if and only if there exists $x\in \R^{\kappa}\setminus\{\orig\}$ such that $T=\CI(x)$, which is the same as $\CU(x)$ by \Cref{l:facts-C-V}(\ref{l:facts-C-V:item1}).

\Cref{p:null-U-max:item2}. Observe that $\CU \VI(T)= \CI \VI (T) = \bigcap \{ \CI (x) \mid x\in \VI (T) \}$. Moreover, $x \in \VI(T)$ if and only if $T \subseteq \CI(x)$. Thus, $\CI\VI(T)$ is the intersection of all the ideals of the form $\CI(x)$ containing $T$. By \Cref{p:null-U-max:item1}, this is exactly the intersection of all maximal ideals containing $T$.

\Cref{p:null-U-max:item3}.
The equivalence between \eqref{p:null-U-max:itemi} and \eqref{p:null-U-max:itemii} is an immediate consequence of \Cref{p:null-U-max:item2}. Condition \eqref{p:null-U-max:itemii} clearly implies \eqref{p:null-U-max:itemiii}. If \eqref{p:null-U-max:itemiii} holds, then obviously $T$ is an ideal and
\begin{align*}
\VU(T)&=\VU\CU(S) & \text{By hypothesis}\\
&=\VU\CI(S)& \text{By \Cref{l:facts-C-V}\eqref{l:facts-C-V:item1}}\\
&=\VU\CI\VI\CI(S)& \text{Because $\CI$ and $\VI$ are a Galois connection}\\
&=\VU\CU\VI\CU(S)& \text{By \Cref{l:facts-C-V}\eqref{l:facts-C-V:item1}}\\
&=\VU\CU\VI(T)& \text{By hypothesis}.
\end{align*}
 If \eqref{p:null-U-max:itemiv} holds, then $T=\CU \VU(T)= \CU \VU \CU \VI(T)=\CU\VI(T)$, and so \eqref{p:null-U-max:itemii} holds. 
\end{proof}
Let $\lr$ denote the set of functions $f\colon\R^n \to \R$ defined by homogeneous linear polynomials and let $\lz$ be the subset of $\lr$ in which the polynomials have integer coefficients.
A \emph{half-space} is a set of the form $\{x\in\R^{n}\mid f(x)\ge 0\}$ for $f\in \lr$.  A half-space is called \emph{rational} if the defining $f$ is in $\lz$.
A (\emph{rational}) \emph{polyhedral cone} is a finite union of finite intersections of (rational) half-spaces.

\begin{theorem}\label{teo:dualitySSeFP}
Let $J$ be an ideal of $\free_{\kappa}$.
\begin{enumerate}
\item\label{cor:lin-ord=closure-of-point}  $\free_{\kappa}/J$ is linearly ordered if and only if $J=\CU(x)$  for some $x\in\U^\kappa$.
\item\label{cor:simple=closure-of-real-point}  $\free_{\kappa}/J$ is simple if and only if $J=\CU(x)$ for some $x\in\R^\kappa \setminus \{ \orig \}$.
\item\label{cor:ss=closure-of-enlargement} $\free_{\kappa}/J$ is semisimple if and only if $J=\CU(S)$  for some  $S\seq \R^\kappa$.
\item\label{pro:FinPres=hperpoly1} If $\kappa$ is finite then the Riesz space $\free^{v}_{\kappa}/J$ is finitely presented if and only if $J=\CU(P)$ for some polyhedral cone $P\seq\R^\kappa$.
\item\label{pro:FinPres=hperpoly2} If $\kappa$ is finite then the $\ell$-group $\free^{\ell}_{\kappa}/J$ is finitely presented if and only if $J=\CU(P)$ for some rational polyhedral cone $P\seq\R^\kappa$.
\end{enumerate} 
\end{theorem}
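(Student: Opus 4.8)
The plan is to prove the five items of \Cref{teo:dualitySSeFP} by combining the characterizations already established in \Cref{l:null-cong}, \Cref{t:null-U}, and \Cref{p:null-U-max} with the Baker representation of free objects as algebras of piecewise linear functions. Items \eqref{cor:lin-ord=closure-of-point} and \eqref{cor:simple=closure-of-real-point} are essentially restatements of earlier results. For \eqref{cor:lin-ord=closure-of-point}: by \Cref{l:null-cong}\eqref{p:cong-embed}, $\free_\kappa/J$ embeds into $\U$ precisely when $J = \CU(x)$ for some $x \in \U^\kappa$; and since, under \Cref{ass:U-embeds}, every linearly ordered algebra generated by fewer than $\alpha$ elements embeds into $\U$, embeddability into $\U$ is equivalent to $\free_\kappa/J$ being linearly ordered (the forward direction is automatic since $\U$ is linearly ordered, the backward direction uses the embedding property). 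Note the trivial algebra is linearly ordered and corresponds to $J = \free_\kappa = \CU(\orig)$, consistent with \Cref{t:null-U}\eqref{t:null-U:item0}. For \eqref{cor:simple=closure-of-real-point}: $\free_\kappa/J$ is simple iff it is non-trivial and embeds into $\R$ (by Hölder's theorem, as recalled in \Cref{sec:intro}); by \Cref{l:null-cong}\eqref{p:cong-embed} applied with $A = \R$ this means $J = \CI(x)$ for some $x \in \R^\kappa$, and $J = \CI(\orig) = \free_\kappa$ is the trivial case to exclude, so $x \ne \orig$; finally $\CI(x) = \CU(x)$ by \Cref{l:facts-C-V}\eqref{l:facts-C-V:item1}.

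For \eqref{cor:ss=closure-of-enlargement}: an algebra is semisimple iff it is a subdirect product of simple algebras, which by the general adjunction theory (see the discussion after \Cref{l:null-cong}) means $J$ is an intersection of ideals $J'$ with $\free_\kappa/J'$ embedding into $\R$; by \Cref{l:null-cong}\eqref{p:cong-embed} each such $J'$ is of the form $\CI(x)$, so $J = \bigcap_{x \in S} \CI(x) = \CI(S) = \CU(S)$ for the corresponding $S \subseteq \R^\kappa$. This is exactly the equivalence \eqref{p:null-U-max:itemi}$\Leftrightarrow$\eqref{p:null-U-max:itemiii} of \Cref{p:null-U-max}\eqref{p:null-U-max:item3} once one identifies ``intersection of maximal ideals'' with ``semisimple'', using that $M$ is maximal iff $\free_\kappa/M$ is simple (non-trivial).

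For the finitely presented cases \eqref{pro:FinPres=hperpoly1} and \eqref{pro:FinPres=hperpoly2}, the key input is Baker's theorem: $\free^v_\kappa$ (resp.\ $\free^\ell_\kappa$) is the algebra of continuous piecewise linear functions on $\R^\kappa$ with real (resp.\ integer) coefficients. By \Cref{l:PWL-I(C)}, $\free_\kappa/\CU(P) \cong \free_\kappa/\CI(P)$ is the algebra of restrictions to $P$ of such functions. The strategy is: \emph{(i)} if $P$ is a (rational) polyhedral cone, then by compactness of the relevant data one shows $\CI(P)$ is finitely generated as an ideal --- a finite set of linear components cutting out $P$, together with finitely many defining functions, suffices --- so $\free_\kappa/\CI(P)$ is finitely presented; \emph{(ii)} conversely, if $\free_\kappa/J$ is finitely presented, then $J = \langle f_1, \dots, f_m \rangle$ for finitely many piecewise linear functions $f_i$, and one argues that $\VI(J) = \{x \in \R^\kappa \mid f_1(x) = \cdots = f_m(x) = 0\}$ is a (rational) polyhedral cone: each zero set $\VI(f_i)$ is a finite union of the cones cut out along the linear pieces of $f_i$ where the component vanishes, using that the linearity domains of a piecewise linear function are themselves (rational) polyhedral cones, and that finite unions and finite intersections of (rational) polyhedral cones are again (rational) polyhedral cones; finally $J = \CI(P) = \CU(P)$ by semisimplicity (finitely presented algebras in these varieties are semisimple, or directly because $J = \CU\VU(J)$ is an ideal and one checks $J = \CI\VI(J)$ via \Cref{p:null-U-max}).

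The main obstacle is the converse direction of the finitely presented cases: one must verify carefully that the zero set of a single piecewise linear function with integer (resp.\ real) coefficients is a rational (resp.\ real) polyhedral cone, and that passing from a finite presentation $J = \langle f_1, \dots, f_m\rangle$ to $\VI(J)$ and back to $\CI(\VI(J))$ recovers $J$ exactly. The first point rests on the standard fact that the regions of linearity of a continuous piecewise linear function form a fan of polyhedral cones whose defining inequalities have the same coefficient type as the components; the second point requires knowing that $J$, being finitely generated hence an ideal, satisfies $J = \CU\VU(J)$ by \Cref{t:null-U}\eqref{t:null-U:item3}, combined with the identification $\VU(J) \cap \R^\kappa = \VI(J)$ from \Cref{l:facts-C-V}\eqref{l:facts-C-V:item5} and a semisimplicity argument (e.g.\ via \cite{baker68} finitely presented objects are subalgebras of finitely generated free objects, hence Archimedean, hence semisimple in the finitely generated setting) to conclude $\CU(\VI(J)) = J$ rather than merely $\CU(\VU(J)) = J$.
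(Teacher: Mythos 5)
Your treatment of items (1)--(3) coincides with the paper's: linearly ordered $\Leftrightarrow$ prime or improper and then \Cref{t:null-U}; simple $\Leftrightarrow$ maximal (H\"older) and then item (1) of \Cref{p:null-U-max}, whose proof is exactly your argument through \Cref{l:null-cong} and \Cref{l:facts-C-V}; semisimple $\Leftrightarrow$ intersection of maximal ideals and then item (3) of \Cref{p:null-U-max}. Nothing to change there.

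For items (4)--(5), however, the direction ``$J=\CU(P)$ with $P$ a (rational) polyhedral cone $\Rightarrow \free_\kappa/J$ finitely presented'' has a genuine gap. You assert that ``by compactness of the relevant data'' the ideal $\CI(P)$ is generated by a finite family built from the linear forms cutting out $P$. That finite generation is precisely the nontrivial content of the statement: it amounts to two quotable facts, namely that (rational) polyhedral cones are exactly the sets $\VI(t)$ for a \emph{single} $t\in\freersn$ (resp.\ $t\in\freen$) --- Baker's Lemma~3.2, resp.\ Beynon's Corollary~2 to Theorem~3.1, and this is where the real/integer coefficient distinction enters, a point your sketch never addresses --- and that principal ideals of $\free_n$ are intersections of maximal ideals (the Baker--Beynon Nullstellensatz), which gives $\genid{t}=\CI\VI(\genid{t})=\CI(P)$. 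Without these inputs nothing in your sketch explains why an arbitrary piecewise linear function vanishing on $P$ lies in the ideal generated by your chosen finite family. The paper's proof is exactly this: finitely presented means $G\simeq\free_n/\genid{t}$ for a single $t$ (a finite set of relators is replaced by $\lvert t_1\rvert+\dots+\lvert t_m\rvert$), polyhedral cones are the sets $\VI(t)$, and item (3) of \Cref{p:null-U-max} applied to $\genid{t}$ yields $\genid{t}=\CU\VI(t)$, which settles both directions at once. Your other direction (computing $\VI(J)$ from a finite generating set and then invoking semisimplicity of finitely presented algebras to get $J=\CU(\VI(J))$) is workable and close in spirit to the paper, but the justification ``finitely presented objects are subalgebras of finitely generated free objects'' is not in \cite{baker68}; it would need Beynon's projectivity theorem, or, more economically, the same fact the paper quotes, that principal ideals are intersections of maximal ideals.
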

\begin{proof}
\Cref{cor:lin-ord=closure-of-point}. Recall that $\free_{\kappa}/J$ is linearly ordered if and only if $J$ is prime or $J=\free_\kappa$. By \Cref{t:null-U}(\ref{t:null-U:item0},\ref{t:null-U:item1}) $J$ is prime or $J=\free_\kappa$ if and only if there exists $x\in \U^\kappa$ such that $J=\CU(x)$. Note that $\free_{\kappa}/J$ is trivial exactly when $x$ is the origin.

\Cref{cor:simple=closure-of-real-point}. By \cite[Corollaire 2.6.7]{BKW}, $\free_{\kappa}/J$ is simple if and only if $J$ is maximal.  By \Cref{p:null-U-max}(\ref{p:null-U-max:item1}), $J$ is maximal if and only if there exists $x\in \R^\kappa \setminus \{ \orig \}$ such that $J=\CU(x)$.

\Cref{cor:ss=closure-of-enlargement}. By standard results of universal algebra (see, e.g., \cite[Theorem II.6.20]{BS}), $\free_{\kappa}/J$ is semisimple if and only if $J$ is an intersection of maximal ideals. In turn, by  \Cref{p:null-U-max}\eqref{p:null-U-max:item3} this is equivalent to $J= \CU (S)$ for some $S\seq \R^\kappa$. 

\Cref{pro:FinPres=hperpoly1}.
By Baker-Beynon duality, principal ideals are intersection of maximal ideals, and \cite[Lemma~3.2]{baker68} yields that polyhedral cones are exactly the sets of the form 
$\VI(t)$ for $t \in \freersn$. If $\genid{t}$ is the ideal of $\freersn$ generated by $t$, then $\VI(\genid{t}) = \VI(t)$.
Now, $G$ is finitely presented if and only if $G\simeq \free^{v}_n /\genid{t}$. By \Cref{p:null-U-max}(\ref{p:null-U-max:item3}),  $\genid{t}= \CU \VI(t)$ and $\VI(t)$ is a polyhedral cone by the above considerations. Conversely, assume  $J=\CU(P)$ for some  polyhedral cone $P$, then there exists $t$ such that $P=\VI(\genid{t})$. It follows  that $J=\CU (\VI(\genid{t}))$. By \Cref{p:null-U-max}(\ref{p:null-U-max:item3}), the latter equals $\genid{t}$, settling the claim.

\Cref{pro:FinPres=hperpoly2}. The proof is analogous to the one of \Cref{pro:FinPres=hperpoly1}, upon noticing that rational polyhedral cones are exactly the sets of the form $\VI(t)$ for $t\in \free^{\ell}_{n}$ (see \cite[Corollary~2 to Theorem~3.1]{beynon77}).
\end{proof}

\begin{remark}
The correspondences of \Cref{teo:dualitySSeFP} induce categorical dualities. More specifically, there is a dual equivalence between the full subcategory of linearly ordered (simple, semisimple and finitely presented, respectively) $\kappa$-generated $\ell$-groups  and the full subcategory of $\Z$-definable subsets of $\U^{\kappa}$ for some $\kappa < \alpha$ whose objects are the $\VU\CU$-closures of the points in some $\U^\kappa$ (points of $\R^\kappa \setminus \{ \orig \}$, subsets of $\R^\kappa$, polyhedral cones in $\R^\kappa$, respectively) for some $\kappa < \alpha$. Similar dualities can be obtained, in the case of Riesz spaces, by considering analogous full  subcategories.
\end{remark}

Recall from the introduction that a closed cone is a subset of $\R^\kappa$ that is closed in the Euclidean topology and is closed under multiplication by non-negative scalars.

\begin{proposition}\label{p:iso-closed-cones-and-closures}
\mbox{}\begin{enumerate}
\item\label{p:iso-closed-cones-and-closures:item1} The composition $\VU\CU$ induces  an isomorphism between the category of closed cones in $\R^\kappa$ and piecewise linear maps with integer coefficients, with $\kappa<\alpha$, and the category of $\VU\CU$-closures of subsets of $\R^\kappa$ with $\Z$-definable maps between them, with $\kappa<\alpha$.
\item\label{p:iso-closed-cones-and-closures:item2} The composition $\VU\CU$ induces an isomorphism between the category of  closed cones in $\R^\kappa$ and  piecewise linear maps, with $\kappa<\alpha$, and the category of  $\VU\CU$-closures of subsets of $\R^\kappa$ with $\R$-definable maps between them, with $\kappa<\alpha$.
\end{enumerate}
\end{proposition}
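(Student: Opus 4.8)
The plan is to make the functor induced by $\VU\CU$ completely explicit in terms of defining terms, and then to verify directly that it has a strict two-sided inverse; I stress that merely composing Baker--Beynon duality with the duality of \Cref{thm:duality} (through the common category of semisimple $\kappa$-generated algebras) would yield only an equivalence, so this explicit bookkeeping is precisely what promotes the statement to an isomorphism of categories. I will prove only \eqref{p:iso-closed-cones-and-closures:item1}; the proof of \eqref{p:iso-closed-cones-and-closures:item2} is obtained verbatim, replacing ``integer coefficients'' by ``real coefficients'' and $\freek$ by $\freersk$. Throughout I use two standing facts from Baker's theorem \cite{baker68} and Baker--Beynon duality (recalled in \Cref{sec:intro}): the closed cones in $\R^\kappa$ are exactly the $\VI\CI$-fixed subsets, and every piecewise linear function with integer coefficients on a closed cone $C\seq\R^\kappa$ is the restriction to $C$ of some term of $\freek$.

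First I would define the candidate functor $\Phi$: on objects $\Phi(C)\df\VU\CU(C)$, and on a piecewise linear map $f=(f_\beta)_{\beta<\nu}\colon C\to D$ between closed cones $C\seq\R^\kappa$, $D\seq\R^\nu$, let $\Phi(f)$ be the $\Z$-definable map $\VU\CU(C)\to\U^\nu$ determined by any $\nu$-tuple $(t_\beta)_{\beta<\nu}$ of terms of $\freek$ with $t_\beta|_C=f_\beta$. The two things to check for well-definedness both follow from $\CU(C)=\CI(C)$ (\Cref{l:facts-C-V}\eqref{l:facts-C-V:item1}): if $t_\beta|_C=t'_\beta|_C$ then $t_\beta-t'_\beta\in\CI(C)=\CU(C)$ vanishes on $\VU\CU(C)$, so $\Phi(f)$ is independent of the chosen $t_\beta$; and for $s\in\CU(D)=\CI(D)$ the composite term $s[(t_\beta)_\beta]$ vanishes on $C$ because $f(C)\seq D$, so $s[(t_\beta)_\beta]\in\CU(C)$ and $s$ vanishes at $(t_\beta(x))_\beta$ for every $x\in\VU\CU(C)$; hence $\Phi(f)$ lands in $\VU\CU(D)$. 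Functoriality is then automatic, since identities correspond to the tuple of projection terms and composition is computed by substituting defining terms on both sides.

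Next I would show $\Phi$ is bijective on objects, with inverse $D\mapsto D\cap\R^\kappa$. For a general object $D=\VU\CU(S)$ of the target category ($S\seq\R^\kappa$), \Cref{l:facts-C-V}\eqref{l:facts-C-V:item4} gives $D\cap\R^\kappa=\VI\CI(S)$, which is a closed cone, and $\CU(\VI\CI(S))=\CI\VI\CI(S)=\CI(S)=\CU(S)$ by \Cref{l:facts-C-V}\eqref{l:facts-C-V:item1} and the Galois connection (\Cref{l:galois}); applying $\VU$ yields $\Phi(D\cap\R^\kappa)=D$. Conversely, for a closed cone $C$ one has $\Phi(C)\cap\R^\kappa=\VU\CU(C)\cap\R^\kappa=\VI\CI(C)=C$.

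Finally I would check that $\Phi$ is full and faithful. Because $C=\VI\CI(C)=\VU\CU(C)\cap\R^\kappa\seq\VU\CU(C)$, the map $\Phi(f)$ restricts on $C$ to $(t_\beta|_C)_\beta=f$; this simultaneously gives faithfulness and shows that restriction to $C$ inverts $\Phi$ on morphisms. For fullness, a $\Z$-definable map $g\colon\VU\CU(C)\to\VU\CU(D)$ is given by a tuple $(t_\beta)_\beta$ of terms of $\freek$, and for $x\in C\seq\R^\kappa$ one gets $(t_\beta(x))_\beta\in\VU\CU(D)\cap\R^\nu=\VI\CI(D)=D$ (membership in $\R^\nu$ because the $t_\beta$ have integer coefficients and $x\in\R^\kappa$), so $(t_\beta|_C)_\beta$ is a piecewise linear map $C\to D$ with integer coefficients that $\Phi$ carries back to $g$. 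Together with the object bijection this makes $\Phi$ an isomorphism of categories, proving \eqref{p:iso-closed-cones-and-closures:item1}, and \eqref{p:iso-closed-cones-and-closures:item2} follows by the same argument. I expect the only subtle point to be the two ``transfer'' steps---that a piecewise linear map between closed cones extends to a $\Z$-definable map of their $\VU\CU$-closures, and conversely---but both reduce, as above, to the single observation that a term of $\freek$ vanishes on $C$ if and only if it vanishes on $\VU\CU(C)$, which is immediate from \Cref{l:facts-C-V}\eqref{l:facts-C-V:item1}.
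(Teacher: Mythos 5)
Your proposal is correct and follows essentially the same route as the paper: objects correspond via $C\mapsto\VU\CU(C)$ with inverse $D\mapsto D\cap\R^\kappa$, verified through \Cref{l:facts-C-V} and the Galois connection, and morphisms are transported by defining terms (restriction one way, extension by the same terms the other). Your extra bookkeeping (independence of the chosen terms, that $\Phi(f)$ lands in $\VU\CU(D)$, fullness and faithfulness) simply spells out what the paper dismisses as ``straightforward to see,'' under the same tacit identification of piecewise linear maps on closed cones with restrictions of term-defined maps.
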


\begin{proof}
Assume $A=\VU\CU(S)$ for some $S\seq \R^\kappa$. We map $A$ to the set $A\cap \R^\kappa$, which is a closed cone because $A \cap \R^\kappa =\VU \CU(S) \cap \R^\kappa = \VI \CU(S) = \VI \CI (S)$ by \Cref{l:facts-C-V}. Vice versa, we map each closed cone $C \subseteq \R^\kappa$ to the subset $\VU \CU(C) \subseteq \U^\kappa$. We show that these maps are inverses of each other. Since we have already observed that $\VU \CU(S) \cap \R^\kappa = \VI \CI (S)$, it follows that 
\[
\VU \CU (\VU \CU(S) \cap \R^\kappa) = \VU \CU \VI \CI (S) = \VU \CI \VI \CI (S) = \VU \CI (S) = \VU \CU (S).
\]
On the other hand, if $C \subseteq \R^\kappa$ is a closed cone, then \Cref{l:facts-C-V} implies that
\[
\VU \CU (C) \cap \R^\kappa =  \VI \CI (C) = C.
\]
Since each $\Z$-definable map $f\colon \VU\CU(S_1)\to \VU\CU(S_2)$ is induced by a tuple of terms in the language of  $\ell$-groups, the map $f_{|\VU\CU(S_1) \cap \R^\kappa}:\VU\CU(S_1) \cap \R^\kappa \to \VU\CU(S_2) \cap \R^\kappa$ is a well-defined $\Z$-definable map between closed cones. Vice versa, if $g\colon C_1 \to C_2$ is a $\Z$-definable map between closed cones, then we can consider the map $f\colon \VU \CU(C_1) \to \VU \CU(C_2)$ defined by the same terms that define $g$. It is then straightforward to see that this correspondence gives rise to the required isomorphism. The proof of \Cref{p:iso-closed-cones-and-closures:item2} is analogous.
\end{proof}

It is a straightforward consequence of \Cref{l:facts-C-V} that the compositions of the functors of the dual equivalence of \Cref{thm:duality} with the isomorphisms of \Cref{p:iso-closed-cones-and-closures} coincide with the functors of Baker-Beynon duality. Thus, the duality of \Cref{thm:duality} extends Baker-Beynon duality.

\section{The Zariski topology on \texorpdfstring{$\Uz^\kappa$}{U0k}}
\label{sec:first-properties}
In this section we study the topology induced  by the closure operator $\VU \CU$. We start by noticing that $\VU \CU$ is not a topological closure operator on the whole $\U^{\kappa}$. Indeed, $\VU \CU(\emptyset)= \VU(\free_\kappa)= \{ \orig\}$.  This issue can easily be  fixed by removing the origin from $\U^\kappa$. We set $\Uz^\kappa \df \U^\kappa \setminus \{ \orig \}$ and $\VUst(T)\df\VU(T) \setminus \{ \orig \}$,  for each $T \subseteq \free_\kappa$. Note that $\CU(S)=\CU(S \setminus \{\orig\})$ for any $S \subseteq \U^\kappa$ because $t(\mathbf{0})=0$ for all $t\in \free_{\kappa}$. It is an immediate consequence of \Cref{l:galois} that $\VUst$ and $\CU$ form a Galois connection between the powersets of $\Uz^\kappa$ and of $\free_\kappa$.

\begin{proposition}\label{p:VC-is-topological}
$\VUst \CU$ is a topological closure operator on $\Uz^\kappa$.
\end{proposition}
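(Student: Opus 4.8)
The plan is to check the Kuratowski closure axioms for $\VUst\CU$ on $\Uz^\kappa$. Since $\VUst$ and $\CU$ form a Galois connection between $\mathscr{P}(\Uz^\kappa)$ and $\mathscr{P}(\free_{\kappa})$ (a consequence of \Cref{l:galois}), the composite $\VUst\CU$ is automatically extensive, monotone, and idempotent; so the only axioms left to verify are $\VUst\CU(\emptyset)=\emptyset$ and the preservation of binary unions. The first is exactly the reason the origin had to be removed: $\CU(\emptyset)=\free_{\kappa}$ vacuously, and $\VU(\free_{\kappa})=\{\orig\}$ since every projection term lies in $\free_{\kappa}$ (this is \Cref{t:null-U}\eqref{t:null-U:item0}), whence $\VUst\CU(\emptyset)=\{\orig\}\setminus\{\orig\}=\emptyset$.

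For binary unions I would show $\VUst\CU(S_1\cup S_2)=\VUst\CU(S_1)\cup\VUst\CU(S_2)$ for $S_1,S_2\subseteq\Uz^\kappa$. The inclusion $\supseteq$ is immediate from monotonicity. For $\subseteq$, note first that $\CU(S_1\cup S_2)=\CU(S_1)\cap\CU(S_2)$ straight from the definition, and write $I_j\df\CU(S_j)$, which is an ideal of $\free_{\kappa}$. Take $x\in\VUst\CU(S_1\cup S_2)$; then $x\neq\orig$, so $\CU(x)$ is a prime ideal by \Cref{t:null-U}\eqref{t:null-U:item1}, and $I_1\cap I_2\subseteq\CU(x)$ by the Galois connection. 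The crux is then the standard fact that a prime ideal $P$ of an $\ell$-group or Riesz space is meet-prime in the lattice of ideals: $I_1\cap I_2\subseteq P$ forces $I_1\subseteq P$ or $I_2\subseteq P$. Indeed, if there were $a\in I_1\setminus P$ and $b\in I_2\setminus P$, then $\lvert a\rvert\wedge\lvert b\rvert$ would lie in $I_1\cap I_2\subseteq P$ (ideals are closed under passing to elements of smaller absolute value), and primeness in the form recalled in \Cref{sec:intro} would put $\lvert a\rvert$ or $\lvert b\rvert$, hence $a$ or $b$, in $P$, a contradiction. Applying this with $P=\CU(x)$ yields $I_1\subseteq\CU(x)$ or $I_2\subseteq\CU(x)$, i.e. $x\in\VU(I_1)$ or $x\in\VU(I_2)$; since $x\neq\orig$ this places $x$ in $\VUst\CU(S_1)\cup\VUst\CU(S_2)$, as required.

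I expect the only genuine obstacle to be the meet-primeness of prime ideals, and even that is routine given the absolute-value convexity of ideals and the characterization of primeness ($a\wedge b\in P\Rightarrow a\in P$ or $b\in P$) already recorded in the introduction with references to \cite{BKW,RS1}; everything else is bookkeeping with the Galois connection and \Cref{t:null-U}. An alternative, equivalent route would be to observe that the closed sets are precisely the $\VUst(T)$ with $T$ an ideal of $\free_{\kappa}$ and to verify directly that $\VUst(I_1)\cup\VUst(I_2)=\VUst(I_1\cap I_2)$, which is the same computation packaged differently.
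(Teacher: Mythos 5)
Your proof is correct, and the only place where it genuinely diverges from the paper is the key step, the preservation of binary unions. The paper also reduces this to the identity $\VU(J_1\cap J_2)=\VU(J_1)\cup\VU(J_2)$ for ideals $J_1,J_2$ of $\free_\kappa$, but proves it by a direct pointwise computation: if $x\notin\VU(J_1)\cup\VU(J_2)$, pick $t_i\in J_i$ with $t_i(x)\neq 0$ and observe that (the absolute values of) their meet lies in $J_1\cap J_2$ and does not vanish at $x$, because $\U$ is linearly ordered. You instead go through \Cref{t:null-U}\eqref{t:null-U:item1}: for $x\neq\orig$ the ideal $\CU(x)$ is prime, and prime ideals are meet-prime in the lattice of ideals, so $I_1\cap I_2\subseteq\CU(x)$ forces $I_1\subseteq\CU(x)$ or $I_2\subseteq\CU(x)$. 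Both arguments ultimately rest on the linearity of $\U$ (in your case via the embedding of $\free_\kappa/\CU(x)$ into the chain $\U$), but the paper's route is slightly more elementary and self-contained, needing only that $\CU(S)$ is an ideal and that meets of nonzero values in a chain are nonzero, whereas yours invokes the structural lemma on point-ideals plus the standard meet-primeness fact, which you correctly justify via the convexity of ideals and the characterization $a\wedge b\in P\Rightarrow a\in P$ or $b\in P$. The treatment of the Galois-connection axioms and of $\VUst\CU(\emptyset)=\emptyset$ is the same in both. One small remark: your closing "alternative route" is in fact the paper's proof, except that the paper establishes the identity by the pointwise computation above rather than by the prime-ideal argument, so the two packagings are not quite the same computation.
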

\begin{proof}
Since $\VUst$ and $\CU$ form a Galois connection, the composition $\VUst \CU$ is a closure operator on $\Uz^\kappa$. Also, $\VUst\CU(\emptyset)=\VU\CU(\emptyset) \setminus \{ \orig \}=\emptyset$.  It remains to show that if $C,D \subseteq \Uz^\kappa$, then $\VUst \CU(C \cup D) = \VUst \CU(C) \cup \VUst \CU(D)$. 
It suffices to show that, for any $J_1, J_2$ ideal of $\free_\kappa$, 
\begin{align}\label{eq:VC-top}
\VU(J_1\cap J_2)=\VU(J_1)\cup \VU(J_2).
\end{align}
The inclusion $\VU(J_1)\cup \VU(J_2)\subseteq \VU(J_1\cap J_2)$ is trivial. To prove the converse inclusion assume that $x\notin \VU(J_1)\cup \VU(J_2)$. Then there must exist $t_{1}\in J_1$ and $t_{2}\in J_2$ such that $t_{1}(x)\neq 0$ and $t_{2}(x)\neq 0$.  Also, $(t_{1}\wedge t_{2})(x)=\min \{t_{1}(x),t_{2}(x)\}\neq 0$ because $\U$ is linearly ordered. Since $t_{1}\wedge t_{2}\in J_1\cap J_2$, it follows that $x\notin \VU(J_1\cap J_2)$. 
\end{proof}

As a consequence of \Cref{p:VC-is-topological}, the subsets of $\Uz^\kappa$ of the form $\VUst(J)$, for $J$ an ideal of $\free_{\kappa}$, are the closed subsets of a topology on $\Uz^\kappa$ that we call \emph{Zariski topology}. It is readily seen that the subsets of the form $\VUst(t)$ with $t \in \free_\kappa$ form a basis of closed sets for  this topology.

\begin{remark}\label{rem:FinerTop}
Since each Riesz space is also an $\ell$-group, it makes sense to compare the Zariski topologies on $\Uz^\kappa$ given by the two different languages.  Every $\ell$-group term is also a Riesz term, therefore the topology induced by the Riesz terms is finer than the one relative to $\ell$-group terms; in fact, in \Cref{ex:finerTop} we show that in general it is strictly finer. When $\mathbb{R}$ is a subalgebra of $\U$, one can consider the topologies on $\mathbb{R}^\kappa \setminus \{ \orig \}$ as subspace of $\Uz^\kappa$. The closed subsets of $\mathbb{R}^\kappa \setminus \{ \orig \}$ are exactly the sets in the range of $\VI$ with the origin removed, i.e. the closed cones minus the origin.
So, in contrast with $\Uz^\kappa$, the subspace topologies induced on $\mathbb{R}^\kappa \setminus \{ \orig \}$ by $\ell$-group and  Riesz terms  coincide.
\end{remark}

\begin{lemma}\label{l:compactOpen=cBC}
The compact opens of $\Uz^\kappa$ are exactly the complements of the subsets of the form $\VUst(t)$ with $t \in \free_\kappa$. 
\end{lemma}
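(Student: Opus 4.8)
The plan is to prove the two inclusions separately, the workhorses being the Galois connection of \Cref{l:galois} between $\CU$ and $\VUst$ and the identification in \Cref{t:null-U}\eqref{t:null-U:item2} of $\CU\VU(T)$ with the ideal generated by $T$. First I would record the elementary identity
\[
\VUst(s_1)\cap\cdots\cap\VUst(s_k)=\VUst\bigl(|s_1|\vee\cdots\vee|s_k|\bigr)\qquad\text{for all }s_1,\dots,s_k\in\free_\kappa,
\]
which holds because terms are evaluated pointwise and, $\U$ being linearly ordered, $\bigl(|s_1|\vee\cdots\vee|s_k|\bigr)(x)=|s_1(x)|\vee\cdots\vee|s_k(x)|$ vanishes exactly when every $s_j(x)$ does. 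In particular the sets $\VUst(t)$ are closed under finite intersection, so (as already observed) they form a basis of closed sets and their complements form a basis of open sets of $\Uz^\kappa$.

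For the easy containment, each $\Uz^\kappa\setminus\VUst(t)$ is open, and I would check it is compact. Let $\{\Uz^\kappa\setminus\VUst(s_i)\}_{i\in I}$ be a cover of $\Uz^\kappa\setminus\VUst(t)$ by basic opens; it suffices to treat covers of this form. Passing to complements in $\Uz^\kappa$, this cover condition reads $\bigcap_{i\in I}\VUst(s_i)\seq\VUst(t)$, and adding back the origin (which lies in every set $\VU(\cdot)$ since terms are homogeneous) gives $\VU(\{s_i\mid i\in I\})\seq\VU(t)$. Applying $\CU$ and using \Cref{l:galois}, $t\in\CU\VU(t)\seq\CU\VU(\{s_i\mid i\in I\})$, so by \Cref{t:null-U}\eqref{t:null-U:item2} the element $t$ lies in the ideal generated by $\{s_i\mid i\in I\}$. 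Since the ideal generated by a subset is the directed union of the ideals generated by its finite subsets, there are $i_1,\dots,i_k\in I$ with $t\in\CU\VU(\{s_{i_1},\dots,s_{i_k}\})$, i.e.\ $\VU(\{s_{i_1},\dots,s_{i_k}\})\seq\VU(t)$. Removing the origin, $\bigcap_{j=1}^k\VUst(s_{i_j})\seq\VUst(t)$, which is precisely the statement that $\{\Uz^\kappa\setminus\VUst(s_{i_j})\}_{j=1}^k$ covers $\Uz^\kappa\setminus\VUst(t)$: the desired finite subcover.

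For the converse, let $U\seq\Uz^\kappa$ be compact and open. Since the complements of the $\VUst(t)$'s form a basis of opens, $U=\bigcup_{i\in I}\bigl(\Uz^\kappa\setminus\VUst(s_i)\bigr)$ for suitable $s_i\in\free_\kappa$, and compactness yields a finite $J\seq I$ with $U=\Uz^\kappa\setminus\bigcap_{i\in J}\VUst(s_i)$. By the identity of the first step, $\bigcap_{i\in J}\VUst(s_i)=\VUst\bigl(\bigvee_{i\in J}|s_i|\bigr)$, so $U=\Uz^\kappa\setminus\VUst(t)$ with $t=\bigvee_{i\in J}|s_i|\in\free_\kappa$, as required.

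The only genuine subtlety is in the compactness half: one must turn the topological inclusion $\bigcap_{i}\VUst(s_i)\seq\VUst(t)$ into the algebraic assertion that $t$ belongs to the ideal generated by the $s_i$'s — this is where the Galois connection and \Cref{t:null-U}\eqref{t:null-U:item2} are used — and then invoke the finitary character of ideal generation to pass to finitely many $s_i$'s. The hypothesis that $\U$ is linearly ordered intervenes only in computing finite joins of absolute values pointwise; everything else is routine manipulation of the Galois pair and of the origin.
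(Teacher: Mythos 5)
Your proof is correct and follows essentially the same route as the paper's: reduce to covers by basic opens, translate the containment $\bigcap_i\VUst(s_i)\seq\VUst(t)$ via the Galois connection and \Cref{t:null-U}\eqref{t:null-U:item2} into the statement that $t$ lies in the ideal generated by the $s_i$'s, extract finitely many $s_i$'s, and handle the converse with the finite-intersection identity. The only cosmetic differences are that you use $\lvert s_1\rvert\vee\cdots\vee\lvert s_k\rvert$ where the paper uses $\lvert s_1\rvert+\cdots+\lvert s_k\rvert$ (either works, and in fact neither needs $\U$ to be linearly ordered for this identity), and that you appeal to the finitary character of ideal generation where the paper instead cites the explicit description of generated ideals ($\lvert t\rvert\le\lvert s_1\rvert+\cdots+\lvert s_n\rvert$ for some finite subset); both justifications are equally valid.
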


\begin{proof}
We first show that if $t \in \free_\kappa$, then $\Uz^\kappa \setminus \VUst(t)$ is compact. Suppose that $\Uz^\kappa \setminus \VUst(t)$ is contained in a union of opens, which, without loss of generality, can be assumed to be basic. Hence there exists $T \subseteq \free_\kappa$ such that
\[
\Uz^\kappa \setminus \VUst(t) \subseteq \bigcup \{ \Uz^\kappa \setminus \VUst(s) \mid s \in T  \}.
\]
By taking complements of both sides, we obtain 
\begin{align*}
    \VUst(t) &\supseteq\bigcap \{ \VUst(s) \mid s \in T \} = \VUst(T),
\end{align*}
and applying $\CU$ yields
\begin{align*}
\genid{t} &= \CU\VUst(t) \subseteq \CU \VUst (T) = \genid{T},
\end{align*}
where $\genid{T}$ denotes the ideal of $\free_\kappa$ generated by $T$.
By \cite[Proposition~2.2.3]{BKW} and \cite[p.~96]{RS1}, there exist $s_1, \ldots, s_n \in T$ such that $\lvert t \rvert \le \lvert s_1 \rvert + \dots + \lvert s_n \rvert$. Since
\begin{align}\label{eq: intersection V}
\begin{split}
\VUst(\lvert s_1 \rvert + \cdots +\lvert s_n\rvert ) &= \{ x \in \Uz^\kappa \mid  \lvert s_1(x) \rvert + \cdots +\lvert s_n(x)\rvert =0 \} \\
&=\{ x \in \Uz^\kappa \mid s_1(x)=\cdots=s_n(x)=0\}\\
&=\VUst(s_1) \cap \cdots \cap \VUst(s_n),    
\end{split}
\end{align}
we obtain $\VUst(s_1) \cap \cdots \cap \VUst(s_n) \subseteq \VUst(t)$. Thus, 
\[
\Uz^\kappa \setminus \VUst(t) \subseteq (\Uz^\kappa \setminus \VUst(s_1)) \cup \cdots \cup (\Uz^\kappa \setminus  \VUst(s_n)).
\]
This shows that $\Uz^\kappa \setminus \VUst(t)$ is compact for each $t \in \free_\kappa$. 

Suppose that $O$ is a compact open subset of $\Uz^\kappa$. Since $O$ is open, there exists $T \subseteq \free_\kappa$ such that $O= \Uz^\kappa \setminus \VUst(T)= \bigcup \{ \Uz^\kappa \setminus \VUst(t) \mid t \in T \}$. Since $O$ is compact, there exist $t_1, \ldots, t_n \in T$ such that $O= (\Uz^\kappa \setminus \V(t_1)) \cup \cdots \cup (\Uz^\kappa \setminus \VUst(t_n)) = \Uz^\kappa \setminus (\VUst(t_1) \cap \cdots \cap \VUst(t_n))$. 
Thus, $O=\Uz^\kappa \setminus \VUst(\lvert t_1 \rvert + \cdots +\lvert t_n\rvert)$ holds because $\VUst(t_1) \cap \cdots \cap \VUst(t_n)= \VUst(\lvert t_1 \rvert + \cdots +\lvert t_n\rvert )$.
\end{proof}

\begin{proposition}\label{p:compactOpen=base}
The set of compact opens of $\Uz^\kappa$ is a basis of open sets for the Zariski topology and it is closed under non-empty finite intersections.
\end{proposition}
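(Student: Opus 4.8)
The plan is to get both assertions almost for free from \Cref{l:compactOpen=cBC} together with the observation recorded just after \Cref{p:VC-is-topological} that the sets $\VUst(t)$, $t\in\free_\kappa$, form a basis of closed sets for the Zariski topology. Indeed, a family is a basis of closed sets precisely when the family of complements is a basis of open sets; hence the sets $\Uz^\kappa\setminus\VUst(t)$ with $t\in\free_\kappa$ form a basis of open sets for the Zariski topology. By \Cref{l:compactOpen=cBC} these are exactly the compact opens of $\Uz^\kappa$, which establishes the first claim.

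For closure under non-empty finite intersections, an easy induction reduces the problem to the intersection of two compact opens. Using \Cref{l:compactOpen=cBC}, write them as $\Uz^\kappa\setminus\VUst(t_1)$ and $\Uz^\kappa\setminus\VUst(t_2)$ with $t_1,t_2\in\free_\kappa$; then their intersection equals $\Uz^\kappa\setminus\big(\VUst(t_1)\cup\VUst(t_2)\big)$, so it suffices to exhibit a single term $t\in\free_\kappa$ with $\VUst(t)=\VUst(t_1)\cup\VUst(t_2)$. I would take $t\df\lvert t_1\rvert\wedge\lvert t_2\rvert$, which belongs to $\free_\kappa$ since free algebras are closed under term operations and $\lvert s\rvert=s\vee(-s)$. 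The identity $\VUst(\lvert t_1\rvert\wedge\lvert t_2\rvert)=\VUst(t_1)\cup\VUst(t_2)$ follows exactly as in the proof of \Cref{p:VC-is-topological}: for $x\in\Uz^\kappa$ one has $(\lvert t_1\rvert\wedge\lvert t_2\rvert)(x)=\min\{\lvert t_1(x)\rvert,\lvert t_2(x)\rvert\}$ because $\U$ is linearly ordered, and a minimum of two non-negative elements of a chain vanishes if and only if one of them vanishes. Applying \Cref{l:compactOpen=cBC} once more, $\Uz^\kappa\setminus\VUst(\lvert t_1\rvert\wedge\lvert t_2\rvert)$ is again compact open, which closes the induction.

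The argument is entirely routine; the only point meriting a line of justification is the use of the linear order of $\U$ in the identity $\VUst(\lvert t_1\rvert\wedge\lvert t_2\rvert)=\VUst(t_1)\cup\VUst(t_2)$, which is the same phenomenon already exploited in \eqref{eq:VC-top}. I do not expect any genuine obstacle here.
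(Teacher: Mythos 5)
Your proof is correct and follows essentially the same route as the paper: both derive the basis claim from \Cref{l:compactOpen=cBC} together with the fact that the sets $\VUst(t)$ form a basis of closed sets, and both handle intersections by showing $\VUst(t_1)\cup\VUst(t_2)=\VUst(\lvert t_1\rvert\wedge\lvert t_2\rvert)$ using the linear order of $\U$. No gaps.
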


\begin{proof}
Since $\{ \VUst(t) \mid t \in \free_\kappa \}$ is a basis of closed subsets of $\Uz^\kappa$, \Cref{l:compactOpen=cBC} implies that the compact open subsets form a basis of open subsets of the Zariski topology of $\Uz^\kappa$.

To conclude the proof it is enough to show that the basic closed sets $\VUst(t)$ are closed under binary unions and then apply \Cref{l:compactOpen=cBC}.
Let $t_1,t_2 \in \free_\kappa$, then
\begin{align*}
\VUst(t_1) \cup \VUst(t_2)&= \{ x \in \Uz^\kappa \mid t_1(x)=0 \mbox{ or } t_2(x)=0 \}\\ 
&= \{ x \in \Uz^\kappa \mid \lvert t_1(x)\rvert \wedge \lvert t_2(x)\rvert=0 \}\\
&=\VUst(\lvert t_1\rvert \wedge \lvert t_2\rvert )
\end{align*}
because $\U$ is linearly ordered. 
\end{proof}

Recall that a closed subset $C$ of a topological space is called  \emph{irreducible} if $C \subseteq D \cup E$ implies $C \subseteq D$ or $C \subseteq E$, for all closed subsets $D,E$.

\begin{proposition}\label{p:irrClosed=prime}
The operators $\VUst$ and $\CU$ restrict to order isomorphisms between the poset of irreducible closed subsets of $\Uz^\kappa$ ordered by inclusion and $\spec(\free_\kappa)$ ordered by reverse inclusion.
\end{proposition}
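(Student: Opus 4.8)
The plan is to show first that $\VUst$ and $\CU$ already restrict to an order-reversing bijection between \emph{all} closed subsets of $\Uz^\kappa$ and \emph{all} ideals of $\free_{\kappa}$, and then to cut this bijection down to irreducible closed sets on one side and prime ideals on the other. For the first step I would argue as follows: by the discussion following \Cref{p:VC-is-topological} the closed subsets of $\Uz^\kappa$ are exactly the sets $\VUst(J)$ with $J$ an ideal of $\free_{\kappa}$, hence the fixed points of the closure operator $\VUst\CU$; and, using $\CU(S)=\CU(S\setminus\{\orig\})$ (so that $\CU\VUst=\CU\VU$) together with \Cref{t:null-U}\eqref{t:null-U:item3}, the ideals of $\free_{\kappa}$ are exactly the fixed points of $\CU\VUst$. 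Since an antitone Galois connection always restricts to a mutually inverse order-reversing pair between the two posets of fixed points, $C\mapsto\CU(C)$ and $J\mapsto\VUst(J)$ are order-reversing mutually inverse bijections between closed subsets of $\Uz^\kappa$ and ideals of $\free_{\kappa}$.

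Next I would identify which closed sets are irreducible, adopting the usual convention that irreducible closed sets are nonempty; this is consistent with passing to $\spec(\free_{\kappa})$ because $\VUst(J)=\emptyset$ forces $\VU(J)=\{\orig\}$ (as $\orig\in\VU(J)$ always), hence $J=\CU\VU(J)=\free_{\kappa}$ by \Cref{t:null-U}\eqref{t:null-U:item0} and \eqref{t:null-U:item3}, while $\free_{\kappa}\notin\spec(\free_{\kappa})$. For the direction ``prime $\Rightarrow$ irreducible'' I would take $P\in\spec(\free_{\kappa})$, write $P=\CU(x)$ with $x\in\Uz^\kappa$ via \Cref{t:null-U}\eqref{t:null-U:item1}, and note that the matching closed set $\VUst(P)=\VUst\CU(x)$ is then the Zariski closure $\overline{\{x\}}$ of the single point $x$; the closure of a point is always nonempty and irreducible, since from $\overline{\{x\}}\subseteq D\cup E$ with $D,E$ closed one gets $x\in D$ or $x\in E$, and hence $\overline{\{x\}}\subseteq D$ or $\overline{\{x\}}\subseteq E$. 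For the converse I would let $C$ be a nonempty irreducible closed set, put $J\df\CU(C)$ (an ideal with $C=\VUst(J)$ and $J\neq\free_{\kappa}$), and show $J$ is prime: if $a\wedge b\in J$ with $a,b\in\free_{\kappa}$, then $(a\wedge b)(x)=0$ for every $x\in C$, and since $\U$ is linearly ordered the vanishing of $\min\{a(x),b(x)\}$ forces $a(x)=0$ or $b(x)=0$, so $C\subseteq\VUst(a)\cup\VUst(b)$; irreducibility then gives $C\subseteq\VUst(a)$ or $C\subseteq\VUst(b)$, and applying $\CU$ and \Cref{t:null-U}\eqref{t:null-U:item2} (which gives $\CU\VUst(a)=\genid{a}\ni a$, and similarly for $b$) yields $a\in J$ or $b\in J$. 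Combining the two steps, the bijection of the first paragraph restricts to the required order isomorphism between the irreducible closed subsets of $\Uz^\kappa$ ordered by inclusion and $\spec(\free_{\kappa})$ ordered by reverse inclusion.

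I expect the only genuinely delicate point to be the translation of topological irreducibility into algebraic primality in the converse direction; everything else is bookkeeping with the Galois connection and the fixed-point descriptions already available. That delicate point, however, reduces to the single observation that $\U$ is linearly ordered, so that a term $a\wedge b$ can vanish at a point only when $a$ or $b$ does — the same feature that made the basic closed sets $\VUst(t)$ closed under binary unions in \Cref{p:compactOpen=base}.
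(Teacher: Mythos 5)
Your proof is correct, and its first half — cutting the Galois connection down to an order-reversing bijection between the closed subsets of $\Uz^\kappa$ and the ideals of $\free_\kappa$ via the fixed-point descriptions and \Cref{t:null-U}\eqref{t:null-U:item3} — is exactly how the paper's proof begins. Where you diverge is in matching irreducible closed sets with prime ideals: the paper does this abstractly, noting that irreducible closed sets are the join-prime elements of the closed-set lattice and citing the standard fact that an ideal is prime if and only if it is meet-prime in the ideal lattice, so the anti-isomorphism finishes the job. You instead argue both directions by hand: for prime $\Rightarrow$ irreducible you use \Cref{t:null-U}\eqref{t:null-U:item1} to write $P=\CU(x)$, so that $\VUst(P)$ is the closure of the point $x$ and hence irreducible (in effect anticipating \Cref{p:sober}); for irreducible $\Rightarrow$ prime you use the linearity of $\U$ to get $C\subseteq\VUst(a)\cup\VUst(b)$ from $a\wedge b\in\CU(C)$, and then \Cref{t:null-U}\eqref{t:null-U:item2} to pull $a$ or $b$ into $\CU(C)$. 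The paper's route is shorter and purely lattice-theoretic but leans on the cited meet-prime characterization of prime ideals; yours is more self-contained at this stage (it uses only \Cref{t:null-U}, the same linearity argument already exploited in \Cref{p:VC-is-topological,p:compactOpen=base}, and the $a\wedge b$ characterization of primality adopted in the introduction), and you also deal explicitly with the empty closed set versus the improper ideal $\free_\kappa$, a point the paper leaves implicit in its ``join-prime'' convention. Both arguments are sound; yours trades brevity for explicitness.
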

\begin{proof}
Since $\VUst$ and $\CU$ form a Galois connection, \cref{t:null-U}\eqref{t:null-U:item3} implies that $\VUst$ and $\CU$ restrict to an order isomorphism between the lattice of closed subsets of $\Uz^\kappa$ ordered by inclusion and the lattice of ideals of $\free_\kappa$ ordered by reverse inclusion. To conclude, it is enough to check that under this isomorphism the irreducible closed sets correspond to prime ideals. On the one hand, the irreducible closed sets are join-prime elements of the lattice of closed subsets of $\Uz^\kappa$. On the other hand, it is well known (see, e.g., \cite[Théorème~2.4.1]{BKW} and \cite[Theorem~33.2]{RS1}) that an ideal is meet-prime (with respect to inclusion) if and only if it is a prime ideal. Therefore, in the order given by reverse inclusion, $\spec(\free_\kappa)$ is the set of join-prime elements. 
\end{proof}

\begin{proposition}\label{p:sober}
Each non-empty irreducible closed subset of $\Uz^\kappa$ is the closure of a point.
\end{proposition}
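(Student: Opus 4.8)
The plan is to deduce the statement from the explicit description of prime ideals of $\free_\kappa$ obtained in \Cref{t:null-U}, together with the order isomorphism of \Cref{p:irrClosed=prime}. Let $C$ be a non-empty irreducible closed subset of $\Uz^\kappa$. First I would apply \Cref{p:irrClosed=prime}: since $C$ is irreducible closed, the ideal $P \df \CU(C)$ is prime, and because $C$ is closed one has $C = \VUst\CU(C) = \VUst(P)$.

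The crucial step is then to invoke \Cref{t:null-U}\eqref{t:null-U:item1} for the prime ideal $P$: it yields a point $x \in \Uz^\kappa$ with $P = \CU(x)$. This is exactly the place where \Cref{ass:U-embeds} enters, since $\free_\kappa/P$ is non-trivial and linearly ordered, hence embeds into $\U$, and a choice of such an embedding provides the coordinates of $x$; by \Cref{r:x-not-unique-for-prime-ideals} this $x$ need not be unique, which is why the conclusion speaks of \emph{a} point rather than a unique one. Combining the two displays, $C = \VUst(P) = \VUst\CU(x)$.

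To finish, I would observe that $\VUst\CU$ is the Zariski closure operator on $\Uz^\kappa$ by \Cref{p:VC-is-topological}, so $\VUst\CU(x) = \VUst\CU(\{x\})$ is precisely the closure of the point $x$; moreover $x$ itself lies in this set, since $x \neq \orig$ and every $t \in \CU(x)$ vanishes at $x$ by the very definition of $\CU(x)$. Hence $C = \overline{\{x\}}$, as desired.

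There is no real obstacle here: the substantive content has already been packaged into \Cref{t:null-U} and \Cref{p:irrClosed=prime,p:VC-is-topological}. The only subtlety is the bookkeeping around the removal of the origin — one must use $\VUst$ rather than $\VU$ as the relevant (topological) operator and check that the point produced genuinely lies in $\Uz^\kappa$. The latter is automatic because a prime ideal is proper, so $\free_\kappa/P$ is non-trivial, and then \Cref{t:null-U}\eqref{t:null-U:item0} forbids $x = \orig$.
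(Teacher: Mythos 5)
Your proof is correct and follows essentially the same route as the paper: it passes from the non-empty irreducible closed set $C$ to the prime ideal $\CU(C)$ via \Cref{p:irrClosed=prime}, writes it as $\CU(x)$ for some $x \in \Uz^\kappa$ using \Cref{t:null-U}\eqref{t:null-U:item1}, and concludes $C=\VUst\CU(x)$ is the closure of $x$. The extra bookkeeping about the origin and $x$ lying in its own closure is sound and only makes explicit what the paper leaves implicit.
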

\begin{proof}
Let $C$ be a non-empty irreducible closed subset of $\Uz^\kappa$. 
By \Cref{t:null-U}(\ref{t:null-U:item1}) and \Cref{p:irrClosed=prime}, there exists $x \in \Uz^\kappa$ such that $\CU(C)=\CU( x )$. Thus, $C=\VUst\CU( x )$, which means that $C$ is the closure of $x$ in $\Uz^\kappa$.
\end{proof}

Notice that the Zariski topology on $\Uz^\kappa$ is not  $T_0$. Indeed, if $t\in \free_\kappa$ and $x\in \Uz^\kappa$, since $t$ preserves multiplication by positive scalars, $t(x)=0$ implies $t(2x)=2t(x)=0$.  Hence, $x$ and $2x$ cannot be separated by an open set. Therefore, it will be convenient to consider the $T_0$-reflection of $\Uz^\kappa$. 
The $T_0$-reflection of a topological space $X$ is obtained by quotienting $X$ over the relation $\sim$ that collapses all points of $X$ that have the same closure (or, equivalently, that cannot be separated by an open set).
Since $\VUst\CU(x)$ is the closure of $\{ x \}$ for any $x \in \Uz^\kappa$, the relation $x \sim y$ holds if and only if $\VUst \CU( x ) = \VUst \CU( y )$. 

The following result is folklore. For lack of a reference, we sketch its proof.

\begin{lemma}\label{l:lattice-isomorphism}
If $X$ is a topological space, then the complete lattices of open sets of $X$ and of $X / \simo$ are isomorphic.
\end{lemma}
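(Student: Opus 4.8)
The statement is that for any topological space $X$, the complete lattices of open sets of $X$ and of the $T_0$-reflection $X/\simo$ are isomorphic. The natural candidate for the isomorphism is the map induced by the quotient map $q\colon X\to X/\simo$: given an open $V\subseteq X/\simo$, send it to $q^{-1}(V)\subseteq X$. Since $q$ is continuous, $q^{-1}(V)$ is open, so this gives a well-defined order-preserving map $q^{-1}\colon \mathcal{O}(X/\simo)\to\mathcal{O}(X)$ between the two lattices of opens. The plan is to show this map is an order isomorphism by exhibiting an inverse.

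\textbf{Key steps.} First I would recall that, by definition of the $T_0$-reflection, $q$ is a quotient map whose fibres are exactly the equivalence classes of $\simo$, i.e., the sets of points sharing the same closure. The essential observation is that every open set $U$ of $X$ is \emph{saturated} for $\simo$: if $x\in U$ and $x\simo y$, then $y\in U$. This holds because $x\simo y$ means $x$ and $y$ lie in exactly the same open sets. Consequently, for every open $U\subseteq X$ there is a subset $U'\subseteq X/\simo$ with $q^{-1}(U')=U$, namely $U'=q(U)$; and because $q$ is a quotient map, $U'$ is open in $X/\simo$ precisely because its preimage $U$ is open. So I would define the candidate inverse $\phi\colon\mathcal{O}(X)\to\mathcal{O}(X/\simo)$ by $\phi(U)=q(U)$ and check: (i) $\phi(U)$ is open (quotient map plus saturation); (ii) $q^{-1}(\phi(U))=U$ (saturation of $U$); (iii) $\phi(q^{-1}(V))=V$ for $V$ open in $X/\simo$ (surjectivity of $q$). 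Both composites are the identity, and both $q^{-1}$ and $\phi$ are clearly monotone, so they form an order isomorphism of posets; since each side is a complete lattice (arbitrary joins are unions, meets are interiors of intersections) and an order isomorphism between complete lattices automatically preserves all joins and meets, we are done.

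\textbf{Main obstacle.} There is no serious obstacle; the only point requiring a moment's care is the claim that open sets of $X$ are saturated under $\simo$, which is what makes $q(U)$ well behaved — but this is immediate from the very definition of $\simo$ as "indistinguishable by open sets." Since the result is folklore and the excerpt explicitly only asks for a sketch, I would keep the write-up to the three verifications above.

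\begin{proof}
Let $q\colon X\to X/\simo$ be the quotient map, which by definition of the $T_0$-reflection is continuous and has the property that its fibres are exactly the classes of $\simo$. Recall that $x\simo y$ means that $x$ and $y$ belong to the same open sets; hence every open $U\subseteq X$ is \emph{saturated}, i.e. $x\in U$ and $x\simo y$ imply $y\in U$, so $q^{-1}(q(U))=U$. Since $q$ is a quotient map, it follows that $q(U)$ is open in $X/\simo$ for every open $U\subseteq X$.

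Define $\phi\colon \mathcal{O}(X)\to \mathcal{O}(X/\simo)$ by $\phi(U)=q(U)$ and $\psi\colon \mathcal{O}(X/\simo)\to \mathcal{O}(X)$ by $\psi(V)=q^{-1}(V)$; both are well defined by the previous paragraph and the continuity of $q$, and both are clearly monotone. For open $U\subseteq X$ we have $\psi(\phi(U))=q^{-1}(q(U))=U$ by saturation, and for open $V\subseteq X/\simo$ we have $\phi(\psi(V))=q(q^{-1}(V))=V$ because $q$ is surjective. Thus $\phi$ and $\psi$ are mutually inverse monotone bijections, i.e. an order isomorphism between $\mathcal{O}(X)$ and $\mathcal{O}(X/\simo)$. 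As each of these posets is a complete lattice and an order isomorphism between complete lattices preserves arbitrary joins and meets, the two complete lattices are isomorphic.
\end{proof}
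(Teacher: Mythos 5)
Your argument is correct and is essentially the paper's: both proofs hinge on the observation that every open set of $X$ is saturated under $\simo$ (so $q^{-1}(q(U))=U$), and both conclude that $q^{-1}$ is an order isomorphism between the open-set lattices. The only cosmetic difference is that you exhibit the explicit inverse $U\mapsto q(U)$, whereas the paper verifies directly that $\pi^{-1}$ is an order-preserving, order-reflecting bijection, with surjectivity given by the same saturation computation.
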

\begin{proof}
Let $\pi$ be the quotient map induced by $\sim$.  By definition of quotient space, a set $O$ is open in $X/\simo$ if and only if $\pi^{-1}[O]$ is open in $X$. Hence, $\pi^{-1}$ is a function from the lattice of open sets of $X/\simo$ into the lattice of open sets of $X$. It is straightforward to check that $O_1\seq O_2$ if and only if $\pi^{-1}[O_1]\seq \pi^{-1}[O_2]$, which also entails that $\pi^{-1}$ is injective. Since every bijection between lattices that preserves and reflects the order is a lattice isomorphism, it only remains to show that $\pi^{-1}$ is onto.  Let $A$ be any open subset of $X$, we prove that $A=\pi^{-1}[\pi[A]]$ --- i.e. $A$ is saturated. We show the non-trivial inclusion. If $x\in\pi^{-1}[\pi[A]]$, then $x\sim y$ for some $y\in A$. Hence, by the definition of $\sim$, there exists no open set that separates $x$ from $y$, and hence $x\in A$.
\end{proof}

A topological space is called \emph{generalized spectral} if it is $T_0$, sober, and the compact open subsets are closed under binary intersections and form a basis of the space.

\begin{theorem}\label{t:Uzkappa generalized spectral}
$\Uz ^\kappa/{\sim}$ is a generalized spectral space.
\end{theorem}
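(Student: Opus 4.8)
The plan is to verify the four defining properties of a generalized spectral space for $\Uz^\kappa/{\sim}$ one at a time, transferring each from $\Uz^\kappa$ via the quotient map $\pi$ and \Cref{l:lattice-isomorphism}. First I would observe that $T_0$-ness is immediate: by construction the $T_0$-reflection of any space is $T_0$, and we have already identified the relation $\sim$ explicitly (namely $x\sim y$ iff $\VUst\CU(x)=\VUst\CU(y)$), so $\Uz^\kappa/{\sim}$ is precisely that reflection.

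Next I would handle soberness. Since $\pi$ induces a lattice isomorphism between the open sets of $\Uz^\kappa$ and those of $\Uz^\kappa/{\sim}$ by \Cref{l:lattice-isomorphism}, it also induces a bijection between the (irreducible) closed sets of the two spaces, compatible with $\pi$ and $\pi^{-1}$; in particular $\pi$ and $\pi^{-1}$ carry irreducible closed sets to irreducible closed sets. By \Cref{p:sober} every non-empty irreducible closed subset of $\Uz^\kappa$ is the closure of a point, hence its image under $\pi$ is the closure of a point; and since distinct points of the quotient have distinct closures (the space being $T_0$), that point is unique. This gives soberness.

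For the compact opens I would use \Cref{l:lattice-isomorphism} together with the fact that $\pi$ is a continuous surjection: a surjective continuous image of a compact set is compact, so $\pi$ sends compact opens of $\Uz^\kappa$ to compact opens of $\Uz^\kappa/{\sim}$; conversely, if $O\seq\Uz^\kappa/{\sim}$ is compact open then $\pi^{-1}[O]$ is open, and any open cover of $\pi^{-1}[O]$ by compact opens of $\Uz^\kappa$ pushes forward (applying $\pi$) to a cover of $O$, which admits a finite subcover, whose preimages cover $\pi^{-1}[O]$ — so $\pi^{-1}[O]$ is compact too. Thus $\pi^{-1}$ and $\pi$ restrict to a bijection between the compact opens of the two spaces, and this bijection is the restriction of the lattice isomorphism of \Cref{l:lattice-isomorphism}, hence preserves finite intersections and the property of forming a basis. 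By \Cref{p:compactOpen=base} the compact opens of $\Uz^\kappa$ form a basis closed under non-empty finite intersections, so the same holds in the quotient; the empty intersection is the whole space, which is compact, so the compact opens are closed under all finite intersections. This completes the verification. The only mild subtlety — and the point I would be most careful about — is the back-and-forth argument that $\pi$ and $\pi^{-1}$ genuinely match up the compact opens; everything else is a direct appeal to the already-established \Cref{l:lattice-isomorphism}, \Cref{p:compactOpen=base}, and \Cref{p:sober}.

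\begin{proof}
We check that $\Uz^\kappa/{\sim}$ satisfies the four defining properties of a generalized spectral space. Let $\pi\colon \Uz^\kappa \to \Uz^\kappa/{\sim}$ be the quotient map. Since $\Uz^\kappa/{\sim}$ is, by definition, the $T_0$-reflection of $\Uz^\kappa$, it is $T_0$.

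By \Cref{l:lattice-isomorphism}, the map $\pi^{-1}$ is an isomorphism between the lattice of open subsets of $\Uz^\kappa/{\sim}$ and the lattice of open subsets of $\Uz^\kappa$; dually, taking complements, $\pi^{-1}$ and $\pi$ give mutually inverse order isomorphisms between the lattices of closed subsets of the two spaces. In particular they match up irreducible closed sets (these being exactly the join-prime elements of the respective lattices of closed sets). Now let $C$ be a non-empty irreducible closed subset of $\Uz^\kappa/{\sim}$. Then $\pi^{-1}[C]$ is a non-empty irreducible closed subset of $\Uz^\kappa$, so by \Cref{p:sober} there is $x \in \Uz^\kappa$ with $\pi^{-1}[C] = \VUst\CU(x)$, the closure of $\{x\}$ in $\Uz^\kappa$. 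Applying $\pi$ and using that $\pi$ sends closed sets to closed sets and is surjective, $C = \pi\big[\pi^{-1}[C]\big]$ is the closure of $\{\pi(x)\}$ in $\Uz^\kappa/{\sim}$. Uniqueness of the generic point follows from the $T_0$ property. Hence $\Uz^\kappa/{\sim}$ is sober.

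It remains to treat the compact opens. First, $\pi$ is a continuous surjection, so it sends any compact open subset of $\Uz^\kappa$ to a compact subset of $\Uz^\kappa/{\sim}$, which is open since, by \Cref{l:lattice-isomorphism}, open subsets of $\Uz^\kappa$ saturated for $\sim$ correspond to open subsets of the quotient and images of opens under $\pi$ are open in the quotient topology. Conversely, let $O \subseteq \Uz^\kappa/{\sim}$ be compact open. Then $\pi^{-1}[O]$ is open in $\Uz^\kappa$; cover it by compact open subsets of $\Uz^\kappa$ (possible by \Cref{p:compactOpen=base}), and apply $\pi$ to obtain an open cover of $O$. By compactness of $O$, finitely many of these images cover $O$, and the corresponding finitely many compact opens then cover $\pi^{-1}[O]$ (using surjectivity of $\pi$ and saturation), so $\pi^{-1}[O]$ is compact. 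Thus $\pi^{-1}$ and $\pi$ restrict to mutually inverse bijections between the compact opens of $\Uz^\kappa/{\sim}$ and those of $\Uz^\kappa$, and these bijections are the restriction of the lattice isomorphism of \Cref{l:lattice-isomorphism}; in particular they preserve binary intersections and the property of being a basis. By \Cref{p:compactOpen=base}, the compact opens of $\Uz^\kappa$ form a basis closed under non-empty finite intersections; hence the compact opens of $\Uz^\kappa/{\sim}$ form a basis, and they are closed under non-empty finite intersections. Since the whole space $\Uz^\kappa/{\sim}$ is itself compact open (being the image under $\pi$ of the compact open set $\Uz^\kappa = \Uz^\kappa \setminus \VUst(0)$, compact by \Cref{l:compactOpen=cBC}), the compact opens are closed under all finite intersections. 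Therefore $\Uz^\kappa/{\sim}$ is a generalized spectral space.
\end{proof}
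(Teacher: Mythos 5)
Your argument is essentially the paper's: $T_0$ by construction, soberness via \Cref{l:lattice-isomorphism} and \Cref{p:sober}, and the compact-open properties transferred from \Cref{l:compactOpen=cBC,p:compactOpen=base}. The only real difference is in how you transfer compact opens: you run a hands-on topological back-and-forth with $\pi$, $\pi^{-1}$, and saturation, whereas the paper simply observes that an open set is compact if and only if it is a compact element of the lattice of open sets, so the lattice isomorphism of \Cref{l:lattice-isomorphism} automatically matches up compact opens. Your version is correct (every open and every closed subset of $\Uz^\kappa$ is $\sim$-saturated, so images of opens are open and the finite-subcover argument goes through), just longer.

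One concrete error: your closing claim that $\Uz^\kappa/{\sim}$ is compact open because $\Uz^\kappa = \Uz^\kappa \setminus \VUst(0)$ is false. Since the zero term vanishes everywhere, $\VUst(0)=\Uz^\kappa$, so $\Uz^\kappa\setminus\VUst(0)=\emptyset$; moreover, for infinite $\kappa$ the space $\Uz^\kappa$ is genuinely not compact (every term depends on finitely many coordinates, so no $\VUst(t)$ is empty; equivalently, by \Cref{t:duals-of-units}\eqref{t:duals-of-units:item2} compactness of $\VUst(J)$ corresponds to a strong order-unit, which $\free_\kappa$ lacks for infinite $\kappa$ --- this is why \Cref{c:fin-dim-compact} is stated only for finite $n$). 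Fortunately this claim is superfluous: the definition of a generalized spectral space only asks that the compact opens be closed under binary intersections and form a basis, both of which you have already established, so simply delete the final sentence.
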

\begin{proof}
The space $\Uz^\kappa/{\sim}$ is $T_0$ by construction. By  \Cref{l:lattice-isomorphism},  the lattices of open subsets of $\Uz^\kappa$ and $\Uz^\kappa/\simo$ are isomorphic. 
This isomorphism maps compact opens of $\Uz^\kappa$ to compact opens of $\Uz^\kappa/\simo$ because an open subset of a topological space is compact if and only if it is a compact element of the lattice of open subsets.
\Cref{l:compactOpen=cBC,p:compactOpen=base} imply that the compact opens of $\Uz^\kappa/\simo$ are exactly the complements of $\VUst(t)/\simo$ for some $t \in \free_\kappa$. Therefore, the compact opens of $\Uz^\kappa/\simo$ are closed under binary intersections and form a basis.

Finally, let $C$ be a closed subset of $\Uz^\kappa/{\sim}$ and $O$ its complement. The set $C$ is irreducible if and only if
$O$ is meet-prime in the lattice of open subsets of $\Uz^\kappa$. 
Since isomorphisms of lattices preserve join-prime elements,
$\pi^{-1}[C]=\Uz^\kappa \setminus \pi^{-1}[O]$ is irreducible if and only if so is $C$.
\end{proof}

\begin{definition}
The spectrum of prime ideals of an $\ell$-group $G$ is classically considered with its hull-kernel topology: the closed subsets of  $\spec(G)$ are the sets $\mathtt{h}(J)=\{ P \in \spec(G) \mid J \subseteq P \}$ where $J$ is an ideal of $G$. It is straightforward to see that the closed sets $\mathtt{h}(a) = \{ P \in \spec(G) \mid a \in P \}$ for $a \in G$ form a basis of the topology. Similarly, one defines the hull-kernel topology on the prime spectrum of any Riesz space.
\end{definition}

\begin{theorem}\label{t:spec-closed}
$\Uz^\kappa/{\sim}$ is homeomorphic to $\spec(\free_\kappa)$.
\end{theorem}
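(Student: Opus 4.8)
The plan is to realize the homeomorphism by an explicit map. I would set $\phi\colon\Uz^\kappa\to\spec(\free_\kappa)$, $\phi(x)\df\CU(x)$. By \Cref{t:null-U}\eqref{t:null-U:item1} the prime ideals of $\free_\kappa$ are exactly the ideals of the form $\CU(x)$ with $x\in\Uz^\kappa$, so $\phi$ is well defined and surjective. Next I would identify the fibers of $\phi$ with the equivalence classes of ${\sim}$: as observed in the proof of \Cref{p:irrClosed=prime} (and via \Cref{t:null-U}\eqref{t:null-U:item3} together with the identity $\CU(S)=\CU(S\setminus\{\orig\})$), the operators $\VUst$ and $\CU$ restrict to mutually inverse order-reversing bijections between the closed subsets of $\Uz^\kappa$ and the ideals of $\free_\kappa$; in particular $\VUst$ is injective on ideals, so $\CU(x)=\CU(y)$ is equivalent to $\VUst\CU(x)=\VUst\CU(y)$, which is by definition $x\sim y$. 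Hence $\phi$ factors through the quotient map $\pi\colon\Uz^\kappa\to\Uz^\kappa/{\sim}$ as a bijection $\bar\phi\colon\Uz^\kappa/{\sim}\to\spec(\free_\kappa)$.

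The second step is to compute $\phi$ on basic closed sets. Using that $\VUst$ and $\CU$ form a Galois connection, for any ideal $J$ of $\free_\kappa$ and any $x\in\Uz^\kappa$ one has $J\subseteq\CU(x)$ if and only if $x\in\VUst(J)$. This gives immediately $\phi^{-1}[\mathtt{h}(J)]=\{x\in\Uz^\kappa\mid J\subseteq\CU(x)\}=\VUst(J)$, which is closed in $\Uz^\kappa$; since the sets $\mathtt{h}(J)$, $J$ an ideal, are exactly the closed subsets of $\spec(\free_\kappa)$, this shows $\phi$ is continuous. The same computation yields $\phi[\VUst(J)]\subseteq\mathtt{h}(J)$, and for the reverse inclusion I would use surjectivity once more: if $P$ is prime with $J\subseteq P$, then $P=\CU(x)$ for some $x\in\Uz^\kappa$, and $J\subseteq P=\CU(x)$ forces $x\in\VUst(J)$, so $P=\phi(x)\in\phi[\VUst(J)]$. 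Thus $\phi[\VUst(J)]=\mathtt{h}(J)$, and since the $\VUst(J)$ are exactly the closed subsets of $\Uz^\kappa$, the map $\phi$ is closed.

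Finally I would invoke the standard fact that a continuous, closed, surjective map whose fibers are the classes of an equivalence relation ${\sim}$ descends to a homeomorphism on $X/{\sim}$: the induced bijection $\bar\phi$ is continuous by the universal property of the quotient topology, and it is closed because for $C$ closed in $\Uz^\kappa/{\sim}$ the set $\pi^{-1}[C]$ is closed in $\Uz^\kappa$, whence $\bar\phi[C]=\bar\phi[\pi[\pi^{-1}[C]]]=\phi[\pi^{-1}[C]]$ is closed in $\spec(\free_\kappa)$. I do not expect a serious obstacle: the only points needing care are keeping track of the removal of the origin (so that $\CU(x)$ is proper and prime and $\VUst\CU$ is a genuine topological closure operator) and remembering to use the surjectivity of $x\mapsto\CU(x)$ a \emph{second} time when proving $\phi$ is closed — without it one obtains only the inclusion $\phi[\VUst(J)]\subseteq\mathtt{h}(J)$.
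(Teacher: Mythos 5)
Your proof is correct and follows essentially the same route as the paper: the map $x\mapsto\CU(x)$, the identification of its fibers with the classes of $\sim$ via the Galois connection, continuity through $\phi^{-1}[\mathtt{h}(J)]=\VUst(J)$, and closedness via surjectivity before descending to the quotient. The only cosmetic difference is that you work with arbitrary $\mathtt{h}(J)$ and spell out the closed-map step, whereas the paper checks continuity on the basic closed sets $\mathtt{h}(t)$ and gets closedness directly from bijectivity.
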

\begin{proof}
Define a map $\UU\colon\Uz^\kappa \to \spec(\free_\kappa)$ that sends $x \in \Uz^\kappa$ to $\CU(x)$, which is a prime ideal by \Cref{t:null-U}(\ref{t:null-U:item1}). Since $\CU$ and $\VUst$ form a Galois connection, the definition of the Zariski topology on $\Uz^\kappa$ yields
\[
\UU(x)=\UU(y) \Longleftrightarrow \CU(x)=\CU(y)  \Longleftrightarrow \VUst\CU(x)=\VUst\CU(y)  \Longleftrightarrow  x \sim y.
\]
By  \Cref{t:null-U}\eqref{t:null-U:item1}, $\UU$ is onto.
Therefore, $\UU$ induces a bijection $\UU'\colon\Uz^\kappa/{\sim} \to \spec(\free_\kappa)$. To prove that $\UU'$ is continuous, let $t \in \free_\kappa$. Observe that 
\begin{align*}
(\UU')^{-1}[\mathtt{h}(t)] &= \{ [x]_{\simo} \in \Uz^\kappa/\simo \mid \UU(x) \in \mathtt{h}(t) \}=\{ [x]_{\simo} \in \Uz^\kappa/\simo \mid \CU(x) \in \mathtt{h}(t) \} \\
& = \{ [x]_{\simo} \in \Uz^\kappa/\simo \mid t \in \CU(x) \} = \{ [x]_{\simo} \in \Uz^\kappa/\simo \mid x \in \VUst(t) \}=\VUst(t)/\simo,
\end{align*}
and hence $\UU'$ is continuous.
Since $\UU'$ is a bijection, $\UU'(\VUst(t)/{\sim})=\mathtt{h}(t)$, and hence $\UU'$ is a closed map. Therefore, $\UU'$ is a homeomorphism.
\end{proof}

Let $\mathscr{E}\colon \spec(\free_{\kappa}) \to \Uz^{\kappa}$ be a function defined by choosing for each $P \in \spec(\free_{\kappa})$ a point $\mathscr{E}(P) \in \Uz^\kappa$ such that $P=\CU(\mathscr{E}(P))$.
It follows immediately from the definitions of $\mathscr{E}$ and $\UU$ that the composition ${\UU} \circ \mathscr{E}$ is the identity function on $\spec(\free_{\kappa})$.

\begin{proposition}\label{p:E embedding image dense}
$\mathscr{E}\colon \spec(\free_{\kappa}) \to \Uz^{\kappa}$ is a topological embedding whose image is a dense subset of $\Uz^{\kappa}$.
\end{proposition}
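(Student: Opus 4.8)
The plan is to deduce the three assertions — injectivity, homeomorphism onto the image, and density — essentially mechanically from the Galois connection between $\VUst$ and $\CU$ and the description of closed sets established above. Injectivity of $\mathscr{E}$ is immediate: since $\UU\circ\mathscr{E}=\mathrm{id}_{\spec(\free_\kappa)}$, i.e., $\CU(\mathscr{E}(P))=P$ for every prime $P$, the equality $\mathscr{E}(P)=\mathscr{E}(Q)$ forces $P=Q$. For continuity it suffices to pull back the basic closed sets $\VUst(t)$: a prime $P$ lies in $\mathscr{E}^{-1}[\VUst(t)]$ iff $t(\mathscr{E}(P))=0$ iff $t\in\CU(\mathscr{E}(P))=P$ iff $P\in\mathtt{h}(t)$, so $\mathscr{E}^{-1}[\VUst(t)]=\mathtt{h}(t)$ is closed.

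To upgrade ``continuous injection'' to ``topological embedding'' I will show that $\mathscr{E}$ maps closed sets to relatively closed subsets of its image. Running the same computation with an arbitrary ideal $J$ of $\free_\kappa$ in place of a single term, one gets $\mathscr{E}[\mathtt{h}(J)]=\VUst(J)\cap\mathscr{E}[\spec(\free_\kappa)]$, because $J\subseteq P=\CU(\mathscr{E}(P))$ is equivalent to $\mathscr{E}(P)\in\VUst(J)$; since every closed subset of $\spec(\free_\kappa)$ has the form $\mathtt{h}(J)$, this exhibits $\mathscr{E}$ as a closed map onto its image, hence a homeomorphism onto its image. (Alternatively, with $\pi\colon\Uz^\kappa\to\Uz^\kappa/{\sim}$ the quotient map, one checks that $\pi\circ\mathscr{E}$ is the inverse of the homeomorphism $\UU'$ from the proof of \Cref{t:spec-closed}, and the embedding property drops out of that.) The only point requiring genuine attention here is precisely that a continuous injection need not be an embedding, so this closed-map step (or the detour through $\UU'$) really has to be carried out rather than glossed over.

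For density I compute the closure of the image with the topological closure operator $\VUst\CU$ of \Cref{p:VC-is-topological}. We have $\CU(\mathscr{E}[\spec(\free_\kappa)])=\bigcap_{P\in\spec(\free_\kappa)}\CU(\mathscr{E}(P))=\bigcap_{P\in\spec(\free_\kappa)}P$, and the intersection of all prime ideals of $\free_\kappa$ is the trivial ideal $\{0\}$ — this is the instance $T=\emptyset$ of the identification of $\CU\VU(T)$ with the ideal generated by $T$ used in the proof of \Cref{t:null-U}\eqref{t:null-U:item2}, and amounts to the representability of $\ell$-groups and Riesz spaces as subdirect products of chains. Since $\VUst(\{0\})=\Uz^\kappa$, we conclude $\overline{\mathscr{E}[\spec(\free_\kappa)]}=\VUst\CU(\mathscr{E}[\spec(\free_\kappa)])=\Uz^\kappa$. (An equivalent argument: every closed subset of $\Uz^\kappa$ is $\sim$-saturated, and the image meets every $\sim$-class because $\mathscr{E}(\CU(x))\sim x$ for all $x$, so no proper closed set can contain it.)
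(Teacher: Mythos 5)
Your proof is correct, but its main line differs from the paper's. The paper does not argue directly on $\mathscr{E}$: it composes with the homeomorphism $\UU'\colon\Uz^\kappa/{\sim}\to\spec(\free_\kappa)$ from \Cref{t:spec-closed}, observes that $\mathscr{E}\circ\UU'$ sends each $\sim$-class to one of its representatives, and then invokes the structure of the $T_0$-reflection (via \Cref{l:lattice-isomorphism}, all opens and closeds of $\Uz^\kappa$ are $\sim$-saturated) to conclude that such a section is an embedding with dense image --- essentially your parenthetical alternatives for both the embedding and the density steps. Your primary route instead verifies everything by direct Galois-connection computations: continuity from $\mathscr{E}^{-1}[\VUst(t)]=\mathtt{h}(t)$, the embedding property from the identity $\mathscr{E}[\mathtt{h}(J)]=\VUst(J)\cap\mathscr{E}[\spec(\free_\kappa)]$ (so $\mathscr{E}$ is closed onto its image), and density from $\CU(\mathscr{E}[\spec(\free_\kappa)])=\bigcap_{P\in\spec(\free_\kappa)}P=\{0\}$ together with the fact that $\VUst\CU$ is the closure operator of the Zariski topology (\Cref{p:VC-is-topological}); the triviality of the intersection of all primes is exactly the $T=\emptyset$ instance of \Cref{t:null-U}\eqref{t:null-U:item2}, i.e.\ representability of Abelian $\ell$-groups and Riesz spaces as subdirect products of chains. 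What your approach buys is a self-contained, fully explicit argument that does not lean on a ``straightforward to see'' appeal to the $T_0$-reflection, and it makes visible the algebraic content of density; what the paper's approach buys is brevity and reuse of the machinery already set up for \Cref{t:spec-closed}, where the saturation of closed sets does all three jobs at once. Both are sound, and you correctly flagged the one genuinely delicate point, namely that a continuous injection is not automatically an embedding.
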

\begin{proof}
Let $\UU'\colon\Uz^\kappa/{\sim} \to \spec(\free_\kappa)$ be the homeomorphism from the proof of \Cref{t:spec-closed}. Since $\UU'$ is a homeomorphism, to prove the claim it is sufficient to show that $\mathscr{E} \circ \UU' \colon \Uz^\kappa/{\sim} \to \Uz^{\kappa}$ is an embedding whose image is a dense subset of $\Uz^{\kappa}$. We have that $\mathscr{E} \UU'([x]_{\simo}) = \mathscr{E} ( \CU(x))$ is a point $y \in \Uz^\kappa$ such that $\CU(y)=\CU(x)$, and hence $y \in [x]_{\simo}$. Thus, $\mathscr{E} \circ \UU'$ maps each equivalence class of $\sim$ to one of its representatives. Since $\Uz^\kappa/{\sim}$ is the $T_0$-reflection of $\Uz^\kappa$, it is then straightforward to see that $\mathscr{E} \circ \UU'$ is an embedding whose image is a dense subset of $\Uz^{\kappa}$.
\end{proof}

If $G\simeq\free_\kappa/J$, by the Correspondence Theorem \cite[Theorem 6.20]{BS} there is a bijection between $\mathtt{h}(J)=\{ P\in \spec(\free_{\kappa}) \mid J\subseteq P\}$ and $\spec(G)$ that can be easily proved to be a homeomorphism. With an abuse of notation, we identify $\spec(G)$ with $\mathtt{h}(J)$.

\begin{theorem}\label{c:spec-and-V}
Let $G\simeq \free_{\kappa}/J$ be an $\ell$-group or a Riesz space.
\begin{enumerate}
\item \label{pro:spec} $\VUst(J)/{\sim}$ is homeomorphic to $\spec(G)$. Moreover, $\spec(G)$ embeds into $\VUst(J)$ as a dense subset.
\item \label{pro:retract} $\spec(G)$ is a retract of $\VUst(J)$.
\item \label{c:PWL-spec} $G$ is isomorphic to the algebra of definable maps restricted to $\mathscr{E}[\spec(G)]$ with operations defined pointwise.
\end{enumerate}
\end{theorem}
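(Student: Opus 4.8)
The plan is to prove the three items in order, using the machinery already assembled. For \Cref{pro:spec}, I would combine \Cref{t:spec-closed} with the Correspondence Theorem identification $\spec(G) \cong \mathtt{h}(J)$. The key observation is that under the homeomorphism $\UU'\colon \Uz^\kappa/{\sim} \to \spec(\free_\kappa)$ of \Cref{t:spec-closed}, the subspace $\mathtt{h}(J) \subseteq \spec(\free_\kappa)$ corresponds to $\VUst(J)/{\sim} \subseteq \Uz^\kappa/{\sim}$: indeed $\UU(x) \in \mathtt{h}(J)$ means $J \subseteq \CU(x)$, which by the Galois connection is equivalent to $x \in \VUst(J)$. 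So restricting $\UU'$ gives the desired homeomorphism. For the ``dense subset'' part, I would restrict the embedding $\mathscr{E}$ of \Cref{p:E embedding image dense} to $\mathtt{h}(J) \cong \spec(G)$; its image lands in $\VUst(J)$ since $\CU(\mathscr{E}(P)) = P \supseteq J$, and density inside $\VUst(J)$ follows because $\mathscr{E}$ picks a representative from each $\sim$-class meeting $\VUst(J)$, exactly as in the proof of \Cref{p:E embedding image dense} but relativized to the closed subspace $\VUst(J)$ (a closed subspace of a space equals the closure of a dense subset of it).

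For \Cref{pro:retract}, the retraction is $\UU$ restricted (and corestricted) appropriately: the composition $\UU \circ \mathscr{E}$ is the identity on $\spec(\free_\kappa)$, as noted right before \Cref{p:E embedding image dense}, so restricting to $\mathtt{h}(J)$ we get that $\mathscr{E}\colon \spec(G) \to \VUst(J)$ followed by $\UU\colon \VUst(J) \to \spec(G)$ is the identity; since both maps are continuous (the continuity of $\UU$ restricted to $\VUst(J)$ follows from \Cref{t:spec-closed}, as $\UU$ factors through the quotient map to $\Uz^\kappa/{\sim}$ and then the homeomorphism $\UU'$), this exhibits $\spec(G)$ as a retract of $\VUst(J)$.

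For \Cref{c:PWL-spec}, I would apply \Cref{l:PWL-I(C)} with $S = \mathscr{E}[\spec(G)]$: that lemma says $\free_\kappa/\CU(S)$ is isomorphic to the algebra of definable maps from $S$ to $\U$ with pointwise operations. So it suffices to show $\CU(\mathscr{E}[\spec(G)]) = J$. The inclusion $J \subseteq \CU(\mathscr{E}[\spec(G)])$ holds because every $P \in \mathtt{h}(J)$ satisfies $J \subseteq P = \CU(\mathscr{E}(P))$, hence $J \subseteq \bigcap_{P} \CU(\mathscr{E}(P)) = \CU(\mathscr{E}[\spec(G)])$. For the reverse inclusion I would use that $J$ is an intersection of prime ideals (true for any ideal in an $\ell$-group or Riesz space by \cite[4.2.4]{BKW} and \cite[Theorem 33.5]{RS1}, as already invoked in the proof of \Cref{t:null-U}), say $J = \bigcap_{P \in \mathtt{h}(J)} P$; then $\CU(\mathscr{E}[\spec(G)]) = \bigcap_{P \in \mathtt{h}(J)} \CU(\mathscr{E}(P)) = \bigcap_{P \in \mathtt{h}(J)} P = J$. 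Since $G \cong \free_\kappa/J = \free_\kappa/\CU(\mathscr{E}[\spec(G)])$, the conclusion follows from \Cref{l:PWL-I(C)}.

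The main obstacle is mostly bookkeeping: being careful about the identification of $\spec(G)$ with $\mathtt{h}(J)$ as a \emph{subspace} (not just a set) of $\spec(\free_\kappa)$, and correctly transporting the density and embedding statements of \Cref{p:E embedding image dense} to the closed subspace $\VUst(J)$ — one should check that the $T_0$-reflection behaves well with respect to passing to closed (equivalently, $\sim$-saturated) subsets, which is immediate since $\VUst(J)$ is a union of $\sim$-classes. No genuinely new idea beyond \Cref{t:spec-closed}, \Cref{p:E embedding image dense}, and \Cref{l:PWL-I(C)} is required.
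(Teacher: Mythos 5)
Your proposal is correct and follows essentially the same route as the paper: restrict $\UU$ and $\mathscr{E}$ to $\VUst(J)$ and $\spec(G)\cong\mathtt{h}(J)$ (checking the saturation/quotient compatibility, which the paper handles by redoing the arguments of \Cref{t:spec-closed,p:E embedding image dense} relativized), use $\UU\circ\mathscr{E}=\mathrm{id}$ for the retraction, and apply \Cref{l:PWL-I(C)} after showing $\CU(\mathscr{E}[\spec(G)])=J$. The only (harmless) deviation is in the last step, where you obtain $\CU(\mathscr{E}[\spec(G)])=\bigcap\{P\in\spec(\free_\kappa)\mid J\subseteq P\}=J$ directly from the fact that every ideal is an intersection of the primes above it, whereas the paper uses the density of $\mathscr{E}[\spec(G)]$ in $\VUst(J)$ together with $\CU\VUst(J)=J$; both rest on the same content of \Cref{t:null-U}.
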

\begin{proof}
\Cref{pro:spec}. Let us prove that we can restrict and corestrict the functions $\UU\colon \Uz^{\kappa}\to \spec(\free_{\kappa})$ and $\mathscr{E}\colon \spec(\free_{\kappa})\to \Uz^{\kappa}$ to the sets $\VUst(J)$ and $\spec(G)$.  Indeed, on the one hand, by the properties Galois connections it follows that
\[
x\in \VUst(J) \Longleftrightarrow J\subseteq \CU(x) \Longleftrightarrow J\subseteq \UU(x) \Longleftrightarrow \UU(x)\in \spec(G).
\]
On the other hand, let $\mathscr{E}(P)=x$, then the definition of $\mathscr{E}$ implies $P=\CU(x)$. Hence,
\[
P\in \spec(G) \Longleftrightarrow J\subseteq P \Longleftrightarrow \VUst(P)\subseteq \VUst(J) \Longleftrightarrow \VUst\CU( x)\subseteq \VUst(J) \Longleftrightarrow x\in \VUst(J),
\]
where the latter equivalence follows from the fact that $\VUst\CU( x)$ is the closure of $x$.
By arguing as in \Cref{t:spec-closed,p:E embedding image dense}, $\UU$ induces an homeomorphism between $\VUst(J)/\simo$ and $\spec(G)$, and $\mathscr{E}$ restricts to an embedding $\mathscr{E}\colon \spec(G)\to \VUst(J)$ whose image is dense in $\VUst(J)$. 

\Cref{pro:retract}. The composition $\mathscr{U} \circ \mathscr{E}$ is the identity on $\spec(G)$. Indeed, $\mathscr{U}( \mathscr{E}(P))=\CU(\mathscr{E}(P))=P$ by definition of $\mathscr{E}$. 

\Cref{c:PWL-spec}. By \Cref{l:PWL-I(C)}, it is enough to prove that $J= \CU(\mathscr{E}[\spec(G)])$. Since $\mathscr{E}[\spec(G)]$ is dense in $\VUst(J)$,
\[
\CU(\mathscr{E}[\spec(G)])=\CU \VUst \CU (\mathscr{E}[\spec(G)])= \CU\VUst(J)=J.\qedhere
\]
\end{proof}

Since $\mathscr{E}$ embeds $\spec(G)$ into $\Uz^\kappa$, we call it the \emph{coordinatization} of the spectrum. 
\Cref{c:spec-and-V}\eqref{c:PWL-spec} shows that it is possible to reconstruct, up to isomorphism, the whole $G$ using only $\spec(G)$ and the coordinatization. Thus, the duality of \Cref{thm:duality} can be thought of as being induced by $\spec$.
We remark that the coordinatization of $\spec(G)$ via $\mathscr{E}$ is not unique, as it depends on the choices made in the definition of $\mathscr{E}$. In \Cref{rem:coordinatization,rem:coordinatization-lgroups} 
we will describe, in the finitely generated case, a canonical way to define the embedding $\mathscr{E}$.

\section{Order-units, semisimplicity, and Archimedeanity}\label{sec:archim}

In this section we dually characterize the existence of order-units, the intersection of all maximal ideals, and the ideal of nilpotents of an $\ell$-group or Riesz space $G$. A dual characterization of semisimplicity and Archimedeanity will follow.

Recall that an element $0<a\in G$ is called a \emph{strong order-unit} if for each $b \in G$ there exists $n \in \mathbb{N}$ such that $b \le na$. We say that $a>0$ is a \emph{weak order-unit} of $G$ if $a \wedge \lvert b \rvert=0$ implies $b=0$ for each $b \in G$.

\begin{remark}\label{rem:facts units}
A strong order-unit is always a weak order-unit (see, e.g.,~\cite[Lemma XIII.11.4]{Bir79}). It is straightforward to see that $a$ is a strong order-unit of $G$ if and only if the ideal generated by $a$ is the whole $G$.  Moreover, if $G$ is finitely generated, then it has a strong order-unit. Indeed, if $a_1, \ldots , a_n$ generate $G$, then $\lvert a_1\rvert + \dots + \lvert a_n\rvert$ is a strong order-unit.
\end{remark}

It is well known that $G$ has a strong order-unit if and only if $\spec(G)$ is compact (see, e.g., \cite[Proposition~10.1.6]{BKW} and \cite[Theorem~37.1(i)]{RS1}). 
In the following theorem we show that analogous results for strong and weak order-units hold in our setting. Recall from \Cref{sec:first-properties} that $\Uz^\kappa$ is a topological space with the Zariski topology. Thus, if $J$ is an ideal of $\free_\kappa$, then $\VUst(J)$ can be equipped with the subspace topology inherited from $\Uz^\kappa$.

\begin{theorem}\label{t:duals-of-units}
Let $G=\free_{\kappa}/J$ be non-trivial and $t \in \free_\kappa$ such that $t/J > 0$ in $G$.
\begin{enumerate}
\item\label{t:duals-of-units:item1} $t/J$ is a strong order-unit of $G$ if and only if $\VUst(J)\cap\VUst(t)=\emptyset$.
\item\label{t:duals-of-units:item2} $G$ has a strong order-unit if and only if $\VUst(J)$ is compact.
\item\label{t:duals-of-units:item3} $t/J$ is a weak order-unit of $G$ if and only if $\VUst(J) \cap \VUst(t)$ has empty interior in $\VUst(J)$.
\item\label{t:duals-of-units:item4} $G$ has a weak order-unit if and only if $\VUst(J)$ contains a dense compact open subset.
\end{enumerate}
\end{theorem}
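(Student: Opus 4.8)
The plan is to prove each of the four items by translating the algebraic notions of strong and weak order-units into statements about the Zariski topology on $\VUst(J)$, relying on \Cref{c:spec-and-V} to identify $\VUst(J)/{\sim}$ with $\spec(G)$ and using the dictionary between ideals of $\free_\kappa$ containing $J$ and closed subsets of $\VUst(J)$.

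For \Cref{t:duals-of-units:item1}, recall from \Cref{rem:facts units} that $t/J$ is a strong order-unit if and only if the ideal generated by $t/J$ is all of $G$, equivalently the ideal $J'$ of $\free_\kappa$ generated by $J \cup \{t\}$ equals $\free_\kappa$. By \Cref{t:null-U}\eqref{t:null-U:item3} and \eqref{t:null-U:item2}, and the fact that $\VUst\CU(x) = \VU\CU(x) \setminus \{\orig\}$, this is equivalent to $\VUst(J') = \emptyset$; and $\VUst(J')=\VUst(J \cup \{t\}) = \VUst(J) \cap \VUst(t)$ by the characterization of $\VU$ as an intersection. So \Cref{t:duals-of-units:item1} is essentially an unwinding of definitions.

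For \Cref{t:duals-of-units:item2}: by \Cref{t:duals-of-units:item1}, $G$ has a strong order-unit iff there is $t \in \free_\kappa$ with $t/J > 0$ and $\VUst(J) \cap \VUst(t) = \emptyset$, i.e., $\VUst(J) \subseteq \Uz^\kappa \setminus \VUst(t)$, a basic compact open of $\Uz^\kappa$ by \Cref{l:compactOpen=cBC}. One direction: if such $t$ exists, then $\VUst(J)$ is a closed subset of the compact open $\Uz^\kappa \setminus \VUst(t)$; closed subsets of compact sets are compact, so $\VUst(J)$ is compact. Conversely, if $\VUst(J)$ is compact, cover it by basic opens $\Uz^\kappa \setminus \VUst(s)$, $s \in J$ — this is a cover since $\VUst(J) = \bigcap_{s \in J}\VUst(s)$ forces every point of $\VUst(J)$ to... wait, one must be careful: a point $x \in \VUst(J)$ satisfies $s(x)=0$ for all $s\in J$, so it is \emph{not} in $\Uz^\kappa \setminus \VUst(s)$. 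The correct covering family: since $G$ is non-trivial, pick any $t_0 \in \free_\kappa$ with $t_0/J>0$; then consider... Actually the cleaner route is to use the homeomorphism $\VUst(J)/{\sim} \cong \spec(G)$ from \Cref{c:spec-and-V}\eqref{pro:spec}, together with the classical fact (cited in the paper) that $G$ has a strong order-unit iff $\spec(G)$ is compact, and note that $\VUst(J)$ is compact iff $\VUst(J)/{\sim}$ is compact since the quotient map is a continuous surjection and, conversely, compactness lifts along the lattice isomorphism of opens from \Cref{l:lattice-isomorphism} (a space is compact iff its top open is a compact element of the open-set lattice). This is the approach I would take, as it avoids re-deriving the classical result.

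For \Cref{t:duals-of-units:item3}: by definition, $t/J$ is a weak order-unit iff for every $b = s/J \in G$, $(t \wedge |s|)/J = 0$ implies $s/J = 0$. Using \eqref{eq: intersection V}-style reasoning, $(t \wedge |s|)/J = 0$ in $G$ means $t \wedge |s| \in J$, which by primeness-free manipulation (since $\U$ is linearly ordered, $\VUst(t \wedge |s|) = \VUst(t) \cup \VUst(s)$ restricted appropriately — actually $\VUst(|t| \wedge |s|) = \VUst(t) \cup \VUst(s)$ as shown in the proof of \Cref{p:compactOpen=base}) translates to $\VUst(J) \subseteq \VUst(t) \cup \VUst(s)$, i.e. $\VUst(J) \setminus \VUst(t) \subseteq \VUst(s)$. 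Meanwhile $s/J = 0$ means $\VUst(J) \subseteq \VUst(s)$. So $t/J$ is a weak order-unit iff: whenever a closed set of the form $\VUst(s) \cap \VUst(J)$ contains the open subset $\VUst(J) \setminus \VUst(t)$ of $\VUst(J)$, it contains all of $\VUst(J)$. Since the sets $\VUst(s) \cap \VUst(J)$ for $s$ ranging over $\free_\kappa$ are exactly the closed subsets of $\VUst(J)$ (by the Galois correspondence and \Cref{t:null-U}\eqref{t:null-U:item3} applied to ideals containing $J$), this says: the only closed subset of $\VUst(J)$ containing the open set $O := \VUst(J) \setminus \VUst(t)$ is $\VUst(J)$ itself, i.e., $O$ is dense, i.e., its complement $\VUst(J) \cap \VUst(t)$ has empty interior in $\VUst(J)$. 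I expect this translation step — carefully checking that the $s$'s giving closed subsets of $\VUst(J)$ are unrestricted and that density is correctly dualized — to be the main obstacle, though it is more bookkeeping than genuine difficulty.

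For \Cref{t:duals-of-units:item4}: $G$ has a weak order-unit iff there is $t \in \free_\kappa$ with $t/J > 0$ and, by \Cref{t:duals-of-units:item3}, $O = \VUst(J) \setminus \VUst(t)$ dense in $\VUst(J)$. Now $\VUst(t)$ is a basic closed set of $\Uz^\kappa$, so $O = \VUst(J) \cap (\Uz^\kappa \setminus \VUst(t))$ is the intersection of $\VUst(J)$ with a compact open of $\Uz^\kappa$, hence — using that $\VUst(J)$ inherits the subspace topology and \Cref{l:compactOpen=cBC} — $O$ is a compact open subset of $\VUst(J)$. Conversely, if $\VUst(J)$ has a dense compact open subset $O$, then $O$ is of the form $\VUst(J) \setminus \VUst(t)$ for some $t \in \free_\kappa$ (compact opens of the subspace are restrictions of compact opens of $\Uz^\kappa$, since the latter form a basis closed under finite intersections by \Cref{p:compactOpen=base}, and compactness lets one reduce to a single basic one as in the proof of \Cref{l:compactOpen=cBC}); replacing $t$ by $|t|$ if needed we may assume $t \ge 0$, and density of $O$ forces $t/J \ne 0$ hence $t/J > 0$, so \Cref{t:duals-of-units:item3} gives that $t/J$ is a weak order-unit. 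The one subtlety to nail down is that a compact open of the subspace $\VUst(J)$ is the trace of a compact open of $\Uz^\kappa$ — this follows because compact opens of $\Uz^\kappa$ form a basis (\Cref{p:compactOpen=base}), so any open of the subspace is a union of such traces, and compactness extracts a finite subcover whose union is again such a trace by closure under finite intersection/union of complements.
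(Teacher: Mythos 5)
Your proposal is correct in substance, and on items \eqref{t:duals-of-units:item1}, \eqref{t:duals-of-units:item3}, and \eqref{t:duals-of-units:item4} it follows essentially the same path as the paper: item \eqref{t:duals-of-units:item1} is the same unwinding via $\VUst(\genid{\{t\}\cup J})=\VUst(J)\cap\VUst(t)$, and items \eqref{t:duals-of-units:item3}--\eqref{t:duals-of-units:item4} are the same translation of the weak-unit condition into density of $\VUst(J)\setminus\VUst(t)$, together with the identification of non-empty compact opens of $\VUst(J)$ with sets of that form (the paper phrases the endpoint as ``empty interior'' directly from the basis of opens rather than through density, but this is the same argument). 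Two small imprecisions are worth flagging, though neither is a gap: in item \eqref{t:duals-of-units:item3} the sets $\VUst(s)\cap\VUst(J)$ are only a \emph{basis} of closed sets of $\VUst(J)$, not all closed sets, but since every closed set is an intersection of such sets your density conclusion still follows; and one should say explicitly that $t\wedge\lvert s\rvert\in J$ iff $\lvert t\rvert\wedge\lvert s\rvert\in J$ (because $t\equiv\lvert t\rvert \bmod J$), which is what licenses the identity $\VUst(\lvert t\rvert\wedge\lvert s\rvert)=\VUst(t)\cup\VUst(s)$ you borrow from \Cref{p:compactOpen=base}. The genuine divergence is the backward direction of item \eqref{t:duals-of-units:item2}: the paper gives a self-contained finite-intersection-property argument inside $\VUst(J)$, using $\emptyset=\bigcap\{\VUst(s)\cap\VUst(J)\mid s\in\free_\kappa\}$ (note: $s$ ranges over \emph{all} of $\free_\kappa$, which is where your first attempted cover by $s\in J$ went wrong) to extract finitely many $s_i$ and exhibit the explicit strong unit $(\lvert s_1\rvert+\dots+\lvert s_n\rvert)/J$; you instead invoke the classical fact that $G$ has a strong order-unit iff $\spec(G)$ is compact, the homeomorphism $\VUst(J)/{\sim}\cong\spec(G)$ of \Cref{c:spec-and-V}, and transfer of compactness along the $T_0$-reflection via \Cref{l:lattice-isomorphism}. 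Your route is legitimate and shorter, at the price of importing the classical theorem and of being non-constructive, whereas the paper's argument keeps the section self-contained and produces the unit explicitly (which is also the template reused in the proof of item \eqref{t:duals-of-units:item4}).
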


\begin{proof}
\Cref{t:duals-of-units:item1}. Let $I/J$ be the ideal of $G$ generated by $t/J$. Thus, $I$ is the ideal $\genid{\{ t \} \cup J}$ of $\free_\kappa$ generated by $\{ t \} \cup J$, and hence \Cref{t:null-U}(\ref{t:null-U:item2}) yields
\[
\VUst(I)=\VUst(\genid{\{ t \} \cup J})=\VUst(\{ t \} \cup J)=\VUst(J) \cap \VUst(t). 
\]
As mentioned above, $t/J$ is a strong order-unit of $G$ if and only if $I/J=G$, which holds if and only if $I=\free_\kappa$. Since $I=\free_\kappa$ is equivalent to $\VUst(I)=\emptyset$, it follows that $t/J$ is a strong order-unit if and only if $\VUst(J) \cap \VUst(t) = \emptyset$.

\Cref{t:duals-of-units:item2}. Suppose that $s/J$ is a strong order-unit of $G$. By \Cref{t:duals-of-units:item1}, $\VUst(J) \subseteq \Uz^\kappa \setminus \VUst(s)$, and the latter is compact by \Cref{l:compactOpen=cBC}. Thus, $\VUst(J)$ is compact because it is closed in a compact subset. Vice versa, suppose that $\VUst(J)$ is compact. Since 
\[
\emptyset = \VUst(\free_\kappa) \cap \VUst(J)= \bigcap \{ \VUst(s) \mid s \in \free_\kappa\}\cap \VUst(J)=  \bigcap \{ \VUst(s)\cap \VUst(J) \mid s \in \free_\kappa\},
\] 
there are $s_1, \ldots, s_n \in \free_\kappa$ such that $\VUst(J) \cap \VUst(s_1) \cap \dots \cap \VUst(s_n) = \emptyset$. Hence,
$\VUst(J) \cap \VUst(\lvert s_1 \rvert + \dots + \lvert s_n \rvert) = \emptyset$ by \cref{eq: intersection V}. It follows that $(\lvert s_1 \rvert + \dots + \lvert s_n \rvert)/J > 0$. Indeed, if $\lvert s_1 \rvert + \dots + \lvert s_n \rvert \in J$, then $\VUst(J)=\emptyset$, and so $J=\free_\kappa$. Since $G$ is non-trivial, this is impossible. Therefore, $(\lvert s_1 \rvert + \dots + \lvert s_n \rvert)/J$ is a strong order-unit of $G$ by \Cref{t:duals-of-units:item1}.

\Cref{t:duals-of-units:item3}. By definition, $t/J$ is a weak order-unit of $G$ if and only if 
\begin{equation}\label{t:duals-of-units:item3:eq1}
t \wedge \lvert s \rvert \in J\text{ implies }s \in J, \text{ for each }s \in \free_\kappa.
\end{equation}
Since $\VUst(t \wedge \lvert s \rvert) = \VUst(t) \cup \VUst(\lvert s \rvert) = \VUst(t) \cup \VUst(s)$, condition \eqref{t:duals-of-units:item3:eq1} is equivalent to
\begin{equation}\label{t:duals-of-units:item3:eq2}
\VUst(J)  \subseteq \VUst(t) \cup \VUst(s)\text{ implies }\VUst(J)  \subseteq \VUst(s), \text{ for each }s \in \free_\kappa.
\end{equation}
In turn, condition \eqref{t:duals-of-units:item3:eq2} is equivalent to 
\begin{equation} \label{t:duals-of-units:item3:eq3}
\VUst(J)  \setminus \VUst(s) \subseteq \VUst(J)\cap \VUst(t)  \text{ implies }\VUst(J)  \setminus \VUst(s) = \emptyset, \text{ for each }s \in \free_\kappa.
\end{equation}
By \Cref{l:compactOpen=cBC} and \Cref{p:compactOpen=base}, the family of subsets of the form $\VUst(J)  \setminus \VUst(s)$ is a basis of open subsets of $\VUst(J)$. Thus, \eqref{t:duals-of-units:item3:eq3} amounts to saying that $ \VUst(t)\cap \VUst(J)$ does not contain any non-empty open subset of $\VUst(J)$. We conclude that $t/J$ is a weak order-unit if and only if $ \VUst(t)\cap \VUst(J)$ has empty interior in $\VUst(J) $.

\Cref{t:duals-of-units:item4}. By \Cref{t:duals-of-units:item3}, $G$ has a weak order-unit if and only if there exists $s/J > 0$ such that $\VUst(J) \setminus \VUst(s)$ is dense in $\VUst(J)$.
It is then sufficient to prove that the non-empty compact open subsets of $\VUst(J)$ are exactly the ones of the form $\VUst(J) \setminus \VUst(s)$ for some $s \in \free_\kappa$ such that $s/J>0$. On the one hand, let $U = \VUst(J) \setminus \VUst(s)$ with $s/J>0$. Since $s/J \neq 0$, it follows that $\VUst(J) \nsubseteq \VUst(s)$, and so $U$ is not empty. We have that $U=(\Uz^\kappa \setminus \VUst(s))\cap \VUst(J)$, and so it is a closed subset of $\Uz^\kappa \setminus \VUst(s)$, which is compact by \Cref{l:compactOpen=cBC}. We conclude that $U$ is compact. Moreover, $U$ is open in $\VUst(J)$ because $\Uz^\kappa \setminus \VUst(s)$ is open in $\Uz^\kappa$. On the other hand, suppose $U$ is a non-empty compact open subset of $\VUst(J)$. By definition of the Zariski topology, there is $S \subseteq \free_k$ such that $U=(\Uz^\kappa \setminus \VUst(S))\cap \VUst(J)$. We have that $U = \bigcup \{ (\Uz^\kappa \setminus \VUst(s)) \cap \VUst(J) \mid s \in S \}$ and each $(\Uz^\kappa \setminus \VUst(s)) \cap \VUst(J)$ is open in $U$. Since $U$ is compact, there exist $s_1, \dots, s_n \in \free_\kappa$ such that 
\[
U =\big( \Uz^\kappa \setminus \VUst(s_1)\cup\dots\cup \Uz^\kappa \setminus \VUst(s_n)\big) \cap \VUst(J)=\VUst(J) \setminus (\VUst(s_1) \cap \dots \cap \VUst(s_n)).
\]
Since $\VUst(s_1) \cap \dots \cap \VUst(s_n) = \VUst(\lvert s_1 \rvert + \dots + \lvert s_n \rvert)$, we obtain 
\[
U = \VUst(J) \setminus \VUst(\lvert s_1 \rvert + \dots + \lvert s_n \rvert),
\] 
with $(\lvert s_1 \rvert + \dots + \lvert s_n \rvert)/J > 0$ because $U$ is not empty.
\end{proof}

The next corollary is a direct consequence of \Cref{t:duals-of-units}(\ref{t:duals-of-units:item2}) because any finitely generated $\ell$-group or Riesz space has a strong order-unit as observed in \Cref{rem:facts units}.

\begin{corollary}\label{c:fin-dim-compact}
If $n$ is a positive integer, then $\Uz^n$ is compact.
\end{corollary}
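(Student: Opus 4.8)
The plan is to derive the statement as an immediate application of \Cref{t:duals-of-units}(\ref{t:duals-of-units:item2}). The first observation I would make is that $\Uz^n$ is itself of the form $\VUst(J)$ for a suitable ideal: take $J$ to be the zero ideal $\{0\}$ of $\free_n$. Since the term $0$ vanishes identically on $\U^n$, we have $\VU(\{0\})=\U^n$, and therefore $\VUst(\{0\})=\U^n\setminus\{\orig\}=\Uz^n$. Correspondingly, the algebra attached to this ideal is $G\df\free_n/\{0\}\cong\free_n$.

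Next I would verify that $G$ satisfies the standing hypotheses of \Cref{t:duals-of-units}. As $n$ is a positive integer, $\free_n$ is non-trivial: its $n$ free generators are nonzero and pairwise distinct. Moreover $\free_n$ is $n$-generated, hence finitely generated, so by \Cref{rem:facts units} it possesses a strong order-unit; explicitly, if $g_1,\dots,g_n$ are the free generators, then $\lvert g_1\rvert+\dots+\lvert g_n\rvert>0$ is such a unit. Taking this element as the $t$ of the theorem, all hypotheses are met.

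It then only remains to invoke \Cref{t:duals-of-units}(\ref{t:duals-of-units:item2}): since $G=\free_n/\{0\}$ has a strong order-unit, $\VUst(\{0\})=\Uz^n$ is compact. There is no real obstacle in this argument beyond ensuring that the framework of \Cref{sec:duality} is available, i.e.\ that $n<\alpha$; this costs nothing, since $\alpha$ may be taken infinite --- indeed, by \Cref{cor:existence-U-generators}(\ref{cor:existence-U-generators:item3}) one may take $\alpha=\omega$ and still embed all finitely generated linearly ordered $\ell$-groups and Riesz spaces. The substance of the result is entirely contained in the proof of \Cref{t:duals-of-units}, which has already been carried out.
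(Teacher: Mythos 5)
Your proposal is correct and follows exactly the paper's argument: the paper also deduces the corollary directly from \Cref{t:duals-of-units}(\ref{t:duals-of-units:item2}), using that $\Uz^n=\VUst(\{0\})$ corresponds to the finitely generated algebra $\free_n$, which has a strong order-unit by \Cref{rem:facts units}. Your additional checks (non-triviality of $\free_n$, the explicit unit $\lvert g_1\rvert+\dots+\lvert g_n\rvert$, and $n<\alpha$) are just the details the paper leaves implicit.
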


It is well known that $G$ is semisimple if and only if the intersection of all the maximal ideals of $G$ is the trivial ideal. We now give a dual characterization of the intersection of all maximal ideals, which yields a dual characterization of the $\ell$-groups and Riesz spaces that are semisimple.

\begin{proposition}\label{p:intersection of max ideals dual}
Let $G=\free_{\kappa}/J$ and $I/J$ be the intersection of all maximal ideals of $G$.
\begin{enumerate}
\item\label{p:intersection of max ideals dual:item1} $I=\CU(\VU(J) \cap \mathbb{R}^\kappa)$.
\item\label{rem:dual intersection maximals} $G$ is semisimple if and only if $\VUst(J)$ is the closure of $\VUst(J) \cap \mathbb{R}^\kappa$ in $\Uz^\kappa$.
\end{enumerate}
\end{proposition}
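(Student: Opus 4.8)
The plan is to derive both items from \Cref{p:null-U-max} and \Cref{l:facts-C-V}, after recasting the statement about $G=\free_\kappa/J$ as a statement about ideals of $\free_\kappa$. For \eqref{p:intersection of max ideals dual:item1} I would first invoke the Correspondence Theorem \cite[Theorem~6.20]{BS}, which identifies the maximal ideals of $G$ with exactly the maximal ideals of $\free_\kappa$ that contain $J$: a maximal ideal of $G$ pulls back to an ideal of $\free_\kappa$ that is maximal among \emph{all} ideals, not merely among those containing $J$, since any ideal strictly above it automatically contains $J$. Hence $I$, viewed as an ideal of $\free_\kappa$, is the intersection of all maximal ideals of $\free_\kappa$ containing $J$. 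By \Cref{p:null-U-max}\eqref{p:null-U-max:item2} this intersection is $\CU\VI(J)$, and by \Cref{l:facts-C-V}\eqref{l:facts-C-V:item5} we have $\VI(J)=\VU(J)\cap\R^\kappa$; together these give $I=\CU(\VU(J)\cap\R^\kappa)$. The degenerate case $J=\free_\kappa$ (no maximal ideals) is handled by the convention that an empty intersection of ideals equals $\free_\kappa$, together with \Cref{t:null-U}\eqref{t:null-U:item0}, which yields $\VU(\free_\kappa)=\{\orig\}$ and $\CU(\{\orig\})=\free_\kappa$.

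For \eqref{rem:dual intersection maximals}, recall that $G$ is semisimple exactly when the intersection of its maximal ideals is trivial, i.e.\ when $I=J$. Geometrically, $\VUst\CU$ is the closure operator of the Zariski topology on $\Uz^\kappa$ (\Cref{p:VC-is-topological}); moreover $\orig\in\R^\kappa$ and $\CU(S)=\CU(S\setminus\{\orig\})$ for every $S\subseteq\U^\kappa$, so the closure of $\VUst(J)\cap\R^\kappa$ in $\Uz^\kappa$ is
\[
\VUst\CU\big(\VUst(J)\cap\R^\kappa\big)=\VUst\CU\big(\VU(J)\cap\R^\kappa\big)=\VUst(I),
\]
the last step by \eqref{p:intersection of max ideals dual:item1}. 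Since $J\subseteq I$ we always have $\VUst(I)\subseteq\VUst(J)$; and because $\VUst$ and $\CU$ form a Galois connection whose composite $\CU\VUst=\CU\VU$ fixes every ideal (\Cref{t:null-U}\eqref{t:null-U:item3}), the map $\VUst$ is injective on ideals. Thus $\VUst(J)$ equals this closure if and only if $\VUst(J)=\VUst(I)$, which in turn holds if and only if $J=I$ — precisely the semisimplicity of $G$.

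I do not anticipate a genuine obstacle: the proof is a short chain of already-established facts. The only points requiring a little care are the precise form of the Correspondence Theorem for \emph{maximal} ideals (so that \Cref{p:null-U-max}\eqref{p:null-U-max:item2} applies directly with $T=J$) and the routine bookkeeping around the origin when moving between $\VU$ and $\VUst$ and between $\R^\kappa$ and $\R^\kappa\setminus\{\orig\}$.
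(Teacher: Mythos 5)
Your proposal is correct and follows essentially the same route as the paper: item (1) is obtained by combining \Cref{p:null-U-max}\eqref{p:null-U-max:item2} with \Cref{l:facts-C-V}\eqref{l:facts-C-V:item5} (the paper leaves the Correspondence Theorem step implicit), and item (2) reduces semisimplicity to $J=I$ and identifies the closure of $\VUst(J)\cap\R^\kappa$ with $\VUst(I)$ exactly as in the paper. Your explicit remarks on the injectivity of $\VUst$ on ideals and on the degenerate case $J=\free_\kappa$ are fine additions but not a different argument.
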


\begin{proof}
\Cref{p:intersection of max ideals dual:item1}.
It follows from Lemmas \ref{l:facts-C-V}\eqref{l:facts-C-V:item5} and \ref{p:null-U-max}\eqref{p:null-U-max:item2}  that 
\begin{align*}
I &= \bigcap \{ M \mid M \text{ is maximal and } J \subseteq M \} = \CU\VI(J) = \CU(\VU(J) \cap \mathbb{R}^\kappa).
\end{align*}

\Cref{rem:dual intersection maximals}. We have that $G$ is semisimple if and only if $J=I$, which is equivalent to $\VUst(J)=\VUst(I)$. Since $\CU(S)=\CU(S \setminus \{ \orig \})$ for any $S \subseteq \U^\kappa$, \Cref{p:intersection of max ideals dual:item1} implies that $I=\CU(\VUst(J) \cap \mathbb{R}^\kappa)$. Thus, $\VUst(I)=\VUst\CU(\VUst(J) \cap \mathbb{R}^\kappa)$, which is the closure of $\VUst(J) \cap \mathbb{R}^\kappa$ in $\Uz^\kappa$. Therefore, $G$ is semisimple if and only if $\VUst(J)$ is the closure of $\VUst(J) \cap \mathbb{R}^\kappa$.
\end{proof}

We say that $a \in G$ is \emph{nilpotent} if there exists $b \in G$ such that $n\lvert a \rvert \le b$ for each $n \in \N$.
It is well known that if $G$ has a strong order-unit $u$, then $a$ is nilpotent if and only if $n\lvert a \rvert \le u$ for each $n \in \N$.

\begin{remark}\label{rem:facts nilpotents}
A straightforward calculation yields that the set of nilpotents of $G$ form an ideal. This ideal is always contained in the intersection of all maximal ideals of $G$. The ideal of nilpotents and the intersection of all maximal ideals coincide when $G$ has a strong order-unit \cite[Theorem~1]{YF}. An example of a Riesz space in which the two ideals differ can be found in \cite[Section 3]{YF}. Clearly, any such example is necessarily not finitely generated by \Cref{rem:facts units}.
An $\ell$-group or Riesz space is Archimedean exactly when the ideal of nilpotents is the trivial ideal. Thus, semisimplicity always implies Archimedeanity and, in the presence of a strong order-unit, the two notions are equivalent. 
\end{remark}

\Cref{t:dual-of-N} will provide a dual description of the ideal of nilpotents of $G$. 
To state the theorem we first need to introduce some notation.
Since $\kappa$ is a cardinal, the elements of $\kappa$ are exactly the ordinals smaller than $\kappa$. Let $S= \{ \gamma_1, \dots, \gamma_n \}$ be a finite subset of $\kappa$ with $\gamma_1< \dots< \gamma_n$. We denote by $\varphi_S \colon \free_n \to \free_\kappa$
the unique map that sends the $i^{\text{th}}$ generator of $\free_n$ to the $\gamma_i^{\text{th}}$ generator of $\free_\kappa$ for each $i=1, \ldots, n$.
Moreover, let $\pi_S \colon \U^\kappa \to \U^n$ be the projection that maps $(z_\gamma)_{\gamma < \kappa}$ to $(z_{\gamma_1}, \ldots, z_{\gamma_n})$. It is straightforward to see that $\varphi_S$ is the injective homomorphism dual to $\pi_S$ under the duality of \Cref{thm:duality}. We have the following technical lemma.

\begin{lemma}\label{l:IcapF-and-NcapF}
Let $J$ be an ideal of $\free_{\kappa}\free_{\kappa}$ and $C=\VU(J)$. The following proerties hold.
\begin{enumerate}
\item\label{l:IcapF-and-NcapF:item1} $\varphi_S^{-1}[J] = \CU(\pi_S[C])$.
\item\label{l:IcapF-and-NcapF:item2} If $H \subseteq \free_n$, then $\V(\varphi_S[H]) = \pi_S^{-1}[\VU(H)]$.
\end{enumerate}
Moreover, if $N_S/\varphi_S^{-1}[J]$ is the ideal of nilpotents of $\free_n/\varphi_S^{-1}[J]$, then 
\begin{enumerate}[resume]
\item\label{l:IcapF-and-NcapF:item3i} $N_S = \CU(\VU\CU(\pi_S[C]) \cap \mathbb{R}^n)$.
\item\label{l:IcapF-and-NcapF:item3iii} $\VU(\varphi_S[N_S]) = \pi_S^{-1}[\VU\CU(\VU\CU(\pi_S[C]) \cap \mathbb{R}^n)]$.
\item\label{l:IcapF-and-NcapF:item3ii} If $t \in \varphi_S[\free_n]$, then $t \in \varphi_S[N_S]$ if and only if there exists $u \in \varphi_S[\free_n]$  such that $m \lvert t/J \rvert \le u/J$ for each $m \in \N$.
\end{enumerate}
\end{lemma}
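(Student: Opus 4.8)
The plan is to treat the five items in order, using the duality of \Cref{thm:duality} and the basic Galois-connection machinery throughout, together with the observation that $\varphi_S$ is the $\Va$-homomorphism dual to the projection $\pi_S$.

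For item \eqref{l:IcapF-and-NcapF:item1}, I would unwind the definitions directly: $t \in \varphi_S^{-1}[J]$ means $\varphi_S(t) \in J = \C_\U(C)$, i.e. $\varphi_S(t)(x) = 0$ for every $x \in C$. Since $\varphi_S(t)(x) = t(\pi_S(x))$ by the defining property of $\varphi_S$, this says exactly that $t$ vanishes on $\pi_S[C]$, i.e. $t \in \C_\U(\pi_S[C])$. Item \eqref{l:IcapF-and-NcapF:item2} is the symmetric computation: $x \in \V_\U(\varphi_S[H])$ iff $\varphi_S(h)(x) = 0$ for all $h \in H$ iff $h(\pi_S(x)) = 0$ for all $h \in H$ iff $\pi_S(x) \in \V_\U(H)$ iff $x \in \pi_S^{-1}[\V_\U(H)]$.

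For item \eqref{l:IcapF-and-NcapF:item3i} I would apply \Cref{p:intersection of max ideals dual}\eqref{p:intersection of max ideals dual:item1} to the algebra $\free_n/\varphi_S^{-1}[J]$. Writing $J' \df \varphi_S^{-1}[J]$, that proposition gives that the ideal of nilpotents $N_S$ equals $\C_\U(\V_\U(J') \cap \R^n)$. Now substitute $J' = \C_\U(\pi_S[C])$ from item \eqref{l:IcapF-and-NcapF:item1} to get $N_S = \C_\U(\V_\U\C_\U(\pi_S[C]) \cap \R^n)$, which is the claim. Item \eqref{l:IcapF-and-NcapF:item3iii} then follows by applying $\V_\U \circ \varphi_S[-]$ and using item \eqref{l:IcapF-and-NcapF:item2} with $H = N_S$: indeed $\V_\U(\varphi_S[N_S]) = \pi_S^{-1}[\V_\U(N_S)] = \pi_S^{-1}[\V_\U\C_\U(\V_\U\C_\U(\pi_S[C]) \cap \R^n)]$ after substituting the expression for $N_S$ from item \eqref{l:IcapF-and-NcapF:item3i}.

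The item I expect to require the most care is \eqref{l:IcapF-and-NcapF:item3ii}, which is the only one not purely formal. One direction is easy: if $t = \varphi_S(t_0) \in \varphi_S[N_S]$, then $t_0 \in N_S$, so $t_0/J'$ is nilpotent in $\free_n/J'$; pick $u_0 \in \free_n$ witnessing this and set $u = \varphi_S(u_0)$. Since $\varphi_S$ is a homomorphism and is dual to the surjection $\pi_S$ (hence injective and order-reflecting in the relevant quotient sense), $m|t_0/J'| \le u_0/J'$ for all $m$ translates into $m|t/J| \le u/J$ for all $m$, with $u \in \varphi_S[\free_n]$. The converse is the delicate point: given $t, u \in \varphi_S[\free_n]$ with $m|t/J| \le u/J$ for all $m$, one must produce a witness \emph{inside} $\varphi_S[\free_n]$ for nilpotency of $t_0/J'$, where $t = \varphi_S(t_0)$. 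The subtlety is that a priori $u = \varphi_S(u_0)$ dominates $m|t|$ only modulo $J$, not modulo $\varphi_S[\varphi_S^{-1}[J]] = \varphi_S[J']$; one needs $m|t_0/J'| \le u_0/J'$ in $\free_n$. I would resolve this by noting $\varphi_S(m|t_0| - u_0) = m|t| - u \in$ (negative cone modulo $J$), i.e. $(m|t_0| - u_0)^+ \in \varphi_S^{-1}[J] = J'$, which is exactly $m|t_0/J'| \le u_0/J'$; the key fact used is that $\varphi_S$ reflects the order modulo the respective ideals because $J' = \varphi_S^{-1}[J]$ by definition. So the heart of the argument is simply that $\varphi_S$ being the inverse-image map along the dual projection makes $\free_n/J' \hookrightarrow \free_\kappa/J$ an embedding, and nilpotency is an elementary condition preserved and reflected by embeddings once the dominating element is known to lie in the subalgebra.
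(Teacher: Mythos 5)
Your treatment of \Cref{l:IcapF-and-NcapF:item1}, \Cref{l:IcapF-and-NcapF:item2}, \Cref{l:IcapF-and-NcapF:item3iii}, and \Cref{l:IcapF-and-NcapF:item3ii} is in substance the paper's own proof. In particular, your resolution of the delicate direction of \Cref{l:IcapF-and-NcapF:item3ii} --- passing the inequality through $\varphi_S$ and using that $(m\lvert t_0\rvert-u_0)\vee 0\in\varphi_S^{-1}[J]$ exactly when its image lies in $J$ --- is the same argument the paper runs, phrased there via the induced isomorphism $\free_n/\varphi_S^{-1}[J]\cong \varphi_S[\free_n]/(\varphi_S[\free_n]\cap J)$; your direct preimage formulation is a mildly streamlined but equivalent version, and it is correct.

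The one genuine gap is in \Cref{l:IcapF-and-NcapF:item3i}. You assert that \Cref{p:intersection of max ideals dual}\eqref{p:intersection of max ideals dual:item1} ``gives that the ideal of nilpotents $N_S$ equals $\CU(\VU(J')\cap\R^n)$''. It does not: that proposition computes the \emph{intersection of all maximal ideals} of $\free_n/J'$, not its ideal of nilpotents, and these two ideals differ in general --- that distinction is precisely the theme of the section, and \cref{appendix} exhibits an $\omega$-generated example where they are different. To justify the identification you must insert the step the paper makes here: $\free_n/\varphi_S^{-1}[J]$ is finitely generated, hence has a strong order-unit (\Cref{rem:facts units}), and in the presence of a strong order-unit the ideal of nilpotents coincides with the intersection of the maximal ideals by the Yosida--Fukamiya result recalled in \Cref{rem:facts nilpotents}. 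This is exactly the point where the finiteness of $S$ is used, and it is what makes \Cref{t:dual-of-N} work; without it, the equality in \Cref{l:IcapF-and-NcapF:item3i}, and hence the formula in \Cref{l:IcapF-and-NcapF:item3iii} which you derive from it, is unjustified. Once that sentence is added, your argument goes through as written.
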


\begin{proof}
\Cref{l:IcapF-and-NcapF:item1}.
Since $\varphi_S$ is a homomorphism and $J$ is an ideal, $\varphi_S^{-1}[J]$ is an ideal of $\free_n$. It follows from the definitions of $\varphi_S$ and $\pi_S$ that $\varphi_S (t)(x)=t(\pi_S(x))$ for all $t \in \free_n$ and $x\in \U^\kappa$. Thus,
\begin{align*}
t \in \varphi_S^{-1}[J] & \Longleftrightarrow \varphi_S(t) \in J = \CU(C) \Longleftrightarrow \varphi_S(t)(x)=0 \mbox{ for all } x \in C\\ 
&\Longleftrightarrow t(\pi_S(x))=0 \mbox{ for all } x \in C \Longleftrightarrow t \in \CU(\pi_S[C]).
\end{align*}
Therefore, $\varphi_S^{-1}[J] = \CU(\pi_S[C])$.

\Cref{l:IcapF-and-NcapF:item2}. Since $\varphi_S (t)(x)=t(\pi_S(x))$ for all $t \in \free_n$ and $x\in \U^\kappa$, we have that $\VU(\varphi_S(t))=\pi_S^{-1}[\VU(t)]$. Therefore,
\begin{align*}
\VU(\varphi_S[H]) & = \bigcap \{\VU(\varphi_S(t)) \mid t \in H \} = \bigcap \{\pi_S^{-1}[\VU(t)] \mid t \in H \}\\
& = \pi_S^{-1} \left[ \bigcap \{\VU(t) \mid t \in H \} \right] = \pi_S^{-1}[\VU(H)].
\end{align*}

\Cref{l:IcapF-and-NcapF:item3i}. $\free_n/\varphi_S^{-1}[J]$ has a strong order-unit because it is finitely generated. Thus, 

\Cref{rem:facts nilpotents,p:intersection of max ideals dual,l:IcapF-and-NcapF:item1} imply that
\begin{align*}
N_S = \CU (\VU(\varphi_S^{-1}[J]) \cap \mathbb{R}^n) = \CU (\VU\CU(\pi_S[C]) \cap \mathbb{R}^n).
\end{align*}

\Cref{l:IcapF-and-NcapF:item3iii}. It follows from \Cref{l:IcapF-and-NcapF:item2,l:IcapF-and-NcapF:item3i} that
\[
\VU(\varphi_S[N_S]) = \pi_S^{-1}[\VU(N_S)] = \pi_S^{-1}[\VU\CU (\VU\CU(\pi_S[C]) \cap \mathbb{R}^n)].
\]

\Cref{l:IcapF-and-NcapF:item3ii}. Let $F = \varphi_S[\free_n]$.
Since the co-restriction $\varphi_S \colon \free_n \to F$ is an isomorphism, it induces an isomorphism between $\free_n/\varphi_S^{-1}[J]$ and $F /(F \cap J)$. It follows that $t \in \varphi_S[N_S]$ if and only if $t/(F \cap J)$ is nilpotent in $F/(F \cap J)$. Thus, $t \in \varphi_S[N_S]$ if and only if there exists $u \in F$ such that $m \lvert t/(F \cap J) \rvert \le u/(F \cap J)$ for each $m \in \mathbb{N}$, which is equivalent to $(m \lvert t \rvert - u ) \vee 0 \in F \cap J$. Since $t,u \in F$ and $F$ is a subalgebra of $\free_\kappa$, the last condition is equivalent to $(m \lvert t \rvert - u ) \vee 0 \in J$ for each $m \in \mathbb{N}$. Therefore, $t \in \varphi_S[N_S]$ if and only if there exists $u \in F=\varphi_S[\free_n]$ such that $m \lvert t/J \rvert \le u/J$ for each $m \in \mathbb{N}$. 
\end{proof}

The following theorem shows that the ideal of nilpotents of $G$ corresponds to the intersection of a family of subsets of $\U^\kappa$ indexed by the set of all finite subsets of ordinals smaller than $\kappa$. Let $\pfink$ be the set of all finite subsets of $\kappa$. If $S \in \pfink$, we denote the cardinality of $S$ by $\lvert S \rvert$.

\begin{theorem}\label{t:dual-of-N}
Let $G=\free_{\kappa}/J$ and $C=\VU(J)$. If $N/J$ is the ideal of nilpotents of $G$, then
\[
\VU(N) = \bigcap_{S \in \pfink} \pi_S^{-1} [\VU\CU(\VU\CU(\pi_S[C]) \cap \mathbb{R}^{\lvert S \rvert})].
\]
\end{theorem}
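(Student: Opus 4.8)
The plan is to reduce the statement to a pointwise characterization of membership in $\VU(N)$ and then compute each side using the technical lemma. First I would unwind what $\VU(N)$ means: since $N/J$ is the ideal of nilpotents of $G = \free_\kappa/J$, an element $t \in \free_\kappa$ lies in $N$ exactly when $t/J$ is nilpotent in $G$, i.e. there is $u \in \free_\kappa$ with $m\lvert t/J\rvert \le u/J$ for every $m \in \N$. Now, a crucial observation is that nilpotency of $t/J$ only ever involves \emph{finitely many} variables at a time, because every term of $\free_\kappa$ depends on finitely many coordinates: if $t,u \in \free_\kappa$ both lie in $\varphi_S[\free_n]$ for a suitable finite $S \in \pfink$ of size $n$, then by \Cref{l:IcapF-and-NcapF}\eqref{l:IcapF-and-NcapF:item3ii} the witnessing inequality $m\lvert t/J\rvert \le u/J$ is equivalent to $t \in \varphi_S[N_S]$, where $N_S/\varphi_S^{-1}[J]$ is the ideal of nilpotents of the finitely generated algebra $\free_n/\varphi_S^{-1}[J]$. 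This lets me write $N = \bigcup_{S \in \pfink} \varphi_S[N_S]$: the inclusion $\supseteq$ is immediate, and for $\subseteq$, given $t/J$ nilpotent with witness $u$, pick $S$ large enough that both $t$ and $u$ lie in $\varphi_S[\free_n]$, whence $t \in \varphi_S[N_S]$ by the same item of the lemma.

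Next I would apply $\VU$ to this union. Since $\VU$ turns unions into intersections (it is the right-hand side of a Galois connection, so $\VU\left(\bigcup_S \varphi_S[N_S]\right) = \bigcap_S \VU(\varphi_S[N_S])$), we get
\[
\VU(N) = \bigcap_{S \in \pfink} \VU(\varphi_S[N_S]).
\]
It remains to identify each term $\VU(\varphi_S[N_S])$ with $\pi_S^{-1}[\VU\CU(\VU\CU(\pi_S[C]) \cap \mathbb{R}^{\lvert S\rvert})]$; but this is precisely the content of \Cref{l:IcapF-and-NcapF}\eqref{l:IcapF-and-NcapF:item3iii} (with $n = \lvert S\rvert$), so the proof closes by substitution.

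The main obstacle I anticipate is justifying the identity $N = \bigcup_{S \in \pfink} \varphi_S[N_S]$ cleanly, specifically the passage from ``$t/J$ is nilpotent in $G$ with some witness $u$'' to ``$t$ and $u$ can be taken inside a common $\varphi_S[\free_n]$.'' One must check that $\varphi_S[N_S] \subseteq N$, i.e. that a witness $u \in \varphi_S[\free_n]$ for nilpotency \emph{in the subalgebra} $F = \varphi_S[\free_n]$ modulo $F \cap J$ is still a witness \emph{in all of $G$} modulo $J$ — this is exactly where \Cref{l:IcapF-and-NcapF}\eqref{l:IcapF-and-NcapF:item3ii} does the work, via the fact that $(m\lvert t\rvert - u)\vee 0 \in F \cap J$ iff it lies in $J$, since it already lies in $F$. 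Everything else — that $\VU$ converts the union to an intersection, and the bookkeeping matching $n$ with $\lvert S\rvert$ — is routine once the decomposition of $N$ is in hand.
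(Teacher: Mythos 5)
Your proposal is correct and follows essentially the same route as the paper: decompose $N=\bigcup_{S\in\pfink}\varphi_S[N_S]$ via \Cref{l:IcapF-and-NcapF}\eqref{l:IcapF-and-NcapF:item3ii} (including the subtle direction that a nilpotency witness inside $\varphi_S[\free_n]$ remains one in $G$), convert the union to an intersection under $\VU$, and conclude with \Cref{l:IcapF-and-NcapF}\eqref{l:IcapF-and-NcapF:item3iii}. No gaps to report.
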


\begin{proof}
Let $S \in \pfink$ and $N_S/\varphi_S^{-1}[J]$ be the ideal of nilpotents of $\free_n/\varphi_S^{-1}[J]$. By \Cref{l:IcapF-and-NcapF}(\ref{l:IcapF-and-NcapF:item3ii}), $\varphi_S[N_S]$ is the set of all the $t \in \varphi_S[\free_n]$ for which there exists $u \in \varphi_S[\free_n]$ such that $m \lvert t/J \rvert \le u/J$ for each $m \in \N$. It follows that $t/J$ is a nilpotent of $G$ for any $t \in \varphi_S[N_S]$, and hence $\varphi_S[N_S] \subseteq N$. Conversely, let $t \in N$. Then $t \in \free_\kappa$ and $t/J$ is a nilpotent of $G$, which means that there exists $u \in \free_\kappa$ such that $m \lvert t/J \rvert \le u/J$ for each $m \in \N$. Since every term contains a finite number of variables and $\varphi_S[\free_n]$ is the subset of $\free_\kappa$ consisting of all the equivalence classes of terms whose variables are indexed by elements of $S$, there exists $S \in \pfink$ such that $t,u \in \varphi_S[\free_n]$. It follows that $t \in \varphi_S[N_S]$.
Therefore,
$N = \bigcup \{ \varphi_S[N_S] \mid S \in \pfink \}$. Then \Cref{l:IcapF-and-NcapF}(\ref{l:IcapF-and-NcapF:item3iii}) imples that
\begin{align*}
\VU(N)  = \bigcap_{S \in \pfink} \VU(\varphi_S[N_S]) = \bigcap_{S \in \pfink} \pi_S^{-1} [\VU\CU(\VU\CU(\pi_S[C]) \cap \mathbb{R}^{\lvert S \rvert})].
\end{align*}
\end{proof}

\begin{corollary}\label{c:inclusion-subsets-C}
Let $C \subseteq \U^\kappa$ such that $C=\VU \CU(C)$. Then
\[
\VU \CU(C \cap \mathbb{R}^\kappa) 
\subseteq 
\bigcap_{S \in \pfink} \pi_S^{-1} [\VU\CU(\VU\CU(\pi_S[C]) \cap \mathbb{R}^{\lvert S \rvert})].
\]
If in addition $C\setminus \{ \orig \}$ is a compact subspace of $\Uz^\kappa$, then the inclusion above is an equality.
\end{corollary}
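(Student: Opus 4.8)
The plan is to reduce the statement entirely to \Cref{t:dual-of-N} together with the elementary facts about nilpotents collected in \Cref{rem:facts nilpotents}. First I would set $J \df \CU(C)$; the hypothesis $C = \VU\CU(C)$ then gives $C = \VU(J)$, placing us precisely in the setting of \Cref{t:dual-of-N} with $G \df \free_\kappa/J$. Let $N/J$ be the ideal of nilpotents of $G$ and $I/J$ the intersection of all maximal ideals of $G$. On one side, \Cref{t:dual-of-N} identifies the right-hand side of the displayed formula with $\VU(N)$. On the other side, \Cref{p:intersection of max ideals dual}\eqref{p:intersection of max ideals dual:item1} applied with $\VU(J) = C$ gives $I = \CU(C \cap \R^\kappa)$, so the left-hand side equals $\VU\CU(C\cap\R^\kappa) = \VU(I)$. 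Hence the first assertion becomes the inclusion $\VU(I) \subseteq \VU(N)$ and the second becomes the equality $\VU(I) = \VU(N)$.

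For the inclusion, I would use that the ideal of nilpotents is always contained in the intersection of all maximal ideals (\Cref{rem:facts nilpotents}), so $N \subseteq I$; since $\VU$ reverses inclusions, $\VU(I) \subseteq \VU(N)$, which is what is claimed.

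For the equality under the extra assumption that $C \setminus \{\orig\}$ be a compact subspace of $\Uz^\kappa$, observe that $C \setminus \{\orig\} = \VU(J)\setminus\{\orig\} = \VUst(J)$. If $J = \free_\kappa$ the algebra $G$ is trivial and every set occurring in the statement collapses to $\{\orig\}$, so the equality is immediate; otherwise \Cref{t:duals-of-units}\eqref{t:duals-of-units:item2} applies and shows that $G$ has a strong order-unit, whence by \Cref{rem:facts nilpotents} the ideal of nilpotents coincides with the intersection of all maximal ideals, i.e.\ $N = I$, giving $\VU(N) = \VU(I)$. I do not expect a genuine obstacle here: the only points requiring attention are the harmless trivial-$G$ case (needed because \Cref{t:duals-of-units} is stated for non-trivial algebras) and checking that pulling the equality ``nilpotents $=$ intersection of maximal ideals'' back along $\free_\kappa \twoheadrightarrow G$ yields exactly $N = I$ as ideals of $\free_\kappa$.
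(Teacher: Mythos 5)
Your proposal is correct and follows essentially the same route as the paper: set $J=\CU(C)$, identify the left-hand side with $\VU(I)$ via \Cref{p:intersection of max ideals dual} and the right-hand side with $\VU(N)$ via \Cref{t:dual-of-N}, use $N\subseteq I$ from \Cref{rem:facts nilpotents} for the inclusion, and compactness of $\VUst(J)$ plus \Cref{t:duals-of-units}\eqref{t:duals-of-units:item2} for the equality. Your separate treatment of the trivial case $J=\free_\kappa$ is a harmless extra precaution not present in the paper's proof.
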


\begin{proof}
Let $J=\CU(C)$. Then $J$ is an ideal of $\free_\kappa$ and $C=\VU(J)$. 
Let $N/J$ be the ideal of nilpotents of $\free_\kappa/J$ and $I/J$ the intersection of all maximal ideals of $\free_\kappa/J$.
By \Cref{rem:facts nilpotents}, $N/J \subseteq I/J$, and so $N \subseteq I$. Thus, \Cref{p:intersection of max ideals dual}\eqref{rem:dual intersection maximals} and \Cref{t:dual-of-N} imply that
\begin{align*}
\VU \CU(C \cap \mathbb{R}^\kappa) = \VU(I) \subseteq \VU(N) = \bigcap_{S \in \pfink} \pi_S^{-1} [\VU\CU(\VU\CU(\pi_S[C]) \cap \mathbb{R}^{\lvert S \rvert})].
\end{align*}
If $C \setminus \{ \orig \} = \VUst(J)$ is compact, then $\free_\kappa/J$ has a strong order-unit by \Cref{t:duals-of-units}(\ref{t:duals-of-units:item2}). Therefore, $I=N$ by \Cref{rem:facts nilpotents}, and the inclusion becomes an equality. 
\end{proof}

\begin{remark}\label{rem:directed intersection}
\mbox{}\begin{enumerate}
\item\label{rem:directed intersection:item1} 
If $S,S' \in \pfink$ with $S \subseteq S'$, then
\[
\pi_{S'}^{-1} [\VU\CU(\VU\CU(\pi_{S'}[C]) \cap \mathbb{R}^{\lvert S' \rvert})]
\subseteq
\pi_S^{-1} [\VU\CU(\VU\CU(\pi_S[C]) \cap \mathbb{R}^{\lvert S \rvert})].
\]
Indeed, \cref{l:IcapF-and-NcapF}\eqref{l:IcapF-and-NcapF:item3ii} implies $\varphi_S[N_S] \subseteq \varphi_{S'}[N_{S'}]$, and hence $\VU(\varphi_{S'}[N_{S'}]) \subseteq \VU(\varphi_S[N_S])$. It is then sufficient to apply \cref{l:IcapF-and-NcapF}\eqref{l:IcapF-and-NcapF:item3iii}.
As a consequence, the sets
\[
\pi_S^{-1} [\VU\CU(\VU\CU(\pi_S[C]) \cap \mathbb{R}^{\lvert S \rvert})],
\]
where $S \in \pfink$, form a downward directed family of subsets of $\U^\kappa$.

\item\label{rem:directed intersection:item2} If $\kappa=\omega$, then it follows from \cref{rem:directed intersection:item1} that we can replace the equality of \cref{t:dual-of-N} with
\[
\VU(N) = \bigcap_{n=0}^\infty \pi_{\{0, \ldots, n\}}^{-1} [\VU\CU(\VU\CU(\pi_{\{0, \ldots, n\}}[C]) \cap \mathbb{R}^{n+1})],
\]
where the subsets $\pi_{\{0, \ldots, n\}}^{-1} [\VU\CU(\VU\CU(\pi_{\{0, \ldots, n\}}[C]) \cap \mathbb{R}^{n+1})]$ of $\U^\omega$ form a decreasing sequence of subsets of $\U^\omega$.
\end{enumerate}
\end{remark}

In the appendix we will describe a set $C \subseteq \U^\omega$ for which the inclusion of \Cref{c:inclusion-subsets-C} is strict. This will yield a presentation of an $\ell$-group (or Riesz space) $\free_\omega/\CU(C)$ that is $\omega$-generated and whose ideal of nilpotents does not coincide with the intersection of its maximal ideals. The construction of $C$ will require some basic techniques of non-standard analysis that will be introduced in \Cref{sec:irreducible}.

\section{Irreducible closed subsets of \texorpdfstring{$\Uz^{n}$}{U0n}}
\label{sec:irreducible}

By \Cref{p:irrClosed=prime}, prime ideals of $\free_\kappa$ correspond to irreducible closed subsets of $\Uz^\kappa$.
In this section we use tools from non-standard analysis to give a geometrical description of the irreducible closed subsets of $\Uz^{n}$ when $\U$ is an ultrapower of $\mathbb{R}$ and $n \in \N$.  These tools are available only in finite-dimensional settings, therefore in this section we only work in finite dimension. We briefly recall below the basic definitions and results in non-standard analysis needed in our proofs; see, e.g., \cite{Goldblatt,rob} for more details. 

Throughout this section, $\U$ will be an ultrapower of the form $\R^{\mathbb{N}}/\mathcal{F}$, where $\mathcal{F}$ is a non-principal ultrafilter on $\mathbb{N}$. By \Cref{cor:existence-U-generators}\eqref{cor:existence-U-generators:item3}, such an ultrapower satisfies \Cref{ass:U-embeds} with $\alpha = \omega$.

The elements of $\U$, called \emph{hyperreal numbers}, are equivalence classes of $\N$-indexed sequences of real numbers $[(r_{i})_{i\in \N}]$.
There is a canonical elementary embedding of $\R$ into $\U$ defined by sending $r \in \R$ to $[(r_i)] \in \U$, with $r_i=r$ for all $i\in \N$.
This embedding allows us to identify $\R$ with its isomorphic copy inside $\U$. In order to recall the transfer principle, which will be one of our main tools, we need to introduce the notion of enlargement (see, e.g., \cite[Sections~3.9, 3.11, and 3.14]{Goldblatt}).

\begin{definition}\label{d:enlargement}
If $P\subseteq \R^n$, its \emph{enlargement} $\en{P}\seq\U^n$ is defined as follows.  For any $(r_i^1), \ldots ,(r_i^n)\in\R^\mathbb{N}$, 
\[
\left( [(r_i^1)], \ldots, [(r_i^n)] \right)\in \en{P} \text{ if and only if } \{i\in \mathbb{N}\mid (r_i^1, \ldots, r_i^n) \in P\}\in \mathcal{F}.
\]
Moreover, if $A \subseteq \R^n$ and $f \colon A \to \R$, then the enlargement $\en{f}\colon \en{A}\to \U$ of $f$ is given by
\[ \en{f}([(r_i^1)], \ldots, [(r_i^n)])\df[(f(r_i^1,\dots,r_i^n))].\]
\end{definition}

If $A \subseteq \R^n$ is infinite, its enlargement $\en{A} \subseteq \U^n$ must contain some elements outside $\R^n$ (see, e.g., \cite[Theorem~3.9.1]{Goldblatt}).
On the other hand, if $A$ is finite, then $A=\en{A}$.

Let $\mathscr{L}$ be a first-order language and $(\R, (P_\alpha), (f_\alpha)))$ an $\mathscr{L}$-structure, where the $P_\alpha$'s and $f_\alpha$'s are the interpretations of the predicate and function symbols of $\mathscr{L}$ in $\R$. Then $(\U, (\en{P_\alpha}), (\en{f_\alpha})))$ is also an $\mathscr{L}$-structure. We are now ready to state one of our main tools: the \emph{transfer principle}, which is a consequence of {\L}o\'s theorem (see, e.g., {\cite[Theorem 4.1.9]{CK}}). 
\begin{theorem}[Transfer principle]\label{t:transfer}
Let $\varphi$ be a first-order $\mathscr{L}$-sentence. Then $\varphi$ is true in $(\R, (P_\alpha), (f_\alpha))$ if and only $\varphi$ is true in $(\U, (\en{P_\alpha}), (\en{f_\alpha}))$.
\end{theorem}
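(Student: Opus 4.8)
The plan is to deduce the transfer principle directly from {\L}o\'s's theorem, once we recognize that the structure $(\U, (\en{P_\alpha}), (\en{f_\alpha}))$ is nothing but the ultrapower of $(\R, (P_\alpha), (f_\alpha))$ modulo $\mathcal{F}$. Thus the whole statement becomes the sentence-level instance of {\L}o\'s's theorem applied to a constant family of structures.

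First I would unwind \Cref{d:enlargement} and check that the enlargement operation reproduces exactly the ultrapower interpretations of the symbols of $\mathscr{L}$. The underlying set $\U=\R^{\mathbb{N}}/\mathcal{F}$ is by construction the ultrapower of the set $\R$. For a predicate symbol with interpretation $P_\alpha\subseteq\R^{n}$, \Cref{d:enlargement} puts $\bigl([(r_i^1)],\ldots,[(r_i^n)]\bigr)\in\en{P_\alpha}$ precisely when $\{i\in\mathbb{N}\mid (r_i^1,\ldots,r_i^n)\in P_\alpha\}\in\mathcal{F}$; this is verbatim the definition of the interpretation of that symbol in the ultrapower, and in particular it is well defined on $\mathcal{F}$-classes because $\mathcal{F}$ is a filter. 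Likewise, for a function symbol with interpretation $f_\alpha\colon\R^{n}\to\R$, the prescription $\en{f_\alpha}\bigl([(r_i^1)],\ldots,[(r_i^n)]\bigr)\df[(f_\alpha(r_i^1,\ldots,r_i^n))]$ is exactly the componentwise definition of that function in the ultrapower (constants being treated as nullary functions, so that $\en{c}$ is the diagonal image of $c$). Hence $(\U, (\en{P_\alpha}), (\en{f_\alpha}))=(\R,(P_\alpha),(f_\alpha))^{\mathbb{N}}/\mathcal{F}$ as $\mathscr{L}$-structures.

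Next I would apply {\L}o\'s's theorem \cite[Theorem 4.1.9]{CK} to this ultrapower: for every $\mathscr{L}$-sentence $\varphi$,
\[
(\U, (\en{P_\alpha}), (\en{f_\alpha}))\models\varphi
\quad\Longleftrightarrow\quad
\{\, i\in\mathbb{N}\mid (\R,(P_\alpha),(f_\alpha))\models\varphi \,\}\in\mathcal{F}.
\]
Since every factor of the ultrapower is one and the same structure $(\R,(P_\alpha),(f_\alpha))$, the index set on the right is either all of $\mathbb{N}$ (when $\varphi$ holds in $\R$) or empty (when it does not). As $\mathcal{F}$ is an ultrafilter on $\mathbb{N}$ we have $\mathbb{N}\in\mathcal{F}$ and $\emptyset\notin\mathcal{F}$, so the right-hand side holds if and only if $(\R,(P_\alpha),(f_\alpha))\models\varphi$. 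Combining the two equivalences yields the statement; equivalently, this says that the diagonal embedding $\R\to\U$ is elementary, which is precisely the sentence-level content of {\L}o\'s's theorem for a constant family.

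I do not anticipate a serious obstacle: the argument is essentially bookkeeping. The only point that deserves care is the first step --- verifying that the somewhat ad hoc \Cref{d:enlargement} of enlargements of predicates and functions genuinely coincides with the model-theoretic ultrapower construction, including well-definedness on $\mathcal{F}$-classes and the correct treatment of constant symbols --- after which {\L}o\'s's theorem applies verbatim.
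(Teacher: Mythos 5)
Your proposal is correct and follows exactly the route the paper intends: the paper states the transfer principle as a consequence of {\L}o\'s's theorem \cite[Theorem 4.1.9]{CK}, and your argument simply fleshes out that citation by checking that the enlargements of \Cref{d:enlargement} are the ultrapower interpretations and then specializing {\L}o\'s's theorem to a constant family of structures, using that $\mathcal{F}$ is an ultrafilter so the index set $\{i \mid \R \models \varphi\}$ is either $\mathbb{N}$ or $\emptyset$. No gaps; the verification of well-definedness on $\mathcal{F}$-classes and the treatment of constants are exactly the bookkeeping points one would spell out.
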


\begin{remark}
\mbox{}\begin{enumerate}
\item An immediate consequence of the transfer principle is that $\U$ is a lattice-ordered field with the operations that are enlargements of the operations of the lattice-ordered field $\R$. In addition, $\R$ is an ordered subfield of $\U$. To keep the notation simple, we will denote the enlargements of these operations and the order without the suffix $\en{}$.
\item Since $\U$ is a field, $\U^n$ has a natural structure of $\U$-vector space. Moreover, $\U^n$ is also an $\R$-vector space containing $\R^n$ as a subspace because $\R$ is a subfield of $\U$. In particular, $\U^n$ is a Riesz space.
\item We will often transfer first-order properties of subsets $P \subseteq \R^n$ or functions $f \colon A \to \R$ to their enlargements $\en{P} \subseteq \U^n$ and $\en{f} \colon \en{A} \to \U$ and vice versa. 
This can be done by adding predicate and function symbols to $\mathscr{L}$ and interpret them in $\R$ as the relations or functions to which we want to apply the transfer principle.
\end{enumerate}
\end{remark}

Archimedeanity is not a first-order property and it does not transfer to $\U$. In fact, $\U$ cannot be Archimedean because it satisfies \Cref{ass:U-embeds}.
A hyperreal $\eps \in \U$ is called \emph{infinitesimal} if $-r <\eps < r$ for all $0< r\in \R$. We say that $\alpha \in \U$ is \emph{limited} if there exist $r,s\in \R$ such that $r< \alpha< s$, otherwise $\alpha$ is called \emph{unlimited}. 
Since $\U$ is an ultrapower over a non-principal ultrafilter on $\N$, it contains non-zero infinitesimal and unlimited elements (see~\cite[Section 3.8]{Goldblatt}).
An $x\in\U^{n}$ is called infinitesimal or limited if all its components are infinitesimal or limited, respectively.

We say that $\alpha, \beta \in \U$ are \emph{infinitely close} if $\alpha-\beta$ is infinitesimal. Every limited hyperreal $\alpha$ is infinitely close to exactly one real number (see, e.g., \cite[p.~57]{rob} and \cite[Theorem 5.6.1]{Goldblatt}), that is called its \emph{standard part} and is denoted by $\st(\alpha)$. Notice that infinitesimals in $\U$ are exactly the elements whose standard part is zero. The function $\st$ extends to any limited element of $\U^{n}$ by acting pointwise. The following theorem is one of the fundamental results of non-standard analysis (see, e.g., \cite[Theorem 4.5.2]{rob} and \cite[Theorem 7.1.1]{Goldblatt}).

\begin{theorem}\label{l:zero-ns->zero-s}
A function $f\colon\R^n\to \R$ is continuous if and only if $f(\st(x))=\st(\en{f}(x))$ for each limited $x\in \U^n$. 
\end{theorem}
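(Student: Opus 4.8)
The statement to prove is the characterization: a function $f\colon\R^n\to\R$ is continuous if and only if $f(\st(x))=\st(\en{f}(x))$ for every limited $x\in\U^n$. The plan is to prove each direction separately, using the transfer principle (\Cref{t:transfer}) together with the $\eps$-$\delta$ formulation of continuity. Throughout, note that if $x$ is limited then $\st(x)$ is a well-defined point of $\R^n$ (componentwise standard parts), so $f(\st(x))$ makes sense; the content is that it agrees with the standard part of the hyperreal value $\en f(x)$, which first requires knowing that $\en f(x)$ is itself limited.

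\emph{Continuity $\Rightarrow$ the nonstandard condition.} Fix a limited $x\in\U^n$ and set $a=\st(x)\in\R^n$, so $x$ is infinitely close to $a$. Let $0<\eps\in\R$. By continuity of $f$ at $a$ there is $0<\delta\in\R$ such that, in $\R$, the sentence ``for all $y$, if $\lvert y-a\rvert<\delta$ then $\lvert f(y)-f(a)\rvert<\eps$'' holds (here $\lvert\cdot\rvert$ denotes, say, the max-norm, which is first-order expressible once we adjoin the appropriate predicate/function symbols and interpret them in $\R$ as in the preceding remark). By \Cref{t:transfer} this sentence transfers to $(\U,\en{(\cdot)})$. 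Since $x$ is infinitely close to $a$ we have $\lvert x-a\rvert<\delta$ in $\U$, hence $\lvert\en f(x)-f(a)\rvert<\eps$. As $\eps\in\R_{>0}$ was arbitrary, $\en f(x)-f(a)$ is infinitesimal; in particular $\en f(x)$ is limited and $\st(\en f(x))=f(a)=f(\st(x))$.

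\emph{The nonstandard condition $\Rightarrow$ continuity.} I would argue by contraposition: suppose $f$ is discontinuous at some $a\in\R^n$. Then there is $0<\eps\in\R$ such that for every $0<\delta\in\R$ there exists $y\in\R^n$ with $\lvert y-a\rvert<\delta$ but $\lvert f(y)-f(a)\rvert\ge\eps$. Applying this with $\delta=1/k$ for each $k\in\N$ produces a sequence $(y^{(k)})_{k\in\N}$ in $\R^n$ with $\lvert y^{(k)}-a\rvert<1/k$ and $\lvert f(y^{(k)})-f(a)\rvert\ge\eps$ for all $k$. Let $x=[(y^{(k)})_k]\in\U^n$ be the hyperreal point determined by this sequence (via the chosen nonprincipal ultrafilter $\mathcal F$ on $\N$). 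Since $\lvert y^{(k)}-a\rvert<1/k\to 0$, the point $x$ is infinitely close to $a$, so $x$ is limited and $\st(x)=a$. On the other hand, by the definition of $\en f$ we have $\en f(x)=[(f(y^{(k)}))_k]$, and since $\lvert f(y^{(k)})-f(a)\rvert\ge\eps$ for \emph{all} $k$ (hence on a set in $\mathcal F$), we get $\lvert\en f(x)-f(a)\rvert\ge\eps$ in $\U$, so $\st(\en f(x))\ne f(a)=f(\st(x))$ (if $\en f(x)$ is even limited; if it is unlimited, its standard part is undefined and the identity again fails). This contradicts the assumed condition, completing the contrapositive.

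\emph{Main obstacle.} The argument is essentially bookkeeping; the only genuine subtlety is making sure the $\eps$-$\delta$ statements are legitimately first-order so that \Cref{t:transfer} applies, and that the norm/absolute-value and the relation ``$<$'' used are the enlargements of the corresponding real operations (as flagged in the remark following \Cref{t:transfer}). A secondary care point is the second direction: one must either assume, or verify en route, that the failure of the identity $f(\st(x))=\st(\en f(x))$ includes the case where $\en f(x)$ is unlimited; choosing the sequence $(y^{(k)})$ bounded (which is automatic since $y^{(k)}\to a$) in fact forces $\en f$ of it to behave well enough, but it is cleanest to simply note that the hypothesis asserts both that $\en f(x)$ is limited and that its standard part equals $f(\st(x))$, and our constructed $x$ violates the latter (or the former). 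Since this is standard material, I would keep the write-up brief and refer to \cite{Goldblatt,rob} for the ambient facts about $\st$ and enlargements already recalled above.
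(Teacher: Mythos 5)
Your proof is correct. Note that the paper does not prove this statement at all: it is quoted as a standard fact of non-standard analysis with references to Robinson and Goldblatt, so there is no internal proof to compare against. Your argument is essentially the textbook one, with one small variation worth noting: in the cited sources the converse direction is usually obtained by transferring an existential sentence \emph{downward} (given real $\eps>0$, the sentence ``there exists $\delta>0$ such that $\lvert y-a\rvert<\delta$ implies $\lvert f(y)-f(a)\rvert<\eps$'' holds in $\U$ by taking $\delta$ infinitesimal, hence holds in $\R$ by transfer), whereas you argue contrapositively by building a witness $x=[(y^{(k)})_k]$ directly from the ultrapower construction. Your route is legitimate in this paper's setting, where $\U=\R^{\N}/\mathcal F$ with $\mathcal F$ non-principal (so cofinite sets lie in $\mathcal F$, giving $\st(x)=a$), and your handling of the case where $\en{f}(x)$ is unlimited closes the only potential gap; the transfer-based converse is just slightly more economical and does not depend on the specific ultrapower presentation.
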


We gather here some facts about the operations on $\U$. For a list of similar facts see, e.g., \cite[Section 5.2]{Goldblatt}.

\begin{lemma}\label{c:basic-fact-infinitesimal}
Let $\alpha, \beta, \varepsilon, \delta \in \U$.
\begin{enumerate}
\item\label{c:basic-fact-infinitesimal:item1} All ring and Riesz space operations on  $\U$ commute with $\st$.
\item\label{c:basic-fact-infinitesimal:item2} If $\alpha,\beta$ are limited, then $\alpha\le \beta$ implies $\st(\alpha)\le \st(\beta)$.
\item\label{c:basic-fact-infinitesimal:item3} If $\varepsilon$ and $\delta$ are infinitesimals, $\alpha$ is limited, and $\beta$ is unlimited, then
\begin{enumerate}[label=\alph*., ref=\alph*]
\item\label{c:basic-fact-infinitesimal:item3a} $\varepsilon+\delta$ and $\varepsilon \delta$ are infinitesimal.
\item\label{c:basic-fact-infinitesimal:item3b} $\alpha+ \varepsilon$ is limited and $\alpha \varepsilon$ is infinitesimal.
\item\label{c:basic-fact-infinitesimal:item3c} $\alpha+\beta$ is unlimited and if $\alpha$ is not infinitesimal, then $\alpha \beta$ is unlimited.
\item\label{c:basic-fact-infinitesimal:item3d} $1/\beta$ is infinitesimal and $1/\varepsilon$ is unlimited if $\varepsilon \neq 0$.
\end{enumerate}
\end{enumerate}
\end{lemma}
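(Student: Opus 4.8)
The plan is to reduce every assertion to an elementary inequality in the ordered field $\U$, which is legitimate: all the ordered-field facts used below (the triangle inequality, the reverse triangle inequality, multiplicativity of the absolute value, and monotonicity of multiplication by non-negative elements) hold in $\R$ and hence, by the transfer principle (\Cref{t:transfer}), in $\U$. The items are not logically independent, so I would prove them in the order \eqref{c:basic-fact-infinitesimal:item3}, \eqref{c:basic-fact-infinitesimal:item2}, \eqref{c:basic-fact-infinitesimal:item1}. Throughout I use that, by the very definition of $\st$, a limited $\gamma\in\U$ can be written as $\gamma=\st(\gamma)+\eta$ with $\eta$ infinitesimal.

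For \eqref{c:basic-fact-infinitesimal:item3}, fix an arbitrary real $r>0$. Part \eqref{c:basic-fact-infinitesimal:item3a}: $\lvert\varepsilon\rvert<r/2$ and $\lvert\delta\rvert<r/2$ give $\lvert\varepsilon+\delta\rvert<r$, while $\lvert\delta\rvert<1$ and $\lvert\varepsilon\rvert<r$ give $\lvert\varepsilon\delta\rvert=\lvert\varepsilon\rvert\lvert\delta\rvert\le\lvert\varepsilon\rvert<r$; since $r$ was arbitrary, $\varepsilon+\delta$ and $\varepsilon\delta$ are infinitesimal. Part \eqref{c:basic-fact-infinitesimal:item3b}: choosing a real $M>0$ with $\lvert\alpha\rvert<M$, we get $\lvert\alpha+\varepsilon\rvert<M+1$ (so $\alpha+\varepsilon$ is limited), and $\lvert\varepsilon\rvert<r/M$ gives $\lvert\alpha\varepsilon\rvert<r$ (so $\alpha\varepsilon$ is infinitesimal). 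Part \eqref{c:basic-fact-infinitesimal:item3c}: if $\alpha+\beta$ were limited, then so would be $\beta=(\alpha+\beta)-\alpha$, since a difference of limited elements is limited, a contradiction; and since $\alpha$ is not infinitesimal there is a real $r_0>0$ with $\lvert\alpha\rvert>r_0$, so if $\alpha\beta$ were limited, say $\lvert\alpha\beta\rvert<M$, then $\lvert\beta\rvert<M/r_0\in\R$, again a contradiction. Part \eqref{c:basic-fact-infinitesimal:item3d}: $\beta$ unlimited means $\lvert\beta\rvert>M$ for every real $M>0$, hence $\lvert 1/\beta\rvert<1/M$ for every such $M$, i.e. $1/\beta$ is infinitesimal; dually, if $\varepsilon\neq 0$ is infinitesimal then $\lvert\varepsilon\rvert<1/M$ for every real $M>0$, hence $\lvert 1/\varepsilon\rvert>M$, i.e. $1/\varepsilon$ is unlimited. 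For \eqref{c:basic-fact-infinitesimal:item2}, suppose $\alpha\le\beta$ are limited but $\st(\alpha)>\st(\beta)$, and put $r=\st(\alpha)-\st(\beta)\in\R$, $r>0$; writing $\alpha=\st(\alpha)+\varepsilon$ and $\beta=\st(\beta)+\delta$ with $\varepsilon,\delta$ infinitesimal, we get $\alpha-\beta=r+(\varepsilon-\delta)$, and $\varepsilon-\delta$ is infinitesimal by \eqref{c:basic-fact-infinitesimal:item3a}, whence $\alpha-\beta>r/2>0$, contradicting $\alpha\le\beta$.

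For \eqref{c:basic-fact-infinitesimal:item1} I write each limited element as its standard part plus an infinitesimal; closure of the infinitesimals under addition and under multiplication by a limited element (parts \eqref{c:basic-fact-infinitesimal:item3a} and \eqref{c:basic-fact-infinitesimal:item3b}) then yields at once $\st(\alpha+\beta)=\st(\alpha)+\st(\beta)$, $\st(-\alpha)=-\st(\alpha)$, $\st(\alpha\beta)=\st(\alpha)\st(\beta)$, and in particular $\st(r\alpha)=r\st(\alpha)$ for $r\in\R$. For the lattice operations I first check $\st\lvert\gamma\rvert=\lvert\st(\gamma)\rvert$ for limited $\gamma$: since $\U$ is linearly ordered one may case-split on the sign of $\gamma$ and invoke \eqref{c:basic-fact-infinitesimal:item2}, or alternatively use the reverse triangle inequality $\bigl\lvert\,\lvert\gamma\rvert-\lvert\st(\gamma)\rvert\,\bigr\rvert\le\lvert\gamma-\st(\gamma)\rvert$ (valid in $\U$ by transfer). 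Then $\st(\alpha\vee\beta)=\st(\alpha)\vee\st(\beta)$ and $\st(\alpha\wedge\beta)=\st(\alpha)\wedge\st(\beta)$ follow from the identities $x\vee y=\tfrac12\bigl(x+y+\lvert x-y\rvert\bigr)$ and $x\wedge y=\tfrac12\bigl(x+y-\lvert x-y\rvert\bigr)$, which hold in every lattice-ordered field, together with the additive and scalar facts just established. There is no genuine obstacle here; the proof is routine bookkeeping with quantities like $r/2$ and $r/M$, and the only points deserving a little attention are getting the order of the arguments right (\eqref{c:basic-fact-infinitesimal:item1} really does use \eqref{c:basic-fact-infinitesimal:item3}) and treating the lattice operations by passing through the absolute value rather than arguing directly.
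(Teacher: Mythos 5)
Your proof is correct, but it runs in the opposite direction from the paper's. The paper proves item \eqref{c:basic-fact-infinitesimal:item1} first, as an immediate consequence of \Cref{l:zero-ns->zero-s} (the nonstandard characterization of continuity) applied to the ring and Riesz space operations on $\R$, then gets \eqref{c:basic-fact-infinitesimal:item2} from the identity $\alpha=\alpha\wedge\beta$ and \eqref{c:basic-fact-infinitesimal:item3a} from $\st(\varepsilon+\delta)=\st(\varepsilon)+\st(\delta)$ and $\st(\varepsilon\delta)=\st(\varepsilon)\st(\delta)$, leaving the remaining parts of \eqref{c:basic-fact-infinitesimal:item3} as ``similar''. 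You instead prove \eqref{c:basic-fact-infinitesimal:item3} by direct order-field estimates transferred from $\R$, then \eqref{c:basic-fact-infinitesimal:item2}, and finally \eqref{c:basic-fact-infinitesimal:item1} by the decomposition $\gamma=\st(\gamma)+\eta$ together with \eqref{c:basic-fact-infinitesimal:item3a}--\eqref{c:basic-fact-infinitesimal:item3b}, handling the lattice operations through the absolute value and the identities $x\vee y=\tfrac12(x+y+\lvert x-y\rvert)$, $x\wedge y=\tfrac12(x+y-\lvert x-y\rvert)$. What the paper's route buys is brevity: once \Cref{l:zero-ns->zero-s} is on the table, \eqref{c:basic-fact-infinitesimal:item1} is one line and the rest follows formally. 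What your route buys is self-containedness and uniformity: it does not invoke the continuity theorem at all, and it treats \eqref{c:basic-fact-infinitesimal:item3c} and \eqref{c:basic-fact-infinitesimal:item3d} head-on, which is worth something since those parts involve unlimited elements, where $\st$ is undefined and the paper's ``the other proofs are similar'' cannot mean a literal repetition of the argument given for \eqref{c:basic-fact-infinitesimal:item3a} but must in effect fall back on estimates like yours. Your only implicit appeals are to the uniqueness of the standard part (used when reading off $\st$ from a decomposition ``real plus infinitesimal'') and to the transfer of elementary ordered-field identities, both of which are already available in the paper at this point, so there is no gap.
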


\begin{proof}
\Cref{c:basic-fact-infinitesimal:item1} follows from \Cref{l:zero-ns->zero-s} and the fact that all the ring and Riesz space operations on $\mathbb{R}$ are continuous.

\Cref{c:basic-fact-infinitesimal:item2}. If $\alpha\le \beta$, then $\alpha=\alpha\wedge \beta$. By \Cref{c:basic-fact-infinitesimal:item1}, $\st(\alpha)=\st(\alpha\wedge \beta)=\st(\alpha)\wedge\st(\beta)$, and hence $\st(\alpha)\le \st(\beta)$.

\Cref{c:basic-fact-infinitesimal:item3}. We prove \Cref{c:basic-fact-infinitesimal:item3a}, the other proofs are similar. If $\varepsilon$ and $\delta$ are infinitesimals, then $\st(\varepsilon)=\st(\delta)=0$. By \Cref{c:basic-fact-infinitesimal:item1}, $\st(\varepsilon+\delta)=\st(\varepsilon)+\st(\delta)=0+0=0$ and $\st(\varepsilon\cdot\delta)=\st(\varepsilon)\cdot\st(\delta)=0\cdot0=0$. Thus, $\varepsilon+\delta$ and $\varepsilon\cdot \delta$ are infinitesimal. 
\end{proof}

Before moving to the characterization of irreducible closed subset of $\U^n$, we describe some immediate consequences of the transfer principle relative to the framework developed in \Cref{sec:duality}.

\begin{remark}\label{rem:enlargement piecewise linear}
By \cite[Theorem 2.4]{baker68}, the free Riesz space $\freersn$ is isomorphic to the Riesz space of continuous piecewise linear functions $\R^n \to \R$ with pointwise operations. This isomorphism is obtained by mapping the equivalence class of a term $t$ to the corresponding definable function. If $t$ is a term, let $\varphi$ be the first-order sentence ``for all $x$, $f(x)=t(x)$'' in the language of Riesz spaces with an additional unary function symbol $f$. The transfer principle applied to $\varphi$ yields that a function $f \colon \R^n \to \R$ is defined by the term $t$ if and only if $\en{f} \colon \U^n \to \U$ is defined by $t$. Thus, the definable functions from $\U^n$ to $\U$ are exactly the enlargements of continuous piecewise linear functions from $\R^n$ to $\R$. If $S \subseteq \U^n$, it then follows from \Cref{l:PWL-I(C)} that $\freersn/\CU(S)$ is isomorphic to the Riesz space of the enlargements of continuous piecewise linear functions restricted to $S$. By \cite[Section 7]{baker68}, an analogous result holds for $\ell$-groups, by considering only piecewise linear functions with integer coefficients.
\end{remark}

\begin{remark}\label{rem:*V_I(f)=V_U(f)}
For any $t \in \free_n$ the transfer principle yields
\begin{align*}
\en{(\VI(t)}) & = \en{\{ x \in \R^n \mid t(x)=0\}} = \{ x \in \U^n \mid t(x)=0\} = \VU(t).
\end{align*}
However, if $T \subseteq \free_n$, then $\en{(\VI(T))}$ and $\VU(T)$ are two subsets of $\U^n$ that do not necessarily coincide. For example, 
let $n$ be a positive integer and $t_n \in \free_2$ be the equivalence class of the term $0 \wedge x \wedge y \wedge (x-ny)$. Set $T = \{t_n \mid n \ge 1 \}$. Then  $\VI(t_n)=\{(x,y)\in \R^2 \mid x,y \ge 0, \ ny\le x\}$ and $\VI(T)=\{(x,y)\in \R^2 \mid  y=0,\ x\ge 0\}$. Thus, $\en{(\VI(T))}=\{(x,y)\in \U^2 \mid  y=0,\ x\ge 0\}$ by the transfer principle, whereas
\[
\VU(T)=\bigcap_n \VU(t_n)=\{(x,y)\in \U^2 \mid  x,y \ge 0 \text{ and } y/x \text{ is infinitesimal}\},
\]
which is larger than $\en{(\VI(T))}$. Nevertheless, the inclusion $\en{(\VI(T))} \subseteq  \VU(T)$ holds for any $T \subseteq \free_n$. Indeed, $\VI(T)\subseteq \VI(t)$ yields $\en{(\VI(T))}\subseteq \en{(\VI(t))}=\VU(t)$ for any $t\in T$. Consequently, $\en{(\VI(T))}\subseteq \bigcap_{t\in T}\VU(t)=\VU(T)$.
\end{remark}

The following example shows that the topologies on $\Uz^\kappa$ depend on whether we are working with Riesz spaces or $\ell$-groups, as mentioned in \Cref{rem:FinerTop}.

\begin{example}\label{ex:finerTop}
Let $r$ be an irrational real number and $A=\{(x,y) \in \Uz^2 \mid y=rx \}$. Then $A=\VUst(s)$, where $s \in \free^{v}_2$ is the equivalence class of the term $rx-y$ in the language of Riesz spaces. It follows that $A$ is closed in the Zariski topology on $\Uz^2$ relative to Riesz spaces. We show that $A$ is not closed in the Zariski topology on $\Uz^2$ relative to $\ell$-groups.
If $A \subseteq \VUst(t)$ for some $t \in \free^\ell_2$, then 
\[
\{(x,y) \in \mathbb{R}^2 \mid y=rx \} = \{ \orig \} \cup (A \cap \mathbb{R}^2) \subseteq \VI(t).
\]
Since $\VI(t)$ is a rational polyhedral cone, there exist $p,q \in \Q$ such that $p < r < q$ and $\{1\} \times [p,q] \subseteq \VI(t)$. By the transfer principle and \Cref{rem:*V_I(f)=V_U(f)}, for every $\alpha \in \U$ such that $p \le \alpha \le q$ we have that $(1, \alpha) \in \en{(\VI(t))}=\VU(t)$. In particular, if $\varepsilon \neq 0$ is an infinitesimal, then $(1, r+\varepsilon) \in \VU(t)$. Since this is true for any $t \in \free^\ell_2$ such that $A \subseteq \VUst(t)$, it follows that $(1,r+\varepsilon) \in \VUst\CU(A)$. Thus, $A$ is not closed in the Zariski topology on $\Uz^2$ relative to $\ell$-groups because $(1,r+\varepsilon) \notin A$. Indeed, it can be shown that 
\[
\VUst\CU(A) = \{ (x,y) \in \Uz^2 \mid x \neq 0 \text{ and } y/x-r \text{ is infinitesimal} \}.
\]
This example can be easily generalized to show that the two Zariski topologies do not coincide for any $n > 1$. Nonetheless, both topologies induce the same subspace topology on $\R^n \setminus \{ \mathbf{0} \}$ (see \Cref{rem:FinerTop}).
\end{example}

The results in the remainder of this section depend on whether we work with Riesz Spaces or $\ell$-groups. Hence, we shall first deal with the case of Riesz Spaces, and then use it to discuss the case of $\ell$-groups.

\subsection{The case of Riesz spaces}

The goal of this subsection is to show that irreducible closed subsets of $\Uz^n$ in the Zariski topology relative to Riesz spaces are in bijective correspondence with tuples of orthonormal vectors of $\R^n$. It will follow that such tuples are in 1-1 correspondence with the prime ideals of the free $n$-generated Riesz space, as shown in \cite{MR1707667}. Following \cite[Definition 2.1]{BM}, where similar tuples are introduced to study prime ideals in free MV-algebras, we call such tuples indexes.

\begin{definition}
An \emph{index} is a tuple $(v_1, \ldots, v_k)$ of orthonormal vectors of $\R^n$. Let $\indu,\indv$ be indexes in $\mathbb{R}^n$. We write $\indu \le \indv$ if $\indu$ is a truncation of $\indv$, i.e., if $\indv=(v_1, \ldots, v_k)$ and $\indu=(v_1, \ldots, v_j)$ with $j \le k$.
\end{definition}

Next theorem allows us to associate an index to any point of $\Uz^n$.

\begin{theorem}[{\cite[Proposition 5.2.1 and Theorem 5.3.1]{NAP}}]\label{t:decomposition}
If $x\in \Uz^n$, then $x=\alpha_1v_1+\dots + \alpha_k v_k$ where $\alpha_1, \ldots , \alpha_k \in \U$ are strictly positive, $\alpha_{i+1}/\alpha_{i}$ is infinitesimal for each $i < k$, and $v_1, \ldots , v_k\in \R^n$ are orthonormal vectors. Furthermore, this decomposition is unique.
\end{theorem}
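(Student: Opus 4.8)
The plan is to prove existence and uniqueness separately, working entirely inside the ultrapower $\U = \R^{\N}/\mathcal{F}$ and exploiting the transfer principle together with the standard-part map on limited elements. For existence, given $x \in \Uz^n$, I would first normalize: since $x \neq \orig$, the quantity $\lVert x \rVert$ (the enlargement of the Euclidean norm) is a strictly positive hyperreal, so set $\alpha_1 = \lVert x \rVert$ and $w_1 = x/\alpha_1$, a unit vector in $\U^n$. Now $w_1$ is limited (each coordinate has absolute value $\le 1$), so $v_1 \df \st(w_1)$ is defined; by \Cref{l:zero-ns->zero-s} applied to the norm, $\lVert v_1 \rVert = \st(\lVert w_1 \rVert) = 1$, so $v_1 \in \R^n$ is a genuine unit vector. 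Write $x = \alpha_1 v_1 + x'$ where $x' = \alpha_1(w_1 - v_1)$. The key point is that $w_1 - v_1$ is infinitesimal (by definition of standard part, applied coordinatewise), so every coordinate of $x'$ is $\alpha_1$ times an infinitesimal. Then recurse on $x'$: if $x' = \orig$ we stop with $k=1$; otherwise set $\alpha_2 = \lVert x' \rVert$, which is infinitesimal relative to $\alpha_1$ by \Cref{c:basic-fact-infinitesimal}\eqref{c:basic-fact-infinitesimal:item3b}, and $v_2 = \st(x'/\alpha_2)$. The one subtlety is \emph{orthogonality and termination}: I claim $v_2 \perp v_1$. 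Indeed the inner product $\langle x'/\alpha_2, v_1 \rangle$ equals $\langle \alpha_1(w_1-v_1)/\alpha_2, v_1\rangle$; one must check this is infinitesimal despite the factor $\alpha_1/\alpha_2$ being unlimited. This follows because $\langle w_1 - v_1, v_1 \rangle = \langle w_1, v_1\rangle - 1$, and since $w_1$ is infinitely close to the unit vector $v_1$, a short computation with $\lVert w_1 \rVert = 1$ gives $\langle w_1, v_1 \rangle - 1 = -\tfrac12 \lVert w_1 - v_1\rVert^2$, which is infinitesimal of the \emph{second order}; multiplying by $\alpha_1/\alpha_2 = \lVert x \rVert / \lVert x' \rVert$ and using that $\lVert x' \rVert = \lVert x \rVert \cdot \lVert w_1 - v_1 \rVert$ shows the product is $-\tfrac12 \lVert w_1 - v_1 \rVert$, still infinitesimal. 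Taking standard parts (using \Cref{c:basic-fact-infinitesimal}\eqref{c:basic-fact-infinitesimal:item1} for the inner product, which is continuous) gives $\langle v_2, v_1 \rangle = 0$. Since each step produces a new unit vector orthogonal to all previous ones, and $\R^n$ has no orthonormal family of size exceeding $n$, the recursion halts after at most $n$ steps.

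For uniqueness, suppose $x = \sum_{i=1}^k \alpha_i v_i = \sum_{j=1}^m \beta_j u_j$ are two such decompositions. I would argue by induction on $k + m$ (or on $n$) that $\alpha_1 = \beta_1$, $v_1 = u_1$, and then the truncated sums agree. First, $\lVert x \rVert = \lVert \sum \alpha_i v_i \rVert$: expanding the square and using orthonormality of the $v_i$ gives $\lVert x \rVert^2 = \sum \alpha_i^2$; since $\alpha_{i+1}/\alpha_i$ is infinitesimal, all terms after the first are infinitesimal relative to $\alpha_1^2$, so $\lVert x \rVert / \alpha_1$ is infinitely close to $1$, i.e. $\alpha_1 = \st(\lVert x \rVert / \alpha_1)^{-1} \lVert x \rVert$ forces $\st(\lVert x \rVert/\alpha_1) = 1$; the same holds for $\beta_1$, whence $\alpha_1/\beta_1$ is infinitely close to $1$. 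Likewise $x/\lVert x \rVert$ is infinitely close to $v_1$ and to $u_1$, so $v_1 = \st(x/\lVert x\rVert) = u_1$. To upgrade $\alpha_1 \simeq \beta_1$ to $\alpha_1 = \beta_1$: consider $x - \alpha_1 v_1 = \sum_{i \ge 2}\alpha_i v_i$ and $x - \beta_1 v_1 = \sum_{j\ge 2}\beta_j u_j$; subtracting, $(\beta_1 - \alpha_1) v_1 = \sum_{i\ge 2}\alpha_i v_i - \sum_{j\ge 2}\beta_j u_j$. Taking the inner product with $v_1$: the right side is a sum of terms each infinitesimal relative to $\alpha_1$ (indeed $\langle v_i, v_1\rangle = 0$ for $i \ge 2$, and $\langle u_j, v_1 \rangle = \langle u_j, u_1 \rangle = 0$ for $j \ge 2$ — so actually the right side is orthogonal to $v_1$ exactly), forcing $\beta_1 - \alpha_1 = 0$. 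Then $x - \alpha_1 v_1 = \sum_{i\ge 2}\alpha_i v_i = \sum_{j\ge 2}\beta_j u_j$ are two valid decompositions (after dividing out $\alpha_2$, resp.\ $\beta_2$, to meet the hypotheses, or just noting the defining conditions are inherited) of a vector living in the $(n-1)$-dimensional subspace $v_1^\perp$, and the induction hypothesis finishes the argument, giving $k = m$, $\alpha_i = \beta_i$, $v_i = u_i$ throughout.

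The main obstacle I anticipate is the bookkeeping around \emph{scales of infinitesimals}: one constantly multiplies an unlimited ratio $\alpha_i/\alpha_{i+1}$ against a higher-order infinitesimal and must verify the product is still infinitesimal (or limited, as needed). The cleanest way to control this is to consistently express everything via norms — $\alpha_{i+1} = \lVert x - \alpha_1 v_1 - \cdots - \alpha_i v_i\rVert$ — so that the ratios telescope and the "second-order cancellation" $\langle w, v\rangle - 1 = -\tfrac12\lVert w - v\rVert^2$ for unit vectors $w$ does the heavy lifting; after that every estimate reduces to \Cref{c:basic-fact-infinitesimal}. A secondary point worth stating carefully is that this decomposition is exactly the nonstandard incarnation of the flag of subspaces underlying Panti's indexes, so one should record that $(v_1,\dots,v_k)$ is an index in the sense just defined, which is immediate from orthonormality. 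Note that since \Cref{t:decomposition} is attributed to \cite{NAP}, in the paper itself this may simply be cited rather than reproved; the above is the argument one would give if a self-contained proof were wanted.
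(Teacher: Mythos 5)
The paper itself does not prove this theorem (it is imported from \cite{NAP}), so the only question is whether your argument stands on its own. Your uniqueness argument does: taking the inner product with $v_1=u_1$ kills both tails exactly, and the induction on the number of terms goes through. The existence construction, however, has a genuine gap. You recurse on $x'=x-\lVert x\rVert v_1$ with $v_1=\st(x/\lVert x\rVert)$. This residual is \emph{not} orthogonal to $v_1$ in $\U^n$: its $v_1$-component is $-\tfrac12\lVert x\rVert\,\lVert w_1-v_1\rVert^2=-\tfrac12\alpha_2\lVert w_1-v_1\rVert$, which (as you checked) is small enough relative to $\alpha_2=\lVert x'\rVert$ to give $v_2\perp v_1$, but it does not go away, and at the next stage the relevant ratio is $\lVert w_1-v_1\rVert/\lVert w_2-v_2\rVert$, over which you have no control. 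So the assertion ``each step produces a new unit vector orthogonal to all previous ones'' fails, and with it termination in at most $n$ steps. Concretely, take $n=2$, a positive infinitesimal $\eps$, and $x=(1,\eps)$ (whose actual decomposition is $1\cdot(1,0)+\eps\cdot(0,1)$). Your construction gives $v_1=(1,0)$, $x'=(1-\sqrt{1+\eps^2},\,\eps)$, $v_2=(0,1)$, and then
\begin{equation*}
x''=x'-\lVert x'\rVert v_2\approx\Bigl(-\tfrac{\eps^2}{2},\,-\tfrac{\eps^3}{8}\Bigr)\neq\orig ,
\end{equation*}
whose dominant component lies along $v_1$; hence $v_3=\st(x''/\lVert x''\rVert)=(-1,0)=-v_1$, which is not orthogonal to $v_1$, and the recursion never halts.

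The repair is to subtract the orthogonal projection rather than the norm: keep $v_1=\st(x/\lVert x\rVert)$, but set $\alpha_1\df\langle x,v_1\rangle$ (strictly positive, with $\alpha_1/\lVert x\rVert$ infinitely close to $1$) and $x'\df x-\alpha_1 v_1$, so that $x'$ is \emph{exactly} orthogonal to $v_1$ in $\U^n$. Then every later direction is the standard part of a unit vector lying in $\{v_1,\dots,v_j\}^{\perp}$, hence orthogonal to all previous $v_i$ (its inner product with each real $v_i$ is identically $0$, so its standard part is $0$); the residual at stage $j$ lies in a $\U$-subspace of dimension $n-j$, so the process stops after at most $n$ steps; and $\alpha_2/\alpha_1\le\lVert x'\rVert/\alpha_1=\lVert w_1-\langle w_1,v_1\rangle v_1\rVert/\langle w_1,v_1\rangle$ is infinitesimal since $\langle w_1,v_1\rangle$ is infinitely close to $1$ and $\lVert w_1-\langle w_1,v_1\rangle v_1\rVert^2=1-\langle w_1,v_1\rangle^2$ is infinitesimal. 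With that change the existence half, combined with your uniqueness argument, yields the theorem.
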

We call the representation of $x$ as $\alpha_1v_1+\dots + \alpha_k v_k$, given in \Cref{t:decomposition}, the \emph{orthogonal decomposition} of $x$.
We denote by $\imm(x)$ the tuple of vectors $(v_1 , \dots, v_k)$ that appear in the orthogonal decomposition of $x$. Notice that $\imm(x)$ is an index, which we call the  \emph{index of $x$}.

\begin{lemma}\label{l:orthdec-standardpart}
Let $x \in \Uz^n$ with orthogonal decomposition $x=\alpha_1 v_1 + \cdots + \alpha_k v_k$.
\mbox{}\begin{enumerate}
\item\label{l:orthdec-standardpart:item1} $k \le n$.
\item\label{l:orthdec-standardpart:itema} If $i < j$, then $\alpha_j/\alpha_i$ is an infinitesimal.
\item\label{l:orthdec-standardpart:item2} $x$ is limited if and only if $\alpha_1$ is limited. In this case, $\st(x)=\st(\alpha_1) v_1$.
\item\label{l:orthdec-standardpart:item3} $x$ is infinitesimal if and only if $\alpha_1$ is infinitesimal.
\end{enumerate}
\end{lemma}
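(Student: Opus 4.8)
The plan is to prove the four items in order, starting from the orthogonal decomposition $x=\alpha_1 v_1+\dots+\alpha_k v_k$ of \Cref{t:decomposition} and using the arithmetic of infinitesimal, limited and unlimited hyperreals recorded in \Cref{c:basic-fact-infinitesimal} (together with the elementary fact that the limited elements of $\U$ are closed under sums and products).

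Item \eqref{l:orthdec-standardpart:item1} is immediate, since $v_1,\dots,v_k$ are orthonormal---hence linearly independent---vectors of the $n$-dimensional real vector space $\R^n$. For item \eqref{l:orthdec-standardpart:itema} I would induct on $j-i$: the case $j=i+1$ is part of \Cref{t:decomposition}, and for $j>i+1$ one writes $\alpha_j/\alpha_i=(\alpha_j/\alpha_{j-1})\cdot(\alpha_{j-1}/\alpha_i)$, a product of two infinitesimals---the first by \Cref{t:decomposition}, the second by the inductive hypothesis---hence infinitesimal by \Cref{c:basic-fact-infinitesimal}.

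The key step, which drives items \eqref{l:orthdec-standardpart:item2} and \eqref{l:orthdec-standardpart:item3}, is the factorization
\[
x=\alpha_1\bigl(v_1+\varepsilon\bigr),\qquad \varepsilon\df\sum_{j=2}^{k}\frac{\alpha_j}{\alpha_1}\,v_j ,
\]
which is legitimate because $\alpha_1>0$. Each coordinate of $\varepsilon$ is a finite sum of products of an infinitesimal $\alpha_j/\alpha_1$ (by item \eqref{l:orthdec-standardpart:itema}) with a real number, hence is infinitesimal by \Cref{c:basic-fact-infinitesimal}; so $\varepsilon$ is an infinitesimal vector. For item \eqref{l:orthdec-standardpart:item2}: if $\alpha_1$ is limited then every coordinate $x_i=\alpha_1(v_1)_i+\alpha_1\varepsilon_i$ is limited (a limited element plus an infinitesimal, using \Cref{c:basic-fact-infinitesimal}), so $x$ is limited, and taking standard parts coordinatewise---$\st$ commutes with the ring operations by \Cref{c:basic-fact-infinitesimal}---gives $\st(x)=\st(\alpha_1)v_1$; conversely, if $\alpha_1$ is unlimited then, choosing a coordinate $i$ with $(v_1)_i\ne 0$, the element $(v_1)_i+\varepsilon_i$ is limited and not infinitesimal, so $x_i=\alpha_1\bigl((v_1)_i+\varepsilon_i\bigr)$ is unlimited by \Cref{c:basic-fact-infinitesimal} and $x$ is not limited. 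Finally, item \eqref{l:orthdec-standardpart:item3} follows from item \eqref{l:orthdec-standardpart:item2}: since an infinitesimal element is limited, $x$ is infinitesimal precisely when $x$ is limited and $\st(x)=\orig$, which by item \eqref{l:orthdec-standardpart:item2} happens precisely when $\alpha_1$ is limited and $\st(\alpha_1)v_1=\orig$, i.e.\ (as $v_1\ne\orig$) when $\st(\alpha_1)=0$, i.e.\ when $\alpha_1$ is infinitesimal.

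I do not expect a genuine obstacle here; the only things to be careful about are checking that $\varepsilon$ is genuinely an infinitesimal vector (it is a finite real-linear combination of infinitesimals), and bearing in mind throughout that ``limited'' and ``infinitesimal'' for a vector refer to all of its coordinates simultaneously---so that in the two converse directions it is enough to exhibit a single coordinate that misbehaves.
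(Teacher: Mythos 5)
Your proposal is correct and follows essentially the same route as the paper: item (1) via linear independence of orthonormal vectors, item (2) via a telescoping product of infinitesimals (your induction on $j-i$ is just a repackaging of this), and items (3)–(4) by factoring out $\alpha_1$, noting the remaining terms are infinitesimal, and deducing the infinitesimal case from the limited one. The only differences are cosmetic (coordinatewise checks instead of the paper's vector-level manipulation), so no changes are needed.
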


\begin{proof}
\Cref{l:orthdec-standardpart:item1} follows from the fact that $v_1, \ldots, v_k$ are orthogonal vectors of $\mathbb{R}^n$, and so are linearly independent.

\Cref{l:orthdec-standardpart:itema}. Writing $\alpha_j/\alpha_i = (\alpha_{i+1}/\alpha_i) \cdots (\alpha_j/\alpha_{j-1})$, \Cref{c:basic-fact-infinitesimal}\eqref{c:basic-fact-infinitesimal:item3} guarantees that $\alpha_j/\alpha_i$ is infinitesimal, as product of infinitesimals.

\Cref{l:orthdec-standardpart:item2}. Suppose $\alpha_1$ is limited. By \Cref{l:orthdec-standardpart:itema}, $\alpha_i/\alpha_1$ is infinitesimal for each $i>1$. By \Cref{c:basic-fact-infinitesimal}\eqref{c:basic-fact-infinitesimal:item3}, $\alpha_i= \alpha_1 (\alpha_i/\alpha_1)$ is infinitesimal, and so $\alpha_i v_i$ is infinitesimal for each $i>1$. Since $\alpha_1v_1$ is limited and $x=\alpha_1 v_1 + \cdots + \alpha_k v_k$, \Cref{c:basic-fact-infinitesimal} entails that $x$ is limited and
\begin{align*}
\st(x) &= \st(\alpha_1 v_1 + \dots + \alpha_k v_k) = \st(\alpha_1 v_1) + \dots + \st(\alpha_k v_k)\\
 &= \st(\alpha_1 v_1)=\st(\alpha_1) \st(v_1) =\st(\alpha_1) v_1.
\end{align*}
For the other implication we reason contrapositively. Assume that $\alpha_1$ is unlimited. Notice that $x\alpha_1^{-1} = v_1 + (\alpha_2/\alpha_1) v_2 + \cdots + (\alpha_k/\alpha_1) v_k$, where the right-hand side of the equation is not infinitesimal by \Cref{c:basic-fact-infinitesimal}\eqref{c:basic-fact-infinitesimal:item3} because $v_1$ is not infinitesimal. Thus, $x\alpha_1^{-1}$ is not infinitesimal, and so  $x\alpha_1^{-1}\alpha_1=x$ is unlimited. 

\Cref{l:orthdec-standardpart:item3}. We have that $x$ is infinitesimal if and only if $\st(x)=\orig$. By \Cref{l:orthdec-standardpart:item2} this is equivalent to $\st(\alpha_1) v_1=\orig$. Since $v_1$ is non-zero, $x$ is infinitesimal if and only if $\st(\alpha_1)=0$, which means that $\alpha_1$ is infinitesimal.
\end{proof}

The next lemma shows that the Zariski topology on $\Uz^n$ relative to Riesz spaces is generated by the enlargements of half-spaces. Once more, we write $\lr$ for the set of functions $f\colon\R^n \to \R$ defined by homogeneous linear polynomials. A family of closed subsets of a topological space is said to form a subbasis if every closed subset can be written as an intersection of finite unions of elements of the family.

\begin{lemma}\label{r:basis-hyper}
The sets $S_f\df\{ x \in \Uz^n \mid \en{f}(x) \ge 0 \}$ with $f \in \lr$ form a subbasis of closed subsets for the Zariski topology of $\Uz^n$ relative to Riesz spaces.
\end{lemma}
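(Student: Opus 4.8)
The plan is to show that every closed subset of $\Uz^n$ (in the Riesz-space Zariski topology) is an intersection of finite unions of the sets $S_f$ with $f\in\lr$. Since the sets $\VUst(t)$ with $t\in\freersn$ already form a basis of closed sets for this topology, it suffices to express each such basic closed set $\VUst(t)$ in the required form. First I would use \Cref{rem:enlargement piecewise linear} (or directly Baker's theorem, \cite[Theorem 2.4]{baker68}): the term $t$ corresponds to a continuous piecewise linear function on $\R^n$, and $t$, being built from $+$, $-$, scalar multiplication, $\vee$, $\wedge$, admits a normal form expressing it as $\bigvee_i \bigwedge_j l_{ij}$ for finitely many $l_{ij}\in\lr$; by the transfer principle this same identity holds for the enlargement $\en t$ on $\U^n$, since it is a first-order sentence in the language of Riesz spaces augmented by a function symbol for $t$.

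Next I would compute the zero set. For $x\in\Uz^n$, since $\U$ is linearly ordered we have $\en t(x)=0$ iff for some $i$, $\bigwedge_j \en{l_{ij}}(x)\le 0$, and for all $i$, $\bigwedge_j\en{l_{ij}}(x)\le 0 \le \bigvee_j \en{l_{ij}}(x)$ — more carefully, $t(x)=0$ iff $t(x)\le 0$ and $t(x)\ge 0$, and each of these two inequalities, upon distributing $\vee$ over $\wedge$ in the linearly ordered $\U$, becomes a finite conjunction of finite disjunctions of sign conditions $\en{l}(x)\ge 0$ or $\en{l}(x)\le 0$. Since $\en{l}(x)\le 0$ is the same as $\en{(-l)}(x)\ge 0$ and $-l\in\lr$, every such sign condition is of the form $S_g$ for a suitable $g\in\lr$. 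Thus $\VUst(t)$ is a finite intersection of finite unions of sets $S_g$, which is exactly the statement that the family $\{S_f\mid f\in\lr\}$ is a subbasis of closed sets. Each $S_f$ itself is closed, because $S_f=\VUst\big((f\vee 0)-f\big)$ — noting that for any $x$, $\en f(x)\ge 0$ iff $(\en f(x)\vee 0)-\en f(x)=0$, and $(y\vee 0)-y$ is a Riesz-space term — so the family indeed consists of closed sets.

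The only mildly delicate point is the bookkeeping in distributing $\vee$ over $\wedge$ inside the linearly ordered structure $\U$ and keeping track of which sign conditions appear; this is routine but must be done with a little care to make sure one genuinely lands on a finite intersection of finite unions rather than something more complicated. An alternative, cleaner route that avoids the normal-form manipulation: one can first observe that the sets $\VUst(t)$ with $t\in\freersn$ form a basis of closed sets, then note that by \cite[Lemma~3.2]{baker68} the sets $\VI(l)$ for $l\in\lr$ together with the operations of the ambient lattice generate all $\VI(t)$ as finite unions of finite intersections, and finally transfer via \Cref{rem:*V_I(f)=V_U(f)}, which gives $\VUst(l)=\en{(\VI(l))}\setminus\{\orig\}$ for $l\in\lr$; combining $S_l$ with $S_{-l}$ recovers $\VUst(l)$, so the $S_f$ generate all the basic closed sets. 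Either way, the work is essentially a transfer argument plus elementary lattice bookkeeping, and there is no real obstacle.
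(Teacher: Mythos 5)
Your proposal is correct and rests on the same two ingredients as the paper's proof, namely the transfer principle together with Baker's decomposition into half-spaces: the paper cites \cite[Lemma 3.2]{baker68} directly for $\VI(t)=\bigcap_i\bigcup_j\{x\mid l_{ij}(x)\ge 0\}$ and transfers the sets, whereas your main route redoes this bookkeeping at the level of the $\vee$-$\wedge$ normal form of $t$ and sign conditions (and your closedness check via $(f\vee 0)-f$ matches the paper's use of $p\wedge 0$). The only slip is in your ``alternative route'' aside, where you phrase Baker's lemma in terms of the zero-sets $\VI(l)$ rather than the half-spaces $\{l\ge 0\}$ (e.g.\ $\VI(x\wedge 0)$ is a half-line, not a union of intersections of hyperplanes); since that aside is not needed for your main argument, nothing essential is affected.
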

\begin{proof}
Let $f \in \lr$. The linear polynomial $p$ defining $f$ is in fact a term in the language of Riesz spaces. If $t \in \freersn$ is the equivalence class of the term $p \wedge 0$, then the transfer principle and \Cref{rem:*V_I(f)=V_U(f)} entail that $S_f=\en{(\VI(t))} \setminus \{ \orig \} = \VUst(t)$. Thus, $S_f$ is a closed subset of $\Uz^n$.
By \cite[Lemma 3.2]{baker68}, for every $t\in \free_n$ there exist $l_{ij}\in\lr$, with $i=1, \ldots ,k$ and $j=1, \ldots, m$, such that 
\[
\VI(t)=\bigcap_{i=1}^k\bigcup_{j=1}^m\{ x \in \R^n \mid l_{ij}(x) \ge 0 \}.
\]
Hence, \Cref{rem:*V_I(f)=V_U(f)} yields that 
\[
\VUst(t)=\en{(\VI(t))}\setminus\{\mathbf{0}\}=\bigcap_{i=1}^k\bigcup_{j=1}^m\{ x \in \Uz^n \mid \en{l_{ij}}(x) \ge 0 \}
\]
because the enlargement commutes with finite intersections and unions by the transfer principle. The claim follows because the sets  $\VUst(t)$ as $t$ ranges in $\free_n$ form a basis of closed subsets for the Zariski topology.
\end{proof}

\begin{remark}\label{l:enlarge-linear}
If $f \in \lr$, then $\en{f} \colon \U^n \to \U$ is a linear map of $\U$-vector spaces by the transfer principle. 
Since $\en{f}$ agrees with $f$ on $\R^n$, we will write $f(v)$ instead of $\en{f}(v)$ when $v \in \R^n$.
\end{remark}

The following lemma shows that if $f \colon \R^n \to \R$ is a linear map and $x \in \Uz^n$, then the sign of $\en{f}(x)$ is completely determined by the values of $f$ on the vectors of $\imm(x)$.

\begin{lemma}\label{l:zero-index}
Let $x \in \Uz^n$ with $\imm(x)=(v_1, \ldots, v_k)$ and $f \in \lr$. 
\begin{enumerate}
\item \label{l:zero-index:item1} $\en{f}(x) = 0$ if and only if $f(v_i)=0$ for each $i=1, \ldots, k$. 
\item \label{l:zero-index:item2} $\en{f}(x) > 0$ if and only if there exists $i$ such that $f(v_j)=0$ for each $j<i$ and $f(v_i) > 0$. 
\item \label{l:zero-index:item3} $\en{f}(x) < 0$ if and only if there exists $i$ such that $f(v_j)=0$ for each $j<i$ and $f(v_i) < 0$. 
\end{enumerate}
\end{lemma}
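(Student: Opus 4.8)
The plan is to combine the orthogonal decomposition of \Cref{t:decomposition} with the linearity of $\en{f}$. Write the orthogonal decomposition $x = \alpha_1 v_1 + \dots + \alpha_k v_k$, so that $\imm(x) = (v_1, \dots, v_k)$, each $\alpha_i \in \U$ is strictly positive, and $\alpha_{i+1}/\alpha_i$ is infinitesimal for $i < k$. By \Cref{l:enlarge-linear} the map $\en{f}$ is $\U$-linear and restricts to $f$ on $\R^n$, so expanding along the decomposition gives
\[
\en{f}(x) = \alpha_1 f(v_1) + \alpha_2 f(v_2) + \dots + \alpha_k f(v_k),
\]
where each $f(v_i)$ is a real number.

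First I would treat the trivial case: if $f(v_i) = 0$ for every $i$, the display gives $\en{f}(x) = 0$. Otherwise, let $i_0$ be the least index with $f(v_{i_0}) \ne 0$; by minimality the summands with $j < i_0$ vanish, so factoring out $\alpha_{i_0} > 0$,
\[
\en{f}(x) = \alpha_{i_0}\Bigl( f(v_{i_0}) + \sum_{j > i_0} \frac{\alpha_j}{\alpha_{i_0}}\, f(v_j) \Bigr).
\]
By \Cref{l:orthdec-standardpart}\eqref{l:orthdec-standardpart:itema} each ratio $\alpha_j/\alpha_{i_0}$ with $j > i_0$ is infinitesimal, hence so is each product $\frac{\alpha_j}{\alpha_{i_0}} f(v_j)$ by \Cref{c:basic-fact-infinitesimal}\eqref{c:basic-fact-infinitesimal:item3b}, and so is their sum by \Cref{c:basic-fact-infinitesimal}\eqref{c:basic-fact-infinitesimal:item3a}. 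Thus the parenthesized factor is $f(v_{i_0})$ plus an infinitesimal, so it has standard part $f(v_{i_0}) \ne 0$; in particular it is nonzero, and I would use \Cref{c:basic-fact-infinitesimal}\eqref{c:basic-fact-infinitesimal:item2} to conclude it has the same strict sign as $f(v_{i_0})$ --- were it $\le 0$, its standard part would be $\le 0$, and symmetrically for $\ge 0$. Since $\U$ has no zero divisors and $\alpha_{i_0} > 0$, this yields $\en{f}(x) \ne 0$ with $\en{f}(x)$ having the sign of $f(v_{i_0})$.

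The two cases together give all three items. Indeed $\en{f}(x) = 0$ holds exactly when no index $i_0$ as above exists, i.e.\ when $f(v_i) = 0$ for all $i$, which is \eqref{l:zero-index:item1}; and when such $i_0$ exists, $\en{f}(x) > 0$ (resp.\ $< 0$) iff $f(v_{i_0}) > 0$ (resp.\ $< 0$). Unwinding this gives \eqref{l:zero-index:item2} and \eqref{l:zero-index:item3}: the minimal $i_0$ witnesses the required condition since $f(v_j) = 0$ for $j < i_0$, and conversely any $i$ satisfying ``$f(v_j) = 0$ for $j < i$ and $f(v_i) \neq 0$'' must equal $i_0$. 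I expect no real difficulty here; the only step deserving care is the inference from ``$f(v_{i_0})$ plus an infinitesimal'' to a statement about its sign, which is precisely why it is routed through the standard-part monotonicity of \Cref{c:basic-fact-infinitesimal}\eqref{c:basic-fact-infinitesimal:item2}.
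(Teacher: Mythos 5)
Your proposal is correct and follows essentially the same route as the paper: expand $\en{f}(x)=\alpha_1 f(v_1)+\cdots+\alpha_k f(v_k)$ by $\U$-linearity, factor out the leading coefficient, and use that the ratios $\alpha_j/\alpha_i$ are infinitesimal together with standard-part monotonicity (\Cref{c:basic-fact-infinitesimal}\eqref{c:basic-fact-infinitesimal:item2}) to read off the sign. The only difference is organizational: you establish a single sign trichotomy at the least index $i_0$ with $f(v_{i_0})\neq 0$ and deduce all three items at once, whereas the paper argues item \eqref{l:zero-index:item1} by iteratively peeling off coefficients and treats \eqref{l:zero-index:item2} separately; both are the same argument in substance.
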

\begin{proof}
Since $\imm(x)=(v_1, \ldots, v_k)$, there exists an orthogonal decomposition $x=\alpha_1v_1+\dots + \alpha_k v_k$.

\Cref{l:zero-index:item1}. By \Cref{l:enlarge-linear}, $\en{f}$ is a $\U$-linear map extending $f$, and so $\en{f}(x)=\alpha_1 f(v_1) + \cdots + \alpha_k f(v_k)$. Thus, the right-to-left implication is straightforward. For the other implication, assume that $\en{f}(x)=0$. Then
\begin{align}\label{eq:zero-index1}
f(v_1) + \frac{\alpha_2}{\alpha_1}f(v_2)+\cdots + \frac{\alpha_k}{\alpha_1} f(v_k)=0,
\end{align}
with $f(v_1)\in \R$. Since each ${\alpha_i}/{\alpha_1}$ is infinitesimal by \Cref{l:orthdec-standardpart}(\ref{l:orthdec-standardpart:itema}), 
we have
\[
f(v_1)=\st\left(f(v_1)\right)=-\st\left(\frac{\alpha_2}{\alpha_1}f(v_2)+\cdots + \frac{\alpha_k}{\alpha_1} f(v_k)\right)=0.
\]
Thus, \eqref{eq:zero-index1} implies that
\[
f(v_2) + \frac{\alpha_3}{\alpha_2}f(v_3)+ \dots + \frac{\alpha_k}{\alpha_2} f(v_k)=0.
\]
Since $\alpha_i/\alpha_j$ is infinitesimal for any $i>j$, we can repeat the same argument and deduce that $f(v_i)=0$ for each $i$.

\Cref{l:zero-index:item2}. Assume $\en{f}(x) > 0$. Since $\en{f}(x)=\alpha_1 f(v_1) + \cdots + \alpha_k f(v_k)$, there exists $i$ such that $f(v_i) \neq 0$ and $f(v_j)=0$ for each $j < i$. Since $\alpha_i >0$, we have that 
\[
f(v_i) + \frac{\alpha_{i+1}}{\alpha_i}f(v_{i+1})+ \cdots + \frac{\alpha_k}{\alpha_i} f(v_k) = \frac{\en{f}(x)}{\alpha_i} > 0.
\]
It follows from \Cref{c:basic-fact-infinitesimal}\eqref{c:basic-fact-infinitesimal:item2} that
\[
f(v_i) = \st \left( f(v_i) + \frac{\alpha_{i+1}}{\alpha_i}f(v_{i+1})+ \cdots + \frac{\alpha_k}{\alpha_i} f(v_k) \right) \ge 0,
\]
which implies that $f(v_i) > 0$ because $f(v_i) \neq 0$.

Vice versa, assume there exists $i$ such that $f(v_j)=0$ for each $j < i$ and $f(v_i) > 0$. Then
\[
\frac{1}{\alpha_i} \en{f}(x)=f(v_i) + \frac{\alpha_{i+1}}{\alpha_i}f(v_{i+1})+ \cdots + \frac{\alpha_k}{\alpha_i} f(v_k).
\]
Thus, $\st(\alpha_i^{-1} \en{f}(x))=f(v_i) > 0$ which, again by \Cref{c:basic-fact-infinitesimal}\eqref{c:basic-fact-infinitesimal:item2}, implies that $\alpha_i^{-1}\en{f}(x) > 0$. Therefore, $\en{f}(x) > 0$ because $\alpha_i >0$.

\Cref{l:zero-index:item3}. Apply \Cref{l:zero-index:item2} to $-f$.
\end{proof}

Given a vector subspace $V$ of $\R^n$, we denote by $\ort{V}$ its \emph{orthogonal complement}. We will need the following simple linear algebra result.

\begin{lemma}\label{l:existence-linear-maps}
Let $V$ be a vector subspace of $\mathbb{R}^n$.
\mbox{}\begin{enumerate}
\item\label{l:existence-linear-maps:item1} If $w \in V^\perp$ is a unit vector, then there exists $f \in \lr$ such that $f(w)<0$, and $f(v)=0$ for all $v \in V$.
\item\label{l:existence-linear-maps:item2} If $u,w \in V^\perp$ are two distinct unit vectors, then there exists $f \in \lr$ such that $f(u)>0$, $f(w)<0$, and $f(v)=0$ for all $v \in V$.
\end{enumerate}
\end{lemma}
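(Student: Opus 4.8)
The plan is to observe that every $f\in\lr$ that vanishes on $V$ is nothing but the inner product against a vector of $\ort V$, and then to choose that vector by hand in each of the two cases. Write $\langle\,\cdot\,,\,\cdot\,\rangle$ for the standard inner product on $\R^n$ and $\|\,\cdot\,\|$ for the associated norm; for any $c\in\R^n$ the map $x\mapsto\langle x,c\rangle$ belongs to $\lr$, and it vanishes identically on $V$ as soon as $c\in\ort V$.

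For \Cref{l:existence-linear-maps:item1} I would simply take $f(x)=-\langle x,w\rangle$. Since $w\in\ort V$, this $f$ vanishes on $V$, and $f(w)=-\langle w,w\rangle=-1<0$ because $w$ is a unit vector. For \Cref{l:existence-linear-maps:item2} the natural candidate is $f(x)=\langle x,u-w\rangle$. As $\ort V$ is a subspace containing both $u$ and $w$, we have $u-w\in\ort V$, so $f$ again vanishes on $V$. A direct computation gives $f(u)=\|u\|^2-\langle u,w\rangle=1-\langle u,w\rangle$ and $f(w)=\langle u,w\rangle-\|w\|^2=\langle u,w\rangle-1$, so it suffices to show $\langle u,w\rangle<1$. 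This follows from the Cauchy--Schwarz inequality $|\langle u,w\rangle|\le\|u\|\,\|w\|=1$, whose equality case holds precisely when $u$ and $w$ are linearly dependent, i.e.\ $u=\pm w$: since $u\neq w$, either the inequality is strict, or $u=-w$ and then $\langle u,w\rangle=-1<1$. In both situations $f(u)>0>f(w)$, as required.

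There is no genuine obstacle here: the statement is pure finite-dimensional linear algebra, and the only point deserving a moment's attention is the use of the equality case of Cauchy--Schwarz, together with the hypothesis $u\neq w$, to rule out $\langle u,w\rangle=1$ in the second part.
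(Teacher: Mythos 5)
Your proof is correct, but it takes a different route from the paper's. You identify the functionals vanishing on $V$ with inner products against vectors of $V^\perp$ and then write down $f$ explicitly: $f(x)=-\langle x,w\rangle$ for the first item and $f(x)=\langle x,u-w\rangle$ for the second, closing the second case with the equality case of Cauchy--Schwarz (equivalently, for unit vectors $\langle u,w\rangle=1$ iff $u=w$, since $\lVert u-w\rVert^2=2-2\langle u,w\rangle$). The paper instead argues by prescribing values on a basis: it picks a basis $v_1,\dots,v_k$ of $V$, notes that adjoining $w$ (resp.\ $u$ and $w$) keeps the family linearly independent, and invokes the existence of a linear map taking the prescribed values $0,\dots,0,-1$ (resp.\ $0,\dots,0,1,-1$); this forces a separate case $u=-w$, which is handled by reducing to the first item. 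Your construction is more explicit and uniform (no case split, and it even yields $f(u)=-f(w)$), while the paper's argument is the generic ``extend a linearly independent set and assign values'' device and foreshadows the inner-product representation it uses anyway in the rational refinement (\Cref{l:existence-linear-Zmaps}), where $f$ is written as $x\mapsto x\cdot y$ with $y\in V^\perp$ before perturbing $y$ to a rational vector. Either proof serves equally well for the later applications.
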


\begin{proof}
Let $v_1, \ldots, v_k$ be a basis of $V$.

\Cref{l:existence-linear-maps:item1}. Since $w \in V^\perp$, the vectors $v_1, \ldots, v_k, w$ are linearly independent. By a standard argument from linear algebra, there exists $f \in \lr$ such that $f(v_i)=0$ for each $i$ and $f(w)=-1$.  Therefore, $f(w)<0$, and $f(v)=0$ for all $v \in V$.

\Cref{l:existence-linear-maps:item2}. If $u=-w$, then by \Cref{l:existence-linear-maps:item1} there exists  $f \in \lr$ such that $f(v)=0$ for each $v \in V$ and $f(w)<0$, and so $f(u)>0$. Suppose $u \neq -w$. Since $u,w$ are distinct unit vectors, they are linearly independent. Since $u,w \in V^\perp$, it follows that $v_1, \ldots, v_k, u, w$ are linearly independent vectors of $\mathbb{R}^n$. Thus, there exists $f \in \lr$ such that $f(v_i)=0$ for each $i$, $f(u)=1$, and $f(w)=-1$. Therefore, $f(u)>0$, $f(w)<0$, and $f(v)=0$ for all $v \in V$.
\end{proof}

We are now able to characterize the closures of points of $\Uz^n$ using indexes.

\begin{theorem}\label{l:closure-index}
$x \in \VUst \CU ( y )$ if and only if $\imm(x) \le \imm(y)$.
\end{theorem}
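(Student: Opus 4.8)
The plan is to reduce the statement about Zariski-closures to a purely combinatorial statement about indexes, using the subbasis of closed sets furnished by \Cref{r:basis-hyper} together with the sign-calculus of \Cref{l:zero-index}. First recall that $\VUst\CU(y)$ is the closure of $\{y\}$, hence $x \in \VUst\CU(y)$ if and only if every subbasic closed set containing $y$ also contains $x$; by \Cref{r:basis-hyper} the relevant subbasic closed sets are the $S_f = \{z \in \Uz^n \mid \en{f}(z) \ge 0\}$ with $f \in \lr$. So the condition $x \in \VUst\CU(y)$ is equivalent to: for every $f \in \lr$, if $\en{f}(y) \ge 0$ then $\en{f}(x) \ge 0$. (One has to be mildly careful here, since $S_f$ is only a subbasis, not a basis — but closure with respect to a topology generated by a subbasis of closed sets is still detected pointwise by membership in every subbasic closed set, because the subbasic closed sets containing $y$ have the closure of $\{y\}$ as their intersection. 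Alternatively one can replace $S_f$ by the basic closed sets $\bigcap_i \bigcup_j S_{l_{ij}}$ and argue directly; I would spell out whichever is cleaner.) Writing this out, $x \in \VUst\CU(y)$ iff for all $f \in \lr$, $\en{f}(y) \ge 0 \Rightarrow \en{f}(x) \ge 0$, equivalently (replacing $f$ by $\pm f$) $\en{f}(y) = 0 \Rightarrow \en{f}(x) = 0$ together with $\en{f}(y) > 0 \Rightarrow \en{f}(x) \ge 0$.

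Next I would translate both sides via \Cref{l:zero-index}. Let $\imm(y) = (v_1, \ldots, v_k)$ and $\imm(x) = (w_1, \ldots, w_m)$. By \Cref{l:zero-index}\eqref{l:zero-index:item1}, $\en{f}(y) = 0$ iff $f$ vanishes on $\{v_1,\ldots,v_k\}$, i.e.\ iff $f$ vanishes on $V \df \env{v_1,\ldots,v_k}$; similarly $\en{f}(x) = 0$ iff $f$ vanishes on $W \df \env{w_1,\ldots,w_m}$. So the condition $\en{f}(y)=0 \Rightarrow \en{f}(x)=0$ for all $f \in \lr$ says exactly that every linear functional vanishing on $V$ vanishes on $W$, i.e.\ $W \subseteq V$. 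For the strict part, using \Cref{l:zero-index}\eqref{l:zero-index:item2} one sees that $\en{f}(y) > 0$ iff the first $v_i$ on which $f$ does not vanish has $f(v_i) > 0$, and similarly for $x$.

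The forward direction ($\imm(x) \le \imm(y)$ implies $x \in \VUst\CU(y)$): if $\imm(x) = (v_1,\ldots,v_j)$ with $j \le k$, then for any $f \in \lr$ with $\en{f}(y) \ge 0$, look at the least index $i$ with $f(v_i) \ne 0$ (if none, $f$ vanishes on all $v_1,\ldots,v_k$ hence on all $v_1,\ldots,v_j$, so $\en{f}(x)=0$). By \Cref{l:zero-index}, $\en{f}(y) > 0$ forces $f(v_i) > 0$; if $i \le j$ then again by \Cref{l:zero-index} $\en{f}(x) > 0$, and if $i > j$ then $f$ vanishes on $v_1,\ldots,v_j$ so $\en{f}(x) = 0$. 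Either way $\en{f}(x) \ge 0$, so $x$ lies in every $S_f$ containing $y$, hence $x \in \VUst\CU(y)$.

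The converse is where the work is, and it is the step I expect to be the main obstacle: from "$\en{f}(y) \ge 0 \Rightarrow \en{f}(x) \ge 0$ for all $f$" I must deduce that $(w_1,\ldots,w_m)$ is literally the initial segment $(v_1,\ldots,v_m)$ of $(v_1,\ldots,v_k)$ — not merely that $W \subseteq V$, but that the ordered orthonormal tuples agree up to truncation. The argument goes by induction on the length, using \Cref{l:existence-linear-maps} to manufacture separating functionals. From $W \subseteq V$ we already know $m \le k$ and $w_1,\ldots,w_m \in V$. Suppose inductively $w_1 = v_1, \ldots, w_{i-1} = v_{i-1}$, and consider $w_i$. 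Put $V_i = \env{v_1,\ldots,v_{i-1}} = \env{w_1,\ldots,w_{i-1}}$. Both $v_i$ and $w_i$ are unit vectors in $V_i^\perp$. If $w_i \ne v_i$, apply \Cref{l:existence-linear-maps}\eqref{l:existence-linear-maps:item2} to get $f \in \lr$ with $f(v_i) > 0$, $f(w_i) < 0$, and $f$ vanishing on $V_i$; then by \Cref{l:zero-index}, $\en{f}(y) > 0$ (first nonzero value is $f(v_i) > 0$) while $\en{f}(x) < 0$ (first nonzero value is $f(w_i) < 0$), contradicting the hypothesis. Hence $w_i = v_i$. This induction runs for $i = 1, \ldots, m$, giving $\imm(x) = (v_1,\ldots,v_m) \le \imm(y)$, and completes the proof. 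I should double-check the base case and the use of \Cref{l:existence-linear-maps}\eqref{l:existence-linear-maps:item1} if at some stage one of the tuples runs out before the other, but that is routine.
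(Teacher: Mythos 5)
Your proof is correct and follows essentially the same route as the paper: reduce closure membership to the subbasic sets $S_f$ via \Cref{r:basis-hyper}, handle the easy direction with the sign calculus of \Cref{l:zero-index}, and use \Cref{l:existence-linear-maps} to produce separating functionals for the converse. The only difference is organizational---you argue the converse directly (first deducing $\mathrm{span}(\imm(x))\subseteq\mathrm{span}(\imm(y))$, then an induction forcing coordinatewise agreement), whereas the paper argues contrapositively with a two-case split; your side remark justifying why the subbasis suffices to detect closures of points is also correct and fills a step the paper leaves implicit.
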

\begin{proof}
By \Cref{r:basis-hyper}, $x \in \VUst \CU ( y )$ if and only if $y \in S_f$ implies $x \in S_f$ for each $f \in \lr$. Thus, $x \in \VUst \CU ( y )$ if and only if $\en{f}(y)\ge 0$ implies $\en{f}(x)\ge 0$ for each $f\in \lr$. By \Cref{l:zero-index}(\ref{l:zero-index:item1},\ref{l:zero-index:item2}), if $\imm(x) \le \imm(y)$ and $\en{f}(y) \ge 0$, then $\en{f}(x) \ge 0$. Therefore, the right-to-left implication holds.

Conversely, suppose $\imm(x) \nleq \imm(y)$. Let $\imm(x)=(v_1, \ldots ,v_k)$ and $\imm(y)=(w_1, \ldots ,w_t)$.
We first consider the case $\imm(y) < \imm(x)$, which means that $t<k$ and $v_i=w_i$ for any $i \le t$. Let $V$ be the span of $v_1, \ldots, v_t$ in $\R^n$. Then $v_{t+1} \in V^\perp$ and \Cref{l:existence-linear-maps}(\ref{l:existence-linear-maps:item1}) implies that there exists $f \in \lr$ such that $f(v_{t+1})<0$ and $f(v_i)=0$ for each $i \le t$. By \Cref{l:zero-index}(\ref{l:zero-index:item1},\ref{l:zero-index:item3}), $\en{f}(y)=0$ and $\en{f}(x) < 0$. Thus, $y \in S_f$ and $x \notin S_f$, and so $x \notin \VUst \CU (y)$.
If $\imm(y)$ and $\imm(x)$ are incomparable, then there exists $i \le \min(k,t)$ such that $v_j=w_j$ for $j < i$ and $v_i\neq w_i$. Let $V$ be the span of $v_1, \ldots, v_{i-1}$. 
Since $w_i, v_i \in V^\perp$, \Cref{l:existence-linear-maps}(\ref{l:existence-linear-maps:item2}) implies that there exists $f \in \lr$ such that $f(w_i) > 0$, $f(v_i) < 0$, and $f(v_j)=f(w_j)=0$ for each $j < i$. By \Cref{l:zero-index}(\ref{l:zero-index:item2},\ref{l:zero-index:item3}), $\en{f}(y)>0$ and $\en{f}(x) < 0$. Thus $y \in S_f$ and $x \notin S_f$, and so $x \notin \VUst \CU ( y )$. 
\end{proof}

\begin{corollary}
$\VUst \CU (x)= \VUst \CU (y)$ if and only if $\imm(x)= \imm(y)$.
\end{corollary}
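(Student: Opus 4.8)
The plan is to deduce the corollary directly from \Cref{l:closure-index}. First I would recall that, by the discussion preceding \Cref{p:sober}, for any $x\in\Uz^n$ the set $\VUst\CU(x)$ is exactly the closure of the singleton $\{x\}$ in the Zariski topology on $\Uz^n$. Consequently, for $x,y\in\Uz^n$ one has $\VUst\CU(x)\subseteq\VUst\CU(y)$ if and only if $x\in\VUst\CU(y)$: one implication is immediate since $x\in\VUst\CU(x)$, and the other holds because $\VUst\CU(y)$ is a closed set containing $x$, hence contains the closure of $\{x\}$. Therefore $\VUst\CU(x)=\VUst\CU(y)$ if and only if $x\in\VUst\CU(y)$ and $y\in\VUst\CU(x)$.

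Next I would invoke \Cref{l:closure-index} twice. It gives $x\in\VUst\CU(y)\iff\imm(x)\le\imm(y)$ and, symmetrically, $y\in\VUst\CU(x)\iff\imm(y)\le\imm(x)$. Combining this with the previous paragraph, $\VUst\CU(x)=\VUst\CU(y)$ holds precisely when $\imm(x)\le\imm(y)$ and $\imm(y)\le\imm(x)$.

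To finish, I would note that the relation $\le$ on indexes is antisymmetric: if $\imm(x)$ is a truncation of $\imm(y)$ and $\imm(y)$ is a truncation of $\imm(x)$, then the two tuples have the same length and hence coincide. Thus the two inequalities above are equivalent to $\imm(x)=\imm(y)$, which completes the proof. I do not anticipate any real obstacle here; the corollary is a purely formal consequence of \Cref{l:closure-index} together with the fact that $\VUst\CU$ computes topological closures of points and that truncation of orthonormal tuples is a partial order.
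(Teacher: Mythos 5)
Your argument is correct and is exactly the intended one: the paper states this corollary without proof as an immediate consequence of \Cref{l:closure-index}, and your deduction (equality of point closures is mutual containment, apply the theorem twice, then antisymmetry of the truncation order) is precisely that immediate argument.
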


\begin{definition}
Let $\indv$ be an index. We set $\cone(\indv) := \{ x \in \Uz^n \mid \imm(x) \le \indv \}$.
\end{definition}

As an immediate consequence of \Cref{l:closure-index} we obtain the following theorem characterizing the closure of points of $\Uz^n$, which are exactly the irreducible closed subsets of $\Uz^n$ by \cref{p:sober}.

\begin{theorem}\label{lem:clx-coneix}
$\VUst \CU (y) = \cone(\imm(y))$ for each $y \in \Uz^n$.
\end{theorem}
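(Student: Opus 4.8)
The statement is a direct reformulation of \Cref{l:closure-index}, so the plan is essentially to unwind two definitions and chain one equivalence. First I would observe that by the definition of $\cone(\cdot)$, applied to the index $\indv = \imm(y)$, we have
\[
\cone(\imm(y)) = \{ x \in \Uz^n \mid \imm(x) \le \imm(y) \}.
\]
Then I would invoke \Cref{l:closure-index}, which asserts precisely that for $x \in \Uz^n$ one has $x \in \VUst\CU(y)$ if and only if $\imm(x) \le \imm(y)$. Combining the two, the set $\VUst\CU(y)$ consists exactly of those $x$ with $\imm(x) \le \imm(y)$, i.e.\ it equals $\cone(\imm(y))$, which is the claim.

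\textbf{Comment on the broader statement.} The theorem is stated as identifying the irreducible closed subsets of $\Uz^n$; for that reading I would additionally recall \Cref{p:sober}, which says that every non-empty irreducible closed subset of $\Uz^\kappa$ is the closure of a point, together with the fact that each such closure is of the form $\VUst\CU(y)$ for some $y$. No step here is an obstacle: all the real work (the non-standard-analysis analysis of signs of linear functionals via the orthogonal decomposition, and the linear-algebra lemma producing separating functionals) has already been carried out in \Cref{l:zero-index,l:existence-linear-maps,l:closure-index}. The proof is a one-line deduction, and I expect the write-up to consist of exactly the chain of equivalences above.

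\begin{proof}
By definition, $\cone(\imm(y)) = \{ x \in \Uz^n \mid \imm(x) \le \imm(y) \}$, and by \Cref{l:closure-index} this set is exactly $\VUst \CU(y)$.
\end{proof}
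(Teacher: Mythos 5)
Your proof is correct and matches the paper exactly: the paper also derives \Cref{lem:clx-coneix} as an immediate consequence of \Cref{l:closure-index} together with the definition of $\cone(\imm(y))$. Nothing further is needed.
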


\begin{example}\label{ex:cone(v)}
Let $e_1=(1,0)$ and $e_2=(0,1)$ be the standard basis of $\mathbb{R}^2$ and $\indv$ the index $(e_1,e_2)$. Then
\[
\cone(\indv) = \{ (x_1, x_2) \in \Uz^2 \mid x_1 > 0, \ x_2 \ge 0, \text{ and } x_2/x_1 \text{ is infinitesimal} \}.
\]
If $\varepsilon \in \U$ is a positive infinitesimal and $y=(1, \varepsilon) \in \Uz^2$, then $y=1 e_1 + \varepsilon e_2$ is the orthogonal decomposition of $y$. Thus, $\imm(y)=\indv$. By \cref{lem:clx-coneix}, $\cone(\indv)=\VUst \CU (y)$. Let $T = \{t_n \mid n \ge 1 \} \subseteq \free_2$ be defined as in \Cref{rem:*V_I(f)=V_U(f)}. We have that $\cone(\indv)=\VUst(T)$, and hence $\CU(\cone(\indv))$ is the prime ideal $\langle T \rangle$ of $\free_2$.

Let $\indu=(e_1)$ be the index obtained by truncating $\indv$ and $z=(1,0) \in \Uz^2$. Then $\imm(z)=\indu$ and so $\VUst\CU(z)=\cone(\indu) = \{ (x_1,0) \in \Uz^2 \mid x_1 > 0 \}$, which is contained in $\cone(\indv)$.
\end{example}

\begin{corollary}\label{cor:cone-irr-order-iso}
Let $\cone$ be the map that associates $\cone(\indv)$ to any index $\indv$.
\begin{enumerate}
\item\label{cor:cone-irr-order-iso:item1} $\cone$ is an order isomorphism between the set of indexes ordered by truncation and the set of irreducible closed subsets of $\Uz^n$ ordered by inclusion.
\item\label{cor:cone-irr-order-iso:item2} ${\CU} \circ {\cone}$ is an order isomorphism between the set of indexes ordered by truncation and $\spec(\freersn)$ ordered by reverse inclusion.
\end{enumerate}
\end{corollary}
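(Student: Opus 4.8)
The plan is to read essentially everything off \Cref{lem:clx-coneix}, \Cref{l:closure-index}, and \Cref{p:sober}, and then obtain \eqref{cor:cone-irr-order-iso:item2} by composing with the order isomorphism of \Cref{p:irrClosed=prime}.

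First I would observe that every index is realized as the index of a point of $\Uz^n$. Given an index $\indv=(v_1,\dots,v_k)$, fix a positive infinitesimal $\eps\in\U$ and set $y=v_1+\eps v_2+\dots+\eps^{k-1}v_k$. The coefficients $1,\eps,\dots,\eps^{k-1}$ are strictly positive and each consecutive ratio equals the infinitesimal $\eps$, so \Cref{t:decomposition} identifies this as the orthogonal decomposition of $y$; in particular $y\in\Uz^n$ and $\imm(y)=\indv$. By \Cref{lem:clx-coneix}, $\cone(\indv)=\VUst\CU(y)$ is the closure of the point $y$, hence a non-empty irreducible closed subset of $\Uz^n$. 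Thus $\cone$ is a well-defined map from the set of indexes into the set of irreducible closed subsets of $\Uz^n$, and it is surjective because by \Cref{p:sober} every non-empty irreducible closed subset of $\Uz^n$ equals $\VUst\CU(y)$ for some $y$, and $\VUst\CU(y)=\cone(\imm(y))$ by \Cref{lem:clx-coneix}.

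Next I would check that $\cone$ is an order embedding, that is, $\indu\le\indv$ if and only if $\cone(\indu)\subseteq\cone(\indv)$. Pick $x,y\in\Uz^n$ with $\imm(x)=\indu$ and $\imm(y)=\indv$ as above, so that $\cone(\indu)=\VUst\CU(x)$ and $\cone(\indv)=\VUst\CU(y)$. Since $\VUst\CU(x)$ is the closure of $\{x\}$, the inclusion $\cone(\indu)\subseteq\cone(\indv)$ holds exactly when $x\in\VUst\CU(y)$, which by \Cref{l:closure-index} is equivalent to $\imm(x)\le\imm(y)$, i.e., to $\indu\le\indv$. A surjective order embedding between posets is automatically an order isomorphism (injectivity is also immediate, e.g.\ from the fact recorded right after \Cref{l:closure-index} that $\VUst\CU(x)=\VUst\CU(y)$ iff $\imm(x)=\imm(y)$), which proves \eqref{cor:cone-irr-order-iso:item1}. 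For \eqref{cor:cone-irr-order-iso:item2}, \Cref{p:irrClosed=prime} gives that $\CU$ restricts to an order isomorphism from the irreducible closed subsets of $\Uz^n$, ordered by inclusion, onto $\spec(\freersn)$, ordered by reverse inclusion; composing with the order isomorphism $\cone$ of \eqref{cor:cone-irr-order-iso:item1} shows that $\CU\circ\cone$ is an order isomorphism from the indexes, ordered by truncation, onto $\spec(\freersn)$, ordered by reverse inclusion.

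The one point that needs care — and that I would flag explicitly — is the bookkeeping of degenerate cases: applying $\cone$ to the empty index (the $0$-tuple) yields $\emptyset$, whose image under $\CU$ is the improper ideal $\freersn$ rather than a prime ideal, and correspondingly the ``irreducible closed subsets'' of \Cref{p:irrClosed=prime} are implicitly understood to be non-empty. Throughout this statement, then, ``index'' should be read as a non-empty tuple of orthonormal vectors, so that the two sides match non-empty irreducible closed subsets and genuine prime ideals respectively. Beyond this, there is no real obstacle: the corollary is a short assembly of \Cref{lem:clx-coneix}, \Cref{l:closure-index}, \Cref{p:sober}, and \Cref{p:irrClosed=prime}.
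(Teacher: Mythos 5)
Your proposal is correct and follows essentially the same route as the paper: realize each index as $\imm(y)$ for $y=v_1+\eps v_2+\dots+\eps^{k-1}v_k$, use \Cref{lem:clx-coneix} and \Cref{p:sober} for well-definedness and surjectivity, check order-preservation and reflection via the closure characterization, and obtain the second item by composing with \Cref{p:irrClosed=prime}. Your remark on excluding the empty index is a harmless clarification of a convention the paper leaves implicit, not a substantive difference.
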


\begin{proof}
\Cref{cor:cone-irr-order-iso:item1}. Let $\indv=(v_1, \dots, v_k)$ be an index and $\varepsilon \in \U$ a positive infinitesimal. We have that $\indv=\imm(y)$, where $y=v_1 + \varepsilon v_2 + \dots + \varepsilon^{k-1} v_k \in \Uz^n$. By \Cref{lem:clx-coneix}, $\cone(\indv)=\cone(\imm(y))=\VUst \CU (y)$, which is an irreducible closed subset of $\Uz^n$. Thus, $\cone$ is a well-defined function. It is clear that $\indv \le \indw$ implies $\cone(\indv) \subseteq \cone(\indw)$. To show the reverse implication, let $\cone(\indv) \subseteq \cone(\indw)$. Take $y \in \Uz^n$ such that $\imm(y)=\indv$. Then $y \in \cone(\indv) \subseteq \cone(\indw)$, and so $\indv=\imm(y) \le \indw$. It remains to prove that $\cone$ is onto. Let $C \subseteq \Uz^n$ be an irreducible closed subset. By \Cref{p:sober}, $C=\VUst \CU (y)$ for some $y \in \Uz^n$. Therefore, $C=\cone(\imm(y))$ by \Cref{lem:clx-coneix}.

\Cref{cor:cone-irr-order-iso:item2}. By \cref{p:irrClosed=prime}, $\CU$ is an order isomorphism between the set of irreducible closed subsets of $\Uz^n$ ordered by inclusion and $\spec(\freersn)$ ordered by reverse inclusion. Thus, ${\CU} \circ {\cone}$ is an order isomorphism by \Cref{cor:cone-irr-order-iso:item1}.
\end{proof}

We now show that, under the assumptions made at the beginning of this section, there is a natural way to define the embedding $\mathscr{E} \colon \spec(\freersn) \to \Uz^n$ (see the end of \Cref{sec:first-properties}), which yields a coordinatization of $\spec(\freersn)$.

\begin{remark}\label{rem:coordinatization}
Fix a positive infinitesimal $\varepsilon \in \U$. By \Cref{cor:cone-irr-order-iso}\eqref{cor:cone-irr-order-iso:item2}, for each $P \in \spec(\freersn)$ there exists a unique index $\indv=(v_1, \dots, v_k)$ such that $\cone(\indv)=\VUst(P)$. We set $\mathscr{E}(P)=v_1 + \varepsilon v_2 + \dots + \varepsilon^{k-1} v_k$. 
Since $\imm(v_1 + \varepsilon v_2 + \dots + \varepsilon^{k-1} v_k)=\indv$, \Cref{lem:clx-coneix} yields that $\VUst \CU (\mathscr{E}(P))=\cone(\indv)=\VUst(P)$. Thus, $\CU (\mathscr{E}(P)) = P$.
It follows from \Cref{p:E embedding image dense} that $\mathscr{E}$ is an embedding of $\spec(\freersn)$ into $\Uz^n$. Thus, $\mathscr{E}$ can be thought of as a coordinatization of $\spec(\freersn)$, which is canonical modulo the choice of the positive infinitesimal $\varepsilon$.
The image of $\mathscr{E}$, given by 
\[
\mathscr{E}[\spec(\freersn)]=\{v_1 + \varepsilon v_2 + \dots + \varepsilon^{k-1} v_k \mid (v_1, \dots, v_k) \text{ is an index}\},
\]
is then a dense subspace of $\Uz^n$ that is homeomorphic to $\spec(\freersn)$. Observe that $\mathscr{E}[\spec(\freersn)] \cap \R^n$ coincides with the sphere $\mathbb{S}^{n-1} = \{ x \in \R^n \mid \lVert x \rVert =1 \}$. By \Cref{p:null-U-max}(\ref{p:null-U-max:item1}), $\mathscr{E}$ restricts to the well-known homeomorphism between the maximal spectrum $\Max(\freersn)$ of $\freersn$ and $\mathbb{S}^{n-1}$ (see \cite[Proof of Lemma 3.4]{Marra}). An immediate consequence of the definition of $\mathscr{E}$ is that each point in $\mathscr{E}[\spec(\freersn)]$ is infinitely close to a (necessarily unique) point of $\mathbb{S}^{n-1}$. If $P \in \spec(\freersn)$, it follows from \Cref{cor:cone-irr-order-iso}(\ref{cor:cone-irr-order-iso:item2}) that $\mathscr{E}(P)$ is infinitely close to $\mathscr{E}(M) \in \mathbb{S}^{n-1}$, where $M$ is the unique maximal ideal containing $P$.
Finally, if $J$ is an ideal of $\freersn$, then $\mathscr{E}$ restricts to an embedding of $\spec(\freersn/J)$ into $\VUst(J)$, whose image is $\mathscr{E}[\spec(\freersn)] \cap \VUst(J)$.
\end{remark}

\subsection{The case of \texorpdfstring{$\ell$}{l}-groups}
Building on the previous subsection, we now describe the more complex case of $\ell$-groups.

The next lemma, analogous to \Cref{r:basis-hyper}, shows that the closed subsets of the Zariski topology on $\Uz^n$ relative to $\ell$-groups are generated by the enlargements of rational half-spaces. Recall that we denote by $\lz$ the set of functions $f\colon\R^n \to \R$ defined by homogeneous linear polynomials with integer coefficients.

\begin{lemma}\label{r:basis-hyper-l-groups}
The sets $S_f\df\{ x \in \Uz^n \mid \en{f}(x) \ge 0 \}$ with $f \in \lz$ form a subbasis of closed subsets for the Zariski topology of $\Uz^n$ relative to $\ell$-groups.
\end{lemma}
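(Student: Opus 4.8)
The plan is to mimic the proof of \Cref{r:basis-hyper} almost verbatim, replacing $\lr$ with $\lz$ and the free Riesz space $\freersn$ with the free $\ell$-group $\free^{\ell}_n$. First I would fix $f \in \lz$ and note that the homogeneous linear polynomial with integer coefficients defining $f$ is itself a term in the language of $\ell$-groups. Letting $t \in \free^{\ell}_n$ be the equivalence class of the term $p \wedge 0$, the transfer principle together with \Cref{rem:*V_I(f)=V_U(f)} gives $S_f = \en{(\VI(t))} \setminus \{\orig\} = \VUst(t)$, so each $S_f$ is closed in the Zariski topology on $\Uz^n$ relative to $\ell$-groups.

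Next I would invoke the $\ell$-group analogue of \cite[Lemma 3.2]{baker68}. By \cite[Section 7]{baker68} (or, equivalently, by the description of $\free^{\ell}_n$ as the $\ell$-group of continuous piecewise linear functions with integer coefficients recalled in \Cref{rem:enlargement piecewise linear}), every $t \in \free^{\ell}_n$ satisfies
\[
\VI(t) = \bigcap_{i=1}^{k} \bigcup_{j=1}^{m} \{ x \in \R^n \mid l_{ij}(x) \ge 0 \}
\]
for suitable $l_{ij} \in \lz$. Applying \Cref{rem:*V_I(f)=V_U(f)} and using that enlargement commutes with finite intersections and finite unions by the transfer principle yields
\[
\VUst(t) = \en{(\VI(t))} \setminus \{\orig\} = \bigcap_{i=1}^{k} \bigcup_{j=1}^{m} \{ x \in \Uz^n \mid \en{l_{ij}}(x) \ge 0 \} = \bigcap_{i=1}^{k} \bigcup_{j=1}^{m} S_{l_{ij}}.
\]
Since the sets $\VUst(t)$, as $t$ ranges over $\free^{\ell}_n$, form a basis of closed subsets for the Zariski topology relative to $\ell$-groups, this exhibits every basic closed set as an intersection of finite unions of the $S_f$'s, so the family $\{ S_f \mid f \in \lz \}$ is a subbasis of closed sets, as claimed.

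I do not expect a genuine obstacle here: the only subtlety is making sure the decomposition of $\VI(t)$ into rational half-spaces is legitimate for $\ell$-group terms, and this is exactly what \cite[Section 7]{baker68} provides (the integer-coefficient version of Baker's structure theorem for piecewise linear functions). Everything else is a routine transcription of the Riesz-space argument, since the transfer principle and \Cref{rem:*V_I(f)=V_U(f)} are language-agnostic once one restricts attention to $\lz$ and integer-coefficient terms.
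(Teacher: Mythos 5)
Your proposal is correct and matches the paper's own proof, which likewise cites \cite[Lemma~3.2, Section~7]{baker68} for the decomposition of $\VI(t)$ into rational half-spaces and then declares the rest of the argument analogous to the proof of \Cref{r:basis-hyper}. Your transcription of that Riesz-space argument (closedness of each $S_f$ via the term $p \wedge 0$, transfer, and \Cref{rem:*V_I(f)=V_U(f)}) is exactly the intended one.
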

\begin{proof}
Let $t\in \freen$. By \cite[Lemma~3.2, Section~7]{baker68}, there exist $l_{ij}\in\lz$, with $i=1, \ldots ,k$ and $j=1, \ldots, m$, such that $\VI(t)=\bigcap_{i=1}^k\bigcup_{j=1}^m\{ x \in \R^n \mid l_{ij}(x) \ge 0 \}$.
The rest of the proof is analogous to the one of \Cref{r:basis-hyper}.
\end{proof}

In order to characterize the indexes corresponding to the irreducible closed subsets of the Zariski topology relative to $\ell$-groups, we need to introduce the notion of rational subspaces of $\R^n$.

\begin{definition}
Let $V$ be a vector subspace of $\mathbb{R}^n$. We say that $V$ is \emph{rational} if it admits a linear basis consisting of vectors from $\Q^{n}$ or, equivalently, from $\Z^n$. 
\end{definition}

The following lemma is an immediate consequence of the fact that a linear system with rational coefficients admits a solution in $\R^n$ if and only if it admits a solution in $\Q^n$ (see, e.g, \cite[p.~15]{hoffman_linear_1971}).
\begin{lemma}\label{l:lin indep in Q and R}
The vectors $v_1, \ldots, v_m \in \mathbb{Q}^n$ are linearly independent as vectors of the $\mathbb{Q}$-vector space $\mathbb{Q}^n$ if and only if they are linearly independent as vectors of the $\mathbb{R}$-vector space $\mathbb{R}^n$.
\end{lemma}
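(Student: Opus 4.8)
The plan is to reduce the statement to the solvability of a homogeneous linear system with rational coefficients and then invoke the cited fact. First I would dispose of the easy implication: if $v_1, \dots, v_m$ are linearly independent over $\mathbb{R}$, then they are a fortiori linearly independent over $\mathbb{Q}$, since any non-trivial relation $\sum_{i=1}^m \lambda_i v_i = \mathbf{0}$ with coefficients $\lambda_i \in \mathbb{Q}$ is in particular a non-trivial relation with real coefficients. So the only content of the lemma is the converse, which I would prove contrapositively.

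Suppose $v_1, \dots, v_m$ are linearly dependent over $\mathbb{R}$, and let $A$ be the $n \times m$ matrix over $\mathbb{Q}$ whose $i$-th column is $v_i$. By assumption the homogeneous system $A\lambda = \mathbf{0}$ has a solution $\lambda \in \mathbb{R}^m \setminus \{\mathbf{0}\}$; fix an index $j$ with $\lambda_j \neq 0$. The linear system obtained from $A\lambda = \mathbf{0}$ by adjoining the single equation $\lambda_j = 1$ has rational coefficients and, by construction, admits a solution in $\mathbb{R}^m$ (namely $\lambda/\lambda_j$). Hence, by \cite[p.~15]{hoffman_linear_1971}, it admits a solution $\mu \in \mathbb{Q}^m$, and this $\mu$ is non-zero since $\mu_j = 1$. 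Then $\sum_{i=1}^m \mu_i v_i = \mathbf{0}$ is a non-trivial $\mathbb{Q}$-linear dependence among $v_1, \dots, v_m$, so they are linearly dependent over $\mathbb{Q}$, which is the required contrapositive.

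I do not expect a genuine obstacle here: the only point that needs care is converting a non-zero \emph{real} solution into a non-zero \emph{rational} one, which is exactly why one normalizes a non-zero coordinate to $1$ before applying the cited result. As an alternative route, one could instead observe that the rank of $A$ is the same whether computed over $\mathbb{Q}$ or over $\mathbb{R}$ --- Gaussian elimination performed on a rational matrix never leaves $\mathbb{Q}$ --- and that linear independence of the columns $v_1,\dots,v_m$ is equivalent to $\operatorname{rank}(A) = m$ over either field; this yields the equivalence in one stroke.
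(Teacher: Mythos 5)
Your proof is correct and follows essentially the same route as the paper, which derives the lemma directly from the cited fact that a linear system with rational coefficients solvable over $\mathbb{R}$ is solvable over $\mathbb{Q}$; your normalization $\lambda_j=1$ is exactly the right way to make the "non-trivial real solution gives non-trivial rational solution" step precise. Nothing further is needed.
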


Any $f \in \lz$ corresponds to a polynomial with integer coefficient, which in turn is a term in the language of $\ell$-groups. For this reason, we will denote the kernel of $f$ by $\VI(f)$.

\begin{lemma}\label{l:subspaces-intersections}
Let $V$ be a vector subspace of $\mathbb{R}^n$ of dimension $m$. The following conditions are equivalent.
\begin{enumerate}[label=(\roman*)]
\item\label{l:subspaces-intersections:item1} $V$ is rational.
\item\label{l:subspaces-intersections:itemA} $V \cap \mathbb{Q}^n$ is dense in $V$ with respect to the Euclidean topology.
\item\label{l:subspaces-intersections:item2} $V \cap \mathbb{Q}^n$ is a subspace of $\mathbb{Q}^n$ of dimension $m$ as a $\mathbb{Q}$-vector space.
\item\label{l:subspaces-intersections:item3} $V$ is the kernel of an onto linear map $f:\mathbb{R}^n \to \mathbb{R}^{n-m}$ with integer coefficients.
\item\label{l:subspaces-intersections:item4} $V = \bigcap_{i \in I} \VI(f_i)$ with $f_i \in \lz$ for each $i$.
\end{enumerate}
\end{lemma}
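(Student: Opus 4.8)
The plan is to prove the five conditions equivalent by establishing the cycle \ref{l:subspaces-intersections:item1} $\Rightarrow$ \ref{l:subspaces-intersections:itemA} $\Rightarrow$ \ref{l:subspaces-intersections:item2} $\Rightarrow$ \ref{l:subspaces-intersections:item1} to group the first three, and then a second cycle \ref{l:subspaces-intersections:item1} $\Rightarrow$ \ref{l:subspaces-intersections:item3} $\Rightarrow$ \ref{l:subspaces-intersections:item4} $\Rightarrow$ \ref{l:subspaces-intersections:item1}. All of the implications are elementary linear algebra over $\Q$ and $\R$; the only tool from outside is \Cref{l:lin indep in Q and R}, which lets us pass freely between $\Q$-linear and $\R$-linear independence of rational vectors.

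For the first cycle: \ref{l:subspaces-intersections:item1} $\Rightarrow$ \ref{l:subspaces-intersections:itemA} is immediate, since if $v_1,\dots,v_m \in \Q^n$ form a basis of $V$, then the rational linear combinations of the $v_i$ are dense in $V$ (approximate real coefficients by rationals) and lie in $V \cap \Q^n$. For \ref{l:subspaces-intersections:itemA} $\Rightarrow$ \ref{l:subspaces-intersections:item2}: if $V \cap \Q^n$ is dense in $V$, pick vectors in $V \cap \Q^n$ arbitrarily close to an $\R$-basis of $V$; by continuity of the determinant these remain $\R$-linearly independent, so by \Cref{l:lin indep in Q and R} they are $\Q$-linearly independent as elements of $\Q^n$, giving $\dim_\Q(V \cap \Q^n) \ge m$; the reverse inequality holds since $V \cap \Q^n \seq V$ and $\Q$-independent rational vectors are $\R$-independent. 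Then \ref{l:subspaces-intersections:item2} $\Rightarrow$ \ref{l:subspaces-intersections:item1}: a $\Q$-basis of $V \cap \Q^n$ consists of $m$ rational vectors that are $\R$-linearly independent (again \Cref{l:lin indep in Q and R}) and lie in $V$, hence form an $\R$-basis of $V$ since $\dim_\R V = m$; this exhibits $V$ as rational (and clearing denominators gives a basis in $\Z^n$, justifying the ``equivalently'' in the definition).

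For the second cycle: \ref{l:subspaces-intersections:item1} $\Rightarrow$ \ref{l:subspaces-intersections:item3}: given a rational basis $v_1,\dots,v_m$ of $V$, extend it to a rational basis $v_1,\dots,v_n$ of $\R^n$ (the standard extension argument works over $\Q$), and let $f\colon \R^n \to \R^{n-m}$ be the linear map sending $v_i \mapsto 0$ for $i \le m$ and $v_{m+j} \mapsto e_j$ for $j=1,\dots,n-m$; its matrix in the standard basis is the product of a rational matrix with a rational invertible matrix, hence rational, and after clearing denominators row-by-row we may take it integral while keeping it onto, and $\Ker f = V$. The implication \ref{l:subspaces-intersections:item3} $\Rightarrow$ \ref{l:subspaces-intersections:item4} is trivial: write $f = (f_1,\dots,f_{n-m})$ with each $f_i \in \lz$, so $V = \bigcap_{i=1}^{n-m} \VI(f_i)$, a finite (hence indexed) intersection. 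Finally \ref{l:subspaces-intersections:item4} $\Rightarrow$ \ref{l:subspaces-intersections:item1}: if $V = \bigcap_{i\in I}\VI(f_i)$ with $f_i \in \lz$, then $V$ is the solution set of a (possibly infinite, but by finite-dimensionality reducible to finite) homogeneous linear system with integer coefficients; by Gaussian elimination over $\Q$ this solution set has a basis of rational vectors, and these span $V$ over $\R$ by \Cref{l:lin indep in Q and R} together with a dimension count, so $V$ is rational.

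I do not expect a genuine obstacle here — the content is bookkeeping. The one point requiring a little care is the repeated use of \Cref{l:lin indep in Q and R} to transfer dimension counts between $\Q$ and $\R$, and noting in \ref{l:subspaces-intersections:item4} $\Rightarrow$ \ref{l:subspaces-intersections:item1} that an arbitrary intersection $\bigcap_{i\in I}\VI(f_i)$ equals a finite subintersection because $\R^n$ is finite-dimensional (so only finitely many of the $f_i$ are needed to cut $V$ out); once that is observed, the solution set of the resulting finite integer system is manifestly rational by elimination. I would also remark that clearing denominators is what makes the two formulations of rationality (basis in $\Q^n$ versus in $\Z^n$) interchangeable.
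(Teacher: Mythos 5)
Your proposal is correct, and its individual steps are sound, but it organizes the equivalences differently from the paper. The first cycle \ref{l:subspaces-intersections:item1} $\Rightarrow$ \ref{l:subspaces-intersections:itemA} $\Rightarrow$ \ref{l:subspaces-intersections:item2} $\Rightarrow$ \ref{l:subspaces-intersections:item1} matches the paper almost verbatim, except that for \ref{l:subspaces-intersections:itemA} $\Rightarrow$ \ref{l:subspaces-intersections:item2} the paper argues contrapositively (if $\dim_{\Q}(V\cap\Q^n)<m$ then its span in $\R^n$ is a proper closed subspace of $V$ containing $V\cap\Q^n$, contradicting density), while you argue directly via continuity of a suitable minor/Gram determinant to perturb an $\R$-basis into rational vectors; both work. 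The second part is where you genuinely diverge: the paper proves \ref{l:subspaces-intersections:item2} $\Leftrightarrow$ \ref{l:subspaces-intersections:item3} by rank–nullity (restricting $f$ to $\Q^n$ for one direction, and building $f$ from an extended $\Q$-basis for the other) and closes with \ref{l:subspaces-intersections:item3} $\Leftrightarrow$ \ref{l:subspaces-intersections:item4}, where \ref{l:subspaces-intersections:item4} $\Rightarrow$ \ref{l:subspaces-intersections:item3} selects exactly $n-m$ of the $f_i$ by a dimension-decreasing argument and assembles them into an onto map. You instead go \ref{l:subspaces-intersections:item1} $\Rightarrow$ \ref{l:subspaces-intersections:item3} $\Rightarrow$ \ref{l:subspaces-intersections:item4} $\Rightarrow$ \ref{l:subspaces-intersections:item1}, skipping \ref{l:subspaces-intersections:item3} $\Rightarrow$ \ref{l:subspaces-intersections:item2} entirely and closing by reducing the (possibly infinite) intersection to a finite one via finite-dimensionality and then doing Gaussian elimination over $\Q$, using the invariance of rank under field extension plus \Cref{l:lin indep in Q and R} for the dimension count. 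Your route buys a slightly more self-contained treatment of \ref{l:subspaces-intersections:item4} (elimination does all the work), whereas the paper's selection of $n-m$ forms gives the onto map of \ref{l:subspaces-intersections:item3} explicitly and keeps rank–nullity as the single recurring tool; the rank-invariance fact you invoke is standard but deserves a word of justification if written out in full.
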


\begin{proof}
\ref{l:subspaces-intersections:item1} $\Rightarrow$ \ref{l:subspaces-intersections:itemA}. Let $v_1, \ldots, v_m$ be a basis of $V$ consisting of vectors from $\Q^n$. If $x \in V$, there exist $r_1, \ldots, r_m \in \mathbb{R}$ such that $x=r_1 v_1 + \cdots + r_m v_m$. We show that, given $r>0$, there exists $y \in V \cap \Q^n$ such that $\lVert x - y \rVert < r$. Let $s=\lVert v_1 \rVert + \dots + \lVert v_n \rVert$ and $q_1, \ldots, q_m \in \Q$ such that $|r_i-q_i| < r/s$. If $y=q_1 v_1 + \cdots + q_m v_m \in V \cap \Q^n$, then $\lVert x - y \rVert < r$. Therefore, $V \cap \Q^n$ is dense in $V$.

\ref{l:subspaces-intersections:itemA} $\Rightarrow$ \ref{l:subspaces-intersections:item2}. If $V \cap \mathbb{Q}^n$ has dimension strictly smaller than $m$, then the span $W$ of $V \cap \mathbb{Q}^n$ in $\mathbb{R}^n$ is a proper subspace of $V$ because its dimension is smaller than $m$. Thus, $V \cap \mathbb{Q}^n \subseteq W \subset V$ and $W$ is a proper closed subset of $V$. Thus,  $V \cap \mathbb{Q}^n$ is not dense in $V$.

\ref{l:subspaces-intersections:item2} $\Rightarrow$ \ref{l:subspaces-intersections:item1}. Let $v_1, \ldots, v_m$ be a basis of $V \cap \mathbb{Q}^n$. Then $v_1, \ldots, v_m$ are also linearly independent in $\mathbb{R}^n$ by \Cref{l:lin indep in Q and R}. Thus, they form a basis of $V$ because $V$ has dimension $m$.

\ref{l:subspaces-intersections:item2} $\Rightarrow$ \ref{l:subspaces-intersections:item3}. Let $v_1, \ldots, v_m$ be a basis of $V \cap \mathbb{Q}^n$. We can extend it to a basis $v_1, \ldots, v_m, v_{m+1}, \ldots, v_n$ of $\mathbb{Q}^n$. Define $g:\mathbb{Q}^n \to \mathbb{Q}^{n-m}$ by mapping $v_1, \ldots, v_m$ to the zero vector and $v_{m+1}, \ldots, v_n$ to the canonical basis of $\mathbb{Q}^{n-m}$. Then $g$ is defined by $n-m$ homogeneous linear polynomials with rational coefficients. We can assume that the coefficients of $g$ are all integers by multiplying them by the product of their denominators. Let $f:\mathbb{R}^n \to \mathbb{R}^{n-m}$ be the linear map defined by the same polynomials. Then $f$ maps $v_1, \ldots, v_m$ to the zero vector and $v_{m+1}, \ldots, v_n$ to a basis of $\mathbb{R}^{n-m}$. Therefore, $f$ is onto and its kernel is $V$.

\ref{l:subspaces-intersections:item3} $\Rightarrow$ \ref{l:subspaces-intersections:item2}. The map $f$ restricts to a linear function $g: \mathbb{Q}^n \to \mathbb{Q}^{n-m}$ because $f$ has integer coefficients. Since $V$ is the kernel of $f$, the kernel of $g$ is $V \cap \mathbb{Q}^n$. We have that $g$ maps the canonical basis of $\mathbb{Q}^n$ to a set of generators of $\mathbb{Q}^{n-m}$ because $f$ is onto.
By the rank-nullity theorem, the dimension of $V \cap \mathbb{Q}^n$, which is the kernel of $g$, is $n-(n-m)=m$.

\ref{l:subspaces-intersections:item3} $\Rightarrow$ \ref{l:subspaces-intersections:item4}. It is an immediate consequence of the fact that each component of $f$ is a map in $\lz$.

\ref{l:subspaces-intersections:item4} $\Rightarrow$ \ref{l:subspaces-intersections:item3}. If $m=n$, then $V=\R^n$ and the claim is clear. Thus, we can assume that $m < n$. We show that there are $i_1, \dots, i_{n-m} \in I$ such that $V= \bigcap_{j=1}^{n-m} \VI(f_{i_j})$. Since $m < n$, there is $i_1 \in I$ such that $\VI(f_{i_1}) \neq \R^n$, and hence $\VI(f_{i_1})$ has dimension $n-1$. Moreover, if we have found $i_1, \dots, i_k \in I$ such that $\bigcap_{j=1}^{k} \VI(f_{i_j})$ has dimension $n-k > m$, then there exists $i_{k+1} \in I$ such that $\bigcap_{j=1}^{k+1} \VI(f_{i_j})$ has dimension $n-k-1$. Indeed, otherwise $\bigcap_{j=1}^k \VI(f_{i_j}) = \bigcap_{i \in I} \VI(f_i) = V$, which is impossible because the dimension of $V$ is $m < n-k$. So, it is possible to find $i_1, \dots, i_{n-m} \in I$ such that the dimension $\bigcap_{j=1}^{n-m} \VI(f_{i_j})$ is $m$.
Thus, $V$ is the kernel of the linear map $f\colon\mathbb{R}^n \to \mathbb{R}^{n-m}$ given by $f(v)=(f_{i_1}(v), \ldots, f_{i_{n-m}}(v))$. By the rank-nullity theorem, the image of $f$ has dimension $n-m$. Therefore, $f$ is onto.
\end{proof} 

\begin{lemma}\label{lem:perp-is-rational}
If $V$ is a rational subspace of $\mathbb{R}^n$, then $V^\perp$ is a rational subspace. 
\end{lemma}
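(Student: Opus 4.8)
The idea is to pass through the kernel characterization of rational subspaces provided by \Cref{l:subspaces-intersections}. Write $m = \dim V$. Since $V$ is rational, \Cref{l:subspaces-intersections}(\ref{l:subspaces-intersections:item3}) lets me write $V = \ker f$ for some onto linear map $f \colon \mathbb{R}^n \to \mathbb{R}^{n-m}$ with integer coefficients. Let $A$ be the $(n-m)\times n$ integer matrix of $f$ with respect to the canonical bases, and let $r_1, \dots, r_{n-m} \in \mathbb{Z}^n$ be its rows.

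Next I would invoke the standard identity $\ker A = (\operatorname{span}_{\mathbb{R}}\{r_1, \dots, r_{n-m}\})^{\perp}$: an $x \in \mathbb{R}^n$ lies in $\ker A$ exactly when $r_i \cdot x = 0$ for all $i$. Since $V = \ker A$ and any finite-dimensional subspace of $\mathbb{R}^n$ is closed, taking orthogonal complements of both sides gives $V^{\perp} = \operatorname{span}_{\mathbb{R}}\{r_1, \dots, r_{n-m}\}$. Finally, as $f$ is onto we have $\operatorname{rank} A = n-m = \dim V^{\perp}$, so the rows $r_1, \dots, r_{n-m}$ are linearly independent and hence form a basis of $V^{\perp}$. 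This basis consists of vectors from $\mathbb{Z}^n$, so $V^{\perp}$ is rational by definition.

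There is no real obstacle here; every step is routine linear algebra, and the only point requiring mild care is the dimension bookkeeping that forces the rows of $A$ to be a \emph{basis} (not merely a spanning set) of $V^{\perp}$. If one prefers to avoid matrices, the same argument runs through condition (\ref{l:subspaces-intersections:item4}): write $V = \bigcap_{i \in I} \VI(f_i)$ with $f_i \in \lz$ and integer coefficient vectors $a_i \in \mathbb{Z}^n$, so that $V = \bigcap_{i} a_i^{\perp} = (\operatorname{span}_{\mathbb{R}}\{a_i : i \in I\})^{\perp}$; hence $V^{\perp} = \operatorname{span}_{\mathbb{R}}\{a_i : i \in I\}$, and extracting a maximal linearly independent subset of the $a_i$ produces an integer basis of $V^{\perp}$, using \Cref{l:lin indep in Q and R} if one wants $\mathbb{Q}$-independence as well.
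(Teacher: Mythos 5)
Your proof is correct, but it takes a different route from the paper. You use characterization (\ref{l:subspaces-intersections:item3}) of \Cref{l:subspaces-intersections}: writing $V=\ker A$ for an onto integer matrix $A$, you identify $V^\perp$ with the row space of $A$ (since $V=(\operatorname{span}\{r_1,\dots,r_{n-m}\})^\perp$ and double orthogonal complements are the identity in finite dimension), and surjectivity of $f$ gives $\operatorname{rank} A=n-m$, so the integer rows form a basis of $V^\perp$. The paper instead works through characterization (\ref{l:subspaces-intersections:item2}): it takes a basis $v_1,\dots,v_m$ of $V\cap\Q^n$, a basis $v_{m+1},\dots,v_n$ of the orthogonal complement of $V\cap\Q^n$ inside $\Q^n$, and uses \Cref{l:lin indep in Q and R} plus a dimension count to conclude that the latter vectors form a basis of $V^\perp$. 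Your argument is somewhat more explicit and computational: it exhibits a concrete integer basis of $V^\perp$ (the rows of the defining matrix) and the only work is the rank bookkeeping, while the paper's argument stays intrinsic to the rational-points description and leans on the $\Q$-versus-$\R$ independence lemma. Your alternative sketch via condition (\ref{l:subspaces-intersections:item4}) is also fine; the only mild care needed there is that the index set $I$ may be infinite, but extracting a maximal linearly independent (necessarily finite) subfamily of the $a_i$ handles that.
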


\begin{proof}
Suppose that $V$ has dimension $m$. By \Cref{l:subspaces-intersections}, $V \cap \Q^n$ has dimension $m$ as a $\Q$-vector space. Let $v_1, \dots, v_m$ be a basis of $V \cap \Q^n$ and $v_{m+1}, \dots, v_n$ a basis of $V^\perp \cap \Q^n$, which is the orthogonal complement of $V \cap \Q^n$ in $\Q^n$. By \cref{l:lin indep in Q and R}, the span of $v_{m+1}, \dots, v_n$ in $\R^n$ is a subspace of dimension $n-m$ contained in $V^\perp$. Since $V^\perp$ has dimension $n-m$, it follows that $v_{m+1}, \dots, v_n$ is a basis of $V^\perp$. Therefore, $V^\perp$ is rational.
\end{proof}

By \Cref{l:subspaces-intersections}, rational subspaces of $\R^n$ are exactly intersections of subspaces of the form $\VI(f)$ with $f \in \lz$. Therefore, rational subspaces are closed under arbitrary intersections. This allows us to consider the smallest rational subspace containing a given subset of $\R^n$.

\begin{definition}
Let $S$ be a subset of $\mathbb{R}^n$. We call the smallest rational vector subspace containing $S$ the \emph{rational envelope of $S$} and denote it by $\env{S}$.
\end{definition}

We are now ready to introduce the indexes that correspond to irreducible closed subsets of $\Uz^n$ in the setting of $\ell$-groups. Following the notation of \cite{MR1707667}, we call such indexes $\mathbb{Z}$-reduced.

\begin{definition}\label{def:reduced}
An index $\indv=(v_1, \ldots, v_k)$ is said to be \emph{$\mathbb{Z}$-reduced} if $v_i \in \ortp{v_j}$ for each $i \neq j$.
\end{definition}

\begin{remark}\label{rem:reduced-alternative-def}
\Cref{lem:perp-is-rational} yields that $v_i \in \ortp{v_j}$ if and only if the subspaces $\env{v_i}$ and $\env{v_j}$ are orthogonal to each other. Therefore, $v_i \in \ortp{v_j}$ if and only if $v_j \in \ortp{v_i}$. \Cref{lem:perp-is-rational} also implies that $\indv$ is $\mathbb{Z}$-reduced if and only if $v_1, \dots, v_{i-1} \in \ortp{v_i}$ for each $i>1$, which is equivalent to $v_i \in \ortp{v_1, \dots, v_{i-1}}$ for each $i>1$.
\end{remark}

We now show how to associate to any index a unique $\mathbb{Z}$-reduced index via a sort of Gram-Schmidt process. A similar process also appears in \cite[p.~188]{MR1707667}.

\begin{definition}\label{def:reduction}
Let $\indv=(v_1, \ldots, v_k)$ be an index. We define $w_1, \ldots, w_k \in \mathbb{R}^n$ by recursion as follows:
$w_1 \coloneqq v_1$ and $w_i$ is the projection of $v_i$ onto $\ortp{w_1, \dots, w_{i-1}}$ for each $1< i \le k$.
Let $(u_1,\dots,u_t)$ be the tuple of vectors obtained by discarding all the zero components of the tuple $(w_1,\dots,w_k)$ and then normalizing the remaining components.
We denote $(u_1, \ldots, u_t)$ by $\red(\indv)$.
\end{definition}

\begin{proposition}\label{p:Z-red-iff-equal-red}
Let $\indv$ be an index.
\begin{enumerate}
\item\label{p:Z-red-iff-equal-red:item1} $\red(\indv)$ is a $\mathbb{Z}$-reduced index.
\item\label{p:Z-red-iff-equal-red:item2} $\indv$ is $\mathbb{Z}$-reduced if and only if $\red(\indv)=\indv$.
\item\label{p:Z-red-iff-equal-red:item3} If $\indv'$ is an index such that $\indv \le \indv'$, then $\red(\indv) \le \red(\indv')$.
\end{enumerate}
\end{proposition}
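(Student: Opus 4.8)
The plan is to work directly with the Gram--Schmidt-type recursion of \Cref{def:reduction}, relying throughout on the elementary fact that the rational envelope of a set depends only on its linear span: since a rational subspace is in particular a subspace, it contains a set $S$ if and only if it contains $\mathrm{span}(S)$, whence $\env{S}=\env{S'}$ whenever $\mathrm{span}(S)=\mathrm{span}(S')$. I would record this observation first and then use it, together with \Cref{lem:perp-is-rational} and \Cref{rem:reduced-alternative-def}, to pass freely between the subspaces $\ortp{w_1,\dots,w_{i-1}}$ and $\ortp{u_1,\dots,u_{i-1}}$ once the relevant spans are seen to coincide.

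For part \eqref{p:Z-red-iff-equal-red:item1} I would first check that $\red(\indv)$ is an index. By construction $w_i$ lies in $\ortp{w_1,\dots,w_{i-1}}=\env{w_1,\dots,w_{i-1}}^{\perp}$, which is contained in $\{w_1,\dots,w_{i-1}\}^{\perp}$, so the $w_i$ are pairwise orthogonal; discarding the zero ones and normalizing the rest preserves orthogonality and yields unit vectors, so $(u_1,\dots,u_t)$ is an index. To see it is $\mathbb{Z}$-reduced, let $\sigma$ be the strictly increasing enumeration of the indices $i$ with $w_i\neq\orig$, so that $u_\ell$ is a positive scalar multiple of $w_{\sigma(\ell)}$. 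The nonzero vectors among $w_1,\dots,w_{\sigma(\ell)-1}$ are precisely $w_{\sigma(1)},\dots,w_{\sigma(\ell-1)}$, hence $\mathrm{span}(w_1,\dots,w_{\sigma(\ell)-1})=\mathrm{span}(u_1,\dots,u_{\ell-1})$ and so $\ortp{w_1,\dots,w_{\sigma(\ell)-1}}=\ortp{u_1,\dots,u_{\ell-1}}$. Since $w_{\sigma(\ell)}\in\ortp{w_1,\dots,w_{\sigma(\ell)-1}}$ by definition, it follows that $u_\ell\in\ortp{u_1,\dots,u_{\ell-1}}$ for every $\ell$, which by \Cref{rem:reduced-alternative-def} is exactly the condition that $(u_1,\dots,u_t)$ be $\mathbb{Z}$-reduced.

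For part \eqref{p:Z-red-iff-equal-red:item2}, the implication $\Leftarrow$ is immediate from \eqref{p:Z-red-iff-equal-red:item1}. For $\Rightarrow$, assuming $\indv=(v_1,\dots,v_k)$ is $\mathbb{Z}$-reduced, I would prove by induction on $i$ that $w_i=v_i$: if $w_\ell=v_\ell$ for all $\ell<i$, then $\env{w_1,\dots,w_{i-1}}=\env{v_1,\dots,v_{i-1}}$, and $\mathbb{Z}$-reducedness gives $v_i\in\ortp{v_1,\dots,v_{i-1}}$, so the orthogonal projection of $v_i$ onto that subspace is $v_i$ itself, i.e. $w_i=v_i$. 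Hence every $w_i$ is a nonzero unit vector, nothing is discarded or rescaled, and $\red(\indv)=\indv$. For part \eqref{p:Z-red-iff-equal-red:item3}, writing $\indv=(v_1,\dots,v_j)$ and $\indv'=(v_1,\dots,v_k)$ with $j\le k$ and denoting by $w_1,\dots,w_j$ and $w_1',\dots,w_k'$ the vectors produced by \Cref{def:reduction} for $\indv$ and $\indv'$, an easy induction shows $w_i=w_i'$ for $i\le j$, since the recursion for $w_i$ depends only on $v_i$ and on $w_1,\dots,w_{i-1}$, data which agree for the two indexes when $i\le j$. Thus $(w_1,\dots,w_j)$ is the length-$j$ prefix of $(w_1',\dots,w_k')$, and since the operation ``discard zero entries and normalize the remaining ones'' acts entrywise and preserves order, $\red(\indv)$ is a truncation of $\red(\indv')$, i.e. $\red(\indv)\le\red(\indv')$.

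The only point requiring genuine care is the bookkeeping in part \eqref{p:Z-red-iff-equal-red:item1}: one must verify that deleting the zero vectors $w_i$ does not alter the rational envelopes entering the definition of $\mathbb{Z}$-reducedness, which is exactly where the span-only dependence of $\env{\cdot}$ (via \Cref{lem:perp-is-rational}) is invoked. Everything else reduces to routine linear algebra and straightforward inductions.
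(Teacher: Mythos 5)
Your proof is correct and takes essentially the same route as the paper: part (\ref{p:Z-red-iff-equal-red:item1}) is verified directly from \Cref{def:reduction} using that rational envelopes (and hence their orthogonal complements, via \Cref{lem:perp-is-rational}) depend only on linear spans, and parts (\ref{p:Z-red-iff-equal-red:item2}) and (\ref{p:Z-red-iff-equal-red:item3}) are exactly the routine inductions the paper dismisses as immediate consequences of (\ref{p:Z-red-iff-equal-red:item1}) and the definition of $\red(\indv)$. Your bookkeeping with the enumeration $\sigma$ and the explicit check that $\red(\indv)$ is an index simply spell out details the paper leaves implicit.
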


\begin{proof}
\Cref{p:Z-red-iff-equal-red:item1}. Let $\indv=(v_1, \dots, v_k)$, $w_1, \dots, w_k$, and $u_1, \ldots, u_t$ as in \Cref{def:reduction}. Since $w_i \in \ortp{w_1, \dots, w_{i-1}}$ and $\env{w_j} \subseteq \env{w_1, \dots, w_{i-1}}$ for each $j < i$, we have that $w_i \in \ortp{w_j}$. It follows that $u_i \in \ortp{u_j}$ whenever $j < i$ because the $u_i$'s are obtained by normalizing the non-zero $w_i$'s. Therefore, $\red(\indv)$ is a $\mathbb{Z}$-reduced index. 

\Cref{p:Z-red-iff-equal-red:item2,p:Z-red-iff-equal-red:item3} are immediate consequences of \Cref{p:Z-red-iff-equal-red:item1} and the definition of $\red(\indv)$. 
\end{proof}

\begin{example}
Let $v_1,v_2,v_3 \in \mathbb{R}^3$ be the unit vectors 
\begin{align*}
v_1=\frac{1}{\sqrt{3}}(1,\sqrt{2},0), \qquad v_2=\frac{1}{2}(\sqrt{2},-1,1), \qquad v_3=\frac{1}{2\sqrt{3}}(-\sqrt{2}, 1, 3).
\end{align*}
Since $v_1,v_2,v_3$ are pairwise orthogonal, $\indv \coloneqq (v_1,v_2,v_3)$ is an index. By \Cref{def:reduction}, $w_1=v_1$. The irrationality of $\sqrt{2}$ yields that $\env{w_1}$ is the $xy$-plane in $\mathbb{R}^3$. Thus, $\ortp{w_1}$ is the $z$-axis, and then 
\[
w_2=\frac{1}{2} (0,0,1).
\]
It follows that $\langle w_1, w_2 \rangle = \R^3$, which implies $\ortp{w_1, w_2}=\{ \orig \}$ and $w_3=(0,0,0)$. Therefore, $\red(\indv)=(u_1,u_2)$ where $u_1=w_1=v_1$ and $u_2=(0,0,1)$. 
\end{example}

\begin{lemma}\label{rem:env vi = env ui}
Let $\indv=(v_1, \dots, v_k)$, $w_1, \dots, w_k$, and $u_1, \ldots, u_t$ as in \Cref{def:reduction}. If $\red((v_1, \dots, v_i)) = (u_1, \dots, u_{s_i})$ with $i \le k$ and $s_i \le t$, then
\begin{enumerate}[resume]
\item\label{rem:env vi = env ui:item1} $v_i-w_i \in \env{w_1, \ldots, w_{i-1}}$.
\item\label{rem:env vi = env ui:item2} $\env{v_1, \dots, v_i} = \env{w_1, \dots, w_i} = \env{u_1, \dots, u_{s_i}}$.
\end{enumerate}
\end{lemma}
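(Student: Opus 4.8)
The plan is to establish item~(1) directly from the definition of the orthogonal projection, and then obtain the equalities in item~(2) by induction on $i$, feeding (1) into the inductive step; the last equality in (2) will follow from elementary stability properties of the rational envelope $\env{\cdot}$. Throughout I would use the following basic facts, all immediate from the definition of $\env{\cdot}$ as the intersection of all rational subspaces containing a given set (well posed by the remark preceding \Cref{def:reduction}): $\env{\cdot}$ is monotone and extensive, $\env{S}$ is always a vector subspace, $\env{\emptyset}=\{\orig\}$, deleting a zero vector from $S$ does not change $\env{S}$, and rescaling a member of $S$ by a nonzero real does not change $\env{S}$ (a subspace containing a nonzero vector contains the whole line it spans). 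I would also note at the outset that the hypothesis ``$\red((v_1,\dots,v_i))=(u_1,\dots,u_{s_i})$'' is merely a fixing of notation: the recursion of \Cref{def:reduction} producing $w_j$ consults only $v_1,\dots,v_j$, so $\red((v_1,\dots,v_i))$ is exactly the tuple obtained from $(w_1,\dots,w_i)$ by discarding the zero entries and normalizing, i.e.\ $(u_1,\dots,u_{s_i})$ with $s_i$ the number of nonzero $w_j$, $j\le i$.

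For (1): by definition $w_i$ is the orthogonal projection of $v_i$ onto $\ort{\env{w_1,\dots,w_{i-1}}}$. In the orthogonal direct sum $\R^n=\env{w_1,\dots,w_{i-1}}\oplus\ort{\env{w_1,\dots,w_{i-1}}}$, the vector $v_i-w_i$ is precisely the component of $v_i$ lying in $\env{w_1,\dots,w_{i-1}}$, so $v_i-w_i\in\env{w_1,\dots,w_{i-1}}$. (For $i=1$ this reads $v_1-w_1=\orig\in\{\orig\}=\env{\emptyset}$.)

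For (2): I would first prove $\env{v_1,\dots,v_i}=\env{w_1,\dots,w_i}$ by induction on $i$. The base case $i=1$ is immediate since $w_1=v_1$. For the inductive step, assume $\env{v_1,\dots,v_{i-1}}=\env{w_1,\dots,w_{i-1}}$. By (1) together with the inductive hypothesis, $v_i-w_i\in\env{w_1,\dots,w_{i-1}}=\env{v_1,\dots,v_{i-1}}$. Since rational envelopes are subspaces, $v_i=w_i+(v_i-w_i)$ lies in $\env{w_1,\dots,w_i}$ and $w_i=v_i-(v_i-w_i)$ lies in $\env{v_1,\dots,v_i}$; combining these with the inductive hypothesis and monotonicity yields both inclusions $\env{v_1,\dots,v_i}\subseteq\env{w_1,\dots,w_i}$ and $\env{w_1,\dots,w_i}\subseteq\env{v_1,\dots,v_i}$, hence equality. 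Finally, the tuple $(u_1,\dots,u_{s_i})$ arises from $(w_1,\dots,w_i)$ by deleting zero entries and rescaling the surviving ones by positive scalars, so $\env{w_1,\dots,w_i}=\env{u_1,\dots,u_{s_i}}$ by the stability facts recorded above.

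I do not expect a genuine obstacle here; this is a book-keeping statement meant to feed later arguments. The only mildly delicate point is making the recursion/notation match-up precise — that $\red$ applied to a truncation of $\indv$ yields the corresponding truncation of $\red(\indv)$, which is what lets the induction be phrased for truncations; once that is in place, (1) and (2) are routine.
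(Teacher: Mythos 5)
Your proof is correct and follows essentially the same route as the paper's: item (1) via the orthogonal decomposition of $v_i$ with respect to $\env{w_1,\dots,w_{i-1}}$, and item (2) by induction on $i$, using (1) in the inductive step and observing that discarding zero vectors and normalizing does not change the rational envelope. The extra remark on how $\red$ interacts with truncations matches what the paper records in \Cref{p:Z-red-iff-equal-red} and introduces no new content.
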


\begin{proof}
\Cref{rem:env vi = env ui:item1}. Since $w_i$ is the projection of $v_i$ onto $\ortp{w_1, \dots, w_{i-1}}$, the vector $v_i-w_i$ is the projection of $v_i$ onto $\env{w_1, \dots, w_{i-1}}$. 

\Cref{rem:env vi = env ui:item2}. We show by induction on $i \ge 1$ that $\env{v_1, \dots, v_i} = \env{w_1, \dots, w_i}= \env{u_1, \dots, u_{s_i}}$.
By \cref{def:reduction}, $v_1=w_1=u_1$, and so the base case follows. Suppose the claim is true for $i \ge 1$. By \Cref{rem:env vi = env ui:item1} and the induction hypothesis, $v_{i+1}-w_{i+1} \in \env{w_1, \dots, w_i} = \env{v_1, \dots, v_i}$. 
Thus, $v_{i+1} \in \env{w_1, \dots, w_{i+1}}$ and $w_{i+1} \in \env{v_1, \dots, v_{i+1}}$, which implies that $\env{v_1, \dots, v_{i+1}} = \env{w_1, \dots, w_{i+1}}$. Since $u_1, \dots, u_{s_{i+1}}$ are obtained by normalizing the non-zero vectors among $w_1, \dots, w_{i+1}$, it follows that $\env{w_1, \dots, w_{i+1}} = \env{u_1, \dots, u_{s_{i+1}}}$.
\end{proof}

The following technical lemmas are the analogues of \cref{l:zero-index,l:existence-linear-maps} in the context of $\ell$-groups.

\begin{lemma}\label{lem:zeros-index-equiv-reduced}
Let $f \in \lz$ and $\indv=(v_1, \dots, v_k)$ an index with $\red(\indv)=(u_1, \dots, u_t)$. 
\begin{enumerate}
\item\label{lem:zeros-index-equiv-reduced:item1} $f(v_i)=0$ for each $i \le k$ if and only if $f(u_j)=0$ for each $j \le t$.
\item\label{lem:zeros-index-equiv-reduced:item2} The following conditions are equivalent:
\begin{enumerate}[label=\roman*., ref=\roman*]
\item\label{lem:zeros-index-equiv-reduced:item2a} There exists $i \le k$ such that $f(v_i)>0$ and $f(v_j)=0$ for each $j < i$.
\item\label{lem:zeros-index-equiv-reduced:item2b} There exists $s \le t$ such that $f(u_s)>0$ and $f(u_r)=0$ for each $r < s$.
\end{enumerate}
\end{enumerate}
\end{lemma}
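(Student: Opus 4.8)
The plan is to reduce both parts to a single observation about integer-coefficient linear forms and then feed it into the envelope identities of \Cref{rem:env vi = env ui} together with the bookkeeping built into the reduction of \Cref{def:reduction}.

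\emph{Key observation.} For $f\in\lz$ and any $S\subseteq\R^n$, $f$ vanishes on $S$ if and only if $f$ vanishes on $\env{S}$. Indeed, $\VI(f)$ is a vector subspace which is rational by \Cref{l:subspaces-intersections} (it is the intersection of a one-element family of subspaces of the form $\VI(f_i)$ with $f_i\in\lz$); since $\env{S}$ is by definition the smallest rational subspace containing $S$, the inclusion $S\subseteq\VI(f)$ forces $\env{S}\subseteq\VI(f)$, and the reverse implication is trivial. Granting this, part~\eqref{lem:zeros-index-equiv-reduced:item1} is immediate: $f(v_i)=0$ for all $i\le k$ says exactly that $f$ vanishes on $\{v_1,\dots,v_k\}$, hence on $\env{v_1,\dots,v_k}$; by \Cref{rem:env vi = env ui}\eqref{rem:env vi = env ui:item2} (with $i=k$) this envelope equals $\env{u_1,\dots,u_t}$, and vanishing there is equivalent to $f(u_j)=0$ for all $j\le t$.

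For part~\eqref{lem:zeros-index-equiv-reduced:item2}, note that both \eqref{lem:zeros-index-equiv-reduced:item2a} and \eqref{lem:zeros-index-equiv-reduced:item2b} assert ``the first nonzero value of $f$ along the tuple (which is then forced to exist and to sit at the minimal index with $f\neq 0$) is positive''. I would set up a dictionary between the two tuples using \Cref{def:reduction}: write $w_1,\dots,w_k$ for the intermediate vectors, and let $s_i$ be the number of nonzero $w_j$ with $j\le i$, so that $\red((v_1,\dots,v_i))=(u_1,\dots,u_{s_i})$ (the reduction of a truncation reuses the already-computed $w_1,\dots,w_i$) and each nonzero $w_i$ normalizes to $u_{s_i}$. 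Two facts then drive the argument. First, if $f$ annihilates $v_1,\dots,v_{i-1}$, then by the Key observation it annihilates $\env{v_1,\dots,v_{i-1}}=\env{w_1,\dots,w_{i-1}}$, which contains $v_i-w_i$ by \Cref{rem:env vi = env ui}\eqref{rem:env vi = env ui:item1}, whence $f(v_i)=f(w_i)$. Second, by part~\eqref{lem:zeros-index-equiv-reduced:item1} applied to the truncation $(v_1,\dots,v_{i-1})$, $f$ vanishes on $v_1,\dots,v_{i-1}$ if and only if it vanishes on $u_1,\dots,u_{s_{i-1}}$. Running this both ways finishes the proof: from \eqref{lem:zeros-index-equiv-reduced:item2a}, take the minimal $i$ with $f(v_i)\neq 0$; then $f(w_i)=f(v_i)>0$ forces $w_i\neq 0$, so $s\df s_{i-1}+1\le t$ is the required index, with $f(u_s)=f(w_i)/\lVert w_i\rVert>0$ and $f(u_r)=0$ for $r<s$. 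Conversely, from \eqref{lem:zeros-index-equiv-reduced:item2b}, let $i$ be the position of the nonzero $w$ producing $u_s$; then $f$ vanishes on $u_1,\dots,u_{s-1}$, hence on $v_1,\dots,v_{i-1}$, and $f(v_i)=f(w_i)=\lVert w_i\rVert\,f(u_s)>0$.

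\emph{Main obstacle.} Nothing here is conceptually deep; the only delicate point is the combinatorial bookkeeping connecting an index $i$, the truncated index $(v_1,\dots,v_{i-1})$, and the position $s$ of the corresponding reduced vector, i.e. making precise and consistent the correspondence sending $i$ to the number $s_i$ of nonzero $w_j$ with $j\le i$, and checking that reducing a truncation reuses exactly the vectors $w_1,\dots,w_{i-1}$ already produced. Once that is pinned down, everything else is an application of the Key observation and \Cref{rem:env vi = env ui}.
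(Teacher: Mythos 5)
Your proposal is correct and follows essentially the same route as the paper: part (1) via the fact that $\VI(f)$ is a rational subspace together with $\env{v_1,\dots,v_k}=\env{u_1,\dots,u_t}$ from \Cref{rem:env vi = env ui}, and part (2) via $f(v_i)=f(w_i)$ (since $v_i-w_i\in\env{v_1,\dots,v_{i-1}}$), part (1) applied to truncations, and the positional bookkeeping identifying $u_s$ with the normalization of the nonzero $w_i$. The paper organizes the bookkeeping through \Cref{p:Z-red-iff-equal-red}\eqref{p:Z-red-iff-equal-red:item3} and the choice of a smallest such $i$, but this is the same argument as yours.
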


\begin{proof}
\Cref{lem:zeros-index-equiv-reduced:item1}. We have that  $f(v_i)=0$ for each $i \le k$ if and only if $\{ v_1, \dots, v_k \} \subseteq \VI(f)$. By \Cref{l:subspaces-intersections}, $\VI(f)$ is a rational subspace of $\R^n$. Thus, $\{ v_1, \dots, v_k \} \subseteq \VI(f)$ if and only if $\env{v_1, \dots, v_k} \subseteq \VI(f)$. By \Cref{rem:env vi = env ui}(\ref{rem:env vi = env ui:item2}), $\env{v_1, \dots, v_k} = \env{u_1, \dots, u_t}$. It follows that $\env{v_1, \dots, v_k} \subseteq \VI(f)$ if and only if $\env{u_1, \dots, u_t} \subseteq \VI(f)$ which is equivalent to $f(u_j)=0$ for each $j \le t$ because $\VI(f)$ is a rational subspace.

\Cref{lem:zeros-index-equiv-reduced:item2}. Suppose that condition (\ref{lem:zeros-index-equiv-reduced:item2a}) holds. 
By \Cref{p:Z-red-iff-equal-red}(\ref{p:Z-red-iff-equal-red:item3}), there exists $s \le t$ such that $\red((v_1, \dots, v_i))=(u_1, \dots, u_s)$. We have that $v_i \notin \env{v_1, \ldots, v_{i-1}}$ because $\env{v_1, \ldots, v_{i-1}} \subseteq \VI(f)$ and $v_i \notin \VI(f)$. It follows, using the notation of \Cref{def:reduction}, that $w_i$ is non-zero. Thus, $\red((v_1, \dots, v_{i-1}))=(u_1, \dots, u_{s-1})$ and $u_s=w_i/\lVert w_i \rVert$. So, condition (\ref{lem:zeros-index-equiv-reduced:item2a}) and \Cref{lem:zeros-index-equiv-reduced:item1} imply that $f(u_r)=0$ for each $r < s$. By \Cref{rem:env vi = env ui}(\ref{rem:env vi = env ui:item1},\ref{rem:env vi = env ui:item2}), $v_i-w_i \in \env{v_1, \ldots, v_{i-1}}$, and hence $f(v_i)=f(w_i)$.
Therefore, 
\[
f(u_s)=f(w_i/\lVert w_i \rVert)=f(w_i)/\lVert w_i \rVert=f(v_i)/\lVert w_i \rVert >0.
\]
Suppose that condition (\ref{lem:zeros-index-equiv-reduced:item2b}) holds. It follows from \Cref{def:reduction} that there exists a smallest index $i \le k$ such that $\red((v_1, \dots, v_i))=(u_1, \dots, u_s)$. Since $i$ is the smallest index with this property, $\red((v_1, \dots, v_{i-1}))=(u_1, \dots, u_{s-1})$ and $u_s=w_i/\lVert w_i \rVert$. By arguing as above, we obtain that $f(v_j)=0$ for each $j < i$ and $f(v_i)=f(w_i)=\lVert w_i \rVert f(u_s) >0$.
\end{proof}

\begin{lemma}\label{lem:zero-index-zred}
Let $x \in \Uz^n$ with $\red(\imm(x))=(u_1, \ldots, u_t)$ and $f \in \lz$. 
\begin{enumerate}
\item \label{lem:zero-index-zred:item1} $\en{f}(x) = 0$ if and only if $f(u_i)=0$ for each $i=1, \ldots, t$. 
\item \label{lem:zero-index-zred:item2} $\en{f}(x) > 0$ if and only if there exists $i$ such that $f(u_j)=0$ for each $j<i$ and $f(u_i) > 0$. 
\item \label{lem:zero-index-zred:item3} $\en{f}(x) < 0$ if and only if there exists $i$ such that $f(u_j)=0$ for each $j<i$ and $f(u_i) < 0$. 
\end{enumerate}
\end{lemma}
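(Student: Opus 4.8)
The plan is to deduce the statement by composing the two immediately preceding lemmas, namely \Cref{l:zero-index} (the Riesz-space version phrased in terms of $\imm(x)$) and \Cref{lem:zeros-index-equiv-reduced} (which transfers sign conditions from an index to its $\red$). First I would observe that $\lz \subseteq \lr$, so every $f \in \lz$ is in particular a linear map for which \Cref{l:zero-index} is available. Fix $x \in \Uz^n$ and write $\imm(x)=(v_1,\dots,v_k)$, so that $\red(\imm(x))=(u_1,\dots,u_t)$ by hypothesis.

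For \Cref{lem:zero-index-zred:item1}, \Cref{l:zero-index}\eqref{l:zero-index:item1} gives that $\en{f}(x)=0$ if and only if $f(v_i)=0$ for every $i\le k$, and \Cref{lem:zeros-index-equiv-reduced}\eqref{lem:zeros-index-equiv-reduced:item1} (applied to the index $\indv=\imm(x)$) states that the latter holds if and only if $f(u_j)=0$ for every $j\le t$; chaining these two equivalences yields the claim. For \Cref{lem:zero-index-zred:item2}, \Cref{l:zero-index}\eqref{l:zero-index:item2} gives that $\en{f}(x)>0$ if and only if there exists $i\le k$ with $f(v_j)=0$ for all $j<i$ and $f(v_i)>0$; this is exactly condition \eqref{lem:zeros-index-equiv-reduced:item2a} of \Cref{lem:zeros-index-equiv-reduced}\eqref{lem:zeros-index-equiv-reduced:item2}, which is equivalent to condition \eqref{lem:zeros-index-equiv-reduced:item2b}, i.e.\ to the existence of $s\le t$ with $f(u_r)=0$ for all $r<s$ and $f(u_s)>0$. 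Again chaining gives the statement. Finally, \Cref{lem:zero-index-zred:item3} follows by applying \Cref{lem:zero-index-zred:item2} to $-f$, noting that $-f\in\lz$ whenever $f\in\lz$, since $\en{f}(x)<0$ iff $\en{(-f)}(x)>0$ and $f(u_i)<0$ iff $(-f)(u_i)>0$.

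I expect no genuine obstacle here: the content has already been packaged into \Cref{l:zero-index,lem:zeros-index-equiv-reduced}, and the only thing to be careful about is that the quantifier pattern ``there exists a first index at which $f$ is positive'' appearing in \Cref{l:zero-index}\eqref{l:zero-index:item2} matches verbatim condition \eqref{lem:zeros-index-equiv-reduced:item2a} of \Cref{lem:zeros-index-equiv-reduced}, so that the two lemmas compose cleanly. Thus the proof is a short two-step chain of equivalences for \eqref{lem:zero-index-zred:item1} and \eqref{lem:zero-index-zred:item2}, plus the standard passage to $-f$ for \eqref{lem:zero-index-zred:item3}.
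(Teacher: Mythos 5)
Your proposal is correct and follows essentially the same route as the paper, which likewise obtains items (1) and (2) as immediate consequences of \Cref{l:zero-index} combined with \Cref{lem:zeros-index-equiv-reduced}, and item (3) by applying item (2) to $-f$. Your write-up merely makes the chaining of the two equivalences explicit.
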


\begin{proof}
\Cref{lem:zero-index-zred:item1,lem:zero-index-zred:item2} are immediate consequences of \Cref{l:zero-index,lem:zeros-index-equiv-reduced}. \Cref{lem:zero-index-zred:item3} follows from \Cref{lem:zero-index-zred:item2} applied to $-f$.
\end{proof}

\begin{lemma}\label{l:existence-linear-Zmaps}
Let $V$ be a rational subspace of $\mathbb{R}^n$.
\begin{enumerate}
\item \label{l:existence-linear-Zmaps:item1} If $w \in V^\perp$ is a unit vector, then there exists $g \in \lz$ such that $g(w)<0$, and $g(v)=0$ for all $v \in V$.
\item \label{l:existence-linear-Zmaps:item2} If $u,w \in V^\perp$ are two distinct unit vectors, then there exists $g \in \lz$ such that $g(u)>0$, $g(w)<0$, and $g(v)=0$ for all $v \in V$.
\end{enumerate}
\end{lemma}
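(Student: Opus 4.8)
The plan is to rephrase the conclusion purely in terms of the rational subspace $V^{\perp}$ and then exploit the density of its rational points. First I would record that an element $g\in\lz$ with $g|_{V}=\orig$ is nothing but a map $x\mapsto\langle c,x\rangle$ with $c\in\mathbb{Z}^{n}\cap V^{\perp}$, where $\langle\cdot,\cdot\rangle$ is the standard inner product on $\mathbb{R}^{n}$. By \Cref{lem:perp-is-rational} the subspace $V^{\perp}$ is rational, so \Cref{l:subspaces-intersections} (together with \Cref{l:lin indep in Q and R}) guarantees that $V^{\perp}\cap\mathbb{Q}^{n}$ — and hence, after clearing denominators, $V^{\perp}\cap\mathbb{Z}^{n}$ — spans $V^{\perp}$ over $\mathbb{R}$. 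Thus it suffices to produce a \emph{rational} vector $c\in V^{\perp}$ with the required sign behaviour and then multiply through by a common denominator.

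For the first claim I would argue by contradiction: if $\langle c,w\rangle=0$ for every rational $c\in V^{\perp}$, then $w$ is orthogonal to a spanning set of $V^{\perp}$, so $w\in(V^{\perp})^{\perp}=V$; since also $w\in V^{\perp}$, this forces $w\in V\cap V^{\perp}=\{\orig\}$, contradicting that $w$ is a unit vector. Hence some rational $c\in V^{\perp}$ satisfies $\langle c,w\rangle\neq0$, and replacing $c$ by $-c$ if necessary we may assume $\langle c,w\rangle<0$; clearing denominators yields the desired $g\in\lz$. For the second claim, the case $u=-w$ reduces at once to the first claim applied to $w$, since then $g(u)=-g(w)>0$. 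So I may assume $u\neq-w$; as $u,w$ are distinct unit vectors, $u$ and $w$ are then linearly independent over $\mathbb{R}$. I would consider the $\mathbb{R}$-linear map $\phi\colon V^{\perp}\to\mathbb{R}^{2}$ given by $\phi(c)=(\langle c,u\rangle,\langle c,w\rangle)$ and show it is onto: otherwise its image lies in a line, so there is $(\alpha,\beta)\neq(0,0)$ with $\langle c,\alpha u+\beta w\rangle=0$ for all $c\in V^{\perp}$, forcing $\alpha u+\beta w\in(V^{\perp})^{\perp}=V$; but $\alpha u+\beta w\in V^{\perp}$ as well, so $\alpha u+\beta w=\orig$, contradicting linear independence of $u,w$. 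Since $V^{\perp}\cap\mathbb{Q}^{n}$ spans $V^{\perp}$ over $\mathbb{R}$, the image $\phi[V^{\perp}\cap\mathbb{Q}^{n}]$ is a $\mathbb{Q}$-subspace of $\mathbb{R}^{2}$ whose $\mathbb{R}$-span is all of $\mathbb{R}^{2}$; such a set contains two $\mathbb{R}$-linearly independent vectors $p,q$, hence contains $\mathbb{Q}p+\mathbb{Q}q$, which is dense in $\mathbb{R}^{2}$. In particular $\phi[V^{\perp}\cap\mathbb{Q}^{n}]$ meets the open quadrant $\{(x,y)\mid x>0,\ y<0\}$. Choosing a rational $c\in V^{\perp}$ with $\phi(c)$ in this quadrant and clearing denominators produces $g\in\lz$ with $g(u)>0$, $g(w)<0$, and $g|_{V}=\orig$.

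The genuinely routine parts are the identification of maps in $\lz$ vanishing on $V$ with integer vectors of $V^{\perp}$ and the density of a $\mathbb{Q}$-subspace of $\mathbb{R}^{2}$ that spans over $\mathbb{R}$. The only delicate point is the surjectivity of $\phi$ in the second claim: this is exactly where the hypothesis that $u$ and $w$ are distinct is used, and it is why the case $u=-w$ must be peeled off first (there $u,w$ are dependent and $\phi$ need not be onto). I expect that bookkeeping to be the main, though mild, obstacle.
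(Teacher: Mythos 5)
Your proof is correct, but it takes a genuinely different route from the paper's. The paper deduces the lemma from its real-coefficient counterpart, \Cref{l:existence-linear-maps}: it writes the functional obtained there as $f(x)=x\cdot y$ with $y\in V^\perp$, uses \Cref{lem:perp-is-rational} and \Cref{l:subspaces-intersections} to get density of $V^\perp\cap\Q^n$ in $V^\perp$, and then replaces $y$ by a nearby rational $z\in V^\perp$ with explicit norm bounds (e.g.\ $\lVert z-y\rVert<-f(w)/\lVert w\rVert$, resp.\ the minimum of two such bounds) so that the strict inequalities survive the perturbation; clearing denominators gives $g\in\lz$. You instead bypass \Cref{l:existence-linear-maps} entirely: you identify the admissible $g$ with integer vectors of $V^\perp$, prove item (1) by the duality argument ($w$ orthogonal to a spanning set of $V^\perp$ forces $w\in V\cap V^\perp=\{\orig\}$), and for item (2) you show the evaluation map $\phi(c)=(\langle c,u\rangle,\langle c,w\rangle)$ is onto $\R^2$ and that $\phi[V^\perp\cap\Q^n]$ is a $\Q$-subspace whose $\R$-span is $\R^2$, hence dense, hence meets the open quadrant $\{x>0,\,y<0\}$; your case split $u=-w$ is handled exactly as needed, and for $u\neq -w$ distinctness of the unit vectors does give linear independence, which is precisely what surjectivity of $\phi$ requires. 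Both arguments rest on the same key inputs (rationality of $V^\perp$ and the fact that its rational points span/are dense in it); the paper's version is shorter given \Cref{l:existence-linear-maps} and keeps the two lemmas structurally parallel, while yours is self-contained, avoids the quantitative $\varepsilon$-estimates, and the density-of-a-$\Q$-subspace observation is a clean way to obtain the two simultaneous strict inequalities.
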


\begin{proof}
\Cref{l:existence-linear-Zmaps:item1}. By \Cref{l:existence-linear-maps}(\ref{l:existence-linear-maps:item1}), there exists $f \in \lr$ such that $f(w)<0$, and $f(v)=0$ for all $v \in V$. Since $f$ is linear, there exists $y \in \mathbb{R}^n$ such that $f(x)=x \cdot y$ for each $x\in \mathbb{R}^n$, where $\cdot$ denotes the dot product in $\R^n$. Then $y \in V^\perp$.
By \Cref{lem:perp-is-rational}, $V^\perp$ is rational, and so ${V^\perp \cap \Q^n}$ is dense in $V^\perp$ by \Cref{l:subspaces-intersections}. Therefore, we can find $z \in {V^\perp \cap \Q^n}$ such that $\lVert z-y \lVert < -f(w)/\lVert w \rVert$. Let $g\colon \mathbb{R}^n \to \mathbb{R}$ be the linear map given by $g(x)=x \cdot z$ for each $x \in \R^n$. Since $f(w)=w \cdot y$, we have $g(w)=w\cdot (z-y)+f(w)$. Then  $g(w)<0$ because $w\cdot (z-y)\le\lVert w \rVert \lVert z-y \lVert < -f(w)$. Moreover, $g(v)=v \cdot z=0$ for all $v \in V$ because $z\in V^\perp$. The coefficients of the linear polynomial defining $g$ are rational numbers because $z \in \Q^n$. By multiplying its coefficients by the product of the absolute values of their denominators, we can assume $g \in \lz$.

\Cref{l:existence-linear-Zmaps:item2}. By \Cref{l:existence-linear-maps}(\ref{l:existence-linear-maps:item2}), there exists $f \in \lr$ such that $f(u)>0$, $f(w)<0$, and $f(v)=0$ for all $v \in V$. Because $f$ is linear, there is $y \in \mathbb{R}^n$ such that $f(x)=x \cdot y$ for each $x\in \mathbb{R}^n$. Since $V^\perp$ is a rational subspace, $V^\perp \cap \Q^n$ is dense in $V^\perp$ by \cref{l:subspaces-intersections}, so there exists $z \in V^\perp \cap \mathbb{Q}^n$ such that $\lVert z-y \lVert < \min(f(u)/\lVert u \rVert,-f(w)/\lVert w \rVert)$. By arguing as in \Cref{l:existence-linear-maps:item1}, we obtain $g \in \lz$ satisfying $g(u)>0$, $g(w)<0$, and $g(v)=0$ for all $v \in V$.
\end{proof}

We use indexes to characterize the closure operator of the Zariski topology on $\Uz^n$ relative to $\ell$-groups.

\begin{theorem}\label{lem:clx-redix}
Let $x,y \in \Uz^n$. Then $x \in \VUst \CU (y)$ if and only if $\red(\imm(x)) \le \red(\imm(y))$.
\end{theorem}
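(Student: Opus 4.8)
The plan is to transcribe the proof of \Cref{l:closure-index} into the $\ell$-group setting, replacing the subbasis of half-spaces by the subbasis $S_f = \{ x \in \Uz^n \mid \en f(x) \ge 0 \}$ of \emph{rational} half-spaces (\Cref{r:basis-hyper-l-groups}), and swapping in the $\ell$-group analogues of the two technical ingredients: \Cref{lem:zero-index-zred}, which says the sign of $\en f(x)$ is controlled by the values of $f$ on $\red(\imm(x))$, and \Cref{l:existence-linear-Zmaps}, which produces separating functionals in $\lz$ on rational subspaces. By \Cref{r:basis-hyper-l-groups}, $x \in \VUst\CU(y)$ holds if and only if $y \in S_f$ implies $x \in S_f$ for every $f \in \lz$, i.e.\ $\en f(y) \ge 0$ implies $\en f(x) \ge 0$; so the whole statement reduces to comparing the sign of $\en f$ at $x$ and at $y$ via \Cref{lem:zero-index-zred}.

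For the implication from right to left, I would write $\red(\imm(x)) = (u_1, \dots, u_k)$ and $\red(\imm(y)) = (u_1, \dots, u_t)$ with $k \le t$, and take $f \in \lz$ with $\en f(y) \ge 0$. If $\en f(y) = 0$, then $f$ vanishes on $u_1, \dots, u_t$ by \Cref{lem:zero-index-zred}\eqref{lem:zero-index-zred:item1}, hence on $u_1, \dots, u_k$, so $\en f(x) = 0$. If $\en f(y) > 0$, let $i \le t$ be least with $f(u_i) \ne 0$; by \Cref{lem:zero-index-zred}\eqref{lem:zero-index-zred:item2} we have $f(u_i) > 0$. If $i \le k$, then $\en f(x) > 0$ by the same item; if $i > k$, then $f$ vanishes on all of $u_1, \dots, u_k$, so $\en f(x) = 0$. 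In every case $\en f(x) \ge 0$, and \Cref{r:basis-hyper-l-groups} gives $x \in \VUst\CU(y)$.

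For the contrapositive of the other implication, I would assume $\red(\imm(x)) = (u_1, \dots, u_k) \nleq \red(\imm(y)) = (u'_1, \dots, u'_t)$ and produce $g \in \lz$ with $\en g(y) \ge 0 > \en g(x)$, so that $y \in S_g$ but $x \notin S_g$. As in the Riesz case there are two subcases. If $\red(\imm(y)) < \red(\imm(x))$, then $t < k$ and $u'_i = u_i$ for $i \le t$; since $\red(\imm(x))$ is $\mathbb{Z}$-reduced by \Cref{p:Z-red-iff-equal-red}\eqref{p:Z-red-iff-equal-red:item1}, \Cref{rem:reduced-alternative-def} gives $u_{t+1} \in \ortp{u_1, \dots, u_t}$, so \Cref{l:existence-linear-Zmaps}\eqref{l:existence-linear-Zmaps:item1} applied to the rational subspace $\env{u_1, \dots, u_t}$ yields $g \in \lz$ vanishing on $u_1, \dots, u_t$ with $g(u_{t+1}) < 0$; then $\en g(y) = 0$ and, since $t + 1 \le k$, $\en g(x) < 0$ by \Cref{lem:zero-index-zred}\eqref{lem:zero-index-zred:item1}\eqref{lem:zero-index-zred:item3}. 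If instead $\red(\imm(x))$ and $\red(\imm(y))$ are incomparable, let $i$ be least with $u_i \ne u'_i$ (so $u_j = u'_j$ for $j < i$ and $i \le \min(k,t)$); then $u_i$ and $u'_i$ are distinct unit vectors, both lying in $\ortp{u_1, \dots, u_{i-1}}$ by $\mathbb{Z}$-reducedness, so \Cref{l:existence-linear-Zmaps}\eqref{l:existence-linear-Zmaps:item2} provides $g \in \lz$ vanishing on $u_1, \dots, u_{i-1}$ with $g(u'_i) > 0$ and $g(u_i) < 0$, whence $\en g(y) > 0$ and $\en g(x) < 0$ by \Cref{lem:zero-index-zred}\eqref{lem:zero-index-zred:item2}\eqref{lem:zero-index-zred:item3}.

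The step I expect to require the most care is the passage, in the two separating arguments, from ``$\red(\imm(\cdot))$ is $\mathbb{Z}$-reduced'' to the hypothesis needed to invoke \Cref{l:existence-linear-Zmaps}, namely that $u_{t+1}$ (resp.\ $u_i$ and $u'_i$) lies in the orthogonal complement of a \emph{rational} subspace. Unlike the Riesz case one cannot separate using the span of $u_1, \dots, u_t$, since that span need not be rational; it is precisely \Cref{rem:reduced-alternative-def} (resting on \Cref{lem:perp-is-rational}) that upgrades the orthogonality built into a $\mathbb{Z}$-reduced index to orthogonality against the rational envelope. One should also not overlook the subcase $i > k$ in the easy direction, where $f$ automatically annihilates all of $\red(\imm(x))$. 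Everything else is a faithful copy of the argument for \Cref{l:closure-index}.
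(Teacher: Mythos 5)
Your proof is correct and follows essentially the same route as the paper's: reduce to sign comparison on the subbasis of rational half-spaces via \Cref{r:basis-hyper-l-groups} and \Cref{lem:zero-index-zred}, then in the converse direction separate with \Cref{l:existence-linear-Zmaps} applied to the rational envelope $\env{u_1,\dots,u_{t}}$ (resp.\ $\env{u_1,\dots,u_{i-1}}$), exactly as in the paper. The extra case split $i\le k$ versus $i>k$ in the easy direction is just a more explicit unpacking of what the paper leaves to \Cref{lem:zero-index-zred}.
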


\begin{proof}
Assume that $\red(\imm(x)) \le \red(\imm(y))$. By \Cref{lem:zero-index-zred}, $\en{f}(y) \ge 0$ implies $\en{f}(x) \ge 0$ for each $f \in \lz$. Thus, $x \in \VUst \CU (y)$ by \Cref{r:basis-hyper-l-groups}.

Suppose $\red(\imm(x)) \nleq \red(\imm(y))$. Let $\red(\imm(x))=(v_1, \ldots ,v_k)$ and $\red(\imm(y))=(w_1, \ldots ,w_t)$.
We first consider the case that $\red(\imm(y)) < \red(\imm(x))$. Then $t<k$ and $v_i=w_i$ for any $i \le t$. Let $V=\env{v_1, \ldots, v_t}$. Since $\red(\imm(x))$ is a $\Z$-reduced index, $v_{t+1} \in V^\perp$. By \Cref{l:existence-linear-Zmaps}(\ref{l:existence-linear-Zmaps:item1}), there exists $f \in \lz$ such that $f(v_{t+1})<0$ and $f(v_i)=0$ for each $i \le t$. \Cref{lem:zero-index-zred} implies $\en{f}(y)=0$ and $\en{f}(x) < 0$. Thus, $x \notin \VUst \CU (y)$ by \Cref{r:basis-hyper-l-groups}.
If $\red(\imm(y))$ and $\red(\imm(x))$ are incomparable, then there exists $i \le \min(k,t)$ such that $v_j=w_j$ for $j < i$ and $v_i\neq w_i$. Let $V=\env{v_1, \ldots, v_{i-1}}$.
Since $w_i, v_i \in V^\perp$, \Cref{l:existence-linear-Zmaps}(\ref{l:existence-linear-Zmaps:item2}) implies that there exists $f \in \lz$ such that $f(w_i) > 0$, $f(v_i) < 0$, and $f(v_j)=f(w_j)=0$ for each $j < i$. By \Cref{lem:zero-index-zred}, $\en{f}(y)>0$ and $\en{f}(x) < 0$. Thus, $x \notin \VUst \CU (y)$ by \Cref{r:basis-hyper-l-groups}.
\end{proof}

\begin{lemma}\label{lem:truncation-red}
Let $\indu,\indv$ be indexes. If $\indu$ is $\Z$-reduced and $\red(\indv) \le \indu$, then there is an index $\indw$ such that $\indv \le \indw$ and $\red(\indw)=\indu$.
\end{lemma}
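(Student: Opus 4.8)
The plan is to build $\indw$ by appending to $\indv$ exactly those vectors of $\indu$ that lie past the truncation $\red(\indv)$. Write $\indv=(v_1,\dots,v_k)$ and $\red(\indv)=(u_1,\dots,u_s)$; since $\red(\indv)\le\indu$, we may write $\indu=(u_1,\dots,u_s,u_{s+1},\dots,u_m)$ with $s\le m$. I set $\indw\df(v_1,\dots,v_k,u_{s+1},\dots,u_m)$. With this choice $\indv\le\indw$ is immediate, and it remains to check two things: that $\indw$ is an index, and that $\red(\indw)=\indu$.

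For the first point I would check orthonormality of the entries of $\indw$. The $v_i$ are orthonormal because $\indv$ is an index, and $u_{s+1},\dots,u_m$ are orthonormal because they are entries of the index $\indu$; so only the mixed inner products $v_i\cdot u_j$ with $i\le k$ and $j>s$ need attention. By \Cref{rem:env vi = env ui}(\ref{rem:env vi = env ui:item2}) we have $\env{v_1,\dots,v_k}=\env{u_1,\dots,u_s}$, and since $\indu$ is $\Z$-reduced, \Cref{rem:reduced-alternative-def} gives $u_j\in\ortp{u_1,\dots,u_{j-1}}\subseteq\ortp{u_1,\dots,u_s}=\env{v_1,\dots,v_k}^\perp$ for $j>s$; hence $v_i\cdot u_j=0$, so $\indw$ is an index.

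For the second point I would compute $\red(\indw)$ straight from \Cref{def:reduction}. Running the recursion on $\indw$, the first $k$ auxiliary vectors $w_1,\dots,w_k$ are produced from $v_1,\dots,v_k$ by exactly the same recursion used to compute $\red(\indv)$, so after deleting the zero ones and normalizing they yield precisely $u_1,\dots,u_s$; moreover $\env{w_1,\dots,w_k}=\env{v_1,\dots,v_k}=\env{u_1,\dots,u_s}$ by \Cref{rem:env vi = env ui}(\ref{rem:env vi = env ui:item2}). I would then show by induction on $l=1,\dots,m-s$ that $w_{k+l}=u_{s+l}$ and $\env{w_1,\dots,w_{k+l}}=\env{u_1,\dots,u_{s+l}}$: the $(k+l)$-th entry of $\indw$ is $u_{s+l}$, and by the inductive hypothesis together with the $\Z$-reducedness of $\indu$ (via \Cref{rem:reduced-alternative-def}) we get $u_{s+l}\in\ortp{u_1,\dots,u_{s+l-1}}=\env{w_1,\dots,w_{k+l-1}}^\perp$, so the projection of $u_{s+l}$ onto $\env{w_1,\dots,w_{k+l-1}}^\perp$ is $u_{s+l}$ itself, which is already a nonzero unit vector; the envelope equality then follows since $u_1,\dots,u_s\in\env{w_1,\dots,w_k}$ and $w_{k+l}=u_{s+l}$. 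Hence the nonzero normalized components of $(w_1,\dots,w_{k+m-s})$ are $u_1,\dots,u_s,u_{s+1},\dots,u_m$, i.e.\ $\red(\indw)=\indu$.

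The only delicate part is the bookkeeping with rational envelopes: one has to consistently use \Cref{rem:env vi = env ui}(\ref{rem:env vi = env ui:item2}) to replace $\env{v_1,\dots,v_k}$ by $\env{u_1,\dots,u_s}$ and invoke the characterization of $\Z$-reducedness in \Cref{rem:reduced-alternative-def} at each step of the induction, rather than arguing with ordinary linear spans. Everything else is a direct unwinding of the definition of $\red$.
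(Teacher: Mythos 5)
Your proposal is correct and follows essentially the same route as the paper: both append $u_{s+1},\dots,u_m$ to $\indv$, use \Cref{rem:env vi = env ui}\eqref{rem:env vi = env ui:item2} to identify $\env{v_1,\dots,v_k}$ with $\env{u_1,\dots,u_s}$, and invoke the $\Z$-reducedness of $\indu$ to get the needed orthogonality. The only difference is that you spell out the verification that $\red(\indw)=\indu$ by running the recursion of \Cref{def:reduction}, which the paper leaves implicit; that extra detail is sound.
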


\begin{proof}
Let $\indv=(v_1, \ldots, v_k)$ and $\indu=(u_1, \ldots, u_t)$. Since $\red(\indv) \le \indu$, there is $s \le t$ such that $\red(\indv)=(u_1, \ldots, u_s)$. By \Cref{rem:env vi = env ui}, we have that $\env{v_1, \ldots, v_k}=\env{u_1, \ldots, u_s}$. Moreover, $u_{s+1}, \ldots, u_t \in \ortp{u_1, \ldots, u_s}$ because $\indu$ is a $\mathbb{Z}$-reduced index. Thus, $\indw\coloneqq(v_1, \ldots, v_k, u_{s+1}, \ldots, u_t)$ is an index such that $\indv \le \indw$ and $\red(\indw)=\indu$.
\end{proof}

The following theorem, which is one of the main results of the section together with \Cref{lem:clx-coneix}, gives a complete  description of the irreducible closed subsets of the Zariski topology relative to $\ell$-groups.

\begin{theorem}\label{lem:clx-coneix-lgroups}
Let $y \in \Uz^n$. Then $\VUst \CU (y) = \bigcup \left\lbrace \cone(\indv) \mid \red(\indv) = \red(\imm(y)) \right \rbrace$.
\end{theorem}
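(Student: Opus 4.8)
The plan is to combine the Riesz-space description of closures (\Cref{lem:clx-coneix}) with the $\ell$-group description of the Zariski closure operator (\Cref{lem:clx-redix}) by unfolding both sides set-theoretically. First I would fix $y\in\Uz^n$ and show the two inclusions separately.

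For the inclusion $\supseteq$, take any index $\indv$ with $\red(\indv)=\red(\imm(y))$ and any $x\in\cone(\indv)$, i.e.\ $\imm(x)\le\indv$ by definition of $\cone$. By \Cref{p:Z-red-iff-equal-red}\eqref{p:Z-red-iff-equal-red:item3}, monotonicity of $\red$ gives $\red(\imm(x))\le\red(\indv)=\red(\imm(y))$, and then \Cref{lem:clx-redix} yields $x\in\VUst\CU(y)$. This direction is essentially immediate.

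For the inclusion $\subseteq$, let $x\in\VUst\CU(y)$. By \Cref{lem:clx-redix}, $\red(\imm(x))\le\red(\imm(y))$. Now I want to produce a single index $\indv$ with $\red(\indv)=\red(\imm(y))$ and $x\in\cone(\indv)$, i.e.\ $\imm(x)\le\indv$. This is exactly the content of \Cref{lem:truncation-red}: apply it with $\indu=\red(\imm(y))$ (which is $\Z$-reduced by \Cref{p:Z-red-iff-equal-red}\eqref{p:Z-red-iff-equal-red:item1}) and with the index $\imm(x)$, noting $\red(\imm(x))\le\red(\imm(y))=\indu$; it produces an index $\indw$ with $\imm(x)\le\indw$ and $\red(\indw)=\red(\imm(y))$. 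Taking $\indv=\indw$ gives $x\in\cone(\indv)$ with $\red(\indv)=\red(\imm(y))$, completing this inclusion.

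Finally I would record that $\VUst\CU(y)$ is indeed irreducible closed (hence it makes sense to speak of it as such) via \Cref{p:sober} and \Cref{p:irrClosed=prime}, though strictly this is not needed for the stated set equality. I do not expect a serious obstacle here: the theorem is a clean corollary of \Cref{lem:clx-redix} and \Cref{lem:truncation-red}, with \Cref{p:Z-red-iff-equal-red} supplying the monotonicity of $\red$ needed for the easy inclusion. The only point requiring a little care is making sure that $\cone(\indv)$ is interpreted via its definition $\{x\in\Uz^n\mid\imm(x)\le\indv\}$ rather than via \Cref{lem:clx-coneix} (which concerns closures of points), so that the two inclusions match up on the nose; the bulk of the real work has already been done in proving \Cref{lem:truncation-red}.

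\begin{proof}
We prove the two inclusions separately.

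For $\supseteq$, let $\indv$ be an index with $\red(\indv)=\red(\imm(y))$ and let $x\in\cone(\indv)$. By definition of $\cone$, this means $\imm(x)\le\indv$. Then \Cref{p:Z-red-iff-equal-red}(\ref{p:Z-red-iff-equal-red:item3}) yields $\red(\imm(x))\le\red(\indv)=\red(\imm(y))$, and hence $x\in\VUst\CU(y)$ by \Cref{lem:clx-redix}.

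For $\subseteq$, let $x\in\VUst\CU(y)$. By \Cref{lem:clx-redix}, $\red(\imm(x))\le\red(\imm(y))$. The index $\red(\imm(y))$ is $\mathbb{Z}$-reduced by \Cref{p:Z-red-iff-equal-red}(\ref{p:Z-red-iff-equal-red:item1}). Applying \Cref{lem:truncation-red} with $\indu=\red(\imm(y))$ and with the index $\imm(x)$ in place of $\indv$, we obtain an index $\indw$ such that $\imm(x)\le\indw$ and $\red(\indw)=\red(\imm(y))$. Since $\imm(x)\le\indw$, we have $x\in\cone(\indw)$, and $\red(\indw)=\red(\imm(y))$. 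Therefore $x$ belongs to the right-hand side.
\end{proof}
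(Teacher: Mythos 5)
Your proof is correct and follows essentially the same route as the paper's: both directions rest on \Cref{lem:clx-redix}, monotonicity of $\red$ from \Cref{p:Z-red-iff-equal-red}, and \Cref{lem:truncation-red} applied to the $\Z$-reduced index $\red(\imm(y))$. The only difference is organizational: the paper first identifies $\VUst\CU(y)$ with the union over all $\indw$ with $\red(\indw)\le\red(\imm(y))$ and then shows that union equals the one over $\red(\indv)=\red(\imm(y))$, whereas you prove the two inclusions of the stated equality directly, which is slightly more streamlined but not a genuinely different argument.
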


\begin{proof}
We first show that $\VUst \CU (y) = \bigcup \left\lbrace \cone(\indw) \mid \red(\indw) \le \red(\imm(y)) \right \rbrace$. If $x \in \VUst \CU (y)$, then  $\red(\imm(x)) \le \red(\imm(y))$ by \Cref{lem:clx-redix}. Thus, $x \in \cone(\imm(x))$ with $\red(\imm(x)) \le \red(\imm(y))$. On the other hand, if $x \in \cone(\indw)$ with $\red(\indw) \le \red(\imm(y))$, then $\imm(x) \le \indw$. By \Cref{p:Z-red-iff-equal-red}(\ref{p:Z-red-iff-equal-red:item3}),  $\red(\imm(x)) \le \red(\indw) \le \red(\imm(y))$.
\Cref{lem:clx-redix} yields that $x \in \VUst \CU (y)$. It remains to show that 
\begin{equation}\label{eq:equal Conel}
\bigcup \left\lbrace \cone(\indw) \mid \red(\indw) \le \red(\imm(y)) \right \rbrace=\bigcup \left\lbrace \cone(\indv) \mid \red(\indv) = \red(\imm(y)) \right \rbrace.    
\end{equation}
The right-to-left inclusion is immediate. To show the other inclusion assume that $x \in \cone(\indw)$ with $\red(\indw) \le \red(\imm(y))$. Then $\imm(x) \le \indw$ and \Cref{p:Z-red-iff-equal-red}(\ref{p:Z-red-iff-equal-red:item3}) implies that $\red(\imm(x)) \le \red(\indw) \le \red(\imm(y))$. By \Cref{lem:truncation-red}, there exists an index $\indv$ such that $\imm(x) \le \indv$ and $\red(\indv)=\red(\imm(y))$. Therefore, $x \in \cone(\indv)$ with $\red(\indv)=\red(\imm(y))$.
\end{proof}

\begin{corollary}\label{cor:cone-irr-order-iso-lgroups}
Let $\cone^\ell$ be the map that associates $\bigcup \left\lbrace \cone(\indv) \mid \red(\indv) = \indu \right \rbrace$ to any $\Z$-reduced index $\indu$. 
\begin{enumerate}
\item\label{cor:cone-irr-order-iso-lgroups:item1} $\cone^\ell$ is an order isomorphism between the set of $\Z$-reduced indexes ordered by truncation and the set of irreducible closed subsets of $\Uz^n$ ordered by inclusion.
\item\label{cor:cone-irr-order-iso-lgroups:item2} ${\CU} \circ {\cone^\ell}$ is an order isomorphism between the set of indexes ordered by truncation and $\spec(\free_n)$ ordered by reverse inclusion.
\end{enumerate}
\end{corollary}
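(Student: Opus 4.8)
The plan is to mirror the proof of \Cref{cor:cone-irr-order-iso}, turning $\cone^\ell$ into a closure-of-points operator by means of \Cref{lem:clx-coneix-lgroups}. Fix once and for all a positive infinitesimal $\varepsilon \in \U$. Given a $\Z$-reduced index $\indu = (u_1, \ldots, u_k)$, I would set $y_{\indu} \df u_1 + \varepsilon u_2 + \dots + \varepsilon^{k-1} u_k \in \Uz^n$; exactly as in the proof of \Cref{cor:cone-irr-order-iso}, the uniqueness of orthogonal decompositions (\Cref{t:decomposition}) gives $\imm(y_{\indu}) = \indu$, and since $\indu$ is $\Z$-reduced, \Cref{p:Z-red-iff-equal-red}\eqref{p:Z-red-iff-equal-red:item2} yields $\red(\imm(y_{\indu})) = \indu$. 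Then \Cref{lem:clx-coneix-lgroups} rewrites $\cone^\ell(\indu) = \bigcup\{\cone(\indv) \mid \red(\indv) = \indu\}$ as $\VUst \CU(y_{\indu})$. This single identity shows at once that $\cone^\ell$ is well-defined and that each of its values is the closure of a point, hence, by \Cref{p:sober}, an irreducible closed subset of $\Uz^n$.

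For \Cref{cor:cone-irr-order-iso-lgroups:item1} it then remains to check that $\cone^\ell$ is an order isomorphism. If $\indu \le \indu'$ are $\Z$-reduced indexes and $x \in \cone^\ell(\indu) = \VUst \CU(y_{\indu})$, then \Cref{lem:clx-redix} gives $\red(\imm(x)) \le \red(\imm(y_{\indu})) = \indu \le \indu' = \red(\imm(y_{\indu'}))$, and a second application of \Cref{lem:clx-redix} places $x$ in $\VUst \CU(y_{\indu'}) = \cone^\ell(\indu')$; hence $\cone^\ell$ is order-preserving. Conversely, if $\cone^\ell(\indu) \subseteq \cone^\ell(\indu')$, then $y_{\indu} \in \VUst \CU(y_{\indu'})$, so \Cref{lem:clx-redix} yields $\indu = \red(\imm(y_{\indu})) \le \red(\imm(y_{\indu'})) = \indu'$; thus $\cone^\ell$ reflects the order and, in particular, is injective. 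For surjectivity, any irreducible closed subset $C$ of $\Uz^n$ equals $\VUst \CU(y)$ for some $y \in \Uz^n$ by \Cref{p:sober}, and by \Cref{lem:clx-coneix-lgroups} this is $\cone^\ell(\red(\imm(y)))$, with $\red(\imm(y))$ a $\Z$-reduced index by \Cref{p:Z-red-iff-equal-red}\eqref{p:Z-red-iff-equal-red:item1}.

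Part \Cref{cor:cone-irr-order-iso-lgroups:item2} is then immediate: by \Cref{p:irrClosed=prime}, $\CU$ is an order isomorphism from the irreducible closed subsets of $\Uz^n$ ordered by inclusion onto $\spec(\free_n)$ ordered by reverse inclusion, so composing it with the isomorphism $\cone^\ell$ of part \Cref{cor:cone-irr-order-iso-lgroups:item1} gives the claim.

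I do not expect a genuine obstacle here, since \Cref{lem:clx-coneix-lgroups} and \Cref{lem:clx-redix} already do the heavy lifting. The only point requiring a little care is verifying that $\cone^\ell(\indu)$ is genuinely irreducible and closed rather than merely a union of cones, which is precisely what the rewriting $\cone^\ell(\indu) = \VUst \CU(y_{\indu})$ provides. As an alternative to invoking this rewriting for the order-preservation direction, one can instead feed the inequality $\red(\imm(x)) \le \indu'$ into \Cref{lem:truncation-red} to obtain an index $\indw$ with $\imm(x) \le \indw$ and $\red(\indw) = \indu'$, which shows directly that $x \in \cone(\indw) \subseteq \cone^\ell(\indu')$.
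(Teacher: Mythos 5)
Your proposal is correct and takes essentially the same approach as the paper: both identify $\cone^\ell(\indu)$ with the point closure $\VUst\CU(y_{\indu})$ via \Cref{lem:clx-coneix-lgroups} and then mirror the proof of \Cref{cor:cone-irr-order-iso}, the only cosmetic difference being that the paper derives monotonicity from the identity in \cref{eq:equal Conel} while you apply \Cref{lem:clx-redix} twice (your suggested alternative via \Cref{lem:truncation-red} is in fact closer to the paper's route). One tiny citation slip: irreducibility of the closure of a point is a standard topological fact rather than the content of \Cref{p:sober}, which states the converse implication, but this does not affect the argument.
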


\begin{proof}
\Cref{cor:cone-irr-order-iso-lgroups:item1}. The proof that $\cone^\ell$ is an onto well-defined map is analogous to the one of \Cref{cor:cone-irr-order-iso}(\ref{cor:cone-irr-order-iso:item1}) and uses \Cref{lem:clx-coneix-lgroups} instead of \Cref{lem:clx-coneix}. It remains to show that $\cone^\ell$ preserves and reflect the order. By \cref{eq:equal Conel}, $\cone^\ell(\indu) = \bigcup \left\lbrace \cone(\indv) \mid \red(\indv) \le \indu \right \rbrace$ for any $\Z$-reduced index $\indu$. Thus, $\indu \le \indw$ implies $\cone^\ell(\indu) \subseteq \cone^\ell(\indw)$. 
Suppose that $\cone^\ell(\indu) \subseteq \cone^\ell(\indw)$, where $\indu,\indw$ are $\Z$-reduced indexes. By arguing as in the proof of \cref{cor:cone-irr-order-iso}\eqref{cor:cone-irr-order-iso:item1}, we can find $y \in \Uz^n$ such that $\imm(y)=\indu$. Then $y \in \cone^\ell(\indu) \subseteq \cone^\ell(\indw)$, and hence $y \in \cone(\indv)$ for some index $\indv$ such that $\red(\indv)=\indw$. 
We have that $y \in \cone(\indv)$ if and only if $\indu=\imm(y) \le \indv$. Therefore, $\indu=\red(\indu) \le \red(\indv)=\indw$ by \Cref{p:Z-red-iff-equal-red}(\ref{p:Z-red-iff-equal-red:item3}).

The proof of \Cref{cor:cone-irr-order-iso-lgroups:item2} is analogous to the one of \Cref{cor:cone-irr-order-iso}(\ref{cor:cone-irr-order-iso:item2}).
\end{proof}

\begin{example}\label{ex:conel}
Let $y=(1,\sqrt{2}) \in \Uz^2$. Then $\imm(y)=((1,\sqrt{2})/\sqrt{3})$, which is clearly a $\Z$-reduced index. Since $\langle (1,\sqrt{2})/\sqrt{3} \rangle = \R^2$, there are exactly three indexes $\indv$ such that $\red(\indv)=\imm(y)$, which are $\imm(y)$ itself and
\[
\left(\frac{1}{\sqrt{3}}(1,\sqrt{2}), \pm\frac{1}{\sqrt{3}}(\sqrt{2},-1) \right).
\]
It follows from \Cref{lem:clx-coneix-lgroups} that
\begin{align*}
\VUst \CU (y) &= \cone^\ell(\imm(y)) \\
&= \{ \alpha (1,\sqrt{2}) + \beta (\sqrt{2},-1) \mid \alpha, \beta \in \U, \ \alpha>0 \text{ and } \beta/\alpha \text{ is infinitesimal} \}\\
&= \{ (\gamma, \gamma \sqrt{2} + \delta) \mid \gamma, \delta \in \U, \ \gamma>0 \text{ and } \delta/\gamma \text{ is infinitesimal} \},
\end{align*}
where the last equality follows from straightforward algebraic manipulations.
In this case $\cone(\imm(y))$ is strictly contained in $\cone^\ell(\imm(y))$ because
\[
\cone(\imm(y)) = \{ \alpha(1, \sqrt{2}) \mid 0 < \alpha \in \U\}.
\]
\end{example}

\begin{remark}\label{rem:Icone=Iconel}
As \Cref{ex:conel} shows, if $\indv$ is a $\Z$-reduced index, then $\cone(\indv)$ is strictly contained in $\cone^\ell(\indv)$ in general. However, we always have that $\CU(\cone(\indv))=\CU(\cone^\ell(\indv))$. Indeed, $\cone(\indv) \subseteq \cone^\ell(\indv)$ implies $\CU(\cone^\ell(\indv)) \subseteq \CU(\cone(\indv))$. To show the other inclusion, let $t \in \freen$ and suppose $t \in \CU(\cone(\indv))$. 
Take $y \in \Uz^n$ such that $\imm(y)=\indv$. Then $t(y)=0$, and so $t \in \CU(y)$. By \cref{lem:clx-coneix-lgroups}, $\cone^\ell(\indv)=\cone^\ell(\imm(y))=\VUst\CU(y) \subseteq \VUst(t)$. Therefore, $t \in \CU(\cone^\ell(\indv))$. This shows that $\CU(\cone(\indv)) \subseteq \CU(\cone^\ell(\indv))$. 
\end{remark}

We end this section with some observations analogous to the ones of \Cref{rem:coordinatization}, which yield a coordinatization of $\spec(\freen)$ in the setting of $\ell$-groups.

\begin{remark}\label{rem:coordinatization-lgroups}
Fix a positive infinitesimal $\varepsilon \in \U$. By \Cref{cor:cone-irr-order-iso-lgroups}\eqref{cor:cone-irr-order-iso-lgroups:item2}, for each $P \in \spec(\freen)$ there exists a unique $\Z$-reduced index $\indv=(v_1, \dots, v_k)$ such that $\cone^\ell(\indv)=\VUst(P)$. We then define $\mathscr{E}(P)=v_1 + \varepsilon v_2 + \dots + \varepsilon^{k-1} v_k$. 
\Cref{lem:clx-coneix-lgroups} allows us to argue as in \Cref{rem:coordinatization} and obtain that $\mathscr{E}$ is an embedding of $\spec(\freen)$ into $\Uz^n$. 
Observe that the image of $\mathscr{E}$ still contains $\mathbb{S}^{n-1}$ but it is in general a proper subset of the image of the map $\mathscr{E}$ defined in \Cref{rem:coordinatization}. 
\end{remark}

\section{Characterization of prime ideals in free \texorpdfstring{$\ell$}{l}-groups and Riesz spaces}
\label{sec:Panti}
In this section we show that the results of \Cref{sec:irreducible} can be used to obtain an alternative proof of the characterization of prime ideals in finitely generated free $\ell$-groups and Riesz spaces given in \cite[Theorems~3.8 and 4.8]{MR1707667}.

\begin{definition}
Let $V$ be an $\mathbb{R}$-vector space or a $\U$-vector space. A subset $C \subseteq V$ is a \emph{cone} if it is closed under multiplication by nonnegative scalars. If $T$ is a subset of $V$, then the smallest cone containing $T$ is called the \emph{positive span} of $T$ and is given by the $x \in V$ for which there exist $v_1, \ldots, v_k \in T$ and non-negative scalars $s_1, \ldots, s_k$ such that $x=s_1v_1+\cdots + s_k v_k$.
\end{definition}

\begin{definition}\label{def:v-cone}
Let $\indv=(v_1, \ldots, v_k)$ be an index. We say that a cone $C \subseteq \mathbb{R}^n$ is a \emph{$\indv$-cone} if there exist real numbers $0< r_1, \ldots, r_k \in \mathbb{R}$ such that $C$ is the positive span of the set
\[
\left\{ \sum_{i=1}^j r_i v_i \mid j = 1, \ldots, k \right\}.
\]
In other words, $x \in C$ if and only if there exist $0 \le s_1, \ldots, s_k \in \mathbb{R}$ such that
\[
x= \sum_{j=1}^k \left( s_j \sum_{i=1}^j r_i v_i \right) = s_1(r_1 v_1)+s_2(r_1 v_1+r_2 v_2)+ \cdots + s_k(r_1 v_1 + r_2 v_2 + \cdots + r_k v_k ).
\]
\end{definition}

\begin{remark}\label{rem:en-v-cones}
Let $C$ be a $\indv$-cone given by $r_1, \ldots, r_k$ as in \Cref{def:v-cone}. By the transfer principle, $y \in \en{C}$ if and only if there exist $0 \le \beta_1, \ldots, \beta_k \in \U$ such that
\[
y = \sum_{j=1}^k \left( \beta_j \sum_{i=1}^j r_i v_i \right) = \beta_1(r_1 v_1)+\beta_2(r_1 v_1+r_2 v_2)+ \cdots + \beta_k(r_1 v_1 + r_2 v_2 + \cdots + r_k v_k ).
\]
\end{remark}

\begin{theorem}\label{lem:conev-as-intersection}
If $\indv$ is an index, then 
\[
\cone(\indv)= \bigcap \{ \en{C} \mid C \mbox{ is a $\indv$-cone} \} \setminus \{ \mathbf{0} \}.
\]
\end{theorem}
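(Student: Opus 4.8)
The plan is to reduce both inclusions to an explicit description of the enlargement of a single $\indv$-cone in coordinates with respect to the basis $v_1,\dots,v_k$, and then to match the resulting conditions with the orthogonal decomposition of \Cref{t:decomposition}. So first I would fix a $\indv$-cone $C$ given by positive reals $r_1,\dots,r_k$ as in \Cref{def:v-cone}. Since $C$ is contained in the $\R$-span $W$ of $v_1,\dots,v_k$, monotonicity of enlargement gives $\en C\subseteq\en W$, and by transfer $\en W$ is the $\U$-span of $v_1,\dots,v_k$; as these are orthonormal they stay $\U$-linearly independent, so every $y\in\en W$ is uniquely $y=\sum_{i=1}^k\delta_i v_i$ with $\delta_i=y\cdot v_i\in\U$. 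Expanding the representation of \Cref{rem:en-v-cones} and solving the resulting triangular system (put $\sigma_i=\beta_i+\dots+\beta_k$, so $\delta_i=r_i\sigma_i$ and $\beta_i=\sigma_i-\sigma_{i+1}$), I get that $y=\sum_i\delta_i v_i$ lies in $\en C$ if and only if $\delta_k\ge 0$ and $\delta_i r_{i+1}\ge\delta_{i+1}r_i$ for every $i<k$.

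Next I would intersect over all $\indv$-cones. Because the quotients $r_{i+1}/r_i$ can be prescribed independently over all positive reals, a point $x=\sum_i\gamma_i v_i\in\en W$ lies in $\bigcap\{\en C\mid C\text{ a }\indv\text{-cone}\}$ exactly when $\gamma_k\ge 0$ and $\gamma_i t\ge\gamma_{i+1}$ for all $i<k$ and all positive reals $t$. For the inclusion $\cone(\indv)\subseteq\bigcap\en C\setminus\{\orig\}$: if $x\in\cone(\indv)$, then $\imm(x)\le\indv$, so $x=\alpha_1 v_1+\dots+\alpha_j v_j$ is its orthogonal decomposition with $j\le k$, $\alpha_i>0$ and $\alpha_{i+1}/\alpha_i$ infinitesimal; writing $x=\sum_{i=1}^k\gamma_i v_i$ with $\gamma_i=\alpha_i$ for $i\le j$ and $\gamma_i=0$ otherwise, the conditions above hold since a positive real dominates every infinitesimal by \Cref{c:basic-fact-infinitesimal}, hence $x\in\en C$ for every $C$, and $x\ne\orig$. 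Conversely, given $x\ne\orig$ in the intersection, let $j$ be the largest index with $\gamma_j\ne 0$; from $\gamma_k\ge 0$ (if $j=k$) or from the inequality with $i=j$ (if $j<k$) I obtain $\gamma_j>0$, and a downward induction on $i$ using "$\gamma_i t\ge\gamma_{i+1}$ for all positive reals $t$" forces $\gamma_i>0$ and $\gamma_{i+1}/\gamma_i$ infinitesimal for each $i<j$. Since $\gamma_i=0$ for $i>j$, the expression $x=\gamma_1 v_1+\dots+\gamma_j v_j$ is precisely the orthogonal decomposition of $x$, so $\imm(x)=(v_1,\dots,v_j)\le\indv$ and $x\in\cone(\indv)$.

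The linear algebra (the triangular system, independence over $\U$, uniqueness of the decomposition) is routine, and the non-standard inputs are exactly the cited facts about infinitesimals. The one point I expect to require genuine care — and the main obstacle — is the passage from "$x\in\en C$ for every $\indv$-cone $C$" to the pointwise condition on the $\gamma_i$: one must check that the constraints attached to different coordinates truly decouple as $r$ ranges over all positive tuples, and in the downward induction one must first rule out $\gamma_i\le 0$ (using $\gamma_{i+1}>0$) before deducing that $\gamma_{i+1}/\gamma_i$ is infinitesimal.
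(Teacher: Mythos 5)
Your proof is correct and takes essentially the same route as the paper's: both invert the triangular system relating the coefficients $\beta_i$ of \Cref{rem:en-v-cones} to the coordinates with respect to $v_1,\dots,v_k$, exploit that the ratios $r_{i+1}/r_i$ range over all positive reals to force positivity and infinitesimality of the successive quotients, and conclude via the uniqueness of the orthogonal decomposition of \Cref{t:decomposition}. The only difference is organizational: you derive one coordinate criterion for membership in $\en{C}$ and use it for both inclusions (working inside the $\U$-span of the $v_i$), whereas the paper writes down explicit witnesses $\beta_i$ for one inclusion and extends $v_1,\dots,v_k$ to a basis of $\R^n$ for the other.
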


\begin{proof}
Let $\indv=(v_1, \ldots, v_k)$. If $x \in \cone(\indv)$, then $\imm(x) \le \indv$. Thus, the orthogonal decomposition of $x$ is of the form $\alpha_1 v_1 + \cdots + \alpha_t v_t$ where $t \le k$. For any $0 < r_1, \ldots, r_k \in \mathbb{R}$ we show that $x \in \en{C}$ where $C$ is the $\indv$-cone that is the positive span of $\{ \sum_{i\le j} r_i v_i \mid j \le k \}$. Let $\beta_i \in \U$ for each $i=1, \ldots, k$ be defined as follows: if $i > t$ set $\beta_i=0$, if $i=t$ set $\beta_t=\alpha_t/r_t$, and if $i < t$ set
\[
\beta_i= \frac{\alpha_i}{r_i} - \sum_{j=i+1}^t \frac{\alpha_j}{r_j}.
\]
It follows that
\begin{align*}
\beta_1(r_1 v_1) &+ \cdots + \beta_k(r_1 v_1+\cdots + r_k v_k) = \beta_1(r_1 v_1)+ \cdots + \beta_t(r_1 v_1+\cdots + r_t v_t) \\
& = \left(\frac{\alpha_1}{r_1}-\frac{\alpha_2}{r_2}- \cdots - \frac{\alpha_t}{r_t}\right)(r_1 v_1) + \cdots + \left(\frac{\alpha_t}{r_t}\right)(r_1 v_1+\cdots + r_t v_t)\\
& =\alpha_1 v_1 + \cdots + \alpha_t v_t=x.
\end{align*}
It remains to show that $\beta_i \ge 0$ for each $i$. We have that $r_i > 0$ for each $i$ and $\alpha_j/\alpha_i$ is infinitesimal for any $i < j \le t$ by \Cref{l:orthdec-standardpart}(\ref{l:orthdec-standardpart:itema}). So, if $i < j \le t$, then $\alpha_j/\alpha_i < r_j/(r_i(t-1))$ because $\alpha_j/\alpha_i$ is smaller than any positive real number, and hence
\[
\frac{\alpha_j}{r_j} <  \frac{\alpha_i}{r_i(t-1)}.
\]
Therefore, 
\[
\beta_i= \frac{\alpha_i}{r_i} - \sum_{j=i+1}^t \frac{\alpha_j}{r_j} > \frac{\alpha_i}{r_i} - (t-1) \frac{\alpha_i}{r_i(t-1)} = 0
\]
for each $i \le t$. If $i > t$, then $\beta_i =0$. Thus, $x \in \en{C}$ by \Cref{rem:en-v-cones}.

Conversely, let $x \in \Uz^n$ such that $x \in \en{C}$ for every $\indv$-cone $C$. Since $v_1, \ldots, v_k$ are linearly independent vectors in $\mathbb{R}^n$, there exist $v_{k+1}, \ldots, v_n$ such that $v_1, \ldots, v_n$ is a basis of $\mathbb{R}^n$.
It follows that $v_1, \ldots, v_n$ is also a basis of $\U^n$ as a $\U$-vector space. Thus, $x=\alpha_1 v_1+ \cdots+ \alpha_n v_n$ for some unique $\alpha_i \in \U$. By \Cref{rem:en-v-cones}, for every  $r_1, \ldots, r_k$ positive real numbers there exist $\beta_1, \ldots, \beta_k \ge 0$ in $\U$ such that $x=\beta_1(r_1 v_1)+ \cdots + \beta_k(r_1 v_1+\cdots + r_k v_k)$. The uniqueness of the $\alpha_i$'s implies that $\alpha_i=r_i \sum_{j=i}^k \beta_j$ if $i \le k$ and $\alpha_i=0$ if $i>k$.
Thus, $\alpha_i \ge 0$ for each $i$ and $\alpha_j \le \alpha_i$ for $i \le j$. Let $t$ be the largest index such that $\alpha_t \neq 0$. We have 
\[
\frac{\alpha_{i+1}}{\alpha_i} = \frac{r_{i+1}}{r_i} \frac{\sum_{j=i+1}^k \beta_j}{\sum_{j=i}^k \beta_j}  \le \frac{r_{i+1}}{r_i}
\]
for every $i < t$. For any $0 < r \in \mathbb{R}$ we can choose the $r_i$'s so that $r_{i+1}/r_i=r$. Thus, $\alpha_{i+1}/\alpha_i$ is positive and smaller than any positive real number, and hence infinitesimal, for every $i < t$. Therefore, $x = \alpha_1 v_1 + \dots + \alpha_t v_t \in \cone(\indv)$.
\end{proof}

By combining \Cref{lem:clx-coneix,lem:clx-coneix-lgroups,lem:conev-as-intersection} we obtain an additional description of the irreducible closed subsets of $\Uz^n$.

\begin{theorem}\label{teo:intersection-v-cones}
Let $y\in \Uz^n$.
\begin{enumerate}
\item In the Zariski topology of $\Uz^n$ relative to Riesz spaces we have
\[
\VUst \CU (y) = \bigcap \{ \en{C} \mid C \mbox{ is an $\imm(y)$-cone} \}\setminus \{ \mathbf{0} \}.
\]
\item In the Zariski topology of $\Uz^n$ relative to $\ell$-groups we have
\[
\VUst \CU (y) = \bigcup \left\lbrace \bigcap \{ \en{C} \mid C \mbox{ is a $\indv$-cone} \} \mid \red(\indv) = \red(\imm(y)) \right\rbrace \setminus \{ \mathbf{0} \}.
\] 
\end{enumerate}
\end{theorem}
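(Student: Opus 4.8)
The plan is to combine the two descriptions of $\VUst\CU(y)$ from Section~\ref{sec:irreducible} with the characterization of $\indv$-cones in \Cref{lem:conev-as-intersection}, treating the Riesz case and the $\ell$-group case separately.

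For part~(1), I would simply invoke \Cref{lem:clx-coneix}, which says $\VUst\CU(y)=\cone(\imm(y))$, and then substitute the expression for $\cone(\imm(y))$ provided by \Cref{lem:conev-as-intersection} applied to the index $\indv=\imm(y)$. This gives
\[
\VUst\CU(y)=\cone(\imm(y))=\bigcap\{\en{C}\mid C\text{ is an }\imm(y)\text{-cone}\}\setminus\{\mathbf{0}\},
\]
which is exactly the claimed identity. No extra work is needed here beyond citing the two results.

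For part~(2), I would start from \Cref{lem:clx-coneix-lgroups}, which gives
\[
\VUst\CU(y)=\bigcup\left\{\cone(\indv)\mid\red(\indv)=\red(\imm(y))\right\},
\]
and then replace each $\cone(\indv)$ on the right-hand side using \Cref{lem:conev-as-intersection}, namely $\cone(\indv)=\bigcap\{\en{C}\mid C\text{ is a }\indv\text{-cone}\}\setminus\{\mathbf{0}\}$. The only mildly delicate point is bookkeeping with the set-difference: since a finite (or arbitrary) union of sets with $\{\mathbf{0}\}$ removed equals the union with $\{\mathbf{0}\}$ removed afterwards, i.e. $\bigcup_i(A_i\setminus\{\mathbf{0}\})=\bigl(\bigcup_i A_i\bigr)\setminus\{\mathbf{0}\}$, one can pull the $\setminus\{\mathbf{0}\}$ outside the union. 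This yields
\[
\VUst\CU(y)=\bigcup\left\{\bigcap\{\en{C}\mid C\text{ is a }\indv\text{-cone}\}\mid\red(\indv)=\red(\imm(y))\right\}\setminus\{\mathbf{0}\},
\]
as desired.

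I do not anticipate a genuine obstacle here: the statement is a direct corollary, and the proof is essentially a one-line substitution in each case. The only thing to be careful about is that in the $\ell$-group case the origin must be handled consistently — it belongs to every $\en{C}$ (since $\mathbf{0}=0\cdot v_1$ lies in the positive span of any set), hence to every intersection $\bigcap\{\en{C}\mid C\text{ is a }\indv\text{-cone}\}$, and therefore to their union; removing it once at the end reproduces the stated formula. I would write the proof as two short paragraphs, one per item, each citing \Cref{lem:clx-coneix} (resp.\ \Cref{lem:clx-coneix-lgroups}) together with \Cref{lem:conev-as-intersection}.
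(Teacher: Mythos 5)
Your proposal is correct and matches the paper's own (very short) argument: the theorem is stated there precisely as the combination of \Cref{lem:clx-coneix} (resp.\ \Cref{lem:clx-coneix-lgroups}) with \Cref{lem:conev-as-intersection}, exactly as you do. Your extra remarks about pulling $\setminus\{\orig\}$ outside the union and the origin lying in every $\en{C}$ are accurate and harmless, though the paper does not even spell them out.
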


\begin{example}
Let $e_1=(1,0)$ and $e_2=(0,1)$ be the standard basis of $\mathbb{R}^2$ and $\indv$ the index $(e_1,e_2)$. We saw in \Cref{ex:cone(v)} that
if $\varepsilon$ is a positive infinitesimal and $y=(1,\varepsilon) \in \Uz^2$, then the closure of $y$ in the topology of $\Uz^2$ relative to Riesz spaces is
\[
\VUst \CU(y)=\cone(\indv) = \{ (x_1, x_2) \in \Uz^2 \mid x_1 > 0, \ x_2 \ge 0, \text{ and } x_2/x_1 \text{ is infinitesimal} \}.
\]
By \Cref{def:v-cone}, $\indv$-cones are positive spans of sets $\{ e_1, e_1+re_2 \}$ for some $0 < r \in \R$, which are the cones $\{ (x_1,x_2) \in \R^2 \mid x_1 \ge 0 \text{ and } 0 \le x_2 \le r x_1 \}$.
By the transfer principle, the enlargements of the $\indv$-cones are the subsets of $\U^2$ of the form $\{ (x_1,x_2) \in \U^2 \mid x_1 \ge 0 \text{ and } 0 \le x_2 \le r x_1 \}$  with $0 < r \in \R$. By \cref{teo:intersection-v-cones}, we obtain that 
\[
\VUst\CU(x)=\cone(\indv)= \bigcap_{0 < r \in \R} \{ (x_1,x_2) \in \Uz^2 \mid x_1 \ge 0 \text{ and } 0 \le x_2 \le r x_1 \}.
\] 
It can also be shown that $\cone(\indv)=\cone^\ell(\indv)$ because $\red(\indw)=\indv$ implies $\indw=\indv$ for any index $\indw$.
\end{example}

\begin{lemma}\label{lem:zero-on-conev}
Let $\indv$ be an index and $t \in \free_n$. Then $\cone(\indv) \subseteq \VU(t)$ if and only $\VI(t)$ contains a $\indv$-cone.
\end{lemma}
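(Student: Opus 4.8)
The plan is to prove the two implications separately. The implication ``$\VI(t)$ contains a $\indv$-cone $\Rightarrow\cone(\indv)\subseteq\VU(t)$'' is immediate: if a $\indv$-cone $C$ satisfies $C\subseteq\VI(t)$, then $\en{C}\subseteq\en{(\VI(t))}=\VU(t)$ by monotonicity of the enlargement together with \Cref{rem:*V_I(f)=V_U(f)}, while $\cone(\indv)\subseteq\en{C}$ by \Cref{lem:conev-as-intersection}; chaining the two inclusions gives the claim.

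For the converse, suppose $\cone(\indv)\subseteq\VU(t)$ with $\indv=(v_1,\dots,v_k)$. First I would reduce to a single test point: fixing a positive infinitesimal $\varepsilon$ and setting $y=v_1+\varepsilon v_2+\dots+\varepsilon^{k-1}v_k$, the coefficients $1,\varepsilon,\dots,\varepsilon^{k-1}$ are strictly positive with infinitesimal consecutive ratios, so by the uniqueness in \Cref{t:decomposition} this is the orthogonal decomposition of $y$ and hence $\imm(y)=\indv$; thus $y\in\cone(\indv)\subseteq\VU(t)$. Next I would invoke \cite[Lemma~3.2]{baker68} (exactly as in the proof of \Cref{r:basis-hyper}, and analogously \Cref{r:basis-hyper-l-groups} in the $\ell$-group case) to write $\VI(t)=\bigcap_a\bigcup_b H_{ab}$ with each $H_{ab}=\{x\in\R^n\mid l_{ab}(x)\ge 0\}$ a half-space, $l_{ab}\in\lr$, and $a,b$ ranging over finite index sets. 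Transferring, $\VU(t)=\en{(\VI(t))}=\bigcap_a\bigcup_b\en{H_{ab}}$. Because $y$ is a single point, for each $a$ one may pick an index $b(a)$ with $y\in\en{H_{a,b(a)}}$, and then $C'\coloneqq\bigcap_a H_{a,b(a)}$ is a convex polyhedral cone with $C'\subseteq\VI(t)$ and $y\in\en{C'}$ (the enlargement commuting with finite intersections).

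The heart of the argument is then to manufacture an honest $\indv$-cone inside $C'$. Write $C'=\{x\mid f_p(x)\ge 0\text{ for }p\in P\}$ with $P$ finite and $f_p\in\lr$. From $y\in\en{C'}$ we get $\en{f_p}(y)\ge 0$ for every $p$, so \Cref{l:zero-index} implies that, for each $p$, either $f_p$ vanishes on all of $v_1,\dots,v_k$ or there is a least index $q_p$ with $f_p(v_i)=0$ for $i<q_p$ and $f_p(v_{q_p})>0$. In either case, for every $j\le k$ the value $f_p\!\left(v_1+\eta v_2+\dots+\eta^{\,j-1}v_j\right)$ is non-negative for all sufficiently small real $\eta>0$: it is $0$ when $j<q_p$, and when $j\ge q_p$ it equals $\eta^{q_p-1}$ times a polynomial in $\eta$ whose value at $0$ is $f_p(v_{q_p})>0$. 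Choosing a single real $\eta_0>0$ that works simultaneously for the finitely many pairs $(p,j)$ and putting $r_i=\eta_0^{\,i-1}>0$, each generator $\sum_{i\le j}r_i v_i=v_1+\eta_0 v_2+\dots+\eta_0^{\,j-1}v_j$ lies in $C'$; since $C'$ is a convex cone, the $\indv$-cone it determines is contained in $C'$, hence in $\VI(t)$, which finishes the proof.

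I expect the main obstacle to be precisely this last step, because $\VI(t)$ is in general only a finite union of convex cones, so one cannot simply take the positive hull of finitely many of its points. The remedy is the two-stage move above: first isolate a single convex piece $C'$ of $\VI(t)$ whose enlargement still contains the test point $y$, and then exploit \Cref{l:zero-index} to see that the whole ``staircase'' of prospective generators $v_1,\ v_1+\eta_0 v_2,\ \dots$ slides into $C'$ once $\eta_0$ is a small enough positive real. (This second fact can alternatively be obtained by transfer applied to the first-order statement ``$f_p(v_1+\eta v_2+\dots+\eta^{\,j-1}v_j)\ge 0$ for all $p\in P$ and $j\le k$'', which holds at $\eta=\varepsilon$ and therefore at some positive real $\eta=\eta_0$.)
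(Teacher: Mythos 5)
Your proof is correct, and for the nontrivial direction it takes a genuinely different route from the paper. The easy direction (via \Cref{lem:conev-as-intersection} and \Cref{rem:*V_I(f)=V_U(f)}) coincides with the paper's. For the converse, the paper does not touch the half-space structure of $\VI(t)$ at all: it observes that containment of the positive span $S_{\R}(r)$ of the staircase points $v_1+rv_2+\dots+r^{j-1}v_j$ in $\VI(t)$ is expressible by a first-order formula in the parameter $r$, notes that the nonstandard analogue $S_{\U}(\varepsilon)\subseteq\cone(\indv)\subseteq\VU(t)=\en{(\VI(t))}$ holds for a positive infinitesimal $\varepsilon$, and transfers ``there exists a positive ratio making the span contained'' back to $\R$ in one stroke. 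You instead use Baker's decomposition of $\VI(t)$ into an intersection of finite unions of half-spaces, isolate a single convex polyhedral piece $C'\subseteq\VI(t)$ whose enlargement contains the test point, and then use the sign analysis of \Cref{l:zero-index} to find a real $\eta_0>0$ putting all the staircase generators into $C'$, after which convexity of $C'$ (rather than transfer of the universally quantified span condition) yields the $\indv$-cone. Your route is more hands-on and makes the polyhedral geometry explicit, at the cost of invoking \cite[Lemma~3.2]{baker68} and \Cref{l:zero-index}; the paper's is shorter and needs only the observation that positive spans are first-order definable. Your closing parenthetical transfer variant is essentially the paper's argument localized to $C'$ (and its premise that the condition holds at $\eta=\varepsilon$ does hold, by the same appeal to \Cref{l:zero-index} applied to the truncated points), so both mechanisms are sound.
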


\begin{proof}
We first prove the right-to-left implication. Suppose there exists a $\indv$-cone $C$ such that $C \subseteq \VI(t)$. Then $\en{C} \subseteq \en{(\VI(t))}$. By \Cref{lem:conev-as-intersection}, $\cone(\indv) \subseteq \en{C}$, and \Cref{rem:*V_I(f)=V_U(f)} implies that $\en{(\VI(t))} = \VU(t)$. Therefore, $\cone(\indv) \subseteq \VU(t)$.

We now prove the left-to-right implication. Let $\indv=(v_1, \dots, v_k)$. If $r \in \R$ and $\alpha \in \U$, denote by $S_{\R}(r)$ the positive span of $\left\{ v_1 + r v_2 + \dots + r^{j-1} v_j \mid j \le k \right\}$ in $\R^n$ and by $S_{\U}(\alpha)$ the positive span of $\left\{ v_1 + \alpha v_2 + \dots + \alpha^{j-1} v_j \mid j \le k \right\}$ in $\U^n$.
If $A$ is a subset of $\R^n$, then the transfer principle yields that there exists a positive $r \in \R$ such that $S_{\R}(r) \subseteq A$ if and only if there exists a positive $\alpha \in \U$ such that $S_{\U}(\alpha) \subseteq \en{A}$.
Indeed, the positive span of a subset can be defined by a first-order formula.
Assume $\cone(\indv) \subseteq \VU(t)$.
If $\varepsilon \in \U$ is a positive infinitesimal, then $v_1 + \varepsilon v_2 + \dots + \varepsilon^{j-1} v_j \in \cone(\indv)$ for each $j \le k$. It is straightforward to check that $\cone(\indv)$ is a cone in $\U^n$. Thus, $S_{\U}(\varepsilon) \subseteq \cone(\indv) \subseteq \VU(t)$.
Since $\VU(t)=\en{(\VI(t))}$ and $S_{\U}(\varepsilon) \subseteq \VU(t)$, there exists a positive $r \in \R$ such that $S_{\R}(r) \subseteq \VI(t)$. Therefore, $\VI(t)$ contains the $\indv$-cone $S_{\R}(r)$.
\end{proof}

Recall from \Cref{cor:cone-irr-order-iso,cor:cone-irr-order-iso-lgroups} that ${\CU} \circ {\cone}$ is a bijection between the set of indexes and $\spec(\freersn)$, and that ${\CU} \circ {\cone^\ell}$ is a bijection between the set of $\Z$-reduced indexes and $\spec(\freen)$. We provide an alternative description of these maps using only the language of Baker-Beynon duality.

\begin{theorem}\label{t:I(Cone(v))}
\mbox{}\begin{enumerate}
\item\label{t:I(Cone(v)):item1} If $\indv$ is an index, then 
\[
\CU(\cone(\indv))=\{ t \in \freersn \mid \VI(t) \mbox{ contains a $\indv$-cone}\}.
\]
\item\label{t:I(Cone(v)):item2} If $\indv$ is a $\Z$-reduced index, then 
\[
\CU(\cone^\ell(\indv))=\{ t \in \freen \mid \VI(t) \mbox{ contains a $\indv$-cone}\}.
\]
\end{enumerate}
\end{theorem}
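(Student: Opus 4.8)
The plan is to obtain both equalities as essentially immediate corollaries of \Cref{lem:zero-on-conev}, so that this theorem is just a translation of that lemma into the vocabulary of Baker--Beynon duality. Throughout I would unwind the operator $\CU$ via its definition: for a term $t$ and a subset $S$, the membership $t \in \CU(S)$ means precisely $t(x)=0$ for every $x \in S$, i.e.\ $S \seq \VU(t)$.

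For the first statement, given an index $\indv$ and $t \in \freersn$, the membership $t \in \CU(\cone(\indv))$ is by definition equivalent to $\cone(\indv) \seq \VU(t)$, and \Cref{lem:zero-on-conev} states exactly that this holds if and only if $\VI(t)$ contains a $\indv$-cone. Chaining these two equivalences gives the claimed description of $\CU(\cone(\indv))$. (That $\cone(\indv) \seq \Uz^n$ omits the origin is irrelevant here, since $t(\orig)=0$ automatically, so $\cone(\indv)\seq \VU(t)$ and $\cone(\indv)\seq\VUst(t)$ say the same thing.)

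For the second statement the only extra step is \Cref{rem:Icone=Iconel}, which tells us that for a $\Z$-reduced index $\indv$ one has $\CU(\cone^\ell(\indv)) = \CU(\cone(\indv))$, the operator $\CU$ now being taken relative to $\freen$. Hence for $t \in \freen$ the membership $t \in \CU(\cone^\ell(\indv))$ is equivalent to $\cone(\indv) \seq \VU(t)$, and \Cref{lem:zero-on-conev} — which is formulated for a generic $\free_n$ and holds word for word in the $\ell$-group setting, since its proof only uses $\VU(t)=\en{(\VI(t))}$ and the transfer principle — again identifies this with $\VI(t)$ containing a $\indv$-cone. This yields the second equality.

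I do not anticipate a genuine obstacle: all of the geometric and non-standard content (the orthogonal decomposition of \Cref{t:decomposition}, the reduction of indexes of \Cref{def:reduction}, the transfer arguments underlying \Cref{lem:conev-as-intersection} and \Cref{lem:zero-on-conev}, and the closure descriptions \Cref{lem:clx-coneix} and \Cref{lem:clx-coneix-lgroups}) has already been spent in the earlier results, and this theorem merely assembles them. The two minor points to keep track of are (i) invoking \Cref{lem:zero-on-conev} with the coefficient ring matching the ambient variety — $\R$-coefficients when the free algebra is $\freersn$, $\Z$-coefficients when it is $\freen$ — and (ii) in the $\ell$-group case, recalling via \Cref{rem:Icone=Iconel} that $\cone(\indv)$ and the in general strictly larger $\cone^\ell(\indv)$ nevertheless have the same $\CU$-image, which is precisely what lets the first argument carry over verbatim.
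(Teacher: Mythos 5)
Your proof is correct and follows the paper's argument exactly: the first equality is the unwinding of $t \in \CU(\cone(\indv))$ as $\cone(\indv) \subseteq \VU(t)$ combined with \Cref{lem:zero-on-conev}, and the second reduces to the first via $\CU(\cone^\ell(\indv))=\CU(\cone(\indv))$ from \Cref{rem:Icone=Iconel}. Your side remarks (the irrelevance of the origin and the fact that \Cref{lem:zero-on-conev} applies verbatim in the $\ell$-group setting) are accurate and consistent with how the paper uses these results.
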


\begin{proof}
\Cref{t:I(Cone(v)):item1}. Let $t \in \freersn$. Then $t \in \CU(\cone(\indv))$ if and only if $\cone(\indv) \subseteq \VU(t)$. By \Cref{lem:zero-on-conev}, $\cone(\indv) \subseteq \VU(t)$ if and only if $\VI(t)$ contains a $\indv$-cone.

\Cref{t:I(Cone(v)):item2} is a consequence of \Cref{t:I(Cone(v)):item1} because $\CU(\cone^\ell(\indv))=\CU(\cone(\indv))$ by \Cref{rem:Icone=Iconel}.
\end{proof}

Finally, we can deduce \cite[Theorems~3.8 and~4.8]{MR1707667} as a consequence of our results.

\begin{corollary}\label{c:panti}
\mbox{}\begin{enumerate}
\item The prime ideals of $\freersn$ are exactly the subsets of the form
\[
\{ t \in \freersn \mid \VI(t) \mbox{ contains a $\indv$-cone}\}
\]
for some index $\indv$.
\item The prime ideals of $\freen$ are exactly the subsets of the form
\[
\{ t \in \freen \mid \VI(t) \mbox{ contains a $\indv$-cone}\}
\]
for some $\Z$-reduced index $\indv$.
\end{enumerate}
\end{corollary}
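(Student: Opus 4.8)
The plan is to read the statement off directly from the order isomorphisms of \Cref{cor:cone-irr-order-iso,cor:cone-irr-order-iso-lgroups} together with the explicit, $\U$-free description of $\CU(\cone(\indv))$ and $\CU(\cone^\ell(\indv))$ provided by \Cref{t:I(Cone(v))}. First I would fix $\U$ to be an ultrapower of $\R$ of the form $\R^\N/\mathcal{F}$ with $\mathcal{F}$ a non-principal ultrafilter on $\N$, as in the setup of \Cref{sec:irreducible}; by \Cref{cor:existence-U-generators}(\ref{cor:existence-U-generators:item3}) it satisfies \Cref{ass:U-embeds} with $\alpha=\omega$, so every result of \Cref{sec:first-properties,sec:irreducible} applies with $\free_n$ either $\freersn$ or $\freen$. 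I would also note at the outset that the sets appearing on the right-hand side of the statement are formulated entirely in terms of Baker--Beynon data (the kernels $\VI(t)$ and $\indv$-cones in $\R^n$), hence do not depend on the particular choice of $\U$, which makes the conclusion about the intrinsic object $\spec(\free_n)$ legitimate.

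For item (1), I would argue as follows. By \Cref{cor:cone-irr-order-iso}(\ref{cor:cone-irr-order-iso:item2}), the composition ${\CU}\circ{\cone}$ is an order isomorphism between the poset of indexes and $\spec(\freersn)$; in particular it is a bijection, so the prime ideals of $\freersn$ are \emph{exactly} the ideals $\CU(\cone(\indv))$ as $\indv$ ranges over all indexes. Plugging in \Cref{t:I(Cone(v))}(\ref{t:I(Cone(v)):item1}), each such ideal equals $\{\, t \in \freersn \mid \VI(t)\text{ contains a }\indv\text{-cone}\,\}$, which is precisely the claimed form. Both inclusions of the ``exactly'' are thereby obtained: surjectivity of ${\CU}\circ{\cone}$ gives that every prime ideal has this form, and the fact that the image of ${\CU}\circ{\cone}$ lands in $\spec(\freersn)$ (equivalently, \Cref{p:irrClosed=prime}) gives that every set of this form is indeed a prime ideal.

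For item (2), the argument is verbatim the same, replacing \Cref{cor:cone-irr-order-iso} by \Cref{cor:cone-irr-order-iso-lgroups}, $\cone$ by $\cone^\ell$, indexes by $\Z$-reduced indexes, and \Cref{t:I(Cone(v)):item1} by \Cref{t:I(Cone(v)):item2}. The only point that deserves a word of care is that in the $\ell$-group case two different notions of cone over an index are in play: \Cref{t:I(Cone(v))}(\ref{t:I(Cone(v)):item2}) is stated for $\cone^\ell(\indv)$, and the passage to $\cone(\indv)$ uses the identity $\CU(\cone^\ell(\indv))=\CU(\cone(\indv))$ from \Cref{rem:Icone=Iconel}, so that ``$\VI(t)$ contains a $\indv$-cone'' is the correct membership condition even though $\cone(\indv)\subsetneq\cone^\ell(\indv)$ in general. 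Since the corollary is thus a straightforward synthesis of results already established, I do not expect any genuine obstacle; the subtlest bookkeeping is merely the restriction to $\Z$-reduced indexes in the $\ell$-group case, which is exactly what \Cref{cor:cone-irr-order-iso-lgroups}(\ref{cor:cone-irr-order-iso-lgroups:item2}) guarantees yields a bijection with $\spec(\freen)$.
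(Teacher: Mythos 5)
Your proposal is correct and follows essentially the same route as the paper: the paper's proof likewise combines the order isomorphisms of \Cref{cor:cone-irr-order-iso,cor:cone-irr-order-iso-lgroups} (which give that the prime ideals are exactly those of the form $\CU(\cone(\indv))$, resp. $\CU(\cone^\ell(\indv))$) with the explicit description in \Cref{t:I(Cone(v))}. Your extra remark about $\CU(\cone(\indv))=\CU(\cone^\ell(\indv))$ is harmless but not needed at this stage, since that identity is already absorbed into \Cref{t:I(Cone(v))}(\ref{t:I(Cone(v)):item2}).
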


\begin{proof}
Follows from \Cref{t:I(Cone(v))} because every prime ideal of $\freersn$ is of the form $\CU(\cone(\indv))$ for some index $\indv$ by \Cref{cor:cone-irr-order-iso} and every prime ideal of $\freen$ is of the form $\CU(\cone^\ell(\indv))$ by \Cref{cor:cone-irr-order-iso-lgroups}.
\end{proof}

In \Cref{sec:archim}, we explored the topological conditions that characterize the existence of a strong order unit in both lattice-ordered groups and Riesz spaces. Note that when these algebras are endowed with a designated strong order unit, they can no more be handled in algebraic setting of \Cref{sec:prelim:adjunction}. Nonetheless, both lattice-ordered groups and Riesz spaces with a strong order unit are categorically equivalent to varieties: the ones of MV-algebras and Riesz MV-algebras, respectively. A duality for semisimple MV-algebras, along the line of Baker-Beynon's, has been developed in \cite{MS-StuLog}. The tools developed here can be adapted to extend also this duality to the entire class of MV-algebras and Riesz MV-algebras, as we are currently doing.

\appendix

\section{}\label{appendix}

Let $\U$ be an ultrapower of $\mathbb{R}$ that satisfies \Cref{ass:U-embeds}  with respect to the cardinal $\omega_1$. We construct a $\VU\CU$-fixpoint $C \subseteq \U^{\omega}$ for which the inclusion of \Cref{c:inclusion-subsets-C} is strict (see \Cref{t:appendix}). 
Since there exist non-Archimedean $\ell$-groups (resp. Riesz spaces) that are finitely generated, they must embed into $\U$, and hence $\U$ contains infinitesimal elements (see \Cref{sec:irreducible}  for the definition of infinitesimals). Set $C = \{ \orig \} \cup \bigcup \{ C_n \mid n \ge 1 \}$, where 
\begin{align*}
C_1 = \{ & (x_i)_{i \in \omega} \mid x_0 \ge 0, \ x_1 > 0, \ x_0/x_1 \text{ is infinitesimal, and } x_i=0 \text{ for } i \ge 2 \},\\
C_n = \{ & (x_i)_{i \in \omega} \mid x_0 \ge 0, \ x_1=n x_0, \ x_n > 0, \ x_0/x_n \text{ is infinitesimal,} \text{ and } x_i=0\\
& \text{ for } i \neq 0,1,n \} \text{ for each } n \ge 2.
\end{align*}

We have that $\VU\CU(C)=C$ because a straightforward verification yields that $C = \VU(T)$, where $T \subseteq \free_\omega$ consists of the equivalence classes of following terms:
\begin{align*}
& x_i \wedge 0 &&\text{ for } i \ge 0,\\
&x_i \wedge x_j &&\text{ for } i,j \ge 2 \text{ and } i \neq j,\\
&\lvert x_i \rvert \wedge ((j x_0 - x_i) \vee 0) &&\text{ for } i \ge 2 \text{ and } j \ge 1,\\
&\lvert x_i \rvert \wedge \lvert i x_0 - x_1 \rvert &&\text{ for } i \ge 2,\\
& \bigg((ix_0-x_1) \wedge \bigwedge_{1 < k < i} (kx_0-x_k) \bigg) \vee 0 &&\text{ for } i \ge 2.
\end{align*}

\begin{proposition}\label{p:appendix}
Let $\varepsilon \in \U$ be a positive infinitesimal and $y =(y_i)_{i \in \omega} \in \U^\omega$ given by $y_0=\varepsilon$, $y_1=1$, and $y_i=0$ for $i \ge 2$.
\begin{enumerate}
\item\label{p:appendix:item1} $y \notin \VU\CU(C \cap \R^\omega)$.
\item\label{p:appendix:item2} $y \in \bigcap_{S \in \pfinom} \pi_S^{-1} [\VU\CU(\VU\CU(\pi_S[C]) \cap \mathbb{R}^{\lvert S \rvert})] $.
\end{enumerate}
\end{proposition}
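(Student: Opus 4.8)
The plan is to treat the two items separately: \Cref{p:appendix}\eqref{p:appendix:item1} is essentially immediate, while \Cref{p:appendix}\eqref{p:appendix:item2} is the substantial point.

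For \Cref{p:appendix}\eqref{p:appendix:item1} I would first note that every \emph{real} point of $C$ has vanishing $0$-th coordinate. Indeed, in $C_1$ the requirement that $x_0/x_1$ be infinitesimal forces $x_0=0$ once $x_0,x_1\in\R$, and in $C_n$ with $n\ge 2$ the requirements $x_1=nx_0$ and $x_0/x_n$ infinitesimal again force $x_0=0$ over $\R$; of course $\orig$ also has $x_0=0$. Hence the projection term $\pi_0$ (the equivalence class of the variable $x_0$) belongs to $\CU(C\cap\R^\omega)$, whereas $\pi_0(y)=y_0=\varepsilon\neq 0$. Therefore $y\notin\VU\CU(C\cap\R^\omega)$.

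For \Cref{p:appendix}\eqref{p:appendix:item2}, the first step is to identify the right-hand side. Set $J=\CU(C)$, so that $\VU(J)=\VU\CU(C)=C$, and let $N/J$ be the ideal of nilpotents of $G=\free_\omega/J$. By \Cref{t:dual-of-N},
\[
\bigcap_{S\in\pfinom}\pi_S^{-1}\big[\VU\CU\big(\VU\CU(\pi_S[C])\cap\R^{\lvert S\rvert}\big)\big]=\VU(N),
\]
so it suffices to prove $t(y)=0$ for every $t\in\free_\omega$ whose class is nilpotent in $G$. Unwinding the definition (and using $a\le b\iff(a-b)\vee 0=0$), such a $t$ comes with a witness $u\in\free_\omega$ satisfying $m\lvert t(x)\rvert\le u(x)$ for all $x\in C$ and all $m\in\N$. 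I would then record two standard facts: $N\subseteq I$, where $I/J$ is the intersection of all maximal ideals of $G$ (\Cref{rem:facts nilpotents}), and $I=\CU(\VU(J)\cap\R^\omega)=\CU(C\cap\R^\omega)$ (\Cref{p:intersection of max ideals dual}\eqref{p:intersection of max ideals dual:item1}); since $e_n\in C\cap\R^\omega$ for all $n\ge 1$, this gives in particular $t(e_1)=0$.

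The crux is to exploit that the witness $u$, like $t$, involves only finitely many variables. Choose $N_0\ge 2$ strictly larger than every index of a variable occurring in $t$ or $u$. For $n\ge N_0$ and any unlimited $\lambda>0$, the point $e_0+ne_1+\lambda e_n$ lies in $C_n$ (its coordinates are $x_0=1\ge 0$, $x_1=n=nx_0$, $x_n=\lambda>0$, $x_0/x_n=\lambda^{-1}$ infinitesimal, and all others $0$); as neither $t$ nor $u$ depends on $x_n$, evaluating $m\lvert t\rvert\le u$ there reduces to $m\lvert t(e_0+ne_1)\rvert\le u(e_0+ne_1)$ for all $m$, forcing $t(e_0+ne_1)=0$ for every $n\ge N_0$. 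Next, let $g\colon[0,\infty)\to\R$ be the continuous piecewise linear function $g(s)=t(s,1,0,0,\dots)$ (remaining variables of $t$ set to $0$); it has finitely many breakpoints, $g(0)=t(e_1)=0$, and by homogeneity $g(1/n)=t\big(\tfrac1n e_0+e_1\big)=\tfrac1n t(e_0+ne_1)=0$ for $n\ge N_0$. Choosing $\delta>0$ real with no breakpoint of $g$ in $(0,\delta)$, $g$ is affine on $[0,\delta]$ and vanishes at $0$ and at some $1/n\in(0,\delta)$, hence $g\equiv 0$ on $[0,\delta]$. Applying the transfer principle to the sentence ``$\forall s\,(0\le s\le\delta\rightarrow t(s,1,0,\dots)=0)$'' and instantiating at $s=\varepsilon$ (which satisfies $0<\varepsilon<\delta$) yields $t(y)=t(\varepsilon,1,0,0,\dots)=0$, completing the proof.

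The genuinely delicate point is the third paragraph: one must turn the finiteness of the variable set of a nilpotency witness into the vanishing of $t$ along the directions $e_0+ne_1$ for all large $n$, and then into a one-variable piecewise linear rigidity statement that survives transfer. Everything else is routine manipulation with the Galois connection $(\CU,\VU)$, the translation of $\ell$-group/Riesz-space inequalities into congruence conditions, and \Cref{t:transfer}.
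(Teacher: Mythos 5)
Your proof is correct. Part \eqref{p:appendix:item1} is essentially the paper's own argument: every real point of $C$ has vanishing $0$-th coordinate, so the projection term on coordinate $0$ lies in $\CU(C\cap\R^\omega)$ but takes the value $\varepsilon\neq 0$ at $y$. For part \eqref{p:appendix:item2} you take a genuinely different route. The paper never passes through the nilpotent ideal: using \Cref{rem:directed intersection} it reduces to the sets indexed by $S=\{0,\dots,n\}$, observes that the real points $(1/m,1,0,\dots,0)$ with $m>n$ lie in $\VU\CU(\pi_S[C])\cap\R^{n+1}$ (they are projections of points of $C_m$, the unlimited $m$-th coordinate being forgotten by $\pi_S$), and then, for an arbitrary $t$ in $\CU$ of that set, runs exactly your one-variable piecewise-linear rigidity plus transfer step to get $t(\varepsilon,1,0,\dots,0)=0$. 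You instead invoke \Cref{t:dual-of-N} to identify the displayed intersection with $\VU(N)$ and kill each nilpotent $t$ directly: since $t$ and its witness $u$ involve finitely many variables, evaluating $m\lvert t\rvert\le u$ at the points $e_0+ne_1+\lambda e_n\in C_n$ (with $\lambda$ unlimited, e.g.\ $\lambda=1/\varepsilon$) forces $t(e_0+ne_1)=0$ for all large $n$, and the same rigidity-plus-transfer step finishes. The two mechanisms exploit the same phenomenon --- the unlimited coordinate of $C_n$ is invisible to a finite piece of data, either because it is outside the projection or outside the variables of $t$ and $u$ --- but your version is more conceptual: it exhibits $y$ explicitly as a point of $\VU(N)\setminus\VU\CU(C\cap\R^\omega)$, which is precisely the intended content of \Cref{t:appendix}, at the price of invoking \Cref{t:dual-of-N}; the paper's version works straight from the definition of the intersection, needing only the directedness remark, and so keeps the appendix self-contained. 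A small simplification: the appeal to $N\subseteq I$ to get $t(e_1)=0$ is dispensable, since $g(0)=0$ follows from continuity of $g$ together with $g(1/n)=0$ for large $n$, or directly from the nilpotency inequality evaluated at $e_1\in C$.
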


\begin{proof}
\Cref{p:appendix:item1}. For any $n \ge 1$ we have that 
\[
C_n \cap \mathbb{R}^\omega = \{ (x_i)_{i \in \omega} \mid 0 < x_n \in \mathbb{R} \text{ and } x_i=0 \text{ for } i \neq n \}.
\]
Indeed, if $x_0, x_n \in \mathbb{R}$ and $x_0/x_n$ is infinitesimal, then $x_0=0$. Thus, the definition of $C$ yields  
\[
C \cap \mathbb{R}^\omega = \bigcup_{n \ge 1}\{ (x_i)_{i \in \omega} \mid 0 \le x_n \in \mathbb{R} \text{ and } x_i=0 \text{ for } i \neq n \}.
\]
If $t \in \free_\omega$ is the
projection term on the $0^\text{th}$ coordinate, then $t \in \CU(C \cap \R^\omega)$. However, $t(y)=\varepsilon \neq 0$, and hence $y \notin \VU\CU(C \cap \R^\omega)$.

\Cref{p:appendix:item2}. By \Cref{rem:directed intersection}\eqref{rem:directed intersection:item2}, it is sufficient to show for each $n \ge 2$ that
\[
(\varepsilon, 1, 0, \dots, 0) \in \VU\CU(\VU\CU(\pi_{\{0, \dots, n\}}[C]) \cap \mathbb{R}^{n+1})
\]
because $(\varepsilon, 1, 0, \dots, 0) = \pi_{\{0, \dots, n\}}(y)$.
We first prove that $(1/m, 1, 0, \dots, 0) \in \VU\CU(\pi_{\{0, \dots, n\}}[C]) \cap \mathbb{R}^{n+1}$  for each integer $m$ such that $m > n$. Let $z=(z_i)_{i \in \omega}$ be given by $z_0=1/m$, $z_1=1$, $z_m=1/\varepsilon$, and $z_i=0$ for $i \neq 0,1,m$. Since, $z \in C_m \subseteq C$ and $(1/m, 1, 0, \dots, 0)=\pi_{\{0, \dots, n\}}(z)$, we obtain that 
\begin{equation}\label{eq:appendix}
(1/m, 1, 0, \dots, 0) \in \pi_{\{0, \dots, n\}}[C] \cap \R^{n+1} \subseteq \VU\CU(\pi_{\{0, \dots, n\}}[C]) \cap \mathbb{R}^{n+1}.
\end{equation}
Let $t \in \CU(\VU\CU(\pi_{\{0, \dots, n\}}[C]) \cap \mathbb{R}^{n+1})$. By \Cref{eq:appendix}, $t(1/m, 1, 0, \dots, 0) =0$ for any $m > n$. If $f \colon \R \to \R$ is the piecewise linear map given by $f(x)=t(x,1,0, \dots, 0)$, then $f(1/m)=0$ for any $m>n$. Since $f$ is piecewise linear, it follows that there exists $0 < r \in \R$ such that $f(x)=0$ for any $x \in [0,r] \subseteq \R$. We can employ the transfer principle (see \cref{t:transfer}) because $\U$ is an ultrapower of $\R$. So, we obtain that $\en{f}(x)=0$ for any $x \in \U$ such that $0 \le x \le r$, where $\en{f} \colon \U \to \U$ is the enlargement of $f$. Therefore, $\en{f}(\varepsilon)=0$ because $\varepsilon$ is a positive infinitesimal. By \cref{rem:enlargement piecewise linear}, $\en{f}(x)=t(x,1,0, \dots, 0)$ for any $x \in \U$, and hence $t(\varepsilon,1,0, \dots, 0)=0$. Since this holds for all $t \in \CU(\VU\CU(\pi_{\{0, \dots, n\}}[C]) \cap \mathbb{R}^{n+1})$, it follows that
\[
(\varepsilon, 1, 0, \dots, 0) \in \VU\CU(\VU\CU(\pi_{\{0, \dots, n\}}[C]) \cap \mathbb{R}^{n+1}).
\]
\end{proof}

As an immediate consequence of \Cref{p:appendix}, we obtain:

\begin{theorem}\label{t:appendix}
$C$ is a $\VU \CU$-fixpoint such that
\[
\VU\CU(C \cap \mathbb{R}^\omega) \neq \bigcap_{S \in \pfinom} \pi_S^{-1} [\VU\CU(\VU\CU(\pi_S[C]) \cap \mathbb{R}^{\lvert S \rvert})].
\]
\end{theorem}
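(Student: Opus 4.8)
The plan is to derive \Cref{t:appendix} directly from \Cref{p:appendix}, after recording the two facts that make the statement meaningful. First I would check that $C$ is a $\VU\CU$-fixpoint. It suffices to verify the displayed claim that $C=\VU(T)$ for the explicit set $T$ of equivalence classes of terms listed above: once $C$ lies in the image of $\VU$, we get $\VU\CU(C)=\VU\CU\VU(T)=\VU(T)=C$ from the standard Galois-connection identity $\VU\CU\VU=\VU$. The verification of $C=\VU(T)$ is the routine but slightly laborious part, and I would organize it by families of terms: the terms $x_i\wedge 0$ force every coordinate to be $\geq 0$; the terms $x_i\wedge x_j$ with $i\neq j\geq 2$ force all but one coordinate of index $\geq 2$ to vanish (using that $\U$ is linearly ordered, so a meet of two strictly positive elements is strictly positive); the terms $\lvert x_i\rvert\wedge((jx_0-x_i)\vee 0)$ ranging over $j\geq 1$ force $x_0/x_i$ to be infinitesimal on the stratum where $x_i\neq 0$; the terms $\lvert x_i\rvert\wedge\lvert ix_0-x_1\rvert$ impose $x_1=ix_0$ there; and the final family pins down which single index $\geq 2$ is permitted to be nonzero. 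Collating these constraints with $\{\orig\}$ yields precisely $C=\{\orig\}\cup\bigcup_{n\ge 1}C_n$.

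With $C=\VU\CU(C)$ established, \Cref{c:inclusion-subsets-C} already gives the inclusion
\[
\VU\CU(C\cap\R^\omega)\subseteq \bigcap_{S\in\pfinom}\pi_S^{-1}[\VU\CU(\VU\CU(\pi_S[C])\cap\R^{\lvert S\rvert})].
\]
To see that it is strict, I would invoke \Cref{p:appendix}: for a positive infinitesimal $\varepsilon\in\U$, the point $y=(y_i)_{i\in\omega}$ with $y_0=\varepsilon$, $y_1=1$, and $y_i=0$ for $i\geq 2$ lies in the right-hand set by \Cref{p:appendix}\eqref{p:appendix:item2} and fails to lie in the left-hand set by \Cref{p:appendix}\eqref{p:appendix:item1}. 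Hence the two sets are distinct, which is exactly the assertion of the theorem.

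The only genuine obstacle is already discharged inside \Cref{p:appendix}, so the proof of the theorem itself amounts to assembling these pieces. If I were proving \Cref{p:appendix} from scratch, the hard direction would be part (2): one fixes $n$, places each rational point $(1/m,1,0,\dots,0)$ with $m>n$ in $\pi_{\{0,\dots,n\}}[C]\cap\R^{n+1}$ by exhibiting the witness $z\in C_m$, and then argues that a piecewise linear function vanishing at all the $1/m$ with $m>n$ must vanish on an entire interval $[0,r]\subseteq\R$, so that by the transfer principle (\Cref{t:transfer}, together with \Cref{rem:enlargement piecewise linear}) its enlargement vanishes at $\varepsilon$. That transfer argument is the real content; but since \Cref{p:appendix} may be assumed, \Cref{t:appendix} follows immediately.
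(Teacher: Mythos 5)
Your proposal is correct and follows the same route as the paper: the paper likewise obtains the theorem as an immediate consequence of \Cref{p:appendix} (the point $y$ witnessing the strictness), with the fixpoint property coming from the identity $C=\VU(T)$ for the listed terms $T$, so that $\VU\CU(C)=\VU\CU\VU(T)=\VU(T)=C$. Your term-by-term sketch of the verification $C=\VU(T)$ is a reasonable expansion of what the paper dismisses as a straightforward check, and your appeal to \Cref{c:inclusion-subsets-C} for the non-strict inclusion matches the paper's setup.
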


\begin{remark}
It can be shown that
\[
\VU\CU(C \cap \mathbb{R}^\omega) = \bigcup_{ n \ge 1} \{ (x_i)_{i \in \omega} \mid 0 \le x_n \in \U  \text{ and } x_i=0 \text{ for } i \neq n \}
\]
and
\[
\bigcap_{S \in \pfinom} \pi_S^{-1} [\VU\CU(\VU\CU(\pi_S[C]) \cap \mathbb{R}^{\lvert S \rvert})]= \VU\CU(C \cap \mathbb{R}^\omega) \cup C_1.
\]
\end{remark}

Consequently, \cref{p:intersection of max ideals dual,c:inclusion-subsets-C}  imply that $\free_\omega/\CU(C)$ is an $\omega$-generated $\ell$-group (resp. Riesz space) whose ideal of nilpotents does not coincide with the intersection of all maximal ideals. For a more direct algebraic description of $\free_\omega/\CU(C)$, observe that the list of terms provided at the beginning of the appendix gives a presentation of $\free_\omega/\CU(C)$. Indeed, since $C=\VU(T)$, where $T$ is the set of equivalence classes of terms in that list, we have $\CU(C)=\CU \VU(T)$ is the ideal of $\free_\omega$ generated by $T$.

\bibliographystyle{abbrv}

\end{document}